\newtheorem{theo}{Theorem} 
\newtheorem{lemma}{Lemma}[section]
\newtheorem{prop}[lemma]{Proposition}
\newtheorem{corol}[lemma]{Corollary}
\newtheorem{claim}[lemma]{Claim}
\theoremstyle{remark}
\newtheorem{remark}[lemma]{Remark}
\theoremstyle{definition}
\newcommand{\dd}{\mathsf{d}}
\newcommand{\CC}{\mathbb{C}}
\newcommand{\NN}{\mathbb{N}}
\newcommand{\RR}{\mathbb{R}}
\newcommand{\ZZ}{\mathbb{Z}}
\newcommand{\eps}{\varepsilon}
\newcommand{\BB}{B}
\newcommand{\LLL}{\mathcal{L}}
\newcommand{\MMM}{\mathcal{M}}
\newcommand{\PPP}{\mathcal{P}}
\newcommand{\SSS}{\mathcal{S}}
\newcommand{\VVV}{\mathcal{V}}
\newcommand{\YYY}{\mathcal{Y}}
\newcommand{\teps}{\widetilde{\eps}}
\newcommand{\tW}{W_1}
\newcommand{\aexp}{a}
\newcommand{\tphi}{\widetilde{\varphi}}
\newcommand{\zbar}{\overline{z}}
\newcommand{\ubar}{\overline{u}}
\newcommand{\fbar}{\overline{f}}
\newcommand{\gbar}{\overline{g}}
\DeclareMathOperator{\re}{Re}
\DeclareMathOperator{\im}{Im}
\newcommand{\hdot}{\dot{H}^1}
\newcommand{\Hdot}{\dot{H}^1(\RR^N)}
\newcommand{\NH}[1]{\left\|#1\right\|_{\hdot}}
\DeclareMathOperator{\vect}{span}
\DeclareMathOperator{\Sp}{\sigma}
\newcommand{\ds}{\displaystyle}
\numberwithin{equation}{section} 
\title[Dynamic for energy critical NLS]{Dynamic of threshold solutions for energy-critical NLS}
\author[T.~Duyckaerts]{Thomas Duyckaerts$^1$}
\email{thomas.duyckaerts@u-cergy.fr}
\address{Thomas Duyckaerts\\
Universit{\'e} de Cergy-Pontoise\\
D\'epartement de Math\'ematiques\\ 
Site de Saint Martin, 2 avenue Adolphe-Chauvin\\ 
95302 Cergy-Pontoise cedex, France. }
\author[F.~Merle]{Frank Merle$^2$}
\thanks{$^1$Cergy-Pontoise (UMR 8088)}
\thanks{$^2$Cergy-Pontoise, IHES, CNRS}
\thanks{This work was partially supported by the French ANR Grant ONDNONLIN}
\date{\today}
\begin{document}

\begin{abstract}
We consider the energy-critical non-linear focusing Schr\"odinger equation in dimension $N=3,4,5$. An explicit stationnary solution, $W$, of this equation is known.
 In \cite{KeMe06}, the energy $E(W)$ has been shown to be a threshold for the dynamical behavior of solutions of the equation. In the present article, we study the dynamics at the critical level $E(u)=E(W)$ and classify the corresponding solutions. This gives in particular a dynamical characterization of $W$.
\end{abstract}

\maketitle
\tableofcontents

\section{Introduction}
We consider the focusing energy-critical Schr\"odinger equation on an interval $I$ ($0\in I$)
\begin{equation}
\label{CP}
\left\{ 
\begin{gathered}
i\partial_t u +\Delta u+|u|^{p_c-1}u=0,\quad (t,x)\in I\times \RR^N\\
u_{\restriction t=0}=u_0\in \hdot,
\end{gathered}\right.
\end{equation}
where 
$$N\in\{3,4,5\}, \quad p_c:=\frac{N+2}{N-2}$$
and $\hdot:=\Hdot$ is the homogeneous Sobolev space on $\RR^N$ with the norm 
$\|f\|_{\hdot}^2:=\int |\nabla f|^2.$
The Cauchy problem for \eqref{CP} was studied in \cite{CaWe90}. Namely, if $u_0$ is in $\hdot$, there exists an unique solution defined on a maximal interval $I=(-T_-,T_+)$, 
such that
\begin{equation*}
J\Subset I\Longrightarrow \|u\|_{S(J)}<\infty,\quad S(J):=L^{2p_c}\big(J\times \RR^N\big),
\end{equation*}
and the energy
$$ E(u(t))=\frac{1}{2} \int |\nabla u(t,x)|^2dx-\frac{1}{2^*}\int |u(t,x)|^{2^*}dx$$
is constant (here $2^*:=\frac{2N}{N-2}=p_c+1$ is the critical exponent for the $H^1$-Sobolev embedding in $\RR^N$). In addition, $u$ satisfies the following global existence criterium:
\begin{equation*}
T_+<\infty\Longrightarrow \|u\|_{S(0,T_+)}=\infty.
\end{equation*}
Moreover, solutions of equation \eqref{CP} are invariant by the following transformations: if $u(t,x)$ is such a solution so is 
$$\frac{e^{i\theta_0} }{\lambda_0^{(N-2)/2}} u\Big(\frac{t_0+t}{\lambda_0^2},\frac{x_0+x}{\lambda_0}\Big),\quad (\theta_0,\lambda_0,t_0,x_0)\in \RR\times (0,\infty)\times\RR\times\RR^N.$$ 
Note that these transformations preserve the $S(\RR)$-norm, as well as the $\hdot$-norm, the $L^{2^*}$-norm and thus the energy.\par
An explicit solution of \eqref{CP} is the stationnary solution in $\hdot$ (but in $L^2$ only if $N\geq 5$)
\begin{equation}
\label{defW}
W:=\frac{1}{\left(1+\frac{|x|^2}{N(N-2)}\right)^{\frac{N-2}{2}}}.
\end{equation}
The works of Aubin and Talenti \cite{Au76,Ta76}, give the following elliptic characterization of $W$
\begin{gather}
\label{SobolevIn}
\forall u\in\hdot,\quad \|u\|_{L^{2^*}}\leq C_N\|u\|_{\hdot}\\
\label{CarW}
\|u\|_{L^{2*}}=C_N\|u\|_{\hdot}\Longrightarrow \exists \;\lambda_0,x_0,z_0\quad u(x)=z_0 W\Big(\frac{x+x_0}{\lambda_0}\Big),
\end{gather}
where $C_N$ is the best Sobolev constant in dimension $N$.\par
In \cite{KeMe06}, Kenig and Merle has shown that $W$ plays an important role in the dynamical behavior of solutions of Equation  \eqref{CP}. Indeed, $E(W)=\frac{1}{NC_N^N}$ 
is an energy threshold for the dynamics in the following sense. Let $u$ be a radial solution of \eqref{CP} such that
\begin{equation}
\label{hypsubcrit}
E(u_0)<E(W).
\end{equation}
Then if $\|u_0\|_{\hdot}<\|W\|_{\hdot}$, we have
\begin{equation}
\label{scattering}
T_+=T_-=\infty \text{ and } \|u\|_{S(\RR)}<\infty.
\end{equation}
On the other hand if $\|u_0\|_{\hdot}>\|W\|_{\hdot}$, and $u_0\in L^2$ then
\begin{equation}
\label{explosion}
T_+<\infty \text{ and } T_-<\infty.
\end{equation}
Our goal is to give a classification of solutions of \eqref{CP} with \textit{critical} energy, that is with initial condition such that
\begin{equation*}
u_0\in  \hdot,\quad E(u_0)=E(W).
\end{equation*}
A new example of such a solution (not satisfying \eqref{scattering} nor \eqref{explosion}) is given by $W$. We start with the following theorem, which shows that the dynamics at this critical level is richer, in the sense that there exists orbit connecting different types of behavior for $t>0$ and $t<0$.
\begin{theo}
\label{th.exist}
Let $N\in\{3,4,5\}$. There exist radial solutions $W^-$ and $W^+$ of \eqref{CP} such that
\begin{gather}
\label{ex.energy}
E(W)=E(W^+)=E(W^-),\\
\label{ex.lim}
T_+(W^-)=T_+(W^+)=+\infty \text{ and }\lim_{t\rightarrow +\infty} W^{\pm}(t)=W \text{ in } \hdot,\\
\label{ex.sub}
\big\|W^{-}\big\|_{\hdot}<\|W\|_{\hdot},\quad  T_-(W^-)=+\infty,\quad \|W^-\|_{S((-\infty,0])}<\infty,\\
\label{ex.super}
\big\|W^{+}\big\|_{\hdot}>\|W\|_{\hdot},\text{ and, if }N=5,\;T_-(W^+)<+\infty.
\end{gather}
\end{theo}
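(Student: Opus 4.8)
The plan is to obtain $W^{\pm}$ as the two branches of a one–parameter family of solutions built by ``switching on'' the unstable mode of the linearization of \eqref{CP} at $W$, and then to read off the global behaviour from the sharp Sobolev characterization \eqref{CarW} together with a threshold refinement of the dichotomy of \cite{KeMe06}. \emph{Step 1: spectral structure.} Writing $u=W+h$, $h=h_1+ih_2$, the linear part of the flow near $W$ is $\partial_t\vec h=\LLL\vec h$, where $\LLL(h_1,h_2)=(L_-h_2,\,-L_+h_1)$ with $L_+:=-\Delta-p_cW^{p_c-1}$, $L_-:=-\Delta-W^{p_c-1}$. One has $L_-\ge0$ with $\ker L_-=\vect(W)$, while $L_+$ has exactly one negative eigenvalue and $\ker L_+=\vect\big(\partial_{x_1}W,\dots,\partial_{x_N}W,\,(\tfrac{N-2}{2}+x\cdot\nabla)W\big)$. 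From this Hamiltonian structure a standard argument produces a simple real eigenvalue $-e_0<0$ of $\LLL$ (and its opposite $+e_0$), with eigenfunction $\YYY=\YYY_1+i\YYY_2$, $\LLL\YYY=-e_0\YYY$, that is smooth, radial (by simplicity and rotation invariance), and exponentially decaying together with all derivatives; the rest of $\sigma(\LLL)$ lies on $i\RR$. Moreover the quadratic form $\Phi(\vec h):=\tfrac12\langle L_+h_1,h_1\rangle+\tfrac12\langle L_-h_2,h_2\rangle$ is conserved by $e^{t\LLL}$ and $\Phi(\YYY)=0$, i.e. $\langle L_+\YYY_1,\YYY_1\rangle=-\langle L_-\YYY_2,\YYY_2\rangle$, which with $L_-\ge0$ and $\YYY\neq0$ forces $\langle L_+\YYY_1,\YYY_1\rangle<0$.

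\emph{Step 2: construction by an unstable–mode expansion.} For a small real parameter $a$, set $\UUU_k^a:=W+\sum_{j=1}^{k}a^je^{-je_0t}\varphi_j$ with $\varphi_1:=\YYY$ and, for $j\ge2$, $\varphi_j$ defined recursively by $(\LLL+je_0)\varphi_j=R_j(\varphi_1,\dots,\varphi_{j-1})$, the right side collecting the order–$j$ terms of $|W+v|^{p_c-1}(W+v)$; since $-je_0\notin\sigma(\LLL)$ for $j\ge2$ each equation is solvable with $\varphi_j$ again exponentially decaying. (For $N=3,4$ the nonlinearity is polynomial and this is an exact finite computation; for $N=5$ it is only $C^{1,1/3}$, so the expansion is truncated compatibly with its regularity, with some fractional–Leibniz bookkeeping — the sole dimension–sensitive point here.) The residual $\EEE_k:=i\partial_t\UUU_k^a+\Delta\UUU_k^a+|\UUU_k^a|^{p_c-1}\UUU_k^a$ is $O(e^{-(k+1)e_0t})$ in the dual Strichartz norm on $[T,\infty)$; for $k$ large, writing $u=\UUU_k^a+r$ and using Duhamel's formula, Strichartz estimates and the smallness of $\EEE_k$, a contraction in a weighted Strichartz space (weight $\sim e^{(k+1/2)e_0t}$) gives a unique $r$ on $[T(a),\infty)$, $T(a)$ large, hence an exact \emph{radial} solution $U^a$ of \eqref{CP} with $\|U^a(t)-W-ae^{-e_0t}\YYY\|_{\hdot}\le Ce^{-2e_0t}$ for $t\ge T(a)$. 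Thus $U^a(t)\to W$ in $\hdot$ as $t\to+\infty$, so $T_+(U^a)=+\infty$; since $E$ is continuous on $\hdot$ and conserved, $E(U^a)=E(W)$; and $U^0=W$ while $U^a\neq W$ for $a\neq0$. This gives \eqref{ex.lim} and \eqref{ex.energy}.

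\emph{Step 3: the two branches and the side of $W$.} For any $v$ with $E(v)=E(W)$, combining $E(W)=\tfrac1N\|W\|_{\hdot}^2$ with $\|W\|_{\hdot}^2=\|W\|_{L^{2^*}}^{2^*}$ (test $-\Delta W=W^{p_c}$ against $W$) gives $\|v\|_{\hdot}^2-\|W\|_{\hdot}^2=-\tfrac{N}{2^*}\big(\int|\nabla v|^2-\int|v|^{2^*}\big)$. Expanding the right side at $v=U^a(t)=W+ae^{-e_0t}\YYY+O(e^{-2e_0t})$ yields $\|U^a(t)\|_{\hdot}^2-\|W\|_{\hdot}^2=c\,a\,e^{-e_0t}+O(e^{-2e_0t})$ with $c=\tfrac{N}{2^*}(2^*-2)\langle\nabla W,\nabla\YYY_1\rangle$. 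A genuine point is that $c\neq0$ (equivalently $\YYY_1$ is not $\hdot$–orthogonal to $W$); I isolate this as one of the two main obstacles below. Normalizing the sign of $\YYY$ so that $c>0$ and choosing a small $a_0>0$, set $W^+:=U^{a_0}$ and $W^-:=U^{-a_0}$, so that $\|W^+(t)\|_{\hdot}>\|W\|_{\hdot}$ and $\|W^-(t)\|_{\hdot}<\|W\|_{\hdot}$ for large $t$. These strict inequalities hold on the whole maximal interval: if $E(v)=E(W)$ and $\|v\|_{\hdot}=\|W\|_{\hdot}$, the displayed identity forces $\int|v|^{2^*}=\int|\nabla v|^2$, hence $\|v\|_{L^{2^*}}=C_N\|v\|_{\hdot}$ (the exponents match since $C_N^{2^*}=\|W\|_{\hdot}^{2-2^*}$), so by \eqref{CarW} and $E(v)=E(W)$ one gets $v=e^{i\theta_0}W((\cdot+x_0)/\lambda_0)$, which by uniqueness in \eqref{CP} and the stationarity of $W$ would make the whole orbit constant in time — impossible for $W^{\pm}$. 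Hence $\|W^{\pm}(t)\|_{\hdot}-\|W\|_{\hdot}$ never changes sign, proving the $\hdot$–norm statements of \eqref{ex.sub}--\eqref{ex.super}.

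\emph{Step 4: backward dynamics, and the main obstacle.} It remains to get $T_-(W^-)=+\infty$ with $\|W^-\|_{S((-\infty,0])}<\infty$, and $T_-(W^+)<+\infty$ for $N=5$. These follow from the threshold analogues of the Kenig--Merle dichotomy: (i) if $E(u)=E(W)$ and $\|u(t)\|_{\hdot}<\|W\|_{\hdot}$ on the maximal interval, then $u$ is global with $\|u\|_{S(\RR)}<\infty$; (ii) if $N=5$, $u_0\in L^2$, $E(u)=E(W)$ and $\|u(t)\|_{\hdot}>\|W\|_{\hdot}$, then $T_{\pm}(u)<\infty$. Statement (i) applied to $W^-$ gives \eqref{ex.sub}; for (ii), note that when $N=5$ one has $W\in L^2$ and all $\varphi_j\in L^2$, so $W^+(t)\in L^2$ for large $t$, hence on the whole interval by $L^2$–conservation, and (ii) gives \eqref{ex.super}. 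Both (i) and (ii) are obtained by a threshold adaptation of the concentration–compactness/rigidity method of \cite{KeMe06}: a minimal non–scattering (resp.\ finite–time blow–up) solution at the level $E=E(W)$ is compact modulo the symmetries of \eqref{CP}, and the rigidity (localized virial / Liouville) step forces it to be a modulation of $W$, contradicting the strict inequality from Step 3. I expect the two main obstacles to be precisely: the nonvanishing of $c$ in Step 3 (it is exactly what makes the constructed orbit leave the sphere $\{\|v\|_{\hdot}=\|W\|_{\hdot}\}$ on a definite side, and if the linear term vanished the quadratic one would put both $U^{a_0}$ and $U^{-a_0}$ on the same side), and, more seriously, the threshold dichotomy (i)--(ii), where — unlike in \cite{KeMe06}, which assumes a strict energy inequality — the critical element may a priori be $W$ or one of its modulations, so the variational rigidity \eqref{CarW} must be combined with a careful control of the modulation parameters.
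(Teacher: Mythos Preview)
Your Steps 1--3 are essentially the paper's approach: the spectral analysis, the formal expansion in powers of $e^{-e_0 t}$, the contraction giving $W^{a}$, and the sign check via $(W,\YYY_1)_{\hdot}\neq 0$ (which the paper proves exactly by the coercivity of $Q$ on $G_\bot$, i.e.\ your ``first obstacle'' is handled by Lemma~\ref{lem.positivity}). So far so good.

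The genuine gap is Step~4. Your statements (i) and (ii) are \emph{false as written}, and in fact are contradicted by the very solutions you have just built. For (i): $W^-$ satisfies $E(W^-)=E(W)$, $\|W^-(t)\|_{\hdot}<\|W\|_{\hdot}$ for all $t$, yet $W^-(t)\to W$ in $\hdot$ as $t\to+\infty$, which forces $\|W^-\|_{S(0,+\infty)}=+\infty$ (a solution with finite $S$-norm scatters to a linear wave and cannot converge to $W$). So $\|W^-\|_{S(\RR)}<\infty$ cannot hold. For (ii): $W^+$ (in $N=5$) satisfies all your hypotheses but has $T_+(W^+)=+\infty$ by construction. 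Thus the ``threshold dichotomy'' you invoke does not exist; the strict-energy result of \cite{KeMe06} genuinely fails at the critical level, and $W^{\pm}$ are precisely the obstructions.

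What the paper actually proves (and what you need) are \emph{one-sided} statements. In the subcritical case: every threshold solution with $\|u_0\|_{\hdot}<\|W\|_{\hdot}$ is global (Proposition~\ref{compactness.u}), and \emph{cannot fail to scatter in both time directions} (Corollary~\ref{corol.incomp}). Since $W^-$ does not scatter forward, it must scatter backward, giving $\|W^-\|_{S((-\infty,0])}<\infty$. The proof of Corollary~\ref{corol.incomp} is not a concentration-compactness/rigidity argument but goes through Proposition~\ref{prop.CV0}: a non-scattering threshold solution must converge exponentially to $W_{[\theta_0,\mu_0]}$, and then a localized virial identity (Claim~\ref{claim.virial.seq}) rules out this happening at both $\pm\infty$. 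In the supercritical case (Corollary~\ref{corol.sur}): an $L^2$ threshold solution with $\|u_0\|_{\hdot}>\|W\|_{\hdot}$ cannot be defined on all of $\RR$; since $T_+(W^+)=+\infty$ one concludes $T_-(W^+)<+\infty$. Again the mechanism is convergence to $W$ (Proposition~\ref{PropSuper}) combined with a virial monotonicity, not a direct Kenig--Merle rigidity. Finally, the $L^2$ membership of $W^+$ when $N=5$ is not immediate from your expansion (the fixed-point remainder $r$ is a priori only in $\hdot$); the paper obtains it by a localized-mass argument exploiting $\|W^a(t)-W\|_{\hdot}\to 0$.
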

\begin{remark}
As for $W$, $W^+(t)$ and $W^-(t)$ belongs to $L^2$ if and only if $N=5$. We still expect $T_-(W^+)<+\infty$ for $N=3,4$.
\end{remark}
Our classification result is as follows.
\begin{theo}
\label{th.classif}
Let $N\in\{3,4,5\}$. Let $u_0\in \hdot$ radial, such that 
\begin{equation}
\label{threshold}
E(u_0)=E(W)=\frac{1}{NC_N^N}.
\end{equation}
Let $u$ be the solution of \eqref{CP} with initial condition $u_0$ 
and $I$ its maximal interval of definition. Then the following holds:
\begin{enumerate}
\item \label{theo.sub} If $\ds \int |\nabla u_0|^2<\int |\nabla W|^2=\frac{1}{C_N^N}$ then $I=\RR$. Furthermore, either $u=W^-$ up to the symmetry of the equation, or $\|u\|_{S(\RR)}<\infty$.
\item \label{theo.crit} If $\ds \int |\nabla u_0|^2=\int |\nabla W|^2$ then $u=W$ up to the symmetry of the equation.
\item \label{theo.super} If $\ds \int |\nabla u_0|^2>\int |\nabla W|^2$, and $u_0\in L^2$ then either $u=W^+$ up to the symmetry of the equation, or $I$ is finite.
\end{enumerate}
\end{theo}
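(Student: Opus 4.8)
\textit{Proof strategy.} The plan is to run the concentration-compactness/rigidity scheme of \cite{KeMe06} at the threshold energy $E(W)$, combine it with a modulation analysis around $W$ governed by the spectrum of the linearized flow, and close with a uniqueness statement identifying the solutions $W^{\pm}$ of Theorem~\ref{th.exist}.

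\emph{Step 1: variational preliminaries and case \eqref{theo.crit}.} Using $\int|\nabla W|^2=\int|W|^{2^*}$ and $E(W)=\tfrac1N\int|\nabla W|^2$, conservation of $E$ rewrites, for all $t\in I$, as
\[
 \int|u(t)|^{2^*}=\frac{N}{N-2}\int|\nabla u(t)|^2-\frac{2}{N-2}\int|\nabla W|^2 .
\]
Plugging this into \eqref{SobolevIn} produces a strictly convex function of $\int|\nabla u(t)|^2$ which is $\ge 0$ and, by \eqref{CarW}, vanishes \emph{only} at $\int|\nabla W|^2$, with a double zero there. Hence, if $\int|\nabla u_0|^2=\int|\nabla W|^2$ then $u_0$ is an extremizer of \eqref{SobolevIn}, so by \eqref{CarW} and radiality $u_0=e^{i\theta_0}\mu_0^{-(N-2)/2}W(\cdot/\mu_0)$ for some $\theta_0\in\RR$, $\mu_0>0$; since such a function is a stationary solution, $u=W$ up to the symmetries, which is case \eqref{theo.crit}. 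The same dichotomy shows that in cases \eqref{theo.sub} and \eqref{theo.super} the quantity $\int|\nabla u(t)|^2-\int|\nabla W|^2$ never vanishes on $I$ — otherwise $u(t_1)$, hence $u$, would be $W$ up to symmetry — so it keeps a constant sign and $u\neq W$ modulo symmetry. Finally, in case \eqref{theo.sub} the displayed identity and \eqref{SobolevIn} give the two-sided bound $\tfrac2N\int|\nabla W|^2<\int|\nabla u(t)|^2<\int|\nabla W|^2$ and the coercivity $\int|\nabla u(t)|^2-\int|u(t)|^{2^*}=\tfrac2{N-2}\big(\int|\nabla W|^2-\int|\nabla u(t)|^2\big)>0$.

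\emph{Step 2: reduction to a solution approaching $\{e^{i\theta}W_\mu\}$.} In case \eqref{theo.sub} assume $\|u\|_{S(0,\infty)}=\infty$ (if the $S$-norm is finite on both half-lines we are in the second alternative of \eqref{theo.sub}); in case \eqref{theo.super} assume $I\supset[0,\infty)$ (otherwise $I$ is finite and we are done). I would then run the argument of \cite{KeMe06} at the level $E(W)$: in case \eqref{theo.sub} it yields $I\supset[0,\infty)$ and a scaling $\mu(t)>0$ such that $\{\mu(t)^{-(N-2)/2}u(t,\cdot/\mu(t))\}_{t\ge0}$ is precompact in $\hdot$; and, feeding in the coercivity of Step~1 (case \eqref{theo.sub}) or a localized virial identity using $u_0\in L^2$ (case \eqref{theo.super}), the rigidity of \cite{KeMe06} forces $\inf_{t\ge0}\big|\int|\nabla u(t)|^2-\int|\nabla W|^2\big|=0$. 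Indeed, if not, that quantity stays bounded away from $0$ and the rigidity produces $u\equiv0$ (case \eqref{theo.sub}, impossible since $E(u)=E(W)>0$) or $T_+<\infty$ (case \eqref{theo.super}, impossible since $I\supset[0,\infty)$); the borderline $\int|\nabla v|^2=\int|\nabla W|^2$ in the rigidity is excluded by case \eqref{theo.crit}, $W$ being global and stationary. Hence there is $t_n\to+\infty$ with $\int|\nabla u(t_n)|^2\to\int|\nabla W|^2$, and the strict variational characterization of Step~1 gives $u(t_n)\to W$ in $\hdot$ modulo scaling and phase.

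\emph{Step 3: modulation and exponential convergence.} Near each $t_n$, write $u(t)=e^{i\theta(t)}\mu(t)^{-(N-2)/2}(W+\tilde u)\big(t,\cdot/\mu(t)\big)$ with $\tilde u(t)$ in a fixed complement of $\vect\{\Lambda W,\,iW\}$, $\Lambda W=\tfrac{N-2}{2}W+x\cdot\nabla W$; standard modulation estimates give $|\mu'(t)/\mu(t)|+|\theta'(t)|\lesssim\NH{\tilde u(t)}$ and $\big|\int|\nabla u|^2-\int|\nabla W|^2\big|\sim\NH{\tilde u(t)}$ up to higher order. The linearized flow operator is built from $L_+=-\Delta-p_cW^{p_c-1}$ (exactly one negative eigenvalue) and $L_-=-\Delta-W^{p_c-1}\ge0$ with $L_-W=0$; consequently it has exactly two real eigenvalues $\pm e_0$ ($e_0>0$) with radial, exponentially decaying eigenfunctions $\mathcal{Y}_\pm\in\hdot$, and is coercive on the (symplectically) orthogonal complement of $\vect\{\mathcal{Y}_+,\mathcal{Y}_-,\Lambda W,iW\}$. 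Decomposing $\tilde u=\alpha_+\mathcal{Y}_++\alpha_-\mathcal{Y}_-+g$ and using that $E(u)=E(W)$ makes the Hessian quadratic form of $\tilde u$ vanish to leading order, the coercivity yields $\NH{g}^2\lesssim\alpha_+^2+\alpha_-^2+\NH{\tilde u}^3$, hence $\NH{\tilde u}\lesssim|\alpha_+|+|\alpha_-|$; and projecting the equation gives, modulo errors $O(\NH{\tilde u}^2)=O((|\alpha_+|+|\alpha_-|)^2)$,
\[
 \alpha_\pm'(t)=\pm e_0\,\alpha_\pm(t)+O\big((|\alpha_+(t)|+|\alpha_-(t)|)^2\big).
\]
A by-now classical ODE argument — the unstable coefficient $\alpha_+$ cannot stay small unless it is slaved to $\alpha_-$, which then decays — lets one continue the modulation from the times $t_n$ to all large $t$, with $\NH{\tilde u(t)}\le Ce^{-e_0 t}$; then $\mu(t)\to\mu_\infty>0$, $\theta(t)\to\theta_\infty$ and $\NH{u(t)-e^{i\theta_\infty}\mu_\infty^{-(N-2)/2}W(\cdot/\mu_\infty)}\le Ce^{-e_0 t}$ as $t\to+\infty$.

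\emph{Step 4: uniqueness, conclusion, and the main obstacles.} It remains to show that the solutions with $E(u)=E(W)$, the prescribed sign of $\int|\nabla u|^2-\int|\nabla W|^2$, and exponential convergence to $W$ (modulo symmetry) as $t\to+\infty$ form a single orbit of the three-parameter group of scaling, time translation and phase — necessarily, by Theorem~\ref{th.exist}, the orbit of $W^-$ when $\int|\nabla u_0|^2<\int|\nabla W|^2$ and of $W^+$ when $\int|\nabla u_0|^2>\int|\nabla W|^2$. After normalizing by a symmetry, write $u(t)=W+h(t)$ with $\NH{h(t)}\le Ce^{-e_0 t}$; decomposing $h$ as in Step~3, the unstable coefficient is forced to be $\alpha_+(t)=-\int_t^{\infty}e^{e_0(t-s)}R_+(s)\,ds$ and, likewise, $\alpha_-$ and the kernel parameters are governed by integral equations with remainders superlinear in the exponentially small data; this is a contraction in a suitable exponentially weighted space, yielding the claimed uniqueness. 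Consequently, in case \eqref{theo.sub}: if $u$ does not scatter for $t\to+\infty$ it equals $W^-$ up to the symmetries, and symmetrically for $t\to-\infty$ (apply the above to $\overline{u(-t,\cdot)}$), so otherwise $\|u\|_{S(\RR)}<\infty$; in case \eqref{theo.super}: if $I$ is infinite to the right then $u=W^+$ up to the symmetries (for $N=5$, $W^+$ then blows up to the left, so $I$ is finite there), and symmetrically to the left, so otherwise $I$ is finite. The technical heart is Step~3 — pinning down the spectrum of the linearized operator, the modulation estimates, and extracting exponential decay from the coupled ODE/PDE system with only the energy constraint to control the unstable mode — and the conceptual heart is Step~2, where adapting the \cite{KeMe06} rigidity to the threshold replaces the conclusion ``$u\equiv0$'' by ``$u$ reaches the family $\{e^{i\theta}W_\mu\}$'', forcing one to use the \emph{strict} (double-zero) form of the variational characterization of $W$.
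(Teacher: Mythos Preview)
Your outline is structurally sound, and Steps~1 and~4 match the paper's argument closely (case~\eqref{theo.crit} is indeed immediate from Aubin--Talenti, and the uniqueness in Step~4 is the paper's Lemma~6.5 combined with the fixed point of Proposition~6.3). There is, however, a real gap in Step~3.

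The ODE system $\alpha_\pm' = \pm e_0\,\alpha_\pm + O\big((|\alpha_+|+|\alpha_-|)^2\big)$ alone does \emph{not} let you pass from convergence along the sequence $t_n$ to convergence for all large $t$, let alone exponential decay. The obstruction is the unstable mode: between $t_n$ and $t_{n+1}$ (a gap you do not control), $\alpha_+$ can grow by a factor $e^{e_0(t_{n+1}-t_n)}$ and drive $\tilde u$ out of the modulation regime, after which the full nonlinear dynamics could bring $u$ back near $W$ at time $t_{n+1}$. Nothing in your ODE argument rules out such oscillation. The phrase ``$\alpha_+$ cannot stay small unless slaved to $\alpha_-$'' is an invariant-manifold heuristic, but making it rigorous requires a monotone quantity preventing returns, and that ingredient is missing from your sketch. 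You flag Step~3 as the technical heart, and rightly so---but the difficulty is precisely this no-return issue, not the spectral structure of $\LLL$.

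The paper closes this gap not via the spectral decomposition but via a \emph{refined localized virial identity}. With $\dd(u(t))=\big|\,\|u(t)\|_{\hdot}^2-\|W\|_{\hdot}^2\big|$ and $G_R(t)=2\im\int \bar u\,\nabla u\cdot\nabla\varphi_R$, one proves (i) the pointwise bound $|G_R(t)|\leq CR^2\,\dd(u(t))$, using that $G_R$ vanishes on $W$ together with modulation when $\dd$ is small, and (ii) the monotonicity $\pm G_R'(t)\geq c\,\dd(u(t))$ for $R$ large, uniformly in $t$ (via compactness in the subcritical case, and via the $L^2$ bound plus the radial Strauss decay estimate in the supercritical case). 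Integrating gives
\[
\int_{t}^{T}\dd(u(s))\,ds \;\leq\; C\big(\dd(u(t))+\dd(u(T))\big),
\]
which first upgrades sequence convergence to full convergence $\dd(u(t))\to 0$ (this is the non-oscillation lemma), and then---letting $T\to\infty$---yields $\int_t^\infty \dd(u(s))\,ds\leq C\,\dd(u(t))$, hence exponential decay by Gronwall together with the modulation bound $|\alpha'|\lesssim \dd$. The spectral/ODE analysis you describe in Step~3 is used in the paper only later, for the uniqueness step, once the exponential convergence is already in hand.
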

The constant $C_N$ is defined in \eqref{SobolevIn}. In the theorem, by \emph{$u$ equals $v$ up to the $(\hdot-)$sym\-me\-try of the equation}, we mean that there exist $t_0\in\RR$, $\theta_0\in\RR$, $\lambda_0>0$ such that
$$ u(t,x)=\frac{e^{i\theta_0}}{\lambda_0^{(N-2)/2}}v\Big(\frac{t_0+t}{\lambda_0^2},\frac{x}{\lambda_0}\Big) \text{ or }u(t,x)=\frac{e^{i\theta_0}}{\lambda_0^{(N-2)/2}}\overline{v}\Big(\frac{t_0-t}{\lambda_0^2},\frac{x}{\lambda_0}\Big).$$
\begin{remark}
\label{RemPersist}
Case \eqref{theo.crit} is a direct consequence of the variational characterization of $W$ given by Aubin and Talenti \cite{Au76}, \cite{Ta76}. Furthermore, using assumption \eqref{threshold}, it shows (by continuity of $u$ in $\hdot$) that the assumptions $ \ds \int |\nabla u(t_0)|^2<\int |\nabla W|^2, \; \ds \int |\nabla u(t_0)|^2>\int |\nabla W|^2$
do not depend on the choice of the initial time $t_0$. Of course, this dichotomy does not persist when $E(u_0)>E(W)$.
\end{remark}
\begin{remark}
In the supercritical case \eqref{theo.super}, our theorem shows that in dimension $N=3$ or $N=4$, an $L^2$-solution blows up for negative and positive times. We conjecture that case \eqref{theo.super} holds without the assumption ``$u_0\in L^2$'', i.e. that the only solution with critical energy such that $\int |\nabla u_0|^2>\int |\nabla W|^2$ and whose interval of definition is not finite is $W^+$ up to the symmetry of the equation.
\end{remark}
\begin{remark}
We expect that the extension of the results of \cite{KeMe06} to the non-radial case, together with the material in this paper would generalize Theorem \ref{th.classif} to the non-radial case.
\end{remark}
From \cite{Bo99BO,Bo99JA}, we know that a solution such that $\|u\|_{S(\RR)}<\infty$ scatters in $\hdot$ at $\pm\infty$. Cases \eqref{theo.sub} and \eqref{theo.crit} of Theorem \ref{th.classif} shows:
\begin{corol} 
Up to the symmetry of the equation, $W$ is the only radial solution such that $E(u_0)=E(W)$ and $\int |\nabla u_0|^2\leq \int |\nabla W|^2$ which does not scatter in $\hdot$ for neither positive nor negative times. 
\end{corol}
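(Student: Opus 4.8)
The plan is to read off the Corollary from Theorem~\ref{th.classif} and from the fact, recalled just above its statement, that a solution with $\|u\|_{S(\RR)}<\infty$ scatters in $\hdot$ at $\pm\infty$ \cite{Bo99BO,Bo99JA}. Fix a radial solution $u$ with $E(u_0)=E(W)$ and $\int|\nabla u_0|^2\le\int|\nabla W|^2$; the aim is to show that if $u$ scatters in $\hdot$ for neither positive nor negative times, then $u=W$ up to the symmetry of the equation. I would begin by checking that $W$ itself lies in this class: being stationary and non-zero, $\|W\|^{2p_c}_{S((0,T))}=T\int|W|^{2p_c}\to+\infty$ as $T\to+\infty$ (noting that $W\in L^{2p_c}(\RR^N)$ since $W$ decays like $|x|^{-(N-2)}$), and symmetrically as $T\to-\infty$, so $W$ does not scatter at either end; the same then holds for every solution obtained from $W$ by the symmetries.

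Next I would split according to the size of $\int|\nabla u_0|^2$. If $\int|\nabla u_0|^2=\int|\nabla W|^2$, part~\eqref{theo.crit} of Theorem~\ref{th.classif} gives $u=W$ up to symmetry and there is nothing more to do. If $\int|\nabla u_0|^2<\int|\nabla W|^2$, part~\eqref{theo.sub} yields $I=\RR$ together with two possibilities. In the first, $\|u\|_{S(\RR)}<\infty$, and then \cite{Bo99BO,Bo99JA} shows that $u$ scatters at $\pm\infty$, in particular for positive times, contradicting the hypothesis. In the second, $u=W^-$ up to symmetry; but by~\eqref{ex.sub} we have $T_-(W^-)=+\infty$ and $\|W^-\|_{S((-\infty,0])}<\infty$, so $W^-$ scatters in $\hdot$ at $-\infty$ (finite $S$-norm on a half-line giving scattering on that side), and hence $u$ scatters on a neighbourhood of $+\infty$ or of $-\infty$, again a contradiction. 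Therefore the case $\int|\nabla u_0|^2<\int|\nabla W|^2$ cannot occur for $u$ in our class, and only $u=W$ up to symmetry remains.

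Since Theorem~\ref{th.classif} does all the real work, there is no serious obstacle here; the one point deserving care is the action of the symmetry group in the $W^-$ alternative. The symmetries listed after Theorem~\ref{th.classif} include the conjugation--time-reversal $v(t,x)\mapsto e^{i\theta_0}\lambda_0^{-(N-2)/2}\,\overline v\big((t_0-t)/\lambda_0^2,\,x/\lambda_0\big)$, so ``$u=W^-$ up to symmetry'' may mean that $u$ is a rescaled conjugate of $W^-$ run backwards in time; one must note that this operation carries the half-line near $-\infty$ where the $S$-norm of $W^-$ is finite to a half-line near $+\infty$ (and carries the limit $W$ at $+\infty$ to the limit $\overline W=W$ at $-\infty$), so that in every case $u$ scatters on exactly one side. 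Finally, as in Remark~\ref{RemPersist}, I would point out that under $E(u_0)=E(W)$ the sign of $\int|\nabla u_0|^2-\int|\nabla W|^2$ does not depend on the choice of initial time, so the cases invoked above are exhaustive and mutually exclusive, which is what makes the reduction legitimate.
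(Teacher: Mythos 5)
Your proposal is correct and follows the same route as the paper, which simply reads off the corollary from cases~\eqref{theo.sub} and~\eqref{theo.crit} of Theorem~\ref{th.classif} together with the scattering fact recalled just before the statement. You fill in more detail than the paper does (checking that $W$ itself is in the class, via $\|W\|_{S(I)}^{2p_c}=|I|\int|W|^{2p_c}\to\infty$, and tracking the conjugation--time-reversal symmetry acting on the $W^-$ alternative), which is fine; the only nitpick is that ruling $W$ into the class also tacitly uses the converse implication ``scattering in $\hdot$ on a half-line $\Longrightarrow$ finite $S$-norm on that half-line,'' which is standard via the small-data theory of Remark~\ref{remark.exist} but worth stating explicitly.
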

The behavior exhibited here for $\hdot$-critical NLS is the analogue of the one of the $L^2$-critical NLS. For this equation, Merle has shown in \cite{Me93} that a $H^1$-solution $u(t)$ at the critical level in $L^2$ and such that $xu\in L^2$ is either a periodic solution of the form $e^{i\omega t}Q$, an explicit blow-up solution converging to $Q$ after rescaling or a solution scattering at $\pm \infty$.\par
The outline of the paper is as follows. In Section \ref{sec.compact}, we use arguments of \cite{KeMe06} to show the compactness, up to modulation, of a subcritical threshold solution of \eqref{CP} such that $\|u\|_{S(0,+\infty)}=\infty$ (case \eqref{theo.sub} of Theorem \ref{th.classif}). 
Section \ref{sec.convergence} is devoted to the proof of the fact that such a solution converges to $W$ as $t\rightarrow +\infty$.
In Section \ref{sec.super}, we show a similar result for $L^2$ super-critical solutions of \eqref{CP} (case \eqref{theo.super}). The last ingredient of the proof, which is the object of Section \ref{sec.linear}, is an analysis of the linearized equation associated to \eqref{CP} near $W$. Both theorems are proven in Section \ref{sec.proof}.\par

\section{Compactness properties for nonlinear subcritical threshold solutions}
\label{sec.compact}

In this section we prove a preliminary result related to compactness properties of threshold solutions of \eqref{CP}, and which is the starting point of the proofs of Theorems \ref{th.exist} and \ref{th.classif}. It is essentially proven in \cite{KeMe06}, Proposition 4.2. We give the proof for the sake of completeness.\par
If $v$ is a function defined on $\RR^N$, we will write:
\begin{equation}
\label{def.modulations}
v_{[\lambda_0]}(x)=\frac{1}{\lambda_0^{(N-2)/2}} v\left(\frac{x}{\lambda_0}\right), \quad v_{[\theta_0,\lambda_0]}=e^{i\theta_0} \frac{1}{\lambda_0^{(N-2)/2}}v\left(\frac{x}{\lambda_0}\right).
\end{equation}
\begin{prop}[Global existence and compactness]
\label{compactness.u}
Let $u$ be a radial solution of \eqref{CP} and $I=(T_-,T_+)$ its maximal interval of existence. Assume
\begin{equation}
\label{AssuComp}
E(u_0)=E(W), \quad \NH{u_0}<\NH{W}.
\end{equation}
Then 
$$I=\RR.$$
 Furthermore, if $\ds \|u\|_{S(0,+\infty)}=\infty$,
there exists a map $\lambda$ defined on $[0,\infty)$ such that the set
\begin{equation} 
\label{defK+}
K_+:=\big\{u_{[\lambda(t)]}(t),\; t\in [0,+\infty)\big\}
\end{equation}
is relatively compact in $\hdot$. An analogous assertion holds on $(-\infty,0]$.
\end{prop}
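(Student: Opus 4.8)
The plan is to carry out, at the threshold energy $E(u_0)=E(W)$, the concentration--compactness and rigidity scheme of \cite{KeMe06} (Proposition~4.2 there), in three stages. \emph{Stage 1 (variational trapping).} Using the sharp Sobolev inequality \eqref{SobolevIn}, its equality case \eqref{CarW}, the relation $\int|\nabla W|^2=\int|W|^{2^*}$ (which follows from $-\Delta W=W^{p_c}$ and amounts to $C_N^{2^*}\NH{W}^{2^*-2}=1$), and the fact that $g(s):=\tfrac{1}{2}s-\tfrac{1}{2^*}C_N^{2^*}s^{2^*/2}$ is strictly increasing on $[0,\NH{W}^2]$ with $g(\NH{W}^2)=E(W)$, one shows from \eqref{AssuComp} that $\NH{u(t)}<\NH{W}$ for every $t\in I$: the continuous map $t\mapsto\NH{u(t)}$ cannot attain the value $\NH{W}$, for then conservation of $E$ would make $E(u(t))\geq g(\NH{W}^2)=E(W)$ an equality, hence also an equality in \eqref{SobolevIn}, so by \eqref{CarW} $u(t)$ would lie in the symmetry orbit of $W$ and $u$ would, by uniqueness, be a stationary solution with $\NH{u_0}=\NH{W}$, contradicting \eqref{AssuComp}. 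Substituting $E(u(t))=E(W)=\tfrac{1}{N}\NH{W}^2$ into the identity $E(v)=\tfrac{1}{N}\NH{v}^2+\tfrac{1}{2^*}\big(\NH{v}^2-\|v\|_{L^{2^*}}^{2^*}\big)$ then gives the coercivity
\begin{equation*}
\NH{u(t)}^2-\|u(t)\|_{L^{2^*}}^{2^*}=\tfrac{2^*}{N}\big(\NH{W}^2-\NH{u(t)}^2\big)>0,\qquad t\in I,
\end{equation*}
and in particular a uniform bound on $\NH{u(t)}$.

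\emph{Stage 2 (compactness up to scaling).} Suppose $\|u\|_{S(0,T_+)}=\infty$ (if this norm were finite the blow-up criterion would already give $T_+=+\infty$). Fix any sequence $t_n\uparrow T_+$ and apply to the bounded sequence $\{u(t_n)\}$ the linear profile decomposition in $\hdot$ adapted to the $S$-norm (Keraani, in the form used in \cite{KeMe06}). Evolving the profiles by \eqref{CP}, the asymptotic Pythagorean identities for $\NH{\cdot}^2$ and for $E$, the sub-threshold scattering theorem of \cite{KeMe06} (any profile with energy $<E(W)$ and $\hdot$-norm $<\NH{W}$ scatters), and the long-time perturbation lemma together force at least one profile $V^1$ to have a non-scattering nonlinear evolution; hence $E(V^1)=E(W)$, and since every other profile and the remainder have nonnegative energy (indeed $E(V)\geq\tfrac{1}{N}\NH{V}^2$ once $\NH{V}<\NH{W}$, by Stage~1) they must all vanish, with the remainder tending to $0$ in $\hdot$. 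Thus $u(t_n)$ agrees with a rescaled, translated and phase-shifted copy of $V^1$ up to an $\hdot$-error $\to0$; by radiality the translations may be suppressed and the phases subconverge in $U(1)$, so $u_{[\lambda_n]}(t_n)$ converges in $\hdot$ along a subsequence. As $t_n\uparrow T_+$ was arbitrary, a standard selection of the scaling parameter produces a single $\lambda\colon[0,T_+)\to(0,\infty)$ for which $\{u_{[\lambda(t)]}(t):t\in[0,T_+)\}$ is relatively compact in $\hdot$.

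\emph{Stage 3 (no finite-time blow-up, and conclusion).} If $T_+<\infty$, the compactness of Stage~2 forces $\lambda(t)\to+\infty$ as $t\uparrow T_+$, with the $\hdot$- and $L^{2^*}$-densities of $u(t)$ collapsing to the origin (a scale staying in a compact subset of $(0,\infty)$ would yield, via local well-posedness, a lifespan extending past $T_+$); moreover no limit point of $\NH{u(t)}$ can equal $0$ or $\NH{W}$ (the limit of the rescaled solution would then have zero energy, or would lie in the symmetry orbit of $W$, both impossible), so the right-hand side of the Stage~1 coercivity stays bounded below on $[0,T_+)$. Feeding this concentration and coercivity into a localized virial identity for the truncated mass $\int\varphi_R|u(t)|^2$ then yields a contradiction on the finite interval $[0,T_+)$; this is exactly the rigidity step of \cite{KeMe06}. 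Hence $T_+=+\infty$, and symmetrically $T_-=+\infty$, so $I=\RR$; rerunning Stage~2 on $[0,+\infty)$ gives the set $K_+$ of \eqref{defK+}, and the argument on $(-\infty,0]$ is identical.

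The main obstacle is Stage~2: installing the $\hdot$-profile decomposition relative to the Strichartz space $S$, constructing the nonlinear profiles and controlling the error through the perturbation lemma, and---above all---using the \emph{exactness} of the threshold energy in the Pythagorean expansions to force a single surviving profile with a vanishing remainder. Stage~1 is elementary, and Stage~3, though delicate, is by now a standard localized-virial rigidity argument once the compactness is available. Note that only the finite-time part of the rigidity is needed here: the compact \emph{global} solutions are not excluded at this stage---they are precisely what Theorems~\ref{th.exist} and \ref{th.classif} go on to identify as $W$ and $W^{\pm}$.
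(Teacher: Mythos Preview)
Your outline follows essentially the same route as the paper (which in turn follows \cite{KeMe06}, Proposition~4.2). Stages~1 and~2 are correct in substance; the paper packages Stage~2 as a separate sequential compactness lemma (Lemma~\ref{lemmacc}) listing four alternatives (compactness / vanishing for $t\geq0$ / vanishing for $t\leq0$ / uniform scattering), then applies it to the sequence $u(t_n)$ and rules out the three non-compactness cases. One small slip in your Stage~2: Keraani's profile decomposition here carries only scales, space translations, and time translations---no phase parameters---so the phrase ``the phases subconverge in $U(1)$'' is extraneous. More importantly, the time translations $s_{1n}$ must be discussed: if $s_{1n}\to\pm\infty$ you do not get compactness but rather the vanishing cases, which are then excluded separately (e.g.\ $s_{1n}\to-\infty$ would force $\|u\|_{S(0,t_n)}\to0$, hence $u\equiv0$).

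Stage~3, however, is not quite the right argument. The finite-time blow-up exclusion in the paper (and in \cite{KeMe06}) does \emph{not} use a localized virial identity or the Stage~1 coercivity. Instead one takes the localized mass $F_R(t)=\int|u(t)|^2\psi(x/R)$, bounds $|F_R'(t)|\leq C_0$ uniformly in $R$ via Hardy's inequality, and uses the concentration $\lambda(t)\to\infty$ to show $F_R(t)\to0$ as $t\uparrow T_+$. Integrating gives $F_R(t)\leq C_0|T_+-t|$; letting $R\to\infty$ then yields $u(t)\in L^2$ with $\|u(t)\|_{L^2}^2\leq C_0|T_+-t|$, and $L^2$-conservation forces $u_0=0$, a contradiction. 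The virial-plus-coercivity argument you sketch is the tool \cite{KeMe06} uses to rule out \emph{global} compact solutions in the strictly subcritical case $E(u_0)<E(W)$; it is not the mechanism for the finite-$T_+$ case, and---as you yourself observe in your last paragraph---global compact solutions are not excluded in this proposition at all. Your sentence about coercivity being bounded below is therefore not relevant here; drop it and replace the virial language with the localized-$L^2$ argument above.
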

As a corollary we derive the existence of threshold mixed behavior solutions for \eqref{CP} in the subcritical case in $\hdot$-norm.
\begin{corol}
\label{corol.exist}
There exists a solution $w^-$ of \eqref{CP} defined for $t\in \RR$, and such that
\begin{gather*}
E(w^-)=E(W),\quad \|w^-(0)\|_{\hdot}<\|W\|_{\hdot}\\
\|w^-\|_{S(0,+\infty)}=\infty,\quad \|w^-\|_{S(-\infty,0)}<\infty.
\end{gather*}
\end{corol}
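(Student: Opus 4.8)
The plan is to obtain $w^{-}$ by a concentration--compactness argument starting from the subcritical approximations $\lambda W$ of $W$, with $\lambda\uparrow 1$. Using $\int|\nabla W|^{2}=\int|W|^{2^{*}}$ one checks that $E(\lambda W)<E(W)$ and $\|\lambda W\|_{\hdot}=\lambda\|W\|_{\hdot}<\|W\|_{\hdot}$ for $\lambda\in(0,1)$, so by \cite{KeMe06} the solution $u^{\lambda}$ of \eqref{CP} with $u^{\lambda}(0)=\lambda W$ is global and $\|u^{\lambda}\|_{S(\RR)}<\infty$; moreover $\lambda W$ being real and radial gives $u^{\lambda}(t)=\overline{u^{\lambda}(-t)}$, hence $\|u^{\lambda}\|_{S(-\infty,0)}=\|u^{\lambda}\|_{S(0,+\infty)}$. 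The first step is to show that $\|u^{\lambda}\|_{S(\RR)}\to+\infty$ as $\lambda\uparrow 1$: if not, choose $\lambda_{n}\uparrow 1$ with $\sup_{n}\|u^{\lambda_{n}}\|_{S(\RR)}<\infty$; the perturbation theory for \eqref{CP} (\cite{CaWe90,KeMe06}), applied with the $u^{\lambda_{n}}$ as reference solutions and $W$ as perturbed datum (note $\|u^{\lambda_{n}}(0)-W\|_{\hdot}=(1-\lambda_{n})\|W\|_{\hdot}\to 0$), would force $\|W\|_{S(\RR)}<\infty$, which is false since $W$ is a nonzero stationary solution belonging to $L^{2p_c}(\RR^{N})$.

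Next I would renormalize at an ``exit time''. Fix a small $\eps_{0}>0$ and let $\sigma_{\lambda}>0$ be the first time at which $u^{\lambda}(\sigma_{\lambda})$ is at distance $\eps_{0}$ (in $\hdot$) from the family $\mathcal{O}:=\{W_{[\theta,\mu]}:\theta\in\RR,\ \mu>0\}$; it exists because $u^{\lambda}$ scatters, hence eventually leaves every neighbourhood of $\mathcal{O}$. Continuous dependence on compact time intervals gives $\sigma_{\lambda}\to+\infty$ as $\lambda\uparrow 1$. Translating $u^{\lambda_{n}}$ in time by $\sigma_{\lambda_{n}}$ and applying a suitable modulation (a scaling and a phase), I obtain radial solutions $\tilde v_{n}$ of \eqref{CP} with $E(\tilde v_{n}(0))=E(\lambda_{n}W)\to E(W)$, $\|\tilde v_{n}(0)\|_{\hdot}<\|W\|_{\hdot}$, $\tilde v_{n}(0)$ at distance $\eps_{0}$ from $\mathcal{O}$ and within $O(\eps_{0})$ of $W$, and $\mathrm{dist}(\tilde v_{n}(t),\mathcal{O})\le\eps_{0}$ on $[-\sigma_{\lambda_{n}},0]$. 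Then I would run the profile decomposition/concentration--compactness argument used to prove Proposition~\ref{compactness.u} on the bounded sequence $(\tilde v_{n}(0))$. The decisive point is that no profile escapes: since $\tilde v_{n}(0)$ is within $O(\eps_{0})$ of $W$, the Pythagorean expansion of $\|\cdot\|_{\hdot}$ gives a main profile $\psi$ with $\|\psi\|_{\hdot}\ge\|W\|_{\hdot}-O(\eps_{0})$, so every escaping profile or remainder has $\hdot$-norm $O(\sqrt{\eps_{0}})$ and thus scatters with small $S(\RR)$-norm, while the Pythagorean expansion of the energy forces $E(\psi)<E(W)$ as soon as escape occurs; by \cite{KeMe06} the solution issued from $\psi$ would then scatter, and the nonlinear profile decomposition would give $\|u^{\lambda_{n}}\|_{S(\RR)}=\|\tilde v_{n}\|_{S(\RR)}\le C$, contradicting the first step. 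Hence, along a subsequence, $\tilde v_{n}(0)\to\psi$ strongly in $\hdot$ with $E(\psi)=E(W)$, $\mathrm{dist}(\psi,\mathcal{O})=\eps_{0}>0$ and $\|\psi\|_{\hdot}\le\|W\|_{\hdot}$; by the variational characterization \eqref{CarW} (together with $E(\psi)=E(W)$) an equality $\|\psi\|_{\hdot}=\|W\|_{\hdot}$ would place $\psi$ in $\mathcal{O}$, so in fact $\|\psi\|_{\hdot}<\|W\|_{\hdot}$.

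Let $v$ be the solution of \eqref{CP} with $v(0)=\psi$; by Proposition~\ref{compactness.u} it is global, and (by the variational estimates) $\|v(t)\|_{\hdot}<\|W\|_{\hdot}$ for all $t$. Passing to the limit $\tilde v_{n}\to v$ on compact time intervals, $\mathrm{dist}(v(t),\mathcal{O})\le\eps_{0}$ for every $t\le 0$; but every element of $\mathcal{O}$ has $L^{2^{*}}$-norm equal to the fixed positive number $\|W\|_{L^{2^{*}}}$, whereas a solution scattering as $t\to-\infty$ approaches a free solution, whose $L^{2^{*}}$-norm tends to $0$ — so $v$ cannot scatter as $t\to-\infty$, i.e. $\|v\|_{S(-\infty,0)}=+\infty$. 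On the other hand $\|v\|_{S(0,+\infty)}<\infty$: otherwise, by Proposition~\ref{compactness.u}, $v$ would be relatively compact up to modulation on all of $\RR$, and then the localized virial identity of \cite{KeMe06} — whose positivity here is ensured by $\|v(t)\|_{\hdot}<\|W\|_{\hdot}$ and $E(v)=E(W)$, and whose truncation errors are controlled by the compactness — would make a suitable bounded truncation of $\int|x|^{2}|v|^{2}$ uniformly strictly convex on $\RR$, which is impossible. It then suffices to set $w^{-}(t,x):=\overline{v(-t,x)}$: this is a global solution of \eqref{CP} with $E(w^{-})=E(W)$, $\|w^{-}(0)\|_{\hdot}=\|\psi\|_{\hdot}<\|W\|_{\hdot}$, $\|w^{-}\|_{S(0,+\infty)}=\|v\|_{S(-\infty,0)}=+\infty$ and $\|w^{-}\|_{S(-\infty,0)}=\|v\|_{S(0,+\infty)}<\infty$.

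I expect the extraction step of the second paragraph to be the main obstacle: ruling out the escape of profiles in the threshold regime, where the energy and the gradient norm are both exactly saturated by $W$, and, correlatively, guaranteeing that the limiting datum $\psi$ stays strictly inside $\{\|\cdot\|_{\hdot}<\|W\|_{\hdot}\}$ — that is, that the limit is a genuine mixed-type solution and not merely a symmetry image of $W$.
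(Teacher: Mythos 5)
Your opening and your choice of initial data match the paper's: both start from $\lambda_n W$ with $\lambda_n\uparrow 1$, invoke \cite{KeMe06} to get scattering, and use continuous dependence at $W$ to force $\|u^{\lambda_n}\|_{S(\RR)}\to\infty$. Where you diverge — and where the difficulty lies — is in the choice of renormalization time. The paper picks $t_n$ so that $\|u_n\|_{S(-\infty,0)}=1$, which is a \emph{Strichartz} exit criterion. Once a subsequential strong $\hdot$-limit $w^-_0$ is extracted, Lemma~\ref{lem.CP}~\eqref{CPcontinuity} (applied on $I=(-\infty,0]$ with $\tilde u=u_n$, $u_0=w^-_0$) immediately transfers this to $\|w^-\|_{S(-\infty,0)}<\infty$; the blow-up of the forward $S$-norm then follows by the same continuity statement applied in reverse. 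In your version, the exit criterion is the distance $\eps_0$ to the soliton manifold $\mathcal{O}$, which gives geometric control on the past trajectory but \emph{no} control on either one-sided Strichartz norm.

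This is where your argument has a genuine gap. To conclude, you must show $\|v\|_{S(0,+\infty)}<\infty$, and for that you assert that a solution that is compact up to modulation on all of $\RR$, has $E=E(W)$ and $\|v(t)\|_{\hdot}<\|W\|_{\hdot}$, leads to a contradiction via the localized virial identity, ``made uniformly strictly convex.'' But the virial bound from below is $G_R'(t)\geq\frac{16}{N-2}\dd(v(t))-\eps$ (see~\eqref{bound.larged}), which is uniformly positive \emph{only if} $\dd(v(t))$ is bounded away from $0$ uniformly in $t$ — and there is nothing in your construction that forces this. Compactness up to modulation is entirely compatible with $\dd(v(t))\to 0$ as $t\to\pm\infty$ (the solution $W^-$ itself does this as $t\to+\infty$), and your $v$ is by design within $\eps_0$ of $\mathcal{O}$ on all of $(-\infty,0]$, so $\dd(v(t))$ can be as small as one likes on the past. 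What you are actually trying to invoke is the content of Corollary~\ref{corol.incomp}, which is proved much later and requires the full modulation theory of Section~\ref{sec.convergence}: one must first prove $\dd(v(t))\to 0$ at $\pm\infty$ (Lemma~\ref{lem.delta.mean}, Lemma~\ref{lem.CV0.seq}), and then use the \emph{refined} virial bound $|G_R(t)|\leq CR^2\dd(v(t))$ (Claim~\ref{claim.virial.seq}), not the crude bound $|G_R(t)|\leq C R^2$, to close the integral estimate. Without that refinement the argument does not close. So either you must prove Corollary~\ref{corol.incomp} first (circular at this stage of the paper), or you should follow the paper and pin the exit time to a Strichartz quantity, which is what lets the desired scattering on one side pass to the limit with no further work.

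One additional, smaller point: after extracting the strong limit you assert $\|\psi\|_{\hdot}<\|W\|_{\hdot}$ from the variational characterization together with ${\rm dist}(\psi,\mathcal{O})=\eps_0>0$ — that is fine. The paper reaches the same strict inequality implicitly from $\|w^-\|_{S(-\infty,0)}<\infty$, which rules out $w^-$ being a modulation of $W$.
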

The crucial point of the proofs of Proposition \ref{compactness.u} and Corollary \ref{corol.exist} is a compactness lemma for threshold solutions of \eqref{CP} which is the object of Subsection \ref{sub.compact.lemma}. We give a sketch of the proof, which is essentially contained in \cite{KeMe06}, and refer to \cite{KeMe06} for the details. In Subsections \ref{sub.compactness.u} and \ref{sub.exist}, we prove respectively Proposition \ref{compactness.u} and Corollary \ref{corol.exist}. We start with a quick review of the Cauchy Problem for \eqref{CP}.

\subsection{Preliminaries on the Cauchy  Problem}
In this subsection we quickly review existence, uniqueness and related results for the Cauchy problem \eqref{CP}. See \cite[Section 2]{KeMe06} for the details.
In the sequel, $I\ni 0$ is an interval.
We first recall the two following relevant function spaces for equation \eqref{CP}:
\begin{equation}
\label{defSZ}
S(I):=L^{\frac{2(N+2)}{N-2}}\Big(I\times \RR^N\Big), \quad Z(I):=L^{\frac{2(N+2)}{N-2}}\Big(I;L^{\frac{2N(N+2)}{N^2+4}}\Big).
\end{equation}
Note that $Z(I)$ is a Strichartz space for the Schr\"odinger equation, so that
\begin{equation}
\label{StrichartzI}
\|\nabla e^{it\Delta}u_0\|_{Z(\RR)}\leq C\|u_0\|_{\hdot}
\end{equation}
and that by Sobolev inequality,
\begin{equation}
\label{SobI}
  \|f\|_{S(I)}\leq C\|\nabla f\|_{Z(I)}.
\end{equation}
Following \cite{CaWe90},
we say that $u\in C^0(I,\hdot(\RR^N))$ is a solution of \eqref{CP} if for any $J\Subset I$, $u\in S(J)$, $|\nabla u|\in Z(J)$ and
$$ \forall t\in I,\quad u(t)=e^{it\Delta}u_0+i\int_0^t e^{i(t-s)\Delta}|u(s)|^{p_c-1}u(s)ds.$$
The following holds for such solutions.
\begin{lemma}
\label{lem.CP}
\begin{enumerate}
\item[]
\item\emph{Uniqueness.} Let $u$ and $\tilde{u}$ be two solutions of \eqref{CP} on an interval $I\ni 0$ with the same initial condition $u_0$. Then $u=\tilde{u}$.
\item\emph{Existence.} For $u_0\in \hdot$, there exists an unique solution $u$ of \eqref{CP} defined on a maximal interval of definition $(-T_-(u_0),T_+(u_0))$.
\item\emph{Finite blow-up criterion.} Assume that $T_+=T_+(u_0)<\infty$. Then
$\ds \|u\|_{S(0,T_+)}=+\infty.$
An analogous result holds for $T_-(u_0)$.
\item\emph{Scattering.} If $T_+(u_0)=\infty$ and $\|u\|_{S([0,+\infty))}<\infty$, there exists $u_+\in \hdot$ such that
$$ \lim_{t\rightarrow+\infty} \|u(t)-e^{it\Delta} u_+\|_{\hdot}=0.$$
\item\label{CPcontinuity}\emph{Continuity.}
Let $\tilde{u}$ be a solution of \eqref{CP} on $I\ni 0$. Assume that for some constant $A>0$, $$\sup_{t\in I} \|\tilde{u}(t)\|_{\hdot}+\|\tilde{u}\|_{S(I)}\leq A.$$
Then there exist $\eps_0=\eps_0(A)>0$ and $C_0=C_0(A)$ such that for any $u_0\in \hdot$ with 
$ \|\tilde{u}_0-u_0\|_{\hdot}=\eps<\eps_0,$
the solution $u$ of \eqref{CP} with initial condition $u_0$ is defined on $I$ and satisfies
$\|u\|_{S(I)}\leq C_0$ and $\sup_{t\in I}\|u(t)-\tilde{u}(t)\|_{\hdot}\leq C_0 \eps.$
\end{enumerate}
\end{lemma}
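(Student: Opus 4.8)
The plan is to obtain all five items from the standard contraction and perturbation arguments attached to the energy-critical equation, exactly as carried out in \cite{CaWe90} and \cite[Section~2]{KeMe06}; the proof is essentially a matter of recording how the Strichartz calculus yields each statement, with no new idea involved. The ingredients are the homogeneous Strichartz estimate \eqref{StrichartzI} together with its inhomogeneous (retarded) counterpart in $Z(I)$ and in $C^0(I,\hdot)$, the Sobolev embedding \eqref{SobI}, and the nonlinear estimates controlling the appropriate dual-Strichartz norms of $\nabla(|u|^{p_c-1}u)$ and of $\nabla(|u|^{p_c-1}u-|v|^{p_c-1}v)$ by products of $\|\cdot\|_{S(I)}$- and $\|\nabla\,\cdot\,\|_{Z(I)}$-norms; these follow from the pointwise bound $\big|\nabla(|u|^{p_c-1}u)\big|\lesssim|u|^{p_c-1}|\nabla u|$, its difference analogue and H\"older's inequality, the exponents in \eqref{defSZ} being chosen precisely so that everything closes (here one uses $p_c-1=\tfrac{4}{N-2}\ge1$, true for $N\in\{3,4,5\}$).

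For \emph{existence}, one uses that $\|\nabla e^{it\Delta}u_0\|_{Z(\RR)}<\infty$ by \eqref{StrichartzI}, so that for an interval $I\ni 0$ of small enough length $\|\nabla e^{it\Delta}u_0\|_{Z(I)}$ is as small as desired; the Duhamel map $u\mapsto e^{it\Delta}u_0+i\int_0^t e^{i(t-s)\Delta}|u|^{p_c-1}u\,ds$ is then a contraction on a small ball of $\{u:\ \nabla u\in Z(I),\ u\in S(I)\}$ (and for $\|u_0\|_{\hdot}$ small one may take $I=\RR$). Gluing all intervals of existence gives the maximal one $(-T_-(u_0),T_+(u_0))$. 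For \emph{uniqueness} within the whole solution class, if $u$ and $\tilde u$ solve \eqref{CP} with the same data, then on a subinterval $J\ni 0$ short enough that $\|u\|_{S(J)}+\|\tilde u\|_{S(J)}$ is small the difference estimate yields $\|u-\tilde u\|_{S(J)}\le\tfrac12\|u-\tilde u\|_{S(J)}$, hence $u\equiv\tilde u$ on $J$; the set of times where $u=\tilde u$ is open and closed in $I$, so $u\equiv\tilde u$ on $I$.

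The \emph{finite blow-up criterion} and \emph{scattering} are both instances of ``a finite $S$-norm forces better control''. Assume $\|u\|_{S(0,T_+)}<\infty$ (resp.\ $T_+=+\infty$ and $\|u\|_{S([0,+\infty))}<\infty$). Partition the time interval into finitely many pieces on each of which $\|u\|_S$ is below a fixed small constant, and apply \eqref{StrichartzI}, its retarded form and the nonlinear estimate on these pieces successively to get $\|\nabla u\|_{Z(0,T_+)}<\infty$ (resp.\ $\|\nabla u\|_{Z([0,+\infty))}<\infty$). Then the Duhamel formula shows $t\mapsto u(t)$ is Cauchy in $\hdot$ as $t\to T_+$, so the solution extends beyond $T_+$, contradicting maximality; in the global case, $u_+:=u_0+i\int_0^{+\infty}e^{-is\Delta}|u|^{p_c-1}u\,ds$ converges in $\hdot$ and $\big\|u(t)-e^{it\Delta}u_+\big\|_{\hdot}=\big\|\int_t^{+\infty}e^{i(t-s)\Delta}|u|^{p_c-1}u\,ds\big\|_{\hdot}\to 0$ by the tail estimate.

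Finally, the \emph{continuity} (long-time perturbation) statement, which I expect to be the only genuinely delicate point, is the usual stability argument. Given $\sup_{t\in I}\|\tilde u(t)\|_{\hdot}+\|\tilde u\|_{S(I)}\le A$, partition $I$ into $K=K(A)$ consecutive subintervals $I_1,\dots,I_K$ on each of which $\|\tilde u\|_{S(I_j)}\le\delta_0$, with $\delta_0$ a small absolute constant. On $I_1$ the error $e=u-\tilde u$ solves a Duhamel equation with source $e^{it\Delta}(u_0-\tilde u_0)$ and a nonlinear difference term; the Strichartz and difference estimates bound $\|e\|_{S(I_1)}+\|\nabla e\|_{Z(I_1)}$ by $C\eps$, and this quantity, at the right endpoint of $I_1$, becomes the new initial error on $I_2$. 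Iterating $K$ times yields bounds $C^{K(A)}\eps$ on $\|e\|_{S(I)}$ and on $\sup_{t\in I}\|e(t)\|_{\hdot}$; choosing $\eps_0=\eps_0(A)$ small enough that every smallness hypothesis used along the iteration holds gives $\|u\|_{S(I)}\le C_0(A)$ and $\sup_{t\in I}\|u(t)-\tilde u(t)\|_{\hdot}\le C_0(A)\eps$. The main obstacles are the uniform verification of the nonlinear difference estimates and the careful propagation of the Strichartz constants through the $K$-fold iteration; both are standard and detailed in \cite[Section~2]{KeMe06}.
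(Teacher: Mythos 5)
The paper does not prove Lemma~\ref{lem.CP}; it simply records it and cites \cite{CaWe90}, \cite{Bo99JA}, \cite{TaVi05} and \cite{KeMe06}. Your sketch is a correct reconstruction of the standard Strichartz/fixed-point and long-time perturbation arguments contained in those references (including the remark that $p_c\ge 2$ for $N\le 5$ is what makes the difference estimates straightforward), so it is the same approach the paper implicitly invokes and there is nothing internal to the paper to compare it against.
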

(See \cite{CaWe90}, \cite{Bo99JA}, \cite{TaVi05}, \cite{KeMe06}.)
\begin{remark}
\label{remark.exist}
Precisely, the existence result states that there is an $\eps_0>0$ such that if
\begin{equation}
\label{critere.exist}
\left\|e^{it\Delta} u_0\right\|_{S(I)}=\eps<\eps_0,
\end{equation}
then \eqref{CP} has a solution $u$ on $I$ such that $\|u\|_{S(I)}\leq 2\eps$. In particular, by \eqref{StrichartzI} and \eqref{SobI}, for small initial condition in $\hdot$, $u$ is globally defined and scatters.
\end{remark}

\subsection{Compactness or scattering for sequences of threshold $\hdot$-subcritical solutions}
\label{sub.compact.lemma}
The following lemma (closely related to Lemma 4.9 of \cite{KeMe06}) is a consequence, through the profile decomposition of Keraani \cite{Ke01} (which characterizes the defect of compactness of Strichartz estimates for solutions of linear Schr\"odinger equation), of the scattering of radial subcritical solutions of \eqref{CP} shown in \cite{KeMe06}.
\begin{lemma}
\label{lemmacc}
Let $(u_n^0)_{n\in \NN}$ be a sequence of radial functions in $\hdot$ such that
\begin{equation}
\label{hyplemcc} \forall n,\quad E(u_{n}^0)\leq E(W),\quad \NH{u_{n}^0}\leq \NH{W}.
\end{equation}
Let $u_n$ be the solution of \eqref{CP} with initial condition $u_n^0$.
Then, up to the extraction of a subsequence of $(u_n)_n$, one at least of the following holds:
\begin{enumerate}
\item \textbf{Compactness.} \label{compactness} There exists a sequence $(\lambda_n)_n$ such that the sequence $\big((u_n^0)_{[\lambda_n]}\big)_{n}$ converges in $\hdot$;
\item \textbf{Vanishing for $t\geq 0$.} \label{rightSC} For every $n$, $u_n$ is defined on $[0,+\infty)$ and $\ds \lim_{n\rightarrow +\infty} \|u_n\|_{S(0,+\infty)}=0;$
\item \textbf{Vanishing for $t\leq 0$.}\label{leftSC} For every $n$, $u_n$ is defined on $(-\infty,0]$ and $\ds \lim_{n\rightarrow +\infty} \|u_n\|_{S(-\infty,0)}=0;$
\item \textbf{Uniform scattering.} \label{allSC} For every $n$, $u_n$ is defined on $\RR$. Furthermore, there exists a constant $C$ independent of $n$ such that $$\|u_n\|_{S(\RR)}\leq C.$$
\end{enumerate}
\end{lemma}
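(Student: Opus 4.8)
The plan is to run the standard concentration--compactness argument, essentially that of \cite{KeMe06}. Since $(u_n^0)$ is bounded in $\hdot$ by \eqref{hyplemcc}, Keraani's linear profile decomposition \cite{Ke01}, in the radial form of \cite{KeMe06} (so that the profiles are radial and carry no space translation), gives, after extracting a subsequence,
$$u_n^0=\sum_{j=1}^{J}\big(e^{is_n^j\Delta}V^j\big)_{[\theta_n^j,\lambda_n^j]}+w_n^J,$$
where the $V^j\in\hdot$ are radial, the parameters are pseudo-orthogonal (in particular $\lambda_n^j/\lambda_n^k+\lambda_n^k/\lambda_n^j+|s_n^j-s_n^k|\to\infty$ for $j\ne k$, and after a further extraction each $s_n^j$ converges in $[-\infty,+\infty]$), the remainder satisfies $\limsup_n\|e^{it\Delta}w_n^J\|_{S(\RR)}\to0$ as $J\to\infty$, and one has the asymptotic orthogonality of the $\hdot$-norm and --- through a Brezis--Lieb argument for the $L^{2^*}$-norm --- of the energy:
$$\NH{u_n^0}^2=\sum_{j\le J}\NH{V^j}^2+\NH{w_n^J}^2+o_n(1),\qquad E(u_n^0)=\sum_{j\le J}E_n^j+E(w_n^J)+o_n(1),$$
with $E_n^j=E\big((e^{is_n^j\Delta}V^j)_{[\theta_n^j,\lambda_n^j]}\big)$, so that $E_n^j\to E(e^{is^j\Delta}V^j)$ when $s_n^j\to s^j\in\RR$ and $E_n^j\to\tfrac12\NH{V^j}^2$ when $s_n^j\to\pm\infty$ (dispersive decay of $e^{it\Delta}$ in $L^{2^*}$).

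Next I would use \eqref{hyplemcc} to rigidify the decomposition. From $\NH{u_n^0}\le\NH{W}$ we get $\NH{V^j}\le\NH{W}$ for all $j$; combined with \eqref{SobolevIn} and $\int|\nabla W|^2=C_N^{-N}$ --- which give $E(v)\ge\tfrac12\NH{v}^2-\tfrac1{2^*}C_N^{2^*}\NH{v}^{2^*}\ge0$ whenever $\NH{v}\le\NH{W}$, this lower bound being $>0$ for $0<\NH{v}\le\NH{W}$ --- this yields $\lim_nE_n^j\ge0$ for every $j$ and $\liminf_nE(w_n^J)\ge0$, so $E(u_n^0)\le E(W)$ forces $\sum_j\lim_nE_n^j\le E(W)$. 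Call a profile $V^j$ \emph{bad} if its nonlinear profile $U^j$ --- the solution of \eqref{CP} with data $e^{is^j\Delta}V^j$ if $s_n^j\to s^j\in\RR$, and the wave-operator solution with $\|U^j(t)-e^{it\Delta}V^j\|_{\hdot}\to0$ as $t\to\pm\infty$ if $s_n^j\to\pm\infty$ --- fails to be global with finite $S(\RR)$-norm. I claim there is at most one bad profile, and that if there is one, all the other profiles vanish and $\NH{w_n^J}\to0$. Indeed, $\lim_nE_n^j<E(W)$ forces $\NH{V^j}<\NH{W}$ (immediate for an escaping profile since then $\lim_nE_n^j=\tfrac12\NH{V^j}^2$ while $E(W)=\tfrac1N\NH{W}^2<\tfrac12\NH{W}^2$; for a regular profile because $\NH{V^j}=\NH{W}$ would give $\lim_nE_n^j=E(e^{is^j\Delta}V^j)\ge E(W)$ by the inequality above); then $E(U^j)<E(W)$ with $\NH{U^j}<\NH{W}$ at the relevant time, so $U^j$ scatters by the subcritical theorem \eqref{scattering} (together with the construction of wave operators), i.e.\ is good. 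Hence a bad profile satisfies $\lim_nE_n^j=E(W)$, which saturates the budget: every other $\lim_nE_n^k$ vanishes, so the corresponding $V^k=0$, and $\liminf_nE(w_n^J)=0$, which by the same inequality forces $\NH{w_n^J}\to0$.

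The four alternatives then follow. If there is no nonzero profile, $\|e^{it\Delta}u_n^0\|_{S(\RR)}\to0$ and the small-data theory (Remark \ref{remark.exist}) gives $\|u_n\|_{S(\RR)}\to0$, so \eqref{allSC} holds. If there is no bad profile but at least one nonzero profile, the standard nonlinear-profile reconstruction --- superposing the (rescaled) solutions $U^j$ and $e^{it\Delta}w_n^J$, handling cross terms by pseudo-orthogonality and the resulting error by the long-time perturbation statement of Lemma \ref{lem.CP}\eqref{CPcontinuity} --- shows that $u_n$ is global for $n$ large with $\|u_n\|_{S(\RR)}\le C$, so again \eqref{allSC} holds. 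If there is a (unique, and then only) bad profile $V^1$ with $s_n^1\to s^1\in\RR$, then $u_n^0=\big(e^{is^1\Delta}V^1\big)_{[\theta_n^1,\lambda_n^1]}+o_{\hdot}(1)$; extracting so that $\theta_n^1\to\theta^1$ and setting $\lambda_n:=1/\lambda_n^1$ gives $(u_n^0)_{[\lambda_n]}\to e^{i\theta^1}e^{is^1\Delta}V^1$ in $\hdot$, so \eqref{compactness} holds. Finally, if the bad profile $V^1$ has $s_n^1\to+\infty$, then $U^1$ scatters forward to $e^{it\Delta}V^1$, hence is defined on $[s_n^1,+\infty)$ for $n$ large with $\|U^1\|_{S([s_n^1,+\infty))}\to0$; since $u_n^0=\big(U^1(s_n^1)\big)_{[\theta_n^1,\lambda_n^1]}+o_{\hdot}(1)$, the perturbation statement applied on $[0,+\infty)$ gives $u_n$ defined there with $\|u_n\|_{S(0,+\infty)}\to0$, so \eqref{rightSC} holds (and \eqref{leftSC} if instead $s_n^1\to-\infty$).

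The technical core --- and the step I expect to be the main obstacle --- is the reconstruction used in the second case: upgrading ``every nonlinear profile is global with finite $S(\RR)$-norm'' to ``the $u_n$ are global with uniformly bounded $S(\RR)$-norm'', which needs a long-time perturbation argument (only finitely many profiles are not small-data) and careful bookkeeping of the asymptotically orthogonal parameters, in particular of the time shifts $s_n^j$ --- whose behavior is also what distinguishes alternatives \eqref{rightSC} and \eqref{leftSC} from \eqref{compactness}. All of this is carried out in the proof of Lemma 4.9 of \cite{KeMe06}, which is why only a sketch is needed here.
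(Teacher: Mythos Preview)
Your proof is correct and follows essentially the same concentration--compactness route as the paper: Keraani's profile decomposition, nonnegativity of the energies via the variational inequality (the paper packages this as Claim~\ref{lem.var}), and the dichotomy between the case where one profile carries the full energy $E(W)$ (yielding \eqref{compactness}, \eqref{rightSC} or \eqref{leftSC} according to the limit of $s_n^1$) and the case where all profiles are strictly subcritical (yielding \eqref{allSC} by the long-time perturbation argument of \cite{KeMe06}). One minor remark: the phase parameters $\theta_n^j$ do not appear in Keraani's radial decomposition and should be dropped---the paper writes simply $u_n^0=\sum_j V_j(s_{jn})_{[\lambda_{jn}]}+w_n^J$---and in the escaping case $s_n^1\to+\infty$ the paper argues even more directly via $\|e^{it\Delta}u_n^0\|_{S(0,+\infty)}\to 0$ and small-data theory rather than through the nonlinear profile.
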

\begin{proof}[Sketch of the proof of Lemma \ref{lemmacc}]
We will need the following elementary claim (see \cite[Lemma 3.4]{KeMe06}).
\begin{claim}
\label{lem.var}
Let $f\in \hdot$ such that $\NH{f}\leq \NH{W}$. Then
$$ \frac{\NH{f}^2}{\NH{W}^2}\leq \frac{E(f)}{E(W)}.$$
In particular, $E(f)$ is positive.
\end{claim}
\begin{remark}
\label{rem.compa}
Clearly, $\|u(t)\|^2_{\hdot}\geq 2E(u(t))$, so that Claim \ref{lem.var} implies that for solutions of \eqref{CP} satisfying \eqref{AssuComp},
$$ \exists C>0,\;\forall t,\quad C^{-1}\|u(t)\|_{\hdot}^2\leq E(u(t)) \leq C\|u(t)\|_{\hdot}^2.$$
\end{remark}
\begin{proof}
Let $\Phi(y)=\frac{1}{2} y - \frac{C_N^{2^*}}{2^*}y^{2^*/2}$.  Then by Sobolev embedding
$$\Phi\big(\|f\|^2_{\hdot}\big) \leq \frac{1}{2}\NH{f}^2-\frac{1}{2^*} \|f\|^{2^*}_{L^{2^*}}=E(f).$$
Note that $\Phi$ is concave on $\RR_+$, $\Phi(0)=0$ and $\Phi\big(\NH{W}^2\big)=E(W)$. Thus
\begin{align*}
\forall s\in(0,1),\quad \Phi\big(s\NH{W}^2\big) \geq s \Phi(\NH{W}^2)=sE(W).
\end{align*}
Taking $s=\frac{\NH{f}^2}{\NH{W}^2}$ yields the lemma.
\end{proof}
By the lemma of concentration compactness of Keraani (see \cite{Ke01}), there exists a sequence $(V_{j})_{j\in \NN}$ of solutions of Schr\"odinger \emph{linear} equation with initial condition in $\hdot$, and sequences $(\lambda_{jn},t_{jn})_{n\in \NN^*}$, $\lambda_{jn}>0$, $t_{jn}\in\RR$, which are pairwise orthogonal in the sense that
$$ j\neq k \Rightarrow \lim_{n\rightarrow +\infty} \frac{\lambda_{kn}}{\lambda_{jn}}+\frac{\lambda_{jn}}{\lambda_{kn}}+\frac{|t_{jn}-t_{kn}|}{\lambda^2_{jn}}=+\infty$$ 
such that for all $J$
\begin{gather}
\label{decomposition}
u_n^0=\sum_{j=1}^J V_j(s_{jn})_{[\lambda_{jn}]}+w_n^J, \text{ with } s_{jn}=\frac{-t_{jn}}{\lambda_{jn}^2},\\
\label{w.small}
\lim_{J\rightarrow +\infty}\limsup_{n\rightarrow +\infty} \| e^{it\Delta}w_n^J\|_{S(\RR)}=0,\\
\label{decompo.E0}
\NH{u_n^0}^2=\sum_{j=1}^J \NH{V_{j}}^2+\NH{w_n^J}^2+o(1) \text{ as } n\rightarrow +\infty,\\
\label{decompo.E}
E(u_n^0)=\sum_{j=1}^J E(V_{j}(s_{jn}))+E(w_n^J)+o(1) \text{ as } n\rightarrow +\infty.
\end{gather}
If all the $V_j$'s are identically $0$, then by \eqref{w.small}, $\|e^{it\Delta} u_n^0\|_{S(\RR)}$ tends to $0$ as $n$ tends to infinity,
and the sequence $(u_n)_n$ satisfies simultaneously \eqref{rightSC}, \eqref{leftSC} and \eqref{allSC}. Thus we may assume without loss of generality that $V_1\neq 0$. 
Furthermore, by assumption \eqref{hyplemcc} and by \eqref{decompo.E0}, $\|V_j\|_{\hdot}\leq \|W\|_{\hdot}$ and for large $n$, $\|w_n^J\|_{\hdot}< \|W\|_{\hdot}$, which implies by Claim \ref{lem.var} that the energies $E(V_j(s_{jn}))$ and $E(w_n^J)$ are nonnegative, and thus that $E(V_j(s_{jn}))\leq E(W)$. Extracting once again a subsequence if necessary, we distinguish two cases.\par
\subsubsection*{First case:}
$$
\lim_{n\rightarrow+\infty} E(V_1(s_{1n}))=E(W).
$$
By assumption \eqref{hyplemcc} and by \eqref{decompo.E} (all the energies being nonnegative for large $n$), $E(V_j(s_{jn}))$ ($j\geq 2$), and $E(w_n^J)$ tend to $0$ as $n$ tends to infinity. Thus by
Claim \ref{lem.var} , for $j\geq 2$, $V_j=0$, and $w_n^J=w_n^1$ tends to $0$ in $\hdot$. As a consequence
$$ u_n^0=V_1(s_{1n})_{[\lambda_{1n}]}+o(1) \text{ in } \hdot, \;n\rightarrow +\infty.$$
Up to the extraction of a subsequence, $s_{1n}$ converges to some $s\in [-\infty,+\infty]$.
If $s\in \RR$, It is easy to see that we are in case \eqref{compactness} (compactness up to modulation) of Lemma \ref{lemmacc}.
If $s=+\infty$, then $ \lim_{n\rightarrow+\infty} \|e^{it\Delta}u_n^0\|_{S(0,+\infty)}=0$,
so that by existence theory for \eqref{CP} (see Remark \ref{remark.exist}) case \eqref{rightSC} holds. Similarly, if $s=-\infty$ case \eqref{leftSC} holds.
\subsubsection*{Second case:}
\begin{equation}
\label{case2}
\exists \eps_1,\; 0<\eps_1<E(W) \text{ and } \forall n, \;E\left(V_{1}(s_{1n})\right)\leq E(W)-\eps_1.
\end{equation}
Here we are exactly in the situation of the first case of \cite[Lemma 4.9]{KeMe06}. We refer to the proof of this lemma for the details. Recall that for large $n$, all the energies are nonnegative in \eqref{decompo.E}.
Thus, in view of \eqref{decompo.E} (and of Claim \ref{lem.var} for the second inequality)
\begin{equation}
\label{EVj.petit}
\limsup_{n\rightarrow+\infty} E(V_j(s_{jn}))\leq \eps_1<E(W), \quad \NH{V_j(0)}<\NH{W}.
\end{equation}
Furthermore by assumption \eqref{hyplemcc} and by \eqref{decompo.E0}
\begin{equation}
\label{sum.square}
\sum_{j\geq 1} \|V_j(s_{jn})\|_{\hdot}^2\leq \|W\|_{\hdot}^2.
\end{equation}
Thus, according to the results of \cite{KeMe06} and the Cauchy problem theory for \eqref{CP}, $u_n^0$ is, up to the small term $w_n^J$, a sum \eqref{decomposition} of terms $U_{jn}^0=V_j(s_{jn})_{[\lambda_{jn}]}$ that are all initial conditions of a solution $U_{jn}$ of \eqref{CP} satisfying an uniform bound $\|U_{jn}\|_{S(\RR)}\leq c_j$ (with $\sum c_j^2$ finite by \eqref{sum.square}). Using the pairwise orthogonality of the sequences $(\lambda_{jn},t_{jn})_{n\in\NN^*}$, together with a long-time perturbation result for \eqref{CP}, it is possible to show that for some constant $C$ independant of $n$,
$$\|u_n\|_{S(\RR)}\leq C,$$
that is that case \eqref{allSC} of the Lemma holds. Up to the technical proof of this fact, which we omit, the proof of Lemma \ref{lemmacc} is complete.
\end{proof}

\subsection{Compactness up to modulation and global existence of threshold solutions}
\label{sub.compactness.u}
We now prove Proposition \ref{compactness.u}. 

\medskip

\noindent\emph{Step 1: compactness.}
We start by showing the compactness up to modulation of the threshold solution $u$. In Step 2 we will show  that $u$ is defined on $\RR$.
\begin{lemma}
\label{lem.compact}
Let $u$ be a solution of \eqref{CP} of maximal interval of definition $[0,T_+)$ such that $E(u_0)=E(W)$, $\|u_0\|_{\hdot}<\|W\|_{\hdot}$ and
\begin{equation*}
\|u\|_{S(0,T_+)}=+\infty.
\end{equation*}
Then there exists a function $\lambda$ on $[0,T_+)$ such that the set 
\begin{equation}
\label{defK+lem}
K_+:=\big\{u_{[\lambda(t)]}(t),\; t\in [0,T_+)\big\}
\end{equation}
is relatively compact in $\hdot$.
\end{lemma}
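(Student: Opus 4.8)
\textbf{Proof plan for Lemma \ref{lem.compact}.}

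The plan is to argue by contradiction in the standard concentration-compactness style: if $K_+$ were not precompact up to modulation for any choice of $\lambda$, we will extract from a suitable time sequence $t_n \uparrow T_+$ a sequence of initial data $u(t_n)$ to which Lemma \ref{lemmacc} applies, and show that each of its four alternatives leads to a contradiction. First I would check the hypotheses of Lemma \ref{lemmacc} for $u_n^0 := u(t_n)$: by Remark \ref{RemPersist} (persistence of the strict inequality $\NH{u(t)} < \NH{W}$) together with conservation of energy, we have $E(u_n^0) = E(W)$ and $\NH{u_n^0} \leq \NH{W}$ for all $n$, so \eqref{hyplemcc} holds. Apply Lemma \ref{lemmacc} and pass to a subsequence realizing one of the four cases.

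Next I would rule out the three ``vanishing/scattering'' alternatives. If case \eqref{allSC} (uniform scattering) held, then in particular $\|u\|_{S(t_n,+\infty)} = \|u_n\|_{S(0,+\infty)} \leq C < \infty$ for the solution issued from $u(t_n)$, which by uniqueness is the tail of $u$; but $\|u\|_{S(0,T_+)} = +\infty$ forces $\|u\|_{S(t_n,T_+)} \to \infty$, since the $S$-norm on $(0,t_n)$ is finite for each $n$ (as $t_n < T_+$ and $u\in S(J)$ for every $J\Subset I$). This is a contradiction; the same argument kills case \eqref{rightSC} (where $\|u_n\|_{S(0,+\infty)}\to 0$, even stronger). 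For case \eqref{leftSC}, $\|u_n\|_{S(-\infty,0)} = \|u\|_{S(-\infty,t_n)} \to 0$; but by the finite-blow-up and scattering parts of Lemma \ref{lem.CP}, combined with the small-data theory (Remark \ref{remark.exist}), a solution with $\|u\|_{S}$ small on a left half-line extends and is controlled, contradicting again $\|u\|_{S(0,T_+)} = \infty$ — more directly, $\|u\|_{S(-\infty,t_n)}$ is nondecreasing in $n$ and equals $\|u\|_{S(-\infty,0)} + \|u\|_{S(0,t_n)} \geq \|u\|_{S(0,t_n)}$, which must blow up. So only case \eqref{compactness} survives: there is a sequence $(\lambda_n)$ with $\big(u(t_n)_{[\lambda_n]}\big)_n$ convergent in $\hdot$.

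Then I would upgrade this sequential statement to the full precompactness of a set $K_+$. Define $\lambda(t)$ so that $u_{[\lambda(t)]}(t)$ realizes, say, an optimal modulation bringing $u(t)$ closest to a fixed precompact ``core'' (or simply use a measurable/continuous selection of minimizers in the modulation parameter). The point is: for \emph{any} sequence $t_n \to T_+$ (or any sequence in $[0,T_+)$ at all, using that $u$ is continuous in $\hdot$ on the compact part $[0,T_+-\delta]$, which is automatically precompact), the above argument applied to that sequence yields a convergent subsequence of $u(t_n)_{[\lambda_n]}$; one checks the chosen $\lambda(t_n)$ does essentially as well, so every sequence in $K_+$ has a convergent subsequence, i.e. $K_+$ is relatively compact. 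This is exactly the scheme of \cite[Proposition 4.2]{KeMe06}.

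The main obstacle is the bookkeeping in the last step: extracting a \emph{single} modulation function $\lambda$ on all of $[0,T_+)$ (rather than along one sequence) and verifying that the compactness obtained sequentially is uniform. One must be careful that the scaling parameters $\lambda_n$ furnished by Lemma \ref{lemmacc} for different sequences are mutually compatible — concretely, that $\lambda(t)$ cannot oscillate so wildly that $\{u_{[\lambda(t)]}(t)\}$ escapes compactness even though every sequence has a good subsequence. The resolution is the usual one: fix the limit object $V$ from case \eqref{compactness} along one sequence, observe by a continuity/connectedness argument that the orbit stays in a fixed compact modulation-neighborhood of the set $\{V_{[\mu]} : \mu > 0\}$ intersected appropriately, and define $\lambda(t)$ by minimizing $\NH{u_{[\mu]}(t) - V}$ over $\mu$; the infimum is attained and depends on $t$ in a controlled way because $u(\cdot)$ is continuous into $\hdot$. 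All the hard analytic input — the scattering of subcritical radial solutions below the threshold and the profile decomposition — has already been absorbed into Lemma \ref{lemmacc}, so what remains is soft.
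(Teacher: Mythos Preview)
Your approach is essentially the paper's: apply Lemma~\ref{lemmacc} to $u_n^0 = u(t_n)$ along any sequence $t_n \to T_+$, rule out alternatives \eqref{rightSC}, \eqref{leftSC}, \eqref{allSC}, and conclude sequential compactness up to modulation. Your handling of the three scattering cases is correct (for \eqref{leftSC} the paper argues more simply that $\|u\|_{S(0,t_n)}=\|u_n\|_{S(-t_n,0)}\to 0$ forces $u\equiv 0$, contradicting $E(u_0)=E(W)>0$, but your version also works).

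The one genuine gap is in your final step. Your suggested definition of $\lambda(t)$ --- minimizing $\NH{u_{[\mu]}(t)-V}$ over $\mu$ for a \emph{fixed} limit $V$ extracted along one particular sequence --- is not obviously well-posed, and more seriously there is no reason different time sequences should produce limits that are rescalings of the same $V$; the ``continuity/connectedness'' argument you sketch does not go through as stated. The paper avoids this entirely by defining $\lambda(t)$ \emph{intrinsically}, independent of any limit object. Using the two-sided bound $2E(W)\le\NH{u(t)}^2\le\NH{W}^2$ (from $E(u(t))=E(W)$ and $\NH{u(t)}<\NH{W}$), one sets
\[
\lambda(t):=\sup\Big\{\lambda>0:\int_{|x|\le 1/\lambda}|\nabla u(t,x)|^2\,dx=E(W)\Big\},
\]
which is finite and positive for every $t$. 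Then for any sequence $t_n$ with $u_{[\lambda_n]}(t_n)\to v_0$ in $\hdot$, the lower bound $\NH{v_0}^2\ge 2E(W)>0$ forces $C^{-1}\lambda(t_n)\le\lambda_n\le C\lambda(t_n)$, so after a further extraction $u_{[\lambda(t_n)]}(t_n)$ also converges. This intrinsic choice of scale is what makes the passage from sequential to global compactness routine; you should replace your distance-minimization construction by it.
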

\begin{proof}
The proof is similar to the one in \cite{KeMe06}.
The main point of the proof is to show that for every sequence $(t_n)_n$, $t_n\in[0,T_+)$, there exists, up to the extraction of a subsequence, a sequence $(\lambda_n)_n$ such that $(u_{[\lambda_n]}(t_n))$ converges in $\hdot$. By continuity of $u$, we just have to consider the case $\lim_{n}t_n=T_+$.\par
Let us use Lemma \ref{lemmacc} for the sequence $u_n^0=u(t_n)$. We must show that we are in case \eqref{compactness}. Clearly, cases \eqref{rightSC} (vanishing for $t\geq 0$) and \eqref{allSC} (uniform scattering) are excluded by the assumption that $\|u\|_{S(0,T_+)}$ is infinite. 
Furthermore, $\|u\|_{S(0,t_n)}=\|u_n\|_{S(-t_n,0)}$ (where $u_n$ is the solution of \eqref{CP} with initial condition $u_n^0$) so that case \eqref{leftSC} would imply that  $\|u\|_{S(0,t_n)}$ tends to $0$, i.e that $u$ is identically $0$ which contradicts our assumptions. Thus case \eqref{compactness} holds: there exists, up to the extraction of a subsequence, a sequence $(\lambda_n)_n$ such that $\big(u_{[\lambda_n]}(t_n)\big)_{n}$ converges.\par
The existence of $\lambda(t)$ such that the set $K_+$ defined by \eqref{defK+lem} is relatively compact is now classical. Indeed
\begin{equation}
\label{borninf}
\forall t\in[0,T_+),\quad 2E(W)=2E(u(t))\leq\NH{u(t)}^2\leq \NH{W}^2.
\end{equation}
Fixing $t\in[0,T_+)$, define
\begin{equation}
\label{def.lambda}
\lambda(t):=\sup\left\{\lambda>0,\text{ s.t. } \int_{|x|\leq 1/\lambda} |\nabla u|^2(t,x) dx=E(W)\right\}.
\end{equation}
By \eqref{borninf}, $0<\lambda(t)<\infty$. Let $(t_n)_n$ be a sequence in $[0,T_+)$. As proven before, up to the extraction of a subsequence, there exists a sequence $(\lambda_n)_n$ such that $\big(u_{[\lambda_n]}(t_n)\big)_n$ converges in $\hdot$ to a function $v_0$ of $\hdot$.
One may check directly, using \eqref{borninf}, that for a constant $C>0$,
$$ C^{-1}\lambda(t_n)\leq \lambda_n \leq C\lambda(t_n),$$
which shows (extracting again subsequences if necessary) the convergence of $\big(u_{[\lambda(t_n)]}(t_n)\big)_n$ in $\hdot$.
The compactness of $\overline{K}_+$ is proven, which concludes the proof of Lemma \ref{lem.compact}.
\end{proof}
\medskip

\noindent\emph{Step 2: global existence.}
To complete the proof of Proposition \ref{compactness.u}, it remains to show that the maximal time of existence $T_+=T_+(u_0)$ is infinite. Here we use an argument in \cite{KeMe06}. Assume 
\begin{equation}
\label{T+finite.absurd}
T_+<\infty,
\end{equation}
 and consider a sequence $t_n$ that converges to $T_+$. By the finite blow-up criterion of Lemma \ref{lem.CP}, $\|u\|_{S(0,T_+)}=+\infty$. By Lemma \ref{lem.compact}, there exists $\lambda(t)$ such that the set $K_+$ defined by \eqref{defK+lem} is relatively compact in $\hdot$.\par
If there exists a sequence $(t_n)_n$ converging to $T_+$ such that $\lambda(t_n)$ has a finite limit $\lambda_0\geq 0$, then it is easy to show, using the compactness of $\big(u_{[\lambda(t_n)]}(t_n)\big)_n$ and the scaling invariance of \eqref{CP} that $u$ is defined in a neighborhood of $T_+$, which contradicts the fact that $T_+$ is the maximal positive time of definition of $u$. Thus we may assume
\begin{equation}
\label{lambda.infty}
\lim_{t\rightarrow T_+} \lambda(t)=+\infty.
\end{equation}
Consider a positive radial function $\psi$ on $\RR^N$, such that $\psi=1$ if $|x|\leq 1$ and $\psi=0$ if $|x|\geq 2$. Define, for $R>0$ and $t\in[0,T_+)$,
$$ F_R(t):=\int_{\RR^N} |u(t,x)|^2 \psi\big(\frac{x}{R}\big) dx.$$
By \eqref{lambda.infty}, the relative compactness of $K_+$ in $\hdot$ and Sobolev inequality, for all $r_0>0$, $\int_{|x|\geq r_0} |u(t,x)|^{2^*}dx$ tends to $0$ as $t$ tends to $T_+$. Thus, by H\"older and Hardy inequalities
\begin{equation}
\label{F_R0}
\lim_{t\rightarrow T_+} F_R(t)=0.
\end{equation}
Using equation \eqref{CP}, $ F'_R(t)=\frac{2}{R}\im \int u(x)\nabla\ubar(x)(\nabla\psi)\big(\frac{x}{R}\big) dx$, which shows (using Cauchy-Schwarz and Hardy inequalities) that $|F_R'(t)|\leq C\NH{u(t)}^2\leq C_0$, where $C_0$ is a constant which is independent of $R$. Fixing $t\in [0,T_+)$, we see that
\begin{equation}
\label{bound.f_R}
\forall T\in[0,T_+),\quad |F_R(t)-F_R(T)|\leq C_0|t-T|.
\end{equation}
Thus, letting $T$ tends to $T_+$, and using \eqref{F_R0},
$|F_R(t)|\leq C_0|t-T_+|$. Letting $R$ tends to infinity, one gets that $u(t)$ is in $L^2(\RR^N)$ and satisfies
\begin{equation*}
\int_{\RR^N} |u(t,x)|^2 dx\leq C|t-T_+|.
\end{equation*}
By conservation of the $L^2$ norm, we get that $u_0=0$ which contradicts the fact that $E(u_0)=E(W)$. 
This completes the proof that $T_+=+\infty$. By a similar argument, $T_-=-\infty$. The proof of Proposition \ref{compactness.u} is complete.
\qed

\subsection{Existence of mixed behavior solutions}
\label{sub.exist}
We now prove Corollary \ref{corol.exist}.\par
Let $v_n^0=\big(1-\frac 1n\big)W$ and $v_n$ the solution of \eqref{CP} with initial condition $v_n^0$. One may check that $\|v_n^0\|_{\hdot}<\|W\|_{\hdot}$ and $E(v_n^0)<E(W)$.
By the results in \cite{KeMe06}
\begin{equation*}
T_+(v_n^0)=T_-(v_n^0)=\infty,\quad \|v_n\|_{S(\RR)}<\infty.
\end{equation*}
Since $\|W\|_{S(\RR)}=\infty$,
$\|v_n\|_{S(\RR)}$ tends to $+\infty$ by Lemma \ref{lem.CP} \eqref{CPcontinuity}.
Chose $t_n$ such that the solution $u_n(\cdot)=v_n(\cdot+t_n)$ of \eqref{CP} satisfies
\begin{equation}
\label{exist.u_n-}
\|u_n\|_{S(-\infty,0)}=1.
\end{equation}
Therefore
\begin{equation}
\label{exist.u_n+}
\|u_n\|_{S(0,+\infty)}\underset{n\rightarrow+\infty}{\longrightarrow}+\infty.
\end{equation}
Let us use Lemma \ref{lemmacc}. By \eqref{exist.u_n-} and \eqref{exist.u_n+}, cases  \eqref{rightSC}, \eqref{leftSC} and \eqref{allSC} are excluded. Extracting a subsequence from $(u_n)_n$, there exists a sequence $(\lambda_n)_n$ such that $(u_n)_{[\lambda_n]}$ converges in $\hdot$. Rescaling each $u_n$ if necessary (which preserves properties \eqref{exist.u_n-} and \eqref{exist.u_n+}), we may assume that the sequence $\big(u_n^0\big)_n$ converges in $\hdot$ to some $w_0^-$.
Let $w^-$ be the solution of \eqref{CP} such that $w^-(0)=w^-_0$. Clearly
$$ E(w_0^-)=E(W),\quad \NH{w_0^-}\leq \NH{W}.$$
Thus by Proposition \ref{compactness.u}, $w^-$ is defined on $\RR$.\par 
Fix a large integer $n$. By \eqref{exist.u_n-} and Lemma \ref{lem.CP} \eqref{CPcontinuity}  with $I=(-\infty,0]$, $\tilde{u}=u_n$, and $u_0=w^-_0$,  we get
$$\|w^-\|_{S(-\infty,0)}<\infty.$$
Assume that $\|w^-\|_{S(0,+\infty)}$ is finite. Using again Lemma \ref{lem.CP} \eqref{CPcontinuity}  with $I=[0,+\infty)$, $\tilde{u}=w^-$ and $u_0=u_n(0)$, we would get that $\|u_n\|_{S(0,+\infty)}$ is bounded independently of $n$, contradicting \eqref{exist.u_n+}. Thus 
$$\|w^-\|_{S(0,+\infty)}=\infty.$$
The proof is complete.
\qed
\begin{remark}
The above proof gives a general proof of existence of a mixed-behavior solution at the threshold. In Section \ref{sec.proof}, we will give another proof by a fixed point argument, which also works in the supercritical case $\|u_0\|_{\hdot}>\|W\|_{\hdot}$.
\end{remark}
\section{Convergence to $W$ in the subcritical case}
\label{sec.convergence}
In this section, we consider a threshold subcritical radial solution $u$ of \eqref{CP}, satisfying
\begin{gather}
\label{hyp.sub}
E(u_0)=E(W), \quad \NH{u_0}^2<\NH{W}^2\\
\label{hyp.noscatter}
\|u\|_{S(0,+\infty)}=+\infty.
\end{gather}
We will show:
\begin{prop}
\label{prop.CV0}
Let $u$ be a radial solution of \eqref{CP} satisfying \eqref{hyp.sub} and \eqref{hyp.noscatter}. Then there exist $\theta_0\in\RR$, $\mu_0>0$ and $c,C>0$ such that 
$$ \forall t\geq 0,\quad \|u(t)-W_{[\theta_0,\mu_0]}\|_{\hdot} \leq Ce^{-ct}.$$
\end{prop}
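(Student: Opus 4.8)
The plan is to establish exponential convergence of a threshold subcritical solution to a rescaled $W$ in three stages: first upgrading compactness to convergence along a subsequence of times, then setting up modulation theory near $W$, and finally closing a differential inequality via a virial-type identity together with the spectral analysis of the linearized operator.

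\medskip

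\noindent\textbf{Step 1: convergence in $\hdot$ as $t\to+\infty$.} By Proposition \ref{compactness.u}, the set $K_+=\{u_{[\lambda(t)]}(t):t\ge 0\}$ is relatively compact in $\hdot$. I would first show that $\inf_{t\ge0}\|u(t)-W_{[\theta,\mu]}\|_{\hdot}$ over all $\theta,\mu$ tends to $0$ as $t\to+\infty$. Indeed, take $t_n\to+\infty$; by compactness (up to subsequence) $u_{[\lambda(t_n)]}(t_n)\to v_0$ in $\hdot$, and the solution $v$ with data $v_0$ inherits $E(v_0)=E(W)$, $\NH{v_0}\le\NH{W}$, and one checks (using \eqref{hyp.noscatter} and the argument of Lemma \ref{lem.compact}) that $v$ cannot scatter on either half-line. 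Since $K_+$ is compact, $v$ itself is a compact-up-to-modulation, non-scattering threshold solution. The key algebraic input is that, by Claim \ref{lem.var}, $\NH{v_0}^2/\NH{W}^2\le E(v_0)/E(W)=1$, and if $\NH{v_0}=\NH{W}$ the Aubin--Talenti rigidity forces $v_0=W$ up to symmetry. To rule out the strict-inequality case $\NH{v_0}<\NH{W}$, I expect to need a rigidity/Liouville argument: a radial threshold solution whose trajectory is precompact modulo scaling and which is strictly subcritical must be zero. This is the heart of the matter and I would borrow the virial argument below applied to $v$ directly, showing $v\equiv 0$, contradiction. Hence every limit point is $W$ up to symmetry, and a connectedness/uniqueness-of-modulation argument promotes this to convergence: there exist $\theta(t),\mu(t)$ with $\|u(t)-W_{[\theta(t),\mu(t)]}\|_{\hdot}\to0$.

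\medskip

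\noindent\textbf{Step 2: modulation.} Once $u(t)$ is $\hdot$-close to the manifold $\{W_{[\theta,\mu]}\}$, the implicit function theorem gives $C^1$ modulation parameters $\theta(t),\mu(t)$ and a remainder $\varepsilon(t)$ with $u(t)=(W+\varepsilon(t))_{[\theta(t),\mu(t)]}$ (appropriately rescaled), where $\varepsilon(t)$ satisfies orthogonality conditions killing the symmetry directions (the kernel of the linearized operator $L_+$ around $W$, i.e. the generator of scaling $\Lambda W=\frac{N-2}{2}W+x\cdot\nabla W$ and the phase direction $iW$), and $\|\varepsilon(t)\|_{\hdot}\to 0$. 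The energy constraint $E(u_0)=E(W)$ together with $\NH{u_0}^2\le\NH{W}^2$ gives, after Taylor expansion, a coercivity-type control: $\langle L_+\varepsilon_1,\varepsilon_1\rangle + \langle L_-\varepsilon_2,\varepsilon_2\rangle \lesssim \|\varepsilon\|_{\hdot}^3$ where $\varepsilon=\varepsilon_1+i\varepsilon_2$, and (after subtracting the single negative direction of $L_+$, which is controlled by the energy/mass relation and the sign $\NH{u_0}\le\NH{W}$ via Claim \ref{lem.var}) one obtains $\|\varepsilon(t)\|_{\hdot}^2 \lesssim -\frac{d}{dt}(\text{localized virial})$ or similar. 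The precise spectral facts — $L_+$ has exactly one negative eigenvalue and kernel spanned by $\Lambda W$ in the radial class, coercivity of $L_+$ on the orthogonal complement — are exactly what Section \ref{sec.linear} of the paper supplies, so I would invoke them.

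\medskip

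\noindent\textbf{Step 3: the differential inequality.} Define a virial-type functional, e.g. $\mathcal{J}(t)=\im\int \chi_R\, x\cdot\nabla\bar u\, u$ or a localized version adapted to the scaling $\mu(t)$, and compute $\mathcal{J}'(t)$. The computation yields $\mathcal{J}'(t) \approx -c\|\varepsilon(t)\|_{\hdot}^2 + o(\|\varepsilon(t)\|_{\hdot}^2)$ on one hand, while $|\mathcal{J}(t)|\lesssim \|\varepsilon(t)\|_{\hdot}$ on the other. Coupled with the modulation equations which give $|\mu'(t)/\mu(t)| + |\theta'(t)| \lesssim \|\varepsilon(t)\|_{\hdot}$, this produces a closed differential inequality of the form $y'(t)\le -c\, y(t)$ for $y(t)=\|\varepsilon(t)\|_{\hdot}$ (or its square), whence $\|\varepsilon(t)\|_{\hdot}\le Ce^{-ct}$. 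Feeding this back into the modulation ODEs shows $\mu(t)\to\mu_0>0$ and $\theta(t)\to\theta_0$ exponentially fast (the integrals $\int^\infty|\mu'|$, $\int^\infty|\theta'|$ converge geometrically), and hence $\|u(t)-W_{[\theta_0,\mu_0]}\|_{\hdot}\le Ce^{-ct}$.

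\medskip

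\noindent\textbf{Main obstacle.} The delicate point is the rigidity step (ruling out the strictly subcritical precompact trajectory in Step 1 and, more seriously, closing the virial inequality in Step 3 despite the fact that the natural virial weight $|x|^2$ is not in $\hdot$ and $u$ need not be in $L^2$). This forces the use of a carefully truncated virial identity, with error terms controlled by the compactness of $K_+$ (so that mass leaks out of any fixed ball as $t\to\infty$, mimicking the argument used in Step 2 of Proposition \ref{compactness.u}); matching the truncation radius to $\mu(t)$ and showing the truncation errors are genuinely lower order than $\|\varepsilon(t)\|_{\hdot}^2$ is the technical crux. A secondary subtlety is that the negative eigenvalue of $L_+$ must be handled — here the hypothesis $\NH{u_0}\le\NH{W}$ is used exactly to show the solution lies "on the good side," so that the would-be unstable direction is in fact suppressed by the energy constraint, and coercivity holds modulo the explicit symmetry directions alone.
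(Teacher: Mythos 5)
Your broad architecture (compactness, modulation around $W$, localized virial) matches the paper's, but two of your three steps contain errors that would prevent the proof from closing.

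\textbf{Step 1 contains a false rigidity claim.} You want to exclude the case where a limit point $v_0$ of $u_{[\lambda(t_n)]}(t_n)$ satisfies $\NH{v_0}<\NH{W}$ by claiming that ``a radial threshold solution whose trajectory is precompact modulo scaling and which is strictly subcritical must be zero.'' This is false: the solution $W^-$ from Theorem \ref{th.exist} is strictly subcritical, has precompact trajectory up to modulation (it converges to $W$), and is not zero. Worse, $v$ itself satisfies exactly the same hypotheses as $u$, so the rigidity you need is the very statement you are trying to prove --- the argument is circular. The paper instead proves that $\frac{1}{T}\int_0^T \dd(u(t))\,dt\to 0$ (Lemma \ref{lem.delta.mean}) by the virial argument, extracting a sequence $t_n$ with $\dd(u(t_n))\to 0$, and then uses a genuinely separate \emph{non-oscillation} lemma (Lemma \ref{lem.CV0.seq}, built on the integral virial estimate Claim \ref{claim.virial.seq}) to show that between two times where $\dd(u)$ is small it cannot grow large. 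Your appeal to ``connectedness/uniqueness-of-modulation'' does not substitute for this step: a priori the orbit could repeatedly approach $W$ and wander away.

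\textbf{Step 3 has the wrong power in the virial identity, and the wrong closing mechanism.} You posit $\mathcal{J}'(t)\approx -c\|\varepsilon(t)\|_{\hdot}^2$, i.e.\ a quadratic lower bound. But at the threshold $E(u)=E(W)$ the algebra is \emph{linear}: $\int|\nabla u|^2-\int|u|^{2^*}=\frac{2}{N-2}\dd(u(t))$, so the paper's $G_R$ satisfies $G_R'(t)\geq \frac{8}{N-2}\dd(u(t))\approx \|\varepsilon(t)\|_{\hdot}$ (first power, via Lemma \ref{bound.modul}), together with $|G_R(t)|\lesssim \dd(u(t))$. With your quadratic power the argument gives only $F(t):=\int_t^\infty\|\varepsilon\|^2\leq C\|\varepsilon(t)\|=C\sqrt{-F'(t)}$, hence $F'\leq -F^2/C^2$ and $F(t)\lesssim 1/t$, which is polynomial rather than exponential decay. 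The first-power identity is what makes the Gronwall step work: it yields $\int_t^\infty \dd(u(\tau))\,d\tau\leq C\,\dd(u(t))$, hence exponential decay of the tail integral. The paper then converts this into exponential decay of $\dd(u(t))$ itself not by a pointwise ODE $y'\leq -cy$ (which is not what the virial gives), but by writing $|\alpha(t)|=|\int_t^\infty\alpha'|\leq C\int_t^\infty\dd\leq Ce^{-ct}$ using the modulation bound $|\alpha'|\leq C\dd$ and then $\dd\approx|\alpha|$. Your proposal omits this transfer through the modulation parameter, which is essential.

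Your Step 2 (modulation via IFT, orthogonality conditions, coercivity of the linearized energy modulo the symmetry and unstable directions) is in line with the paper's Subsection \ref{sub.decompo}, and your diagnosis of the ``main obstacle'' (the need to match the truncation radius to $\mu(t)$, and to handle the negative direction of $L_+$ using the sign constraint $\NH{u}\leq\NH{W}$) is correct.
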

(See \eqref{def.modulations} for the definition of $W_{[\theta_0,\mu_0]}$).
As a corollary of the preceding proposition and a step of its proof we get the following result, which completes the proof of the third assertion in \eqref{ex.sub} of Theorem \ref{th.classif}.
\begin{corol}
\label{corol.incomp}
There is no solution $u$ of \eqref{CP} satisfying \eqref{hyp.sub} and
\begin{equation}
\label{non-scatter+-}
\|u\|_{S(-\infty,0)}=\|u\|_{S(0,+\infty)}=+\infty.
\end{equation}
\end{corol}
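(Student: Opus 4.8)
The plan is to derive Corollary~\ref{corol.incomp} from Proposition~\ref{prop.CV0} together with an extra ingredient concerning the sign of a virial-type functional. Suppose for contradiction that $u$ satisfies \eqref{hyp.sub} and \eqref{non-scatter+-}. Then \emph{both} time directions fall under the hypothesis of Proposition~\ref{prop.CV0} (applied to $u$ for $t\ge0$ and to $t\mapsto u(-t)$, which is again a solution of \eqref{CP} with the same energy and $\hdot$-norm constraint, for $t\le0$). So there exist parameters $(\theta_0^+,\mu_0^+)$ and $(\theta_0^-,\mu_0^-)$ and constants $c,C>0$ with
\begin{equation*}
\|u(t)-W_{[\theta_0^+,\mu_0^+]}\|_{\hdot}\le Ce^{-ct}\ (t\ge0),\qquad \|u(t)-W_{[\theta_0^-,\mu_0^-]}\|_{\hdot}\le Ce^{c t}\ (t\le0).
\end{equation*}
In particular $\|u(t)\|_{\hdot}\to\|W\|_{\hdot}$ as $t\to\pm\infty$, while by \eqref{hyp.sub} and Claim~\ref{lem.var}/Remark~\ref{RemPersist} we have $\|u(t)\|_{\hdot}<\|W\|_{\hdot}$ for all $t$. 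The idea is that such a solution — converging to a translate of $W$ at both ends while staying strictly below it — is forbidden by a monotonicity/convexity argument on a localized variance or virial quantity, exactly as in the analogous rigidity step of \cite{KeMe06}.

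Concretely, the first step is to establish the needed integrated-in-space control: using the exponential convergence and the relative compactness of $K_\pm$ (from Proposition~\ref{compactness.u}), one shows $\|\nabla u(t)\|_{L^2(|x|\ge R)}$ and the potential term are uniformly small for $R$ large, uniformly in $t$, so that $u(t)\in L^2$ with good bounds near $\pm\infty$ after the rescaling is controlled; more precisely one works with the truncated functional $F_R(t)=\int |u(t,x)|^2\psi(x/R)\,dx$ as in Step~2 of the proof of Proposition~\ref{compactness.u}. The second step is to compute $F_R'(t)$ and, via the virial identity, $F_R''(t)$, and to use Claim~\ref{lem.var} in the sharpened form: since $\|u(t)\|_{\hdot}<\|W\|_{\hdot}$ strictly, the quantity $\|\nabla u(t)\|_{L^2}^2-\|u(t)\|_{L^{2^*}}^{2^*}$ is bounded below by a positive multiple of $\|W\|_{\hdot}^2-\|u(t)\|_{\hdot}^2>0$; combined with the a priori smallness of the error terms coming from the localization (controlled using the convergence to $W_{[\theta_0^\pm,\mu_0^\pm]}$ and compactness), this forces $F_R''(t)\ge \delta>0$ on a long time interval for suitable $R$. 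The third step is to reach the contradiction: a function with $F_R''\ge\delta$ that is bounded (which $F_R$ is, since $|F_R'(t)|\le C_0\|u(t)\|_{\hdot}^2\le C_0'$ uniformly as in \eqref{bound.f_R}) cannot exist on an interval of length $\gtrsim 1/\sqrt\delta$; letting the interval grow and $R$ grow appropriately yields the contradiction, so no such $u$ exists.

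An alternative, possibly cleaner route avoids the variance functional: by Proposition~\ref{prop.CV0} applied in both directions, $u$ is a solution with critical energy, $\|u(t)\|_{\hdot}<\|W\|_{\hdot}$, converging to (translates of) $W$ as $t\to+\infty$ and as $t\to-\infty$. But the linearized analysis near $W$ carried out in Section~\ref{sec.linear} (and used to prove Proposition~\ref{prop.CV0}) will show that the stable/unstable manifolds through $W$ in the subcritical sheet are one-dimensional and the corresponding solution is essentially unique — i.e.\ $W^-$, which by \eqref{ex.sub} satisfies $\|W^-\|_{S(-\infty,0]}<\infty$, contradicting \eqref{non-scatter+-}. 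In other words, a subcritical threshold solution converging to $W$ as $t\to+\infty$ must coincide with $W^-$ up to symmetry, and $W^-$ scatters as $t\to-\infty$; so \eqref{non-scatter+-} is impossible. I would present the variance-functional argument as the primary proof since it is self-contained given the earlier material, and mention the uniqueness route as a remark.

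The main obstacle I anticipate is the second step: getting a clean \emph{uniform-in-$t$} lower bound $F_R''(t)\ge\delta>0$. The virial identity produces $F_R''(t) = 8\big(\|\nabla u(t)\|_{L^2}^2-\|u(t)\|_{L^{2^*}}^{2^*}\big) + (\text{error terms supported in } |x|\sim R)$, and one must show the error terms are negligible compared to the good term $\|W\|_{\hdot}^2-\|u(t)\|_{\hdot}^2$. This is delicate precisely because this gap may itself tend to $0$ (indeed it does, like $e^{-2ct}$, by Proposition~\ref{prop.CV0}), so one cannot simply fix $R$ once and for all; the localization radius $R$ must be chosen depending on the time window, exploiting the exponential convergence and the $\hdot$-compactness of $K_\pm$ to make the tail contributions exponentially small as well. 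Balancing these two exponentially small quantities against the required length of the interval over which $F_R''\ge\delta$ is the crux of the argument.
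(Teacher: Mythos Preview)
Your virial instinct is correct, but you are missing the key estimate that makes the argument clean and avoids the balancing problem you anticipate. In the proof of Claim~\ref{claim.virial.seq} (Step~1), the paper establishes not merely a uniform bound $|G_R(t)|\le C R^2$ but the much sharper
\[
|G_R(t)|\le C R^2\,\dd(u(t)),
\]
obtained by writing $u_{[\theta,\mu]}=W+v$ and observing that $\im(W\nabla W)=0$, so only terms containing $v$ survive. With this in hand, the argument is a two-line application of Claim~\ref{claim.virial.seq}: by Proposition~\ref{prop.CV0} applied in both time directions, $\dd(u(t))\to 0$ as $t\to\pm\infty$ and $\{u(t):t\in\RR\}$ is relatively compact in $\hdot$ (with $\lambda(t)\equiv 1$). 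Claim~\ref{claim.virial.seq} then gives
\[
\int_{-n}^{n}\dd(u(t))\,dt\le C\big[\dd(u(-n))+\dd(u(n))\big]\longrightarrow 0,
\]
hence $\dd(u_0)=0$, contradicting \eqref{hyp.sub}. There is no need to fight for a uniform lower bound $F_R''\ge\delta$: the point is that both $G_R'$ \emph{and} $G_R$ are controlled by $\dd(u(t))$, so the integrated virial inequality is self-closing.

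Your alternative route is circular: in the paper, the fact that $\|W^-\|_{S(-\infty,0)}<\infty$ (the third assertion in \eqref{ex.sub}) is \emph{deduced from} Corollary~\ref{corol.incomp}, not the other way around. So you cannot invoke the backward scattering of $W^-$ to prove this corollary.
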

Let 
\begin{equation}
\label{def.dd}
\dd(f):=\left|\|f\|_{\hdot}^2-\|W\|^2_{\hdot}\right|.
\end{equation}
The key to proving Proposition \ref{prop.CV0} is to show
\begin{equation}
\label{key.delta}
\lim_{t\rightarrow+\infty} \dd(u(t))=0.
\end{equation}
Our starting point of the proof of \eqref{key.delta} is to prove the existence of a sequence $t_n$ going to infinity such that $\dd(u(t_n))$ goes to $0$ (Subsection \ref{sub.mean}). After giving, in Subsection \ref{sub.decompo}, some useful results on the modulation of threshold solutions with respect to the manifold $\{W_{[\theta,\mu]},\; \mu>0,\;\theta\in\RR\}$, we will show the full convergence \eqref{key.delta} and finish the proof of Proposition \ref{prop.CV0} and Corollary \ref{corol.incomp}. 
\subsection{Convergence to $W$ for a sequence}
\label{sub.mean} 
\begin{lemma}
\label{lem.delta.mean}
Let $u$ be a radial solution of \eqref{CP} satisfying \eqref{hyp.sub}, \eqref{hyp.noscatter}, and thus defined on $\RR$ by Proposition \ref{compactness.u}. Then 
\begin{equation}
\label{delta.0}
\lim_{t\rightarrow +\infty} \frac 1T \int_0^T\dd(u(t))dt=0.
\end{equation}
\end{lemma}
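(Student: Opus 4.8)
The plan is to use a virial-type (localized momentum/variance) identity for the compact-up-to-modulation solution $u$, exploiting the relative compactness of $K_+$ to control the error terms that arise from localization. Recall from Claim \ref{lem.var} and Remark \ref{rem.compa} that, in the subcritical regime \eqref{hyp.sub}, one has $\dd(u(t))\asymp \|W\|_{\hdot}^2-\|u(t)\|_{\hdot}^2$, and moreover one has the sharper variational estimate: a short computation using the definition of $\Phi$ in the proof of Claim \ref{lem.var} shows that
\begin{equation*}
\int |\nabla W|^2-\int|\nabla u(t)|^2\asymp \int |\nabla W|^2-\int |u(t)|^{2^*}=\int |\nabla u(t)|^2-\int|u(t)|^{2^*}+\big(\dd\text{-terms}\big),
\end{equation*}
so that $\dd(u(t))$ is comparable to the quantity $\int |\nabla u(t)|^2-\int |u(t)|^{2^*}\geq 0$ appearing in the virial identity. (I would isolate this as a preliminary \emph{coercivity} estimate: $c\,\dd(u(t))\le \int |\nabla u(t)|^2-\int|u(t)|^{2^*}\le C\,\dd(u(t))$ near $\NH{W}$.)

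First I would set up the truncated variance. Let $\psi$ be a smooth radial cutoff, $\psi(x)=|x|^2$ for $|x|\le 1$ and $\psi$ constant for $|x|\ge 2$ (suitably bounded second derivatives), and for $R>0$ put $y_R(t):=\int \psi\big(x/R\big)\,|u(t,x)|^2\,dx$. One must first argue $u(t)\in L^2_{\loc}$ and that $y_R$ makes sense: since $N\le 5$ we only have $W\in L^2$ for $N=5$, but the truncation makes $y_R(t)$ finite for every $R$ using only $u(t)\in\hdot$ and Hardy's inequality. Then the standard computation gives $y_R'(t)=2\,\im\int \nabla\psi(x/R)\cdot \nabla\bar u(t,x)\,u(t,x)\,dx$ (bounded by $CR\,\NH{u(t)}^2\le C_0 R$), and a second differentiation yields the Morawetz/virial identity
\begin{equation*}
y_R''(t)=8\int|\nabla u(t)|^2-8\int|u(t)|^{2^*}+\AAA_R(t),
\end{equation*}
where $\AAA_R(t)$ collects all the terms supported in the region $\{|x|\ge R\}$ — gradient terms weighted by $(\Delta\psi(x/R)-2N)$, the $L^{2^*}$ term weighted by $\big(\tfrac{2}{2^*}\Delta\psi(x/R)-N\big)$-type factors, and a fourth-order term $R^{-2}\int |u|^2\,(\Delta^2\psi)(x/R)$.

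The key step is to show that $\AAA_R(t)\to 0$ as $R\to\infty$ \emph{uniformly in $t\ge 0$}. This is exactly where relative compactness of $K_+=\{u_{[\lambda(t)]}(t)\}$ enters: all the terms in $\AAA_R(t)$ are of the form $\int_{|x|\ge R} (\text{density})$, and after the change of variables $x\mapsto \lambda(t)x$ they become $\int_{|x|\ge R\lambda(t)}(\text{density of } u_{[\lambda(t)]}(t))$; by compactness of $K_+$, for any $\eta>0$ there is $\rho(\eta)$ with $\int_{|x|\ge\rho}\big(|\nabla v|^2+|v|^{2^*}+|x|^{-2}|v|^2\big)<\eta$ for all $v\in K_+$ — I would need to rule out $\lambda(t)\to 0$ along some sequence (which would push the bad region to small scales); this is the genuine subtlety. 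Here, unlike in Step 2 of Proposition \ref{compactness.u}, we are on $[0,\infty)$ and $T_+=+\infty$, and one shows $\lambda(t)$ stays bounded below (so $R\lambda(t)\to\infty$) by a separate argument: if $\lambda(t_n)\to 0$ one derives, as in the $F_R$ argument of Step 2, that $u(t_n)\to 0$, contradicting $E(u(t_n))=E(W)>0$; alternatively one normalizes using the definition \eqref{def.lambda} to take $\lambda(t)\ge c>0$ outright on a suitable time range. Granting this, $|\AAA_R(t)|\le \eta(R)$ with $\eta(R)\to 0$ uniformly.

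Finally I would conclude by averaging. Fix $R$ large so that $|\AAA_R(t)|\le \eps$ for all $t\ge 0$. Integrating the virial identity on $[0,T]$ and using the coercivity estimate $\int|\nabla u(t)|^2-\int|u(t)|^{2^*}\ge c\,\dd(u(t))$,
\begin{equation*}
8c\int_0^T \dd(u(t))\,dt\le \int_0^T\Big(8\int|\nabla u|^2-8\int|u|^{2^*}\Big)dt = y_R'(T)-y_R'(0)-\int_0^T\AAA_R(t)\,dt\le 2C_0R+\eps T.
\end{equation*}
Dividing by $T$ gives $\tfrac1T\int_0^T\dd(u(t))\,dt\le \tfrac{2C_0R}{8cT}+\tfrac{\eps}{8c}$; letting $T\to\infty$ and then $\eps\to0$ yields \eqref{delta.0}. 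The main obstacle, as noted, is justifying the uniform smallness of $\AAA_R(t)$ — i.e. combining the compactness of $K_+$ with control (lower bound) on the scale $\lambda(t)$ — together with the mild technical point of making sense of the truncated variance when $u(t)\notin L^2$ globally; both are handled by Hardy's inequality and the decay of the compact family $K_+$ at spatial infinity.
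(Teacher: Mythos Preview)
Your overall strategy --- localized virial identity, compactness of $K_+$ to control the localization error $\AAA_R$, then averaging in $T$ --- is exactly the paper's approach. The coercivity you mention is in fact the identity $\int|\nabla u(t)|^2-\int|u(t)|^{2^*}=\tfrac{2}{N-2}\,\dd(u(t))$, which follows from $E(u)=E(W)$ and $\int|\nabla W|^2=\int W^{2^*}$; this is how the paper writes $G_R'$.

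The genuine gap is your claim that $\lambda(t)$ is bounded below on $[0,\infty)$, so that a \emph{fixed} $R$ makes $\AAA_R(t)$ uniformly small. Neither of your two justifications works. The $F_R$ argument of Step~2 of Proposition~\ref{compactness.u} used $T_+<\infty$ and $\lambda(t)\to+\infty$ (mass concentrating at the origin) to force $u\equiv 0$; here $T_+=+\infty$, and $\lambda(t_n)\to 0$ would make the profile spread out, so $u(t_n)\rightharpoonup 0$ only weakly in $\hdot$ while $\|u(t_n)\|_{\hdot}$ stays bounded away from $0$ --- no contradiction with $E(u(t_n))=E(W)$. The definition~\eqref{def.lambda} gives $0<\lambda(t)<\infty$ pointwise but no uniform lower bound. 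In fact the uniform lower bound on $\lambda$ is only obtained \emph{after} the full convergence Proposition~\ref{prop.CV0}, so it cannot be invoked here.

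The paper resolves this with a weaker but sufficient substitute: it proves $\sqrt{t}\,\lambda(t)\to+\infty$ by a contradiction argument (rescaling about $t_n$ and using that the rescaled backward time $-\lambda(t_n)^2 t_n$ stays bounded, so the limit profile would have to match a dilation of the fixed initial datum $u_0$, forcing $u_0=0$). Then, instead of fixing $R$, one takes $R=R(T)=\eps_0\sqrt{T}$: for $t\in[t_0,T]$ one gets $R\lambda(t)\ge\rho_\eps$, hence $|\AAA_{R}(t)|\le\eps$; and the boundary term $|G_R|\le C_*R^2=C_*\eps_0^2 T$ is now of order $T$, so after dividing by $T$ it contributes $C_*\eps_0^2$, which is made small by the choice of $\eps_0$. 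This $T$-dependent choice of $R$ is the missing idea in your argument.
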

\begin{corol}
\label{corol.subseq}
Under the assumptions of Lemma \ref{lem.delta.mean}, there exists a sequence $t_n\rightarrow +\infty$ such that $\dd(u(t_n))$ tends to $0$. 
\end{corol}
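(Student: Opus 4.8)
The plan is to deduce this corollary directly from the vanishing of the Cesàro mean \eqref{delta.0} established in Lemma \ref{lem.delta.mean}, by an entirely elementary argument; no new analysis of the equation is needed. Recall that $\dd(f)\geq 0$ for every $f$ by the very definition \eqref{def.dd}, and that for the solution $u$ under consideration one has $0\leq \dd(u(t))\leq \NH{W}^2$ for all $t\geq 0$ by \eqref{borninf}, with $t\mapsto \dd(u(t))$ continuous.

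First I would argue by contradiction. Suppose there is no sequence $t_n\to+\infty$ with $\dd(u(t_n))\to 0$. Then $\liminf_{t\to+\infty}\dd(u(t))>0$, so there exist $\eps_0>0$ and $T_0\geq 0$ with $\dd(u(t))\geq\eps_0$ for all $t\geq T_0$. For $T>T_0$, using $\dd(u(\cdot))\geq 0$ on $[0,T_0]$,
\begin{equation*}
\frac1T\int_0^T\dd(u(t))\,dt\;\geq\;\frac1T\int_{T_0}^T\dd(u(t))\,dt\;\geq\;\eps_0\,\frac{T-T_0}{T}\;\underset{T\to+\infty}{\longrightarrow}\;\eps_0>0,
\end{equation*}
which contradicts \eqref{delta.0}. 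Hence such a sequence exists.

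Alternatively, and more constructively, one can exhibit the sequence directly: for each fixed $n$, boundedness of $\dd(u(\cdot))$ gives $\frac1T\int_0^n\dd(u(t))\,dt\to 0$ as $T\to+\infty$, so \eqref{delta.0} forces $\frac1{T-n}\int_n^T\dd(u(t))\,dt\to 0$; choosing $T_n>n$ so that this average is below $1/n$, the mean value inequality provides some $t_n\in[n,T_n]$ with $\dd(u(t_n))<1/n$, and $t_n\geq n\to+\infty$. There is no genuine obstacle in this corollary: it is a routine consequence of Lemma \ref{lem.delta.mean}, recorded separately only because the subsequential form is the one used in the subsequent upgrade to the full limit \eqref{key.delta}.
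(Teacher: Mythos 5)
Your proof is correct and is exactly the argument the paper intends: the corollary is stated without proof precisely because it follows from the Ces\`aro mean limit \eqref{delta.0} by the elementary observation that $\dd(u(t))\geq 0$, so a positive $\liminf$ (or, constructively, averages over $[n,T_n]$) would contradict \eqref{delta.0}. Nothing further is needed.
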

\begin{proof}[Proof of Lemma \ref{lem.delta.mean}] 
Let $u$ be a solution of \eqref{CP} satisfying \eqref{hyp.sub} and \eqref{hyp.noscatter}. According to Proposition \ref{compactness.u}, there exists a function $\lambda(t)$ such that
$K_+:=\big\{u_{[\lambda(t)]}(t),\; t\geq 0\big\}$
is relatively compact in $\hdot$. \par
The proof take three steps.
\subsubsection*{Step 1: virial argument}
Let $\varphi$ be a radial, smooth, cut-off function on $\RR^N$ such that
$$ \varphi(x)=|x|^2, \; 0\leq |x|\leq 1,\quad \varphi(x)=0, \; |x|\geq 2.$$
Let $R>0$ and $\varphi_R(x)=R^2 \varphi\big(\frac{x}{R}\big)$, so that $\varphi_R(x)=|x|^2$ for $|x|\leq R$.
Consider the quantity (which is the time-derivative of the localized variance)
$$ G_R(t):=2\im \int \ubar(t) \nabla u(t) \cdot\nabla \varphi_R,\quad t\in \RR.$$
Let us show
\begin{gather}
\label{bound1.gR}
\exists C_*>0,\;\forall t\in \RR,\quad |G_R(t)|\leq C_* R^2,\\
\label{bound.larged}
\forall \eps>0,\;\exists \rho_{\eps}>0,\;\forall R>0, \; \forall t\geq 0,\quad R\lambda(t)\geq \rho_{\eps}\Longrightarrow G'_R(t)\geq \frac{16}{N-2}\dd(u(t))-\eps.
\end{gather}
Using that $|x|\leq 2 R$ on the support of $\varphi_R$ and that $|\nabla \varphi_R|\leq C R$, we get
\begin{equation*}
\forall t\in \RR,\quad |G_R(t)|\leq C R^2 \int_{\RR^N} \frac{1}{|x|}|u(t)||\nabla u(t)|\leq C R^2 \left(\int_{\RR^N} |\nabla u(t)|^2\right)^{1/2}\left(\int_{\RR^N} \frac{1}{|x|^2}{|u(t)|^2}\right)^{1/2},
\end{equation*}
which yields \eqref{bound1.gR}, as a consequence of Hardy's inequality and $\|u(t)\|_{\hdot}\leq \|W\|_{\hdot}$.
To show \eqref{bound.larged}, we will use the compactness of $\overline{K}_+$. By direct computation
\begin{equation}
\label{gR'bis}
G_R'(t)=\frac{16}{N-2}\dd(u(t)) +A_R(u(t)),
\end{equation}
where
\begin{multline}
\label{defAR}
A_R(u):=\int_{|x|\geq R} |\partial_r u|^2\Big(4\frac{d^2\varphi_R}{dr^2}-8\Big)r^{N-1}dr\\+\int_{|x|\geq R}|u|^{2^*}\left(-\frac{4}{N}\Delta\varphi_R+8\right)r^{N-1}dr -\int |u|^2(\Delta^2\varphi_R)r^{N-1}dr.
\end{multline}

Indeed, an explicit calculation yields, together with equation \eqref{CP} 
\begin{align*}
\notag
G_R'(t)=&
4\int \Big|\frac{\partial u}{\partial r}\Big|^2 \frac{d^2\varphi_R}{dr^2} r^{N-1}dr-\frac{4}{N}\int |u|^{2^*}(\Delta \varphi_R) r^{N-1}dr-\int |u|^2(\Delta^2\varphi_R) r^{N-1}dr\\
=& 8\left(\int_{\RR^N} |\nabla u(t)|^2-\int_{\RR^N} |u(t)|^{2*}\right) +A_R(u(t)),
\end{align*}
and as a consequence of assumption \eqref{hyp.sub}, $\int |\nabla u(t)|^2-\int |u(t)|^{2*}=\frac{2}{N-2}\dd(u(t))$ which yields \eqref{gR'bis}. According to \eqref{gR'bis}, we must show
\begin{equation}
\label{eta.0}
\forall \eps>0,\;\exists \rho_{\eps}>0,\;\forall R>0, \; \forall t\geq 0,\quad R\lambda(t)\geq \rho_{\eps}\Longrightarrow |A_R(u(t))|<\eps.
\end{equation}
We have $|\partial_r^2\varphi_R|+|\Delta \varphi_R|\leq C$ and $|\Delta^2\varphi_R|\leq C/R^2$. Thus by \eqref{defAR},
\begin{align}
\notag
|A_R(u(t))| &\leq \int_{|x|\geq R} |\nabla u(t,x)|^{2}+|u(x)|^{2^*}+\frac {1}{|x|^2} |u(t,x)|^2dx\\
\label{forlater}
|A_R(u(t))| &\leq \int_{|y|\geq R\lambda(t)} \big|\nabla u_{[\lambda(t)]}(t,y)\big|^{2}+\big|u_{[\lambda(t)]}(y)\big|^{2^*}+\frac {1}{|y|^2} \big|u_{[\lambda(t)]}(t,y)\big|^2dx,
\end{align}
The set $\overline{K}_+$ being compact in $\hdot$, we get \eqref{eta.0} in view of Hardy and Sobolev inequalities. The proof of \eqref{bound.larged} is complete.

\subsubsection*{Step 2: a bound from below for $\lambda(t)$}
The next step is to show
\begin{equation}
\label{bound.lambda}
\lim_{t\rightarrow +\infty} \sqrt{t}\lambda(t)=+\infty.
\end{equation}
We argue by contradiction. If \eqref{bound.lambda} does not hold, there exists $t_n\rightarrow +\infty$ such that
\begin{equation}
\label{absurd.lambda}
\lim_{n\rightarrow+\infty} \sqrt{t_n}\lambda(t_n)=\tau_0<\infty.
\end{equation}
Consider the sequence $(v_n)_n$ of solutions of \eqref{CP} defined by
\begin{equation}
\label{vn.tau.y}
v_n(\tau,y)=\frac{1}{\lambda(t_n)^\frac{N-2}{2}} u\left(t_n+\frac{\tau}{\lambda^2(t_n)},\frac{y}{\lambda(t_n)}\right).
\end{equation}
By the compactness of $\overline{K}_+$, one may assume that $\big(v_n(0)\big)_n$ converges in $\hdot$ to some function $v_0$. Let $v(\tau)$ be the solution of \eqref{CP} with initial condition $v_0$ at time $\tau=0$. Clearly $E(v_0)=E(W)$ and $\|v_0\|_{\hdot}\leq \|W\|_{\hdot}$. From Proposition \ref{compactness.u}, $v$ is defined on $\RR$. Furthermore, by \eqref{absurd.lambda} and the uniform continuity of the flow of equation \eqref{CP} (Lemma \ref{lem.CP} \eqref{CPcontinuity})
\begin{equation}
\label{CVv_n}
\lim_{n\rightarrow +\infty} v_n(-\lambda^2(t_n)t_n)=v(-\tau_0),\text{ in } \hdot.
\end{equation}
By \eqref{vn.tau.y},
$$ v_n(-\lambda^2(t_n)t_n,y)=\frac{1}{\lambda^{(N-2)/2}(t_n)}u_0\Big(\frac{y}{\lambda(t_n)}\Big).
$$
Assumption \eqref{absurd.lambda} implies that $\lambda(t_n)$ tends to $0$, which shows that $v_n(-\lambda^2(t_n)t_n)\rightharpoonup 0$ weakly in $\hdot$, contradicting \eqref{CVv_n} unless $v(-\tau_0)=0$. This is excluded by the fact that $E(v(-\tau_0))=E(W)$, which concludes
the proof of \eqref{bound.lambda}.
\subsubsection*{Step 3: conclusion of the proof}
\label{CV.in.mean}
Fix $\eps>0$. We will use the estimates \eqref{bound1.gR} and \eqref{bound.larged} of Step $1$ with an appropriate choice of $R$. Consider the positive number $\rho_{\eps}$ given by \eqref{bound.larged}. Take $\eps_0$ and $M_0$ such that
$$ 2C_* \eps_0^2=\eps,\qquad M_0\eps_0=\rho_{\eps},$$
where $C_*$ is the constant of inequality \eqref{bound1.gR}. By Step 2 there exists $t_0$ such that
\begin{equation*}
\forall t\geq t_0, \quad \lambda(t)\geq \frac{M_0}{\sqrt{t}}.
\end{equation*}
Consider, for $T\geq t_0$ 
$$R:=\eps_0\sqrt{T}.$$ 
If $t\in [t_0,T]$, then the definitions of $R$, $M_0$ and $t_0$ imply
$ R\lambda(t)\geq \eps_0\sqrt{T}\frac{M_0}{\sqrt{t}}\geq \rho_{\eps}$.
Integrating \eqref{bound.larged} between $t_0$ and $T$ and using estimate \eqref{bound1.gR} on $G_R$, we get, by the choice of $\eps_0$ and $R$
$$ \frac{16}{N-2} \int_{t_0}^{T}|\dd(u(t))| dt\leq 2C_* R^2+\eps(T-t_0) \leq 2C_*R^2+\eps T\leq \frac{\eps}{\eps_0^2}\eps_0^2 T+\eps T\leq 2\eps T.$$
Letting $T$ tends to $+\infty$,
$$\limsup_{T\rightarrow +\infty} \frac{1}{T}\int_{0}^{T} |\dd(u(t))| dt\leq \frac{N-2}{8}\eps,$$
which concludes the proof of Lemma \ref{lem.delta.mean}.
\end{proof}

\subsection{Modulation of threshold solutions}
\label{sub.decompo}
Let $f$ be in $\hdot$ such that $E(f)=E(W)$. The variational characterization of $W$ \cite{Au76,Ta76,Li85Reb} shows
$$\inf_{\substack{\theta\in\RR\\ \mu>0}} \|f_{[\theta,\mu]}-W\|_{\hdot}\leq \eps(\dd(f)), \quad \lim_{\delta \rightarrow 0^+} \eps(\delta)=0,$$ 
where $\dd(f)$ is defined in \eqref{def.dd}.
We introduce here a choice of the modulation parameters $\theta$ and $\mu$ for which the quantity $\dd(f)$ controls linearly $\|f_{[\theta,\mu]}-W\|_{\hdot}$ and other relevant parameters of the problem. This choice is made through two orthogonality conditions given by the two groups of transformations $f\mapsto e^{i\theta}f$, $\theta\in\RR$ and $f\mapsto f_{[\mu]}$, $\mu>0$. This decomposition is then applied to solutions of \eqref{CP}.
Let us start with a few notations.

Since $W$ is a critical point of $E$, we have the following development of the energy near $W$:
\begin{equation}
\label{W+g}
E(W+g)=E(W)+Q(g)+O\big(\|g\|^3_{\hdot}\big),\quad g\in \hdot,
\end{equation}
where $Q$ is the quadratic form on $\hdot$ defined by
$$Q(g):=\frac{1}{2}\int |\nabla g|^2-\frac{1}{2} \int W^{p_c-1}\big(p_c(\re g)^2+(\im g)^2\big).$$
Let us specify an important coercivity property of $Q$. Consider the three orthogonal directions $W$, $iW$ and $\tW:=\frac{N-2}{2}W+x\cdot \nabla W$ in the real Hilbert space $\hdot=\hdot(\RR^N,\CC)$. Note that
\begin{equation}
\label{iW.W1}
iW=\frac{d}{d\theta}\left(e^{i\theta} W\right)_{\restriction \theta=0},\quad \tW=-\frac{d}{d\lambda}\left( W_{[\lambda]} \right)_{\restriction \lambda=1}.
\end{equation}
Let $H:=\vect\{W,iW,\tW\}$ and $H^{\bot}$ its orthogonal subspace in $\hdot$ for the usual scalar product. Then
\begin{equation}
\label{valeursQ}
Q(W)=-\frac{2}{(N-2)C_N^N}, \quad Q_{\restriction \vect\{iW,\tW\}}= 0,
\end{equation} 
where $C_N$ is the best Sobolev constant in dimension $N$.  The first assertion follows from direct computation and the fact that $\|W\|_{L^{2^*}}^{2^*}= \NH{W}^2=\frac{1}{C_N^N}$. The second assertion is an immediate consequence of \eqref{W+g}, \eqref{iW.W1}, and the invariance of $E$ by the transformations $f\mapsto f_{[\theta,\lambda]}$. The quadratic form $Q$ is nonpositive on $H$. By the following claim, $Q$ is positive definite on $H^{\bot}$.
\begin{claim}
\label{coercivity}
There is a constant $\tilde{c}>0$ such that for all radial function $\tilde{f}$ in $H^{\bot}$
$$ Q(\tilde{f}) \geq \tilde{c}\|\tilde{f}\|_{\hdot}^2.$$
\end{claim}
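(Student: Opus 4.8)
The plan is to prove the coercivity of $Q$ on $H^\bot$ by combining the known spectral analysis of the linearized operator around $W$ with a compactness–contradiction argument. First I would recall the link between $Q$ and the linearized operator: writing $g = g_1 + ig_2$ with $g_1 = \re g$, $g_2 = \im g$, we have $2Q(g) = (L_+ g_1, g_1) + (L_- g_2, g_2)$ where $L_+ = -\Delta - p_c W^{p_c-1}$ and $L_- = -\Delta - W^{p_c-1}$ are the usual real and imaginary parts of the linearized operator. The operator $L_-$ is nonnegative (it annihilates $W>0$, which is its ground state by Perron–Frobenius), so the imaginary part contributes a nonnegative term that is coercive once we quotient out $\mathrm{span}\{W\}$; the real subtlety lies entirely with $L_+$, which has exactly one negative eigenvalue (with a radial, exponentially decaying eigenfunction $\mathcal Y$) and a kernel in the radial sector spanned by $\tW = \frac{N-2}{2}W + x\cdot\nabla W$ (the scaling direction). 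These spectral facts are classical (Rey, or the references implicit in the paper); I would cite them.

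Next, I would reduce the claim to a spectral estimate. The directions removed in $H^\bot$ are $W$, $iW$ and $\tW$. Orthogonality to $iW$ in $\hdot$ kills the kernel direction $W$ of $L_-$, so $(L_-\tilde f_2, \tilde f_2) \gtrsim \|\tilde f_2\|_{\hdot}^2$ on that subspace by the spectral gap of $L_-$ above its ground state. Orthogonality to $\tW$ kills the (radial) kernel of $L_+$. The only remaining danger is the negative eigenvalue of $L_+$: its eigenfunction $\mathcal Y$ need not be orthogonal (in $\hdot$) to $W$, so orthogonality to $W$ alone does not obviously exclude it. Here I would use the extra information coming from the energy constraint: the whole point is that we only need $Q>0$ on vectors orthogonal to $W$, and one checks that $(L_+ W, W) = 2Q(W) < 0$; since $L_+$ has a single negative direction, the negative cone of $L_+$ is one-dimensional and $W$ lies in it, hence $L_+$ is nonnegative on $W^\bot$ in the $\hdot$ inner product — and in fact positive definite there modulo its kernel $\mathrm{span}\{\tW\}$ (plus possibly $\mathrm{span}\{x\cdot\nabla W, \partial_j W\}$ in the non-radial case, but we restrict to radial functions). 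This is the standard "one negative direction, and $W$ realizes it" argument; I would spell out that on $\{W,\tW\}^\bot \cap \{\text{radial}\}$ one has $(L_+ \tilde f_1,\tilde f_1) \ge c\|\tilde f_1\|_{\hdot}^2$.

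Then I would assemble the two pieces: for radial $\tilde f = \tilde f_1 + i \tilde f_2 \in H^\bot$, $2Q(\tilde f) = (L_+\tilde f_1,\tilde f_1) + (L_-\tilde f_2,\tilde f_2) \ge c_1\|\tilde f_1\|_{\hdot}^2 + c_2\|\tilde f_2\|_{\hdot}^2 \ge \min(c_1,c_2)\|\tilde f\|_{\hdot}^2$, giving $\tilde c = \frac12\min(c_1,c_2)$. The one technical point requiring care is that $L_+$ and $L_-$ are naturally operators associated to the $L^2$ (not $\hdot$) inner product, so "coercive on a subspace" must be phrased correctly: $W^{p_c-1}$ decays like $|x|^{-4}$ and, by Hardy/Sobolev, $g\mapsto \int W^{p_c-1}|g|^2$ is a compact quadratic form on $\hdot$; thus $Q$ is a compact perturbation of $\frac12\|\cdot\|_{\hdot}^2$, and the infimum of $Q$ over the unit sphere of $H^\bot$ (restricted to radial functions) is attained. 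If it were $\le 0$, a minimizer $\tilde f_0$ would satisfy an Euler–Lagrange equation forcing $L_+ (\tilde f_0)_1$ and $L_- (\tilde f_0)_2$ to lie in $H$ up to a multiplier, and one would derive a contradiction with the spectral description above (the would-be minimizer would have to be built from the negative/zero modes, all of which have been projected out). The main obstacle, and the part deserving the most care, is precisely this last step: correctly handling the negative eigenvalue of $L_+$ and verifying that orthogonality to $W$ (rather than to $\mathcal Y$) suffices to defeat it — i.e., the "$W$ lies in the negative cone" computation $Q(W)<0$ from \eqref{valeursQ}, together with the fact that $L_+$ has exactly one negative direction. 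Everything else is a routine compactness argument using the compact embedding implicit in Hardy's inequality.
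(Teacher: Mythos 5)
Your overall architecture matches the paper's: write $\tilde f=\tilde f_1+i\tilde f_2$, split $Q$ into the $L_+$-piece on $\tilde f_1$ and the $L_-$-piece on $\tilde f_2$, establish coercivity of each on the appropriate orthogonal complement, and use the compactness of the lower-order term in $\hdot$. The paper treats $L_-$ with a direct H\"older/Sobolev computation rather than a Perron--Frobenius ground-state argument, and, more importantly, it disposes of $L_+$ in one line by citing Rey's appendix for the coercivity inequality on $\{W,\tW\}^{\perp_{\hdot}}$, whereas you try to \emph{derive} that inequality from the Morse-index-one statement. That is a legitimate variant, but it has one genuine gap.

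The gap is in the step ``$L_+$ has exactly one negative direction, $W$ realizes it since $(L_+W,W)_{L^2}=2Q(W)<0$, hence $L_+\ge 0$ on $W^{\perp_{\hdot}}$.'' This implication is \emph{false} as stated: if a symmetric form $B$ has Morse index one and $B(w,w)<0$, what follows is that $B\ge 0$ on $\{v:\ B(w,v)=0\}$, i.e.\ on the $B$-orthogonal of $w$, \emph{not} on the orthogonal of $w$ for some unrelated inner product. (Take $B(x,y)=-x_1y_1+x_2y_2$ on $\RR^2$, $w=(1,0)$, and the line spanned by $(1,\eps)$: it is a complement of $\RR w$ on which $B<0$.) What saves your argument -- and what you must add -- is the identity
$$
L_+W=-\Delta W-p_cW^{p_c}=(1-p_c)W^{p_c}=(p_c-1)\Delta W,
$$
which upon integration by parts gives $(L_+W,v)_{L^2}=-(p_c-1)(W,v)_{\hdot}$. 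Thus for this specific $W$ the $B_+$-orthogonal of $W$ and the $\hdot$-orthogonal of $W$ coincide, and only then does the Morse-index argument give $(L_+\tilde f_1,\tilde f_1)\ge 0$ on $H^{\perp}$; combined with the radial kernel $\vect\{\tW\}$ and the compactness you invoke, this yields the strict coercivity. Without this identity your conclusion ``nonnegative on $W^{\perp}$ in the $\hdot$ inner product'' is an unsupported leap. Fill that in and the rest of the argument is sound and equivalent to the paper's (which, to repeat, avoids the point entirely by citing the inequality from Rey rather than re-deriving it).
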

\begin{proof}
Let $\tilde{f}_1:=\re \tilde{f}$, $\tilde{f}_2:=\im \tilde{f}$. We have 
$$Q(\tilde{f})=\frac{1}{2}\int_{\RR^N} |\nabla \tilde{f}_1|^2-\frac{p_c}{2}\int_{\RR^N}W^{p_c-1}|\tilde{f}_1|^2+\frac{1}{2}\int_{\RR^N} |\nabla \tilde{f}_2|^2-\frac{1}{2}\int_{\RR^N}W^{p_c-1}|\tilde{f}_2|^2.$$ 
The inequality
$$ \exists c_1>0,\;\forall \tilde{f}_1\in \{W,W_1\}^{\bot},\quad\frac{1}{2}\int_{\RR^N} |\nabla \tilde{f}_1|^2-\frac{p_c}{2}\int_{\RR^N}W^{p_c-1}|\tilde{f}_1|^2\geq c_1 \int_{\RR^N} |\nabla \tilde{f}_1|^2$$
is known. We refer to \cite[Appendix D]{Re90} for a proof in a slightly different context, but which readily extends to our case. It remains to show
\begin{equation}
\label{im.coer}
\exists c_2>0,\;\forall \tilde{f}_2\in \hdot, \quad \tilde{f}_2\bot W\Longrightarrow \frac{1}{2}\int_{\RR^N} |\nabla \tilde{f}_2|^2-\frac{1}{2}\int_{\RR^N}W^{p_c-1}|\tilde{f}_2|^2\geq c_2\int_{\RR^N} |\nabla \tilde{f}_2|^2.
\end{equation}
Indeed by H\"older and Sobolev inequality we have, for any real-valued $v\in \hdot$
\begin{align*}
\int_{\RR^N} |\nabla v|^2-\int_{\RR^N}W^{p_c-1}v^2&\geq \int_{\RR^N} |\nabla v|^2-\left(\int_{\RR^N} W^{p_c+1}\right)^{\frac{p_c-1}{p_c+1}}\left(\int v^{p_c+1}\right)^{\frac{2}{p+1}}\\
&\geq \left\{1-\left(\frac{1}{C_N^N}\right)^{\frac{p_c-1}{p_c+1}}C_N^2\right\}\int_{\RR^N} |\nabla v|^2\geq 0,
\end{align*}
with equality if and only if $v\in \vect(W)$. This shows that $\int_{\RR^N} |\nabla \tilde{f}_2|^2-\int_{\RR^N}W^{p_c-1}|\tilde{f}_2|^2>0$ for $\tilde{f}_2\neq 0$, $\tilde{f}_2\bot W$. Noting that the quadratic form $\int_{\RR^N} |\nabla \cdot|^2-\int_{\RR^N}W^{p_c-1}|\cdot|^2$ is a compact perturbation of $\int_{\RR^N} |\nabla \cdot|^2$, it is easy to derive \eqref{im.coer}, using a straightforward compactness argument that we omit here.
\end{proof}

The following lemma, proven in Appendix \ref{Appendix.Decompo}, is a consequence of the Implicit Function Theorem.
\begin{lemma}
\label{lem.ortho}
There exists $\delta_0>0$ such that for all $f$ in $\hdot$ with $E(f)=E(W)$, $\dd(f)<\delta_0$, there exists a couple $(\theta,\mu)$ in $\RR\times (0,+\infty)$ with
$$ f_{[\theta,\mu]} \bot iW,\quad f_{[\theta,\mu]} \bot \tW.$$
The parameters $\theta$ and $\mu$ are unique in $\RR /_{\ds 2\pi\ZZ} \times \RR$, and the mapping $f\mapsto (\theta,\mu)$ is $C^1$.
\end{lemma}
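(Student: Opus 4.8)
The plan is to apply the Implicit Function Theorem to a suitable $C^1$ map whose zeros encode the two orthogonality conditions. First I would introduce the map
$$ \Psi\colon \hdot \times \RR \times (0,+\infty) \longrightarrow \RR^2, \qquad \Psi(f,\theta,\mu):=\Big( \big(f_{[\theta,\mu]},iW\big)_{\hdot},\; \big(f_{[\theta,\mu]},\tW\big)_{\hdot}\Big), $$
where $(\cdot,\cdot)_{\hdot}$ is the real scalar product $\int \nabla\cdot\nabla\cdot$ (taking real parts). The key starting observation is that $W$ is a fixed point of the modulation: $W_{[0,1]}=W$, and $W\bot iW$, $W\bot \tW$ (these three directions are mutually orthogonal by construction of $H$), so $\Psi(W,0,1)=(0,0)$. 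One checks that $\Psi$ is $C^1$ jointly in all variables; the only mild point is smoothness of $(\theta,\mu)\mapsto f_{[\theta,\mu]}$ as an $\hdot$-valued map, which follows from the fact that the dilation group acts continuously and the generator $\tW$ of dilations lies in $\hdot$ (for $W$ and nearby $f$).

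Next I would compute the partial differential of $\Psi$ with respect to $(\theta,\mu)$ at the base point $(W,0,1)$. Using \eqref{iW.W1}, $\frac{d}{d\theta}(e^{i\theta}W)_{\restriction\theta=0}=iW$ and $\frac{d}{d\mu}(W_{[\mu]})_{\restriction\mu=1}=-\tW$, so
$$ \frac{\partial \Psi}{\partial\theta}(W,0,1)=\Big((iW,iW)_{\hdot},\;(iW,\tW)_{\hdot}\Big)=\big(\|W\|_{\hdot}^2,\,0\big),\qquad \frac{\partial \Psi}{\partial\mu}(W,0,1)=\Big(-(\tW,iW)_{\hdot},\;-(\tW,\tW)_{\hdot}\Big)=\big(0,\,-\|\tW\|_{\hdot}^2\big), $$
again by the mutual orthogonality of $W,iW,\tW$. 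Hence the $2\times 2$ Jacobian $\partial_{(\theta,\mu)}\Psi(W,0,1)=\operatorname{diag}(\|W\|_{\hdot}^2,-\|\tW\|_{\hdot}^2)$ is invertible (one must only note $\tW\neq 0$, which is clear since $W$ is not scale-invariant). The Implicit Function Theorem then yields a neighborhood $\UUU$ of $W$ in $\hdot$ and a $C^1$ map $f\mapsto(\theta(f),\mu(f))$ on $\UUU$, with $(\theta(W),\mu(W))=(0,1)$, such that $\Psi(f,\theta(f),\mu(f))=0$, i.e.\ $f_{[\theta(f),\mu(f)]}\bot iW$ and $f_{[\theta(f),\mu(f)]}\bot\tW$; uniqueness in $\RR/2\pi\ZZ\times\RR$ near $(0,1)$ is part of the IFT conclusion.

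Finally I would transfer this local-in-$f$ statement to the desired form, where the hypothesis is only $E(f)=E(W)$ and $\dd(f)<\delta_0$. Here the input is the variational characterization recalled just before the lemma: for $f$ with $E(f)=E(W)$, $\inf_{\theta,\mu}\|f_{[\theta,\mu]}-W\|_{\hdot}\leq \eps(\dd(f))$ with $\eps(\delta)\to0$. So choose $\delta_0$ small enough that $\eps(\delta_0)$ is less than the IFT radius around $W$; then there exist $(\theta_1,\mu_1)$ with $f_{[\theta_1,\mu_1]}\in\UUU$, apply the IFT map to $g:=f_{[\theta_1,\mu_1]}$ to get $(\theta(g),\mu(g))$, and set $\theta:=\theta_1+\theta(g)$, $\mu:=\mu_1\mu(g)$ using the group law $g_{[\theta',\mu']}=f_{[\theta_1+\theta',\,\mu_1\mu']}$ (up to the harmless factor conventions in \eqref{def.modulations}); the composition is $C^1$ in $f$. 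The main obstacle — though a routine one — is making the uniqueness assertion global-looking: a priori the variational estimate only produces \emph{some} approximate $(\theta_1,\mu_1)$, so one should check that any two solutions of the orthogonality conditions that keep $f_{[\theta,\mu]}$ within the IFT neighborhood of $W$ coincide modulo $2\pi$ in $\theta$, which again follows from the local uniqueness in the IFT once $\delta_0$ is taken small enough that all competitors land in that neighborhood. The details of these standard manipulations are deferred to Appendix \ref{Appendix.Decompo}.
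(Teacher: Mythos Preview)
Your proposal is correct and follows essentially the same route as the paper's proof in Appendix~\ref{Appendix.Decompo}: define the map $\Psi=(J_0,J_1)$, compute the diagonal Jacobian $\operatorname{diag}(\|W\|_{\hdot}^2,-\|\tW\|_{\hdot}^2)$ at $(W,0,1)$ via \eqref{iW.W1}, invoke the Implicit Function Theorem near $W$, and then use the variational characterization of $W$ to reduce the general case $E(f)=E(W)$, $\dd(f)<\delta_0$ to this local one. Your discussion of the global uniqueness is slightly more explicit than the paper's, which simply attributes it to the IFT.
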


Let $u$ be a solution of \eqref{CP} on an interval $I$ such that $E(u_0)=E(W)$, and, on $I$, $\dd(u(t))<\delta_0$. According to Lemma \ref{lem.ortho}, there exist real parameters $\theta(t)$, $\mu(t)>0$ such that
\begin{align}
\label{decompo.v}
&u_{[\theta(t),\mu(t)]}(t)=(1+\alpha(t)) W+\tilde{u}(t),\\ \notag &\quad\text{where }1+\alpha(t)=\frac{1}{\|W\|_{\hdot}^2}\left(u_{[\theta(t),\mu(t)]},W\right)_{\hdot} \text{ and } \tilde{u}(t)\in H^{\bot}.
\end{align}
We define $v(t)$ by
$$ v(t):=\alpha(t)W+\tilde{u}(t)=u_{[\theta(t),\mu(t)]}(t)-W.$$
Recall that $\mu$, $\theta$ and $\alpha$ are $C^1$. 
If $a$ and $b$ are two positive quantities, we write $a\approx b$ when $C^{-1}a \leq b\leq Ca$ with a positive constant $C$ independent of all parameters of the problem. We will prove the following lemma, which is a consequence of Claim \ref{coercivity} and of the equation satisfied by $v$, in Appendix \ref{Appendix.Decompo}.
\begin{lemma}[Modulation for threshold solutions of \eqref{CP}]
\label{bound.modul}
Taking a smaller $\delta_0$ if necessary, we have the following estimates on $I$.
\begin{gather}
\label{approximations}
|\alpha(t)|\approx \|v(t)\|_{\hdot}\approx\|\tilde{u}(t)\|_{\hdot}\approx \dd(u(t))\\
\label{bound.derives}
|\alpha'(t)|+|\theta'(t)|+\left|\frac{\mu'(t)}{\mu(t)}\right| \leq C\mu^2(t)\dd(u(t)).
\end{gather}
Furthermore, $\alpha(t)$ and $\|u(t)\|^2_{\hdot}-\|W\|_{\hdot}^2$ have the same sign.
\end{lemma}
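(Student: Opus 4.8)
The strategy is to exploit the three pieces of structure we have at our disposal: the orthogonality conditions $\tilde u\bot iW$, $\tilde u\bot \tW$ from Lemma \ref{lem.ortho}; the coercivity of $Q$ on $H^\bot$ from Claim \ref{coercivity}; and the conservation of energy $E(u(t))=E(W)$, which after modulation translates into a scalar equation for $\alpha(t)$ and $\tilde u(t)$. The first step is to expand $E$ near $W$. Writing $u_{[\theta,\mu]}(t)=W+v(t)$ with $v=\alpha W+\tilde u$, the invariance of $E$ under the modulation symmetries gives $E(u(t))=E(u_{[\theta,\mu]}(t))=E(W+v(t))$, so by \eqref{W+g}, $0 = E(W+v)-E(W)=Q(v)+O(\|v\|_{\hdot}^3)$. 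Using $Q(W)=-\frac{2}{(N-2)C_N^N}<0$, $Q_{\restriction\vect\{iW,\tW\}}=0$ from \eqref{valeursQ}, the orthogonality of $W$, $iW$, $\tW$, the cross term $(g,h)\mapsto$ (polarization of $Q$) between $\alpha W$ and $\tilde u$, and the fact that $\tilde u\in H^\bot$, one gets $Q(v)=\alpha^2 Q(W)+Q(\tilde u)+2B(\alpha W,\tilde u)$ where $B$ is the symmetric bilinear form associated to $Q$; here $B(W,\tilde u)$ need not vanish since $\tilde u\bot W$ in the $\hdot$ inner product is \emph{not} imposed, but $\tilde u\bot iW,\tW$ kills the relevant components and $B(W,\tilde u)$ is linear in $\tilde u$, hence $O(\|\tilde u\|_{\hdot})$. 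Combining with Claim \ref{coercivity}, which gives $Q(\tilde u)\ge \tilde c\|\tilde u\|_{\hdot}^2$ (applicable since $\tilde u$ is radial and in $H^\bot$), the energy identity reads
$$
\tilde c \|\tilde u\|_{\hdot}^2 \le Q(\tilde u) = -\alpha^2 Q(W) - 2B(\alpha W,\tilde u) + O(\|v\|_{\hdot}^3) = |Q(W)|\,\alpha^2 + O\big(|\alpha|\,\|\tilde u\|_{\hdot}\big)+O(\|v\|_{\hdot}^3).
$$
Absorbing the cross term by Young's inequality and taking $\delta_0$ small (so that $\|v\|_{\hdot}$ is small, which follows from the variational characterization quoted before Lemma \ref{lem.ortho}) yields $\|\tilde u(t)\|_{\hdot}\lesssim |\alpha(t)|$. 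The reverse bound $|\alpha|\lesssim\|\tilde u\|_{\hdot}$ would be false in general, so instead one pairs the energy identity the other way: $|Q(W)|\alpha^2 \le Q(\tilde u)+O(|\alpha|\|\tilde u\|)+O(\|v\|^3) \lesssim \|\tilde u\|_{\hdot}^2 + |\alpha|\|\tilde u\|_{\hdot}$; combined with $\|\tilde u\|\lesssim|\alpha|$ this is $\alpha^2\lesssim |\alpha|\,\|\tilde u\|_{\hdot}$, hence $|\alpha|\lesssim\|\tilde u\|_{\hdot}$, and the two are comparable. Finally $\|v\|_{\hdot}^2=\alpha^2\|W\|_{\hdot}^2+\|\tilde u\|_{\hdot}^2$ (using $\tilde u\bot W$? no—$W\in H$ and $\tilde u\in H^\bot$, so indeed $\alpha W\bot\tilde u$), so $\|v\|_{\hdot}\approx|\alpha|\approx\|\tilde u\|_{\hdot}$; and $\big|\|u(t)\|_{\hdot}^2-\|W\|_{\hdot}^2\big|=\big|2\alpha\|W\|_{\hdot}^2+\|v\|_{\hdot}^2\big|$, which for small $\delta_0$ is $\approx|\alpha|$ and has the sign of $\alpha$, giving \eqref{approximations} and the last sentence of the lemma.

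For \eqref{bound.derives} the plan is to differentiate the decomposition \eqref{decompo.v} in $t$ and use the equation. Set $\tilde u$-equation: since $u$ solves \eqref{CP}, $w(t):=u_{[\theta(t),\mu(t)]}(t)$ satisfies a modulated equation $i\partial_t w = \mu^2\big(-\Delta w - |w|^{p_c-1}w\big) + i\theta' w + i\frac{\mu'}{\mu}\big(\tfrac{N-2}{2}w + x\cdot\nabla w\big)$ (the precise rescaled form; note $x\cdot\nabla w+\tfrac{N-2}{2}w = -\tW$-type generator acting on $w$). Writing $w=W+v$ and linearizing, $i\partial_t v = \mu^2 L v + i\theta'(W+v) + i\frac{\mu'}{\mu}(\text{generator})(W+v) + \mu^2 R(v)$ where $L$ is the linearized operator at $W$ and $R(v)=O(\|v\|^2)$ in a suitable sense. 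Taking the $\hdot$ inner product of this equation against the fixed directions $W$, $iW$, $\tW$ and using the orthogonality conditions $\tilde u\bot iW,\tW$ (which also control $\frac{d}{dt}(\tilde u, iW)_{\hdot}=0$ etc., producing the modulation equations), one isolates $\theta'$ and $\mu'/\mu$ as linear combinations of inner products that are all $O(\mu^2\|v\|_{\hdot})=O(\mu^2\dd(u))$; and $\alpha'$ is obtained from $(1+\alpha)=\frac{1}{\|W\|_{\hdot}^2}(w,W)_{\hdot}$ differentiated, giving $\alpha'$ in terms of $(\partial_t v, W)_{\hdot}$, again $O(\mu^2\dd(u))$ after using the equation and the bounds just derived (one must check the "almost-orthogonality" matrix — the Gram-type matrix of $\{W,iW,\tW\}$ against the generators appearing in the equation — is invertible, which holds because the generators of the symmetry group are $iW$ and $-\tW$ up to the rescaling factor, so the matrix is essentially diagonal and nondegenerate). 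Rescaled quantities bring in exactly one factor $\mu^2$, explaining the $\mu^2(t)$ on the right-hand side.

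The main obstacle I expect is \eqref{bound.derives}: organizing the modulation equations cleanly. One has to (i) write the equation for $v$ in the moving frame without sign or factor errors, keeping track of where the $\mu^2$ appears; (ii) verify that the linear system for $(\alpha',\theta',\mu'/\mu)$ obtained by testing against $W$, $iW$, $\tW$ is uniformly invertible for $\delta_0$ small — this is where the structure $iW=\frac{d}{d\theta}(e^{i\theta}W)|_0$ and $\tW=-\frac{d}{d\lambda}(W_{[\lambda]})|_1$ from \eqref{iW.W1} is essential, since it makes the "diagonal" entries of the system nonzero ($\|iW\|_{\hdot}^2$ and $\|\tW\|_{\hdot}^2$) while the off-diagonal couplings to $v$ are small; and (iii) bound the nonlinear remainder $R(v)$ in $\hdot$ using the algebra/Sobolev structure of the nonlinearity $|u|^{p_c-1}u$ near $W$, which for $N=3,4,5$ (so $p_c\le 5$, and $p_c-1\ge 1$ only when... for $N=5$, $p_c=7/3$, so $p_c-1=4/3<2$ — the nonlinearity is not $C^2$ and some care with fractional powers is needed) requires the standard but slightly delicate estimates on $|a+b|^{p_c-1}(a+b)-|a|^{p_c-1}a - (\text{linearization})$. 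By contrast \eqref{approximations} and the sign statement are essentially algebraic once the energy expansion and Claim \ref{coercivity} are in hand. Since the excerpt defers this lemma to Appendix \ref{Appendix.Decompo}, the intended proof is exactly this modulation-theoretic computation, and I would present it there in that order: energy expansion $\Rightarrow$ \eqref{approximations} and sign; then moving-frame equation $\Rightarrow$ invertible linear system $\Rightarrow$ \eqref{bound.derives}.
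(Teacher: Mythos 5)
Your proposal follows the same overall route as the paper: expand the conserved energy near $W$ using \eqref{W+g} and the coercivity of $Q$ on $H^{\bot}$ (Claim \ref{coercivity}) to get \eqref{approximations} and the sign statement, then pass to self-similar variables $ds=\mu^2\,dt$, project the moving-frame equation on $W,iW,\tW$, and invert the resulting linear system to get \eqref{bound.derives}. The $\mu^2$ factor in \eqref{bound.derives} is indeed explained exactly as you say. So the structure matches.

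The one place where you lose efficiency, and where your text contradicts itself, is the cross term $\BB(\alpha W,\tilde u)$. You first assert that $\tilde u\bot W$ in $\hdot$ is ``not imposed'' and therefore $\BB(W,\tilde u)$ ``need not vanish,'' then later (when computing $\|v\|_{\hdot}^2$) correctly notice that $W\in H$ and $\tilde u\in H^{\bot}$ so $\alpha W\bot\tilde u$ after all. In fact the $\hdot$-orthogonality $\tilde u\bot W$ is built into the decomposition \eqref{decompo.v}, since the coefficient $1+\alpha$ is defined exactly to be the $\hdot$-projection of $u_{[\theta,\mu]}$ on $W$. Moreover, this $\hdot$-orthogonality forces the $Q$-orthogonality to hold \emph{exactly}: from $-\Delta W=W^{p_c}$ one has, for any real-valued $h_1$, $\int W^{p_c}h_1=\int\nabla W\cdot\nabla h_1$, so $\BB(W,h)=\tfrac12\int\nabla W\cdot\nabla h_1-\tfrac{p_c}{2}\int W^{p_c}h_1=\tfrac{1-p_c}{2}\int\nabla W\cdot\nabla h_1=0$ whenever $(h,W)_{\hdot}=0$. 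Hence $Q(v)=-|Q(W)|\alpha^2+Q(\tilde u)$ with no cross term, and the two-sided comparison $|\alpha|\approx\|\tilde u\|_{\hdot}$ drops out without any Young-type absorption. Your absorption argument does work, but it is a roundabout way of recovering something that holds identically; keeping the exact cancellation is cleaner and is what the paper does. A second, smaller point: your worry about the nonlinearity not being $C^2$ when $N=5$ is unfounded, since $p_c=7/3>2$ for all $N\in\{3,4,5\}$, so the function $J$ underlying Lemma \ref{lem.R} is $C^2$ on $\CC$; the estimate $\|R(v)\|_{L^{2N/(N+2)}}\le C(\|v\|_{\hdot}^2+\|v\|_{\hdot}^{p_c})$ that the projections require is exactly what Lemma \ref{lem.R} delivers.
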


\subsection{Non-oscillatory behavior near $W_{[\theta_0,\mu_0]}$}
\label{sub.CVseq}
\begin{lemma}
\label{lem.CV0.seq}
Let $(t_{0n})_n$ and $(t_{1n})_n$, $t_{0n}<t_{1n}$, be $2$ real sequences, $(u_n)_n$ a sequence of radial solutions of \eqref{CP} on $\left[t_{0n},t_{1n}\right]$ such that $u_n\left(t_{1n}\right)$ fullfills assumptions \eqref{hyp.sub} and \eqref{hyp.noscatter}, and  $(\lambda_n)_{n\in\NN}$ a sequence of positive functions such that the set:
$$\widetilde{K}=\left\{(u_n(t))_{[\lambda_n(t)]},\;n\in \NN,\;t\in (t_{1n},t_{2n})\right\}$$
is relatively compact in $\hdot$. Assume
\begin{equation}
\label{delta.un.hypo}
\lim_{n\rightarrow+\infty}\dd(u_n(t_{0n}))+\dd(u_n(t_{1n}))=0.
\end{equation}
Then
\begin{equation}
\label{delta.un.conc}
\lim_{n\rightarrow+\infty}\, \left\{\sup_{t\in(t_{0n},t_{1n})}\dd\big(u_n(t)\big)\right\} =0.
\end{equation}
\end{lemma}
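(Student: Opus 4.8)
The plan is to argue by contradiction, using the relative compactness of $\widetilde K$ to extract a limiting solution that is compact up to scaling on \emph{all} of $\RR$, and then to show that the only such radial subcritical threshold solution is $W$.

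\emph{Step 1 (reduction).} Suppose \eqref{delta.un.conc} fails. After passing to a subsequence there are $\eps_0\in(0,\delta_0)$ ($\delta_0$ as in Lemma \ref{bound.modul}) and times $\tau_n\in(t_{0n},t_{1n})$ with $\dd(u_n(\tau_n))\geq\eps_0$; taking $\tau_n$ to be the first time in the interval at which $\dd(u_n)=\eps_0$, we have $\dd(u_n(t))\le\eps_0<\delta_0$ on $[t_{0n},\tau_n]$. By the scaling and time-translation invariances of \eqref{CP} we may assume $\tau_n=0$ and $\lambda_n(0)=1$, all hypotheses (with $\lambda_n$ adjusted accordingly) being preserved. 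Then $u_n(0)=\bigl(u_n(0)\bigr)_{[\lambda_n(0)]}\in\widetilde K$, so along a further subsequence $u_n(0)\to u_\infty(0)$ in $\hdot$ with $u_\infty(0)$ radial, $E(u_\infty(0))=E(W)$ and $\|u_\infty(0)\|_{\hdot}^2\le\|W\|_{\hdot}^2-\eps_0<\|W\|_{\hdot}^2$. By Proposition \ref{compactness.u} the solution $u_\infty$ issued from $u_\infty(0)$ is global, and $\dd(u_\infty(0))\ge\eps_0$; by Lemma \ref{lem.CP}\eqref{CPcontinuity}, $u_n\to u_\infty$ in $C([-T,T];\hdot)$ for every $T$ with $[-T,T]\subset(t_{0n},t_{1n})$ for $n$ large.

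\emph{Step 2 (the interval is long, and $u_\infty$ is compact on $\RR$).} If, along a subsequence, $t_{1n}\to b\in[0,\infty)$, then $u_n(t_{1n})\to u_\infty(b)$ in $\hdot$, hence $\dd(u_\infty(b))=\lim_n\dd(u_n(t_{1n}))=0$; since also $E(u_\infty(b))=E(W)$, the Aubin--Talenti characterization \eqref{CarW} (i.e.\ case \eqref{theo.crit} of Theorem \ref{th.classif}, cf.\ Remark \ref{RemPersist}) forces $u_\infty=W$ up to the symmetries of the equation, contradicting $\dd(u_\infty(0))\ge\eps_0$; the same applies if $t_{0n}$ is bounded. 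Hence $t_{0n}\to-\infty$, $t_{1n}\to+\infty$, and $u_n\to u_\infty$ in $C([-T,T];\hdot)$ for every $T>0$. The relative compactness of $\widetilde K$ transfers to $u_\infty$: every element of $\widetilde K$ has energy $E(W)$, hence $\hdot$-norm $\ge\sqrt{2E(W)}$, so $\widetilde K$ is compact and bounded away from $0$; for each fixed $t$, $\bigl(u_n(t)\bigr)_{[\lambda_n(t)]}\in\widetilde K$ for $n$ large while $u_n(t)\to u_\infty(t)\ne0$, which forces $\lambda_n(t)$ to remain in a compact subset of $(0,\infty)$ (otherwise $(u_n(t))_{[\lambda_n(t)]}$ would converge both weakly to $0$ and strongly to a nonzero limit). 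A diagonal extraction over a countable dense set of times, together with the continuity of $t\mapsto u_\infty(t)$, then yields $\lambda_\infty:\RR\to(0,\infty)$ with $\bigl(u_\infty(t)\bigr)_{[\lambda_\infty(t)]}\in\overline{\widetilde K}$ for all $t$; in particular $\{(u_\infty(t))_{[\lambda_\infty(t)]}:t\in\RR\}$ is relatively compact in $\hdot$.

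\emph{Step 3 (rigidity — the main obstacle).} It remains to rule out a radial subcritical threshold solution $u_\infty$ that is compact up to scaling on all of $\RR$ and satisfies $\dd(u_\infty(0))\ge\eps_0>0$. Such a $u_\infty$ cannot scatter in either time direction (scattering gives $u_\infty(t)\approx e^{it\Delta}u_+\rightharpoonup 0$, incompatible with compactness up to scaling), so Lemma \ref{lem.delta.mean} and its time-reversed analogue give $\frac1T\int_{-T}^{T}\dd(u_\infty(t))\,dt\to0$, and in particular there are $s_k^{\pm}\to\pm\infty$ with $\dd(u_\infty(s_k^{\pm}))\to0$. On the region where $\dd(u_\infty)<\delta_0$ we use the decomposition $u_{\infty,[\theta,\mu]}=(1+\alpha)W+\tilde u$ of Lemma \ref{bound.modul}, where $|\alpha|\approx\dd(u_\infty)$, $|\alpha'|\le C\mu^2\dd(u_\infty)$, and $\mu\approx\lambda_\infty$ by Step 2. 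Feeding these into the virial functional $G_R(t)=2\im\int\overline{u_\infty(t)}\,\nabla u_\infty(t)\cdot\nabla\varphi_R$ of Lemma \ref{lem.delta.mean} — for which $|G_R(t)|\le C_*R^2$ while $G_R'(t)\ge\frac{16}{N-2}\dd(u_\infty(t))-\eps$ as soon as $R\lambda_\infty(t)\ge\rho_\eps$ — one shows that $\dd(u_\infty)$ can neither stay $\ge\eps_0/2$ on a time interval long in the rescaled time $\int\mu^2$, nor travel from $o(1)$ up to $\eps_0$ and back without making that rescaled time long; this forces $\dd(u_\infty)\equiv0$, hence $u_\infty=W$ up to symmetry by \eqref{CarW}, contradicting $\dd(u_\infty(0))\ge\eps_0$ and proving \eqref{delta.un.conc}. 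I expect this last step to be the delicate part: the integral identities above only yield Cesàro convergence of $\dd(u_\infty)$ along $\pm\infty$, and ruling out genuine oscillation of $\dd(u_\infty)$ requires playing the boundedness of $G_R$ against the coercivity of $G_R'$ and the modulation bounds with a carefully matched choice of $R$ and of the cut-off level $\eps$.
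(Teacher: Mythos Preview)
Your Steps~1 and~2 are essentially correct and follow a natural strategy: pass to a limiting radial threshold solution $u_\infty$ that is globally defined, compact up to scaling on all of $\RR$, and has $\dd(u_\infty(0))\ge\eps_0>0$. (The non-scattering argument is a bit loose as written, but it is easily fixed: along $t_k\to+\infty$ extract a subsequential limit $v_0\in\overline{\widetilde K}\setminus\{0\}$ of $(u_\infty(t_k))_{[\lambda_\infty(t_k)]}$ and observe that scattering would force $\|e^{is\Delta}v_0\|_{S(0,\infty)}=0$.)

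The difficulty is Step~3, which you yourself flag as ``the delicate part'' and leave as a heuristic. What you are trying to prove there is exactly Corollary~\ref{corol.incomp}, and in the paper that corollary is obtained \emph{after} Lemma~\ref{lem.CV0.seq}, via Proposition~\ref{prop.CV0}; so you cannot invoke it. To carry out Step~3 directly you need two refinements of the virial identity beyond those of Lemma~\ref{lem.delta.mean}: the bound $|G_R(t)|\le CR^2\dd(u_\infty(t))$ (not merely $C_*R^2$), obtained by expanding $u_\infty=W_{[\theta,\mu]}+v$ and using that $W$ is real; and a uniform lower bound $G_R'(t)\ge \frac{8}{N-2}\dd(u_\infty(t))$ for $R$ large, which requires controlling the remainder $A_R$ both when $\dd$ is small (via the modulation decomposition) and when it is not (via compactness). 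Integrating the second against the first between times $s_k^-$ and $s_k^+$ where $\dd\to0$ then gives $\int_\RR\dd(u_\infty(t))\,dt=0$, hence $\dd\equiv0$, the desired contradiction. This is precisely the content of Claim~\ref{claim.virial.seq} in the paper, and it is the real work; your sketch (``$\dd$ can neither stay $\ge\eps_0/2$ long in rescaled time nor oscillate'') does not supply it.

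The paper organizes things differently: it proves Claim~\ref{claim.virial.seq} directly for the \emph{sequence} $(u_n)$ (after the harmless normalization $\inf\lambda_n=1$), and then argues as follows. First (Claim~\ref{keyclaim}) any time $t_n\in(t_{0n},t_{1n})$ with $\lambda_n(t_n)$ bounded must have $\dd(u_n(t_n))\to0$, because otherwise a subsequential limit $v$ would satisfy $\int_0^1\dd(v(s))\,ds>0$, contradicting $\int_{t_{0n}}^{t_{1n}}\dd(u_n)\to0$ from Claim~\ref{claim.virial.seq}. Then, assuming $\sup\dd(u_n)\ge\delta_1$, one finds $a_n$ with $\dd(u_n(a_n))=\delta_2$ and $\dd<\delta_2$ on $(a_n,b_n)$ for some $b_n$ with $\lambda_n(b_n)\to1$; the modulation estimate $|\mu_n'/\mu_n^3|\le C\dd$ integrated with Claim~\ref{claim.virial.seq} forces $\mu_n$ (hence $\lambda_n$) to stay bounded on $(a_n,b_n)$, so Claim~\ref{keyclaim} gives $\dd(u_n(a_n))\to0$, a contradiction.

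In short: your route is viable but not shorter---you have relocated the hard step (the refined virial estimate of Claim~\ref{claim.virial.seq}) rather than avoided it, and once that estimate is in hand the paper's direct argument on the sequence is more economical than passing to a limit solution.
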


\begin{remark}
Let $u$ be a solution of \eqref{CP} satisfying the assumptions of Proposition \ref{prop.CV0}, and $\lambda(t)$ the parameter given by Proposition \ref{compactness.u}. Let $(t_n)$ be a sequence, given by Corollary \ref{corol.subseq}, such that $\dd(u(t_n))$ tends to $0$. Then the assumptions of the preceding proposition are fullfilled with $u_n=u$, $\lambda_n=\lambda$, $t_{0n}=t_n$, $t_{1n}=t_{n+1}$.
\end{remark}
Under the assumptions of Lemma \ref{lem.CV0.seq}, if $n$ is large enough so that $\dd(u_n(t))<\delta_0$ on the interval $(t_{0n},t_{1n})$, we will denote by $\theta_n(t)$, $\mu_n(t)$ and $\alpha_n(t)$ the parameters of decomposition \eqref{decompo.v}
\begin{equation}
\label{defmun}
\big(u_n(t)\big)_{[\theta_n(t),\mu_n(t)]}=\big(1+\alpha_n(t)\big)W+\tilde{u}_n(t).
\end{equation}
Then we can complete Lemma \ref{lem.CV0.seq} by the following.
\begin{lemma}
\label{lem.mu.seq}
Under the assumptions of Lemma \ref{lem.CV0.seq},
\begin{equation}
\label{mu.seq.conclu}
\lim_{n\rightarrow+\infty} \frac{\sup_{t\in(t_{0n},t_{1n})} \mu_n(t)}{\inf_{t\in(t_{0n},t_{1n})}\mu_n(t)}=1.
\end{equation}
\end{lemma}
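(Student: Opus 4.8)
The plan is to reduce \eqref{mu.seq.conclu}, via the modulation estimate \eqref{bound.derives}, to the integrated decay $I_n:=\int_{t_{0n}}^{t_{1n}}\mu_n^2(t)\,\dd(u_n(t))\,dt\to0$, and then to prove that by a truncated virial argument carried out in the frame attached to $W$. For $n$ large, $\dd(u_n(t))<\delta_0$ on $(t_{0n},t_{1n})$ by Lemma \ref{lem.CV0.seq}, so $\theta_n,\mu_n,\alpha_n$ from \eqref{defmun} are well defined and $C^1$, and \eqref{bound.derives} gives $|(\log\mu_n)'(t)|\le C\mu_n^2(t)\,\dd(u_n(t))$; since $\sup_{(t_{0n},t_{1n})}\log\mu_n-\inf_{(t_{0n},t_{1n})}\log\mu_n\le\int_{t_{0n}}^{t_{1n}}|(\log\mu_n)'|\le C\,I_n$, it is enough to show $I_n\to0$, for then $\sup\mu_n/\inf\mu_n\le e^{C I_n}\to1$.

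Next I would pass to the rescaled time $s$ with $ds/dt=\mu_n^2(t)$ and $s=0$ at $t_{0n}$, put $S_n:=\int_{t_{0n}}^{t_{1n}}\mu_n^2$, and set $g_n(s):=\big(u_n(t)\big)_{[\theta_n(t),\mu_n(t)]}=W+v_n(s)$; a direct computation shows $i\partial_s g_n+\Delta g_n+|g_n|^{p_c-1}g_n=-(\theta_n)_s\,g_n-i\frac{(\mu_n)_s}{\mu_n}\Lambda g_n$, with $\Lambda f:=\frac{N-2}{2}f+y\cdot\nabla f$ and $\Lambda W=\tW$. By Lemma \ref{lem.CV0.seq} and \eqref{approximations}, $\dd(u_n(s))\approx\|v_n(s)\|_{\hdot}\le\eta_n$ with $\eta_n\to0$, uniformly in $s$; moreover $I_n=\int_0^{S_n}\dd(u_n(s))\,ds$, and since $E(u_n)=E(W)$ with $\|u_n(t)\|_{\hdot}<\|W\|_{\hdot}$ throughout (Remark \ref{RemPersist}), $\int|\nabla g_n|^2-\int|g_n|^{2^*}=\frac{2}{N-2}\dd(u_n)$.

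Then I run the virial. With $\varphi_R(x)=R^2\varphi(x/R)$ as in the proof of Lemma \ref{lem.delta.mean}, let $\tilde G_R(s):=2\im\int\overline{g_n(s)}\,\nabla g_n(s)\cdot\nabla\varphi_R$. The key points, all consequences of $g_n=W+v_n$ with $\|v_n\|$ small, are: (i) $\tilde G_R(W)=0$ (real integrand) and, by the virial identity $G_R'=8(\int|\nabla u|^2-\int|u|^{2^*})+A_R(u)$ applied to the stationary solution $W$ together with $\int|\nabla W|^2=\int W^{2^*}$, $A_R(W)=0$; hence $|\tilde G_R(s)|\le C_R\|v_n(s)\|_{\hdot}\le C_R\eta_n$, and, expanding $A_R$ about $W$, $|A_R(g_n(s))|\le\beta(R)\,\dd(u_n(s))+C\eta_n\,\dd(u_n(s))$ with $\beta(R)\to0$ as $R\to\infty$ (the $W$–contribution vanishes by (i), the cross terms carry a positive power of $\int_{|x|\ge R}W^{2^*}$ or $\int_{|x|\ge R}|\nabla W|^2$, the higher-order terms carry $\|v_n\|\le\eta_n$); (ii) the modulation corrections to $\frac{d}{ds}\tilde G_R$ produced by $-(\theta_n)_s g_n-i\frac{(\mu_n)_s}{\mu_n}\Lambda g_n$ cancel or vanish at $g_n=W$ (imaginary parts of real quantities, and the pair $\pm(\theta_n)_s\int|W|^2\Delta\varphi_R$), so they are bounded by $C_R\big(|(\theta_n)_s|+|\frac{(\mu_n)_s}{\mu_n}|\big)\|v_n\|_{\hdot}\le C_R\eta_n\,\dd(u_n)$ thanks to \eqref{bound.derives}; (iii) inserting these into $\frac{d}{ds}\tilde G_R(s)=\frac{16}{N-2}\dd(u_n(s))+A_R(g_n(s))+(\text{modulation corrections})$ yields $\frac{d}{ds}\tilde G_R(s)\ge\big(\frac{16}{N-2}-\beta(R)-C'\eta_n\big)\dd(u_n(s))$. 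Fixing $R$ with $\beta(R)\le\frac{4}{N-2}$ and then $n$ large so $C'\eta_n\le\frac{4}{N-2}$, we get $\frac{d}{ds}\tilde G_R\ge\frac{8}{N-2}\dd(u_n)$ on $[0,S_n]$, whence $\frac{8}{N-2}I_n\le\tilde G_R(S_n)-\tilde G_R(0)\le 2C_R\eta_n\to0$. This proves $I_n\to0$, and with the first paragraph, \eqref{mu.seq.conclu}.

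The crux — and the reason this produces a bound uniform in $n$ with \emph{no} control on the (possibly unbounded) length $S_n$ of the rescaled interval — is that every error term in $\frac{d}{ds}\tilde G_R$ is small \emph{relative to the main term $\dd(u_n)$}: the truncation error because $A_R(W)=0$, the modulation errors because of the cancellations at $g_n=W$; and simultaneously $\tilde G_R$ itself is $O(\|v_n\|_{\hdot})$ rather than $O(R^2)$ because $\tilde G_R(W)=0$. So the hard part is the careful verification of (i) and (ii) with the explicit cut-off $\varphi_R$, the proximity $g_n\to W$ furnished by Lemma \ref{lem.CV0.seq} being exactly what licenses all three expansions.
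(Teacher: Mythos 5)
Your proof is correct, but it takes a genuinely different route from the paper's. The paper simply reuses Claim \ref{claim.virial.seq}: after the scaling normalization $\inf_t\lambda_n(t)=1$, relation \eqref{mu=lambda} gives $C^{-1}\le\inf_t\mu_n(t)\le C$, Claim \ref{claim.virial.seq} gives $\int_{t_{0n}}^{t_{1n}}\dd(u_n)\,dt\to0$, and integrating $|\mu_n'/\mu_n^3|\le C\dd(u_n)$ between the argmin $a_n$ and the argmax $b_n$ of $\mu_n$ yields $|\mu_n^{-2}(a_n)-\mu_n^{-2}(b_n)|\to0$, whence $\mu_n(b_n)$ is bounded and the ratio goes to $1$. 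You instead work scale-invariantly from the outset: you write $|(\log\mu_n)'|\le C\mu_n^2\dd(u_n)$ and reduce to the (strictly stronger) statement $I_n:=\int\mu_n^2\dd(u_n)\,dt\to0$, which you then establish by a fresh truncated virial in the modulated frame $g_n=W+v_n$ at rescaled time $s$ with $ds=\mu_n^2\,dt$. The structure of that argument is sound: the quantity $\tilde G_R$ vanishes at $g_n=W$, so the boundary terms are $O(C_R\eta_n)$ rather than $O(R^2)$; $A_R(W)=0$ turns the truncation error into $\big(\beta(R)+C\eta_n\big)\dd(u_n)$ with $\beta(R)\to0$ (this is exactly the paper's estimate \eqref{ARforlater} with $\mu_0=1$); the $\theta_s$-contribution cancels identically and the $\mu_s/\mu$-contribution vanishes at $W$, giving $O\big(|\mu_s/\mu|\,\|v_n\|_{\hdot}\big)=O(\eta_n\dd)$ via $|\mu_s/\mu|=|\mu_n'/\mu_n^3|\le C\dd$ from Lemma \ref{bound.modul}. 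Fixing $R$ first and then sending $n\to\infty$ makes all $R$-dependent constants (which do grow with $R$, as you note implicitly) harmless. What you gain is independence from the normalization \eqref{minor.lambda} and \eqref{mu=lambda}, and a cleaner conceptual statement ($\int\mu_n^2\dd\to0$ directly controls $\log\mu_n$); what the paper gains is brevity, since Claim \ref{claim.virial.seq} has already done the virial work once. Note you still rely on the conclusion of Lemma \ref{lem.CV0.seq} (hence indirectly on Claim \ref{claim.virial.seq}) to get $\eta_n\to0$ uniformly, so the two approaches are not wholly independent.
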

Using the scaling invariance, it is sufficient to prove the preceding Lemmas assuming 
\begin{equation}
 \label{minor.lambda}
\forall n,\quad \inf_{t\in[t_{0n},t_{1n}]} \lambda_n(t)=1.
\end{equation}
Indeed, let $\ds \ell_n:=\inf_{t\in(t_{0n},t_{1n})} \lambda_n(t)$, and
\begin{gather*}
u^*_n(t,x)=\frac{1}{\ell_{n}^{\frac{N-2}{2}}}u_n\left(\frac{t}{\ell_n^2},\frac{x}{\ell_n}\right),\quad \lambda_n^*(t)=\frac{\lambda_n(t)}{\ell_n},\quad t_{0n}^*=\frac{t_{0n}}{\ell_n^2},\quad t_{1n}^*=\frac{t_{1n}}{\ell_n^2}\\ \widetilde{K}^*=\left\{(u_n^*(t))_{[\lambda_n^*(t)]},\;n\in \NN,\;t\in (t_{0n}^*,t_{1n}^*)\right\}.
\end{gather*}
Then $u_n^*$, $t_{0n}^*$, $t_{1n}^*$, $\lambda_n^*$ and $\widetilde{K}^*$ fullfill the assumptions of Lemmas \ref{lem.CV0.seq} and \ref{lem.mu.seq}. Furthermore, the conclusions \eqref{delta.un.conc} and \eqref{mu.seq.conclu} of the Lemmas are not changed by the preceding transformations. We will thus assume \eqref{minor.lambda} throughout the proofs. 

The key point of the proofs is the following claim, which is a consequence of a localized virial argument.
\begin{claim}
\label{claim.virial.seq}
Let $(u_n)_n$ be a sequence fullfilling the assumptions of Lemma \ref{lem.CV0.seq} and \eqref{minor.lambda}.
Then 
$$ \forall n\in\NN,\quad \int_{t_{0n}}^{t_{1n}} \dd(u_n(t))dt\leq C\big[\dd(u_n(t_{0n}))+\dd(u_n(t_{1n}))\big].$$
\end{claim}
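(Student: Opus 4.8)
The plan is to run a localized virial (Morawetz) argument on the interval $[t_{0n},t_{1n}]$, just as in Step~1 of Lemma \ref{lem.delta.mean}, but now taking advantage of the fact that under assumption \eqref{minor.lambda} the scaling parameter is bounded below by $1$, so a \emph{single} fixed radius $R$ works uniformly on the whole interval and for all $n$. Concretely, fix the cut-off $\varphi$ as in Lemma \ref{lem.delta.mean} (so $\varphi(x)=|x|^2$ for $|x|\le 1$, $\varphi=0$ for $|x|\ge 2$), set $\varphi_R(x)=R^2\varphi(x/R)$, and define
\begin{equation*}
G_{R,n}(t):=2\im\int \overline{u_n}(t)\,\nabla u_n(t)\cdot\nabla\varphi_R .
\end{equation*}
Exactly as in \eqref{gR'bis}--\eqref{defAR}, using equation \eqref{CP} and the energy identity $\int|\nabla u_n(t)|^2-\int|u_n(t)|^{2^*}=\frac{2}{N-2}\dd(u_n(t))$ (valid because $E(u_n(t))=E(W)$ on the interval), one gets
\begin{equation*}
G_{R,n}'(t)=\frac{16}{N-2}\dd(u_n(t))+A_R(u_n(t)),
\end{equation*}
with $A_R$ the remainder supported in $\{|x|\ge R\}$ and bounded by $\int_{|x|\ge R}\big(|\nabla u_n(t,x)|^2+|u_n(t,x)|^{2^*}+|x|^{-2}|u_n(t,x)|^2\big)dx$.

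Next I would absorb the remainder using the compactness of $\widetilde K$. By the change of variables $x=y/\lambda_n(t)$ and \eqref{minor.lambda} (so $R\lambda_n(t)\ge R$ for all $t\in[t_{0n},t_{1n}]$, all $n$),
\begin{equation*}
|A_R(u_n(t))|\le\int_{|y|\ge R}\Big(\big|\nabla (u_n(t))_{[\lambda_n(t)]}(y)\big|^2+\big|(u_n(t))_{[\lambda_n(t)]}(y)\big|^{2^*}+\frac{1}{|y|^2}\big|(u_n(t))_{[\lambda_n(t)]}(y)\big|^2\Big)dy .
\end{equation*}
Since $\widetilde K$ is relatively compact in $\hdot$, by Hardy and Sobolev inequalities the right-hand side tends to $0$ uniformly over $\widetilde K$ as $R\to\infty$; in particular there is $R_0$, independent of $n$ and $t$, with $|A_{R_0}(u_n(t))|\le\frac{8}{N-2}\dd(u_n(t))+1$ — but it is cleaner to simply fix $R_0$ so large that $\sup_{g\in\overline{\widetilde K}}\int_{|y|\ge R_0}(|\nabla g|^2+|g|^{2^*}+|y|^{-2}|g|^2)\le \frac{8}{N-2}\,\tilde c$, actually the precise smallness threshold is irrelevant: since $\dd(u_n(t))$ need not be small a priori, I would instead bound $|A_{R_0}(u_n(t))|$ by the full norm $\int_{|y|\ge R_0}(\cdots)$ which, again by compactness of $\widetilde K$, is at most a constant $C_1$ \emph{uniformly} in $n,t$, and moreover is $\le\frac{8}{N-2}\dd(u_n(t))$ whenever $\dd(u_n(t))$ exceeds some threshold $\delta_1>0$. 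Splitting the time integral according to whether $\dd(u_n(t))\le\delta_1$ or not, and noting that on the former set one trivially has $\dd(u_n(t))\le\delta_1\le \frac{(N-2)\delta_1}{8}\cdot\frac{8}{N-2}$, one gets for every $n$
\begin{equation*}
\frac{16}{N-2}\int_{t_{0n}}^{t_{1n}}\dd(u_n(t))\,dt\le 2\int_{t_{0n}}^{t_{1n}}G_{R_0,n}'(t)\,dt+\frac{16}{N-2}\,\delta_1\,(t_{1n}-t_{0n}) + \text{(harmless term)} .
\end{equation*}

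The last step is to dispose of the two error terms. The boundary term $\int G_{R_0,n}' = G_{R_0,n}(t_{1n})-G_{R_0,n}(t_{0n})$ is controlled by $\dd$ at the endpoints: by Cauchy--Schwarz and Hardy, $|G_{R_0,n}(t)|\le C R_0^2\,\NH{u_n(t)}\big\|\,|x|^{-1}u_n(t)\big\|_{L^2}\le C R_0^2\NH{u_n(t)}^2$, and the more precise estimate is that $|G_{R_0,n}(t)|\lesssim \dd(u_n(t))$ near the endpoints — indeed, when $\dd(u_n(t))<\delta_0$ one uses the modulation decomposition \eqref{decompo.v} and the orthogonality $\tilde u_n(t)\perp iW$ together with the fact that $\im\int\overline W\nabla W\cdot\nabla\varphi_{R_0}=0$ (as $W$ is real) to see that $G_{R_0,n}(t)$ is linear in $v_n(t)$ up to quadratic terms, hence $\lesssim\|v_n(t)\|_{\hdot}\approx\dd(u_n(t))$ by \eqref{approximations}. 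The issue of the term $\delta_1(t_{1n}-t_{0n})$, whose length is not controlled, is the one genuine obstacle: it is handled by the \emph{same dichotomy used to make $\delta_1$ appear disappear} — one must choose the compactness threshold first so that $A_{R_0}$ is genuinely \emph{small} (bounded by $\frac{8}{N-2}\dd(u_n(t))$ whenever $\dd(u_n(t))\ge\delta_1$) and then, crucially, observe that on the complementary set where $\dd(u_n(t))<\delta_1$ the modulation parameters are defined and \eqref{bound.derives} gives $|\mu_n'/\mu_n|\lesssim\mu_n^2\dd(u_n)$, which feeds back into a better localized virial on that sub-interval; alternatively, and more simply, one shows directly that on $\{\dd(u_n(t))<\delta_1\}$ the modulated virial $\int\overline{v_n}\nabla v_n\cdot\nabla\varphi_{R_0}$ has nonnegative derivative up to $O(\dd(u_n)^2)\le\delta_1 O(\dd(u_n))$, so choosing $\delta_1$ small absorbs everything into the left side. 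The cleanest route, which I would adopt, is: pick $R_0$ (from compactness of $\widetilde K$) and then $\delta_1$ (smallness) so that $|A_{R_0}(u_n(t))|\le\frac{8}{N-2}\dd(u_n(t))$ for \emph{all} $t$ — this is possible because for $\dd(u_n(t))<\delta_1$ the smallness of $\dd$ forces $u_n(t)$ (after modulation) close to $W$, where $A_{R_0}$ is itself $O(\dd)$ with a small constant provided $R_0$ was chosen large relative to the (fixed) concentration scale of $W$. Then $\frac{16}{N-2}\int\dd(u_n)\le 2|G_{R_0,n}(t_{1n})|+2|G_{R_0,n}(t_{0n})|+\frac{8}{N-2}\int\dd(u_n)$, and after absorbing the last term on the left and invoking $|G_{R_0,n}(t_{in})|\le C\,\dd(u_n(t_{in}))$ at the endpoints (legitimate since $\dd(u_n(t_{in}))\to 0$ by \eqref{delta.un.hypo}, so eventually $<\delta_0$) we conclude
\begin{equation*}
\int_{t_{0n}}^{t_{1n}}\dd(u_n(t))\,dt\le C\big[\dd(u_n(t_{0n}))+\dd(u_n(t_{1n}))\big],
\end{equation*}
which is the claim. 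The main obstacle, to reiterate, is the interplay between the \emph{uniform-in-$t$} smallness of the virial error (needing $R_0$ large via compactness) and the endpoint control of $G_{R_0,n}$ (needing $R_0$ fixed and $\dd$ small via modulation); these are reconciled precisely because \eqref{minor.lambda} decouples the radius from $t$ and because compactness of $\widetilde K$ is uniform over $n$ and $t$.
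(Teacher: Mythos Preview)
Your plan is essentially the paper's own proof: define $G_{R,n}$, use $G_{R,n}'=\frac{16}{N-2}\dd(u_n)+A_R(u_n)$, show $|A_{R_0}(u_n(t))|\le\frac{8}{N-2}\dd(u_n(t))$ for a single fixed $R_0$ via a dichotomy on the size of $\dd$, and bound $|G_{R_0,n}|$ at the endpoints by $C\dd(u_n)$ using the modulation decomposition (since $W$ is real). After you discard the false starts, the ``cleanest route'' you settle on is exactly the structure of the paper's Steps~1--2.

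There is, however, one technical point you gloss over that the paper handles explicitly and that is not automatic. In the small-$\dd$ regime you write that ``$A_{R_0}$ is itself $O(\dd)$ with a small constant provided $R_0$ was chosen large relative to the (fixed) concentration scale of $W$''. But after modulation $u_n(t)=(W+v_n)_{[\theta_n,\mu_n^{-1}]}$, and a change of variable gives $A_R(u_n(t))=A_{R\mu_n(t)}(W+v_n)$; since $A_\rho(W)=0$, the difference estimate the paper proves yields
\[
|A_R(u_n(t))|\le C\Big(\frac{1}{(R\,\mu_n(t))^{(N-2)/2}}\dd(u_n(t))+\dd(u_n(t))^2\Big).
\]
So the ``concentration scale'' is $\mu_n(t)^{-1}$, which is \emph{not} fixed a priori; to make the constant small uniformly in $t$ and $n$ you need a uniform lower bound on $\mu_n(t)$. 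The paper obtains this from the compactness of $\widetilde K$: since $(u_n(t))_{[\lambda_n(t)]}$ stays in a compact set while $(u_n(t))_{[\mu_n(t)]}$ is close to $W$, one gets $\mu_n(t)\approx\lambda_n(t)$ (this is \eqref{mu=lambda}), and then \eqref{minor.lambda} gives $\mu_n(t)\ge C_*>0$. Your final sentence gestures at this (``\eqref{minor.lambda} decouples the radius from $t$''), but as written the argument assumes the scale is fixed without saying why. Make the step $\mu_n\approx\lambda_n\ge 1$ explicit and your proof is complete.
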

Before proving Claim \ref{claim.virial.seq}, we will show that it implies the above lemmas.
\subsubsection*{Proof of Lemma \ref{lem.CV0.seq}}
Let $(u_n)_n$ be as in Lemma \ref{lem.CV0.seq}, and assume \eqref{minor.lambda}. 
We first prove:
\begin{claim}
\label{keyclaim}
If $t_n\in (t_{0n},t_{1n})$ and the sequence $\lambda_n(t_n)$ is bounded, then
\begin{equation}
\label{delta.hatt.n}
\lim_{n\rightarrow +\infty} \dd\big(u_n(t_n)\big)=0.
\end{equation}
\end{claim}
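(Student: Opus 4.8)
The plan is to derive Claim \ref{keyclaim} directly from the compactness of $\widetilde{K}$ together with Claim \ref{claim.virial.seq}, essentially by contradiction. Suppose the conclusion fails: there is a subsequence (still denoted $n$) with $t_n\in(t_{0n},t_{1n})$, $\lambda_n(t_n)$ bounded, and $\dd(u_n(t_n))\geq\eta>0$. First I would use the compactness of $\widetilde{K}$: since $\lambda_n(t_n)$ is bounded (and bounded below by \eqref{minor.lambda}), up to a further subsequence $\lambda_n(t_n)\to\lambda_\infty\in[1,\infty)$, and $(u_n(t_n))_{[\lambda_n(t_n)]}\to v_0$ in $\hdot$ for some $v_0$; rescaling by the fixed $\lambda_\infty$ we may assume $u_n(t_n)\to v_0$ in $\hdot$ (this uses that modulation by a convergent scaling parameter is continuous on $\hdot$). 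Then $E(v_0)=E(W)$, $\NH{v_0}\leq\NH{W}$, and $\dd(v_0)\geq\eta>0$.

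Next I would run the solution through: let $v$ be the solution of \eqref{CP} with $v(0)=v_0$, which by Proposition \ref{compactness.u} is global. By the continuity statement of Lemma \ref{lem.CP}\eqref{CPcontinuity}, for each fixed $\tau$ the translated solutions $u_n(t_n+\cdot)$ converge to $v(\cdot)$ uniformly in $\hdot$ on compact time intervals (for $n$ large enough that the interval $(t_{0n}-t_n,t_{1n}-t_n)$ contains the compact set, which requires knowing the intervals grow — see below). Consequently, by continuity of $\dd$, $\dd(u_n(t_n+\tau))\to\dd(v(\tau))$ for every such $\tau$. Now I would apply Claim \ref{claim.virial.seq}: the time-integral of $\dd(u_n(\cdot))$ over $(t_{0n},t_{1n})$ is controlled by $\dd(u_n(t_{0n}))+\dd(u_n(t_{1n}))\to 0$ by hypothesis \eqref{delta.un.hypo}. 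On the other hand, for any fixed $T>0$, if $n$ is large the interval $(t_{0n},t_{1n})$ contains $[t_n-T,t_n+T]$ (otherwise $t_n$ would be within distance $T$ of an endpoint, forcing $\dd(u_n(t_n))\to 0$ by continuity and \eqref{delta.un.hypo}, contradicting $\dd(u_n(t_n))\geq\eta$), so
\[
\int_{t_n-T}^{t_n+T}\dd(u_n(t))\,dt\leq\int_{t_{0n}}^{t_{1n}}\dd(u_n(t))\,dt\to 0.
\]
Passing to the limit with the pointwise convergence $\dd(u_n(t_n+\tau))\to\dd(v(\tau))$ and Fatou (the integrands are uniformly bounded since $\NH{u_n(t)}\leq\NH{W}$, so dominated convergence applies), we get $\int_{-T}^{T}\dd(v(\tau))\,d\tau=0$ for every $T$, hence $\dd(v(\tau))\equiv 0$. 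In particular $\dd(v_0)=0$, contradicting $\dd(v_0)\geq\eta>0$. This proves the claim.

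The main obstacle I anticipate is the bookkeeping around whether the time intervals $(t_{0n},t_{1n})$ are long enough near $t_n$ to justify the local-in-time convergence and the integral lower bound — one must handle the alternative that $t_n$ stays close to an endpoint. But this case is benign: if, say, $t_n-t_{0n}$ stays bounded, then combining the uniform bound $|\dd(u_n(t))'|$-type control (implicit in the virial estimates, or more simply the Lipschitz-in-time continuity of $t\mapsto u_n(t)$ in $\hdot$ on the compact pieces coming from Lemma \ref{lem.CP}\eqref{CPcontinuity}) with $\dd(u_n(t_{0n}))\to 0$ forces $\dd(u_n(t_n))\to 0$. A secondary technical point is making the passage "$(u_n(t_n))_{[\lambda_n(t_n)]}$ converges" into "$u_n(t_n)$ converges": this is where boundedness of $\lambda_n(t_n)$ (hypothesis of the claim) is essential, and one uses continuity of the scaling action together with convergence $\lambda_n(t_n)\to\lambda_\infty>0$. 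Everything else — the limiting solution $v$ being global, $E(v_0)=E(W)$, $\dd(v)\equiv 0 \Rightarrow$ contradiction with a bona fide solution having critical energy — follows from Proposition \ref{compactness.u} and the fact that $v\neq 0$ (as $E(v_0)=E(W)>0$).
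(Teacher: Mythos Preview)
Your argument is correct and follows the same route as the paper: extract a limiting solution $v$ from the compactness of $\widetilde{K}$ and the boundedness of $\lambda_n(t_n)$, use Claim~\ref{claim.virial.seq} to force the time-integral of $\dd(u_n)$ over a fixed window around $t_n$ to vanish in the limit, and contradict $\dd(v_0)>0$. The paper works on the one-sided window $[t_n,t_n+1]$ and phrases the contradiction as $\int_0^1\dd(v)>0$ versus $\int_{t_{0n}}^{t_{1n}}\dd(u_n)\to 0$, but this is only a cosmetic difference from your symmetric-window, dominated-convergence version.

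One point in your writeup is looser than it should be. In ruling out the case that $t_n$ stays within bounded distance of an endpoint, you invoke ``Lipschitz-in-time continuity of $t\mapsto u_n(t)$'' to force $\dd(u_n(t_n))\to 0$; but a \emph{bounded} time-separation does not by itself make $u_n(t_n)$ close to $u_n(t_{0n})$, so this does not close the argument. The actual mechanism---and what the paper makes explicit---is: if, say, $t_{1n}-t_n\to\tau$ along a subsequence, then Lemma~\ref{lem.CP}\eqref{CPcontinuity} (applied with $\tilde u=v$) gives $u_n(t_{1n})\to v(\tau)$ in $\hdot$, hence $\dd(v(\tau))=0$ by \eqref{delta.un.hypo}; since also $E(v(\tau))=E(W)$, the variational characterization \eqref{CarW} forces $v\equiv W_{[\theta_0,\lambda_0]}$, whence $\dd(v_0)=0$, the desired contradiction. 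This missing invocation of \eqref{CarW} is the only substantive gap in your sketch.
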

\begin{proof} 
By our assumptions, $1\leq\lambda_n(t_n)\leq C$, for some $C>1$, so that the sequence $u_n(t_n)$ is compact. Assume that \eqref{delta.hatt.n} does not hold, so that, up to the extraction of a subsequence
\begin{equation}
\label{dv0>0}
\lim_{n\rightarrow +\infty} u_n(t_n)=v^0\text{ in } \Hdot,\quad \dd(v^0)>0,\;E(v^0)=E(W)\text{ and } \|v^0\|_{\hdot}<\|W\|_{\hdot}.
\end{equation}
Let $v$ be the solution of \eqref{CP} with initial condition $v^0$ at time $t=0$, which is defined for $t\geq 0$. Note that for large $n$, $1+t_{n}\leq t_{1n}$. If not, $t_{1n}\in (t_n,1+t_{n})$ for an infinite number of $n$, so that extracting a subsequence, $t_{1n}-t_n$ has a limit $\tau\in[0,1]$. By the continuity of the flow of \eqref{CP} in $\hdot$, $u_n(t_{1n})$ tends to $v(\tau)$ with $E(v(\tau))=E(W)$ and, by \eqref{delta.un.hypo}, $\dd(v(\tau))=0$. This shows that $v=W_{[\theta_0,\lambda_0]}$ for some $\theta_0,\,\lambda_0$, contradicting \eqref{dv0>0}. Thus $(t_n,1+t_n)\subset(t_{0n},t_{1n})$. By \eqref{dv0>0} and the continuity of the flow of \eqref{CP},
\begin{equation}
\label{contra1}
\lim_{n\rightarrow +\infty} \int_{t_n}^{1+t_n}\dd(u_n(t))dt=\int_0^1 \dd(v(t))dt>0.
\end{equation}
Furthermore, according to Claim \ref{claim.virial.seq}
$\lim_{n} \int_{t_{0n}}^{t_{1n}} \dd(u_n(t)) dt=0.$
which contradicts \eqref{contra1}. The proof is complete.
\end{proof}
By assumption \eqref{minor.lambda}, one may chose, for every $n$, $b_n\in (t_{0n},t_{1n})$ such that 
\begin{equation}
\label{lambda1}
\lim_{n\rightarrow+\infty} \lambda_n(b_n)=1.
\end{equation}
By Claim \ref{keyclaim}
\begin{equation}
\label{delta.b.n}
\lim_{n\rightarrow +\infty} \dd(u_n(b_n))=0.
\end{equation}
We will show \eqref{delta.un.conc} by contradiction. Let us assume (after extraction) that for some $\delta_1>0$,
\begin{equation*}
\forall n,\quad \sup_{t\in (t_{0n},b_n)} \dd(u_n(t))\geq \delta_1>0
\end{equation*}
 (the proof is the same when $(t_{0n},b_n)$ is replaced by $(b_n,t_{1n})$ in the supremum). 
Fix $\delta_2>0$ smaller than $\delta_1$ and the constant $\delta_0$ given by Lemma \ref{lem.ortho}.
The mapping $t\mapsto\dd\big(u_n(t)\big)$ being continuous, there exists $a_n\in (t_{0n},b_n)$ such that
\begin{equation}
\label{defan}
\dd(u_n(a_n))=\delta_2 \text{ and }\forall t\in(a_n,b_n),\; \dd(u_n(t))<\delta_2.
\end{equation}
On $(a_n,b_n)$, the modulation parameter $\mu_n$ is well defined. Furthermore, by the relative compactness of $\widetilde{K}$ and decomposition \eqref{defmun}, the set $\bigcup_n \Big\{W_{\big[{\lambda_n(t_n)}_{\scriptstyle/\mu_n(t_n)}\big]}(t),\; t\in [a_n,b_n]\Big\}$ must be relatively compact, which shows
\begin{equation}
\label{mu=lambda}
\exists C>0,\; \forall t\in (a_n,b_n),\quad C^{-1}\lambda_n(t)\leq \mu_n(t)\leq C\lambda_n(t).
\end{equation}
By \eqref{lambda1}, extracting a subsequence if necessary, we may assume 
$$\mu_n(b_n) \underset{n\rightarrow +\infty}{\longrightarrow} \mu_{\infty}\in (0,\infty).$$
Let us show by contradiction
\begin{equation}
\label{mubounded}
\sup_{n,t\in(a_n,b_n)}\mu_n(t)<\infty.
\end{equation}
If not, in view of the continuity of $\mu_n$, there exists (for large $n$) $c_n\in (a_n,b_n)$ such that 
\begin{equation}
 \label{defcn}
\mu_n(c_n)=2 \mu_{\infty},\quad \mu_n(t)<2\mu_{\infty},\; t\in (c_n,b_n).
\end{equation}
By Claim \ref{keyclaim}, $\lim_n \dd(u(c_n))=0$. Furthermore, by Lemma \ref{bound.modul}, $\left|\frac{\mu'_n(t)}{\mu^3_n(t)}\right|\leq C\dd(u_n(t))$. Integrating between $c_n$ and $b_n$, we get, by Claim \ref{claim.virial.seq},
\begin{equation}
\label{bound.mu2}
\left|\frac{1}{\mu^2_n(c_n)}-\frac{1}{\mu^2_n(b_n)} \right|\leq C\int_{c_{n}}^{b_n} \dd(u_n(s)) ds\underset{n\rightarrow +\infty}{\longrightarrow} 0,
\end{equation}
which contradicts the fact that $\mu_n(c_n)=2\mu_{\infty}$ and $\mu_n(b_n)\rightarrow \mu_{\infty}$, and thus concludes the proof of \eqref{mubounded}.\par
By \eqref{mubounded}, $\mu_n(a_n)$ is bounded. Claim \ref{keyclaim} shows that $\dd(u_n(a_n))$ tends to $0$, contradicting \eqref{defan}. The proof of Lemma \ref{lem.CV0.seq} is complete.\qed

\subsubsection*{Proof of Lemma \ref{lem.mu.seq}}
It follows from the argument before Claim \ref{claim.virial.seq} that we may assume \eqref{minor.lambda} in addition to the assumptions of the lemma, so that by \eqref{mu=lambda} 
$$\exists C>0, \;\forall n, \quad C^{-1} \leq \inf_{t\in[t_{0n},t_{1n}]}\mu_n(t) \leq C.$$ 
Furthermore, in view of the continuity of $\mu_n$, there exist $a_n,\,b_n\in [t_{0n},t_{1n}]$ such that
$$ \mu_n(a_n)=\inf_{t\in[t_{0n},t_{1n}]}\mu_n(t),\quad \mu_n(b_n)=\sup_{t\in[t_{0n},t_{1n}]}\mu_n(t).$$
By the bound $\left|\frac{\mu_n'(t)}{\mu_n^3(t)}\right|\leq C\dd(u_n(t))$, Claim \ref{claim.virial.seq},  and Lemma \ref{lem.CV0.seq} , we get
\begin{equation*}
\lim_{n\rightarrow +\infty} \left|\frac{1}{\mu_n^2(a_n)}-\frac{1}{\mu_n^2(b_n)}\right| =0.
\end{equation*}
In particular, $\mu_n(b_n)$ is bounded. Multiplying the preceding limit by $\mu_n^2(b_n)$ yields \eqref{mu.seq.conclu}. 
\qed

\subsubsection*{Proof of Claim \ref{claim.virial.seq}}
Let us consider, for $R>0$, the function $G_{R,n}$ defined as in Subsection \ref{CV.in.mean} by
$$ G_{R,n}(t)=2\im \int \ubar_n(t) \nabla u_n(t) \cdot\nabla \varphi_R$$
($\varphi_R$ is defined in Subsection \ref{CV.in.mean}). 

\medskip

\paragraph{\emph{Step 1: a bound for $G_{R,n}$.}}

In this step we show that there exists a constant $C>0$ such that
\begin{equation}
\label{bound.above.GR}
\forall R>0,\;\forall n,\; \forall t\in(t_{0n},t_{1n}) ,\quad |G_{R,n}(t)|\leq C R^2\dd(u_n(t)).
\end{equation}
We have
\begin{equation*}
G_{R,n}(t)=2\im \int \ubar_n(t,x)\nabla u_n(t,x) \cdot R\nabla \varphi(x/R)dx.
\end{equation*}
By Cauchy-Schwarz and Hardy inequalities $|G_{R,n}(t)|\leq R^2\|u_n\|^2_{\hdot}$, so that it suffices to show \eqref{bound.above.GR} when $\dd(u_n(t))\leq \delta_1$ for some small $\delta_1$. In this case, one may decompose $u_n$ as in \eqref{decompo.v}, writing $\left(u_n(t)\right)_{[\theta_n(t),\mu_n(t)]}=W+v_n(t)$, with $\|v_n(t)\|_{\hdot}\leq C\dd(u_n(t))$ by Lemma \ref{lem.CV0.seq}. By the change of variable $x=\frac{y}{\mu_n(t)}$,
\begin{align*}
G_{R,n}(t)=&2\im \frac{R}{\mu_n(t)}\int \frac{1}{\mu_n^{\frac{N-2}{2}}(t)}\ubar_n\Big(t,\frac{y}{\mu_n(t)}\Big)\frac{1}{\mu_n^{N/2}(t)}(\nabla u_n)\Big(t,\frac{y}{\mu_n(t)}\Big)\cdot \nabla \varphi\Big(\frac{y}{R\mu_n(t)}\Big)dy\\
=&2R^2\im \int \frac{1}{R\mu_n(t)}(W+\overline{v}_n)\nabla \big(W+v_n\big)\cdot (\nabla\varphi)\Big(\frac{y}{R\mu_n(t)}\Big)dy. 
\end{align*}
Write $$\im\left[(W+\overline{v}_n)\nabla \big(W+v_n\big)\right]=\im\big(W\nabla v_n+\overline{v}_n\nabla W+\overline{v}_n\nabla v_n\big),$$ 
and note that on the support of $\nabla\varphi\Big(\frac{y}{R\mu_n(t)}\Big)$, $\frac{1}{R\mu_n(t)}$ is bounded by $\frac{2}{|y|}$. As a consequence of Cauchy-Schwarz and Hardy inequalities, we get the bound $|G_{R,n}(t)|\leq CR^2\big(\|v_n(t)\|_{\hdot}+\|v_n(t)\|_{\hdot}^2\big)$, which yields \eqref{bound.above.GR}, for $\dd(u_n(t))\leq \delta_1$, $\delta_1$ small. The proof of \eqref{bound.above.GR} is complete.

\medskip

\paragraph{\emph{Step 2: a bound from below for $G_{R,n}'$}}
The next and last step of the proof of Claim \ref{claim.virial.seq} is to show 
\begin{equation}
\label{bound.G'Rn}
\exists R_0,\; \forall R\geq R_0,\; \forall n,\;\forall t\in(t_{0n},t_{1n}),\quad G'_{R,n}(t)\geq \frac{8}{N-2}\dd(u_n(t)).
\end{equation}

It is clear that \eqref{bound.above.GR} and \eqref{bound.G'Rn} imply the conclusion of Claim \ref{claim.virial.seq}. Indeed, integrating \eqref{bound.G'Rn} between $t_{0n}$ and $t_{1n}$ we get
$$ \frac{8}{N-2} \int_{t_{0n}}^{t_{1n}} \dd(u_n(t))dt\leq G_{R_0,n}(t_{0n})+G_{R_0,n}(t_{1n}),$$
which shows the Claim in view of \eqref{bound.above.GR}.

Let us show \eqref{bound.G'Rn}. Recall that by direct calculation we have, as in \eqref{gR'bis},
\begin{equation}
\label{gR'ter}
G_{R,n}'(t)=\frac{16}{N-2}\dd\big(u_n(t)\big) +A_R(u_n(t)),
\end{equation}
where $A_R$ is defined by \eqref{defAR}. We first claim the following bounds on $A_R(u_n(t))$:
\begin{gather}
\label{bound.larged.2}
\forall \eps>0, \; \exists\rho_{\eps}>0,\;\forall n,\; \forall t\in\big(t_{0n},t_{1n}\big),\; \forall R\geq\frac{\rho_{\eps}}{\lambda_n(t)},\quad |A_R(u_n(t))|\leq \eps\\
\label{bound.smalld}
\exists \delta_2>0,\; \forall n,\; \forall t\in\big(t_{0n},t_{1n}\big),\; \forall R\geq \frac{1}{\mu_n(t)},\qquad\qquad\qquad \qquad\qquad \qquad\qquad\qquad \\ 
\notag
\qquad\qquad\qquad\dd(u_n(t))\leq \delta_2 \Longrightarrow  |A_{R,n}(u_n(t))|\leq C\left(\frac{1}{\left(R\mu_n(t)\right)^{\frac{N-2}{2}}}\dd(u_n(t))+\dd(u_n(t))^2\right).
\end{gather}

The bound \eqref{bound.larged.2}, follows directly from the compactness of $\widetilde{K}$, assumption \eqref{minor.lambda}, and the bound \eqref{forlater} of $A_R$ shown in the preceding section. \par
Let us show \eqref{bound.smalld}. Write as before 
\begin{equation}
\label{decompo2}
(u_n(t))_{[\theta_n(t),\mu_n(t)]}=W+v_n(t),\quad \|v_n(t)\|_{\hdot}\leq C\dd(u_n(t)).
\end{equation}
In view of \eqref{decompo2}, estimate \eqref{bound.smalld} is an immediate consequence of the existence of $\delta_2>0$ such that
\begin{multline}
\label{ARforlater}
\forall g\in \hdot_r,\; \forall \mu_0>0,\;\forall R\geq \frac{1}{\mu_0},\\ \|g\|_{\hdot}\leq \delta_2\Longrightarrow
\bigg|A_{R}\Big((W+g)_{[\mu_0^{-1}]}\Big)\bigg|\leq C\left(\frac{1}{\left(R\mu_0\right)^{\frac{N-2}{2}}}\|g\|_{\hdot}+\|g\|_{\hdot}^2\right).
\end{multline}
Let us show \eqref{ARforlater}. A change of variable in $A_R$ gives
$A_R\Big((W+g)_{[\mu_0^{-1}]}\Big)= A_{R\mu_0}(W+g).$
The function $W$ is a stationnary solution of \eqref{CP}, satisfying $\dd(W)=0$ and $G_R(W)=0$,
so that by \eqref{gR'bis}, $A_R(W)=0$ for any $R>0$. Thus we must bound
$A_{R\mu_0}(W+g)-A_{R\mu_0}(W)$. By the explicit form of $A_R$,
\begin{multline*}
A_R(f)=\int |\nabla f|^2\Big(4\frac{d^2\varphi_R}{dr^2}-8\Big)+|f|^{2^*}\left(-\frac{4}{N}\Delta\varphi_R+8\right)r^{N-1}dr -\int |f|^2(\Delta^2\varphi_R)r^{N-1}dr,
\end{multline*}
and noting that the integrand in the first integral is supported in $\{|x|\geq R\}$ and in the second integral in $\{R\leq |x|\leq 2R\}$, we get
\begin{multline*}
\left|A_{R\mu_0}(W+g)-A_{R\mu_0}(W)\right|
\leq C\bigg[\int_{|x|\geq R\mu_0} |\nabla g|^2+|\nabla W\cdot\nabla g|+ W^{2^*-1} |g|+|g|^{2*}dx\\ +\int_{R\mu_0\leq|x|\leq 2R\mu_0} \frac{1}{(R\mu_0)^2}\left(W|g|+|g|^2\right)dx\bigg].
\end{multline*}
By explicit calculation, $\|\nabla W\|_{L^2(\{|x|\geq \rho\})}\approx \|W\|_{L^{2^*}(\{|x|\geq \rho\})}\approx \frac{1}{\rho^{\frac{N-2}{2}}}$ for large $\rho$. Hence, by Hardy, Sobolev and Cauchy-Schwarz inequalities
\begin{equation*}
|A_{R\mu_0}(W+g)-A_{R\mu_0}(W)|\leq C\left[\|g\|_{\hdot}^2+\|g\|_{\hdot}^{2*}+\left(\frac{1}{(R\mu_0)^{\frac{N-2}{2}}}+\frac{1}{(R\mu_0)^{\frac{N+2}{2}}}\right)\|g\|_{\hdot}\right],
\end{equation*}
which yields \eqref{ARforlater}, and thus \eqref{bound.smalld}.\par

We are now ready to show \eqref{bound.G'Rn}. By assumption \eqref{minor.lambda}, $\lambda_n(t)$ is bounded from below. By \eqref{mu=lambda}, $\mu_n(t)\geq C_*>0$. Thus \eqref{bound.smalld} implies for some $\delta_3>0$, $R_1>0$
$$ \dd(u_n(t))\leq \delta_3,\; R\geq R_1\Longrightarrow \left|A_R(u_n(t))\right|\leq \frac{8}{N-2}\dd(u_n(t)).$$
Now, using \eqref{bound.larged.2} with $\eps=\frac{8\delta_3}{N-2}$ and again \eqref{minor.lambda}, we get $\left|A_R(u_n(t))\right|\leq \frac{8}{N-2}\dd(u_n(t))$ for $\dd(u_n(t))\geq \delta_3$, $R\geq R_2$. In view of \eqref{gR'ter}, estimate \eqref{bound.G'Rn} holds with $R_0:=\max\{R_1,R_2\}$, which concludes the proof of Claim \ref{claim.virial.seq}.
\qed

\subsection{Proof of the convergence as $t$ goes to infinity}
\label{sub.dem.SC}
Let us show Proposition \ref{prop.CV0} and Corollary \ref{corol.incomp}.
Let $u$ be a radial solution of \eqref{CP} satisfying \eqref{hyp.sub} and \eqref{hyp.noscatter}.
\subsubsection*{Step 1: convergence of $\dd(u(t))$ to $0$}
We first prove \eqref{key.delta}.
From Corollary \ref{corol.subseq}, there exists a strictly increasing sequence $(t_n)_{n\in \NN}$ such that:
$$ \lim_{n\rightarrow +\infty} t_n=+\infty,\quad \lim_{n\rightarrow +\infty} \dd(u(t_n))=0.$$
Let $t_{0n}=t_n$, $t_{1n}=t_{n+1}$, and $\lambda_n(t)=\lambda(t)$, where $\lambda$ is given by Proposition \ref{compactness.u}. Then the sequences $(u_n)_n$, $(t_{0n})_n$, $(t_{1n})_n$ and $(\lambda_n)_n$ clearly satisfy the assumptions of Lemma \ref{lem.CV0.seq}. Hence
$$ \lim_{n\rightarrow +\infty} \left(\sup_{t\in [t_n,t_{n+1}]} \dd(u(t))\right)=0,$$
which clearly implies \eqref{key.delta}.\par

As a consequence of \eqref{key.delta}, we may decompose $u$ for large $t$ as in \eqref{decompo.v}:
\begin{equation*}
u_{[\theta(t),\mu(t)]}=(1+\alpha(t))W+\tilde{u}(t),\quad \tilde{u}(t)\in H^{\bot}.
\end{equation*}
If $\theta$, $\mu$ and $\alpha$ are given by the preceding decomposition, the conclusion of  Proposition \ref{prop.CV0} is equivalent to the existence of $\mu_{\infty}>0$, $\theta_{\infty}\in\RR$ and $c,C>0$ such that
\begin{gather}
\label{lim.expo}
\dd(u(t))+
|\alpha(t)|+\|\tilde{u}(t)\|_{\hdot}+|\theta(t)-\theta_{\infty}|+|\mu(t)-\mu_{\infty}| \leq Ce^{-ct}.
\end{gather}
\subsubsection*{Step 2: convergence of $\mu$}
We start to show by contradiction that $\mu(t)$ has a limit $\mu_{\infty}\in (0,+\infty)$ as $t\rightarrow +\infty$. If not, $\log(\mu(t))$ does not satisfy the Cauchy criterion as $t\rightarrow+\infty$, thus there exists sequences $T_n,\,T'_ n\rightarrow+\infty$ such that
\begin{equation}
\label{contra.mu}
\lim_{n\rightarrow +\infty} \frac{|\mu(T_n)|}{|\mu(T'_n)|}=L\neq 1.
\end{equation}
Without loss of generality, we may assume $T_n<T'_n$.
By the preceding step, $d(u(T_n))$ and $d(u(T'_n))$ tends to $0$. Let $u_{n}=u$, $t_{0n}=T_n$, $t_{1n}=T'_n$, and $\lambda_{n}(t)=\lambda(t)$, where $\lambda$ is again given by Proposition \ref{compactness.u}. Then the assumptions of Lemma \ref{lem.mu.seq} are fullfilled, which shows
$$ \lim_{n\rightarrow +\infty} \frac{\inf_{T_n\leq t\leq T'_n} \mu(t)}{\sup_{T_n\leq t\leq T'_n} \mu(t)}=1.$$
This contradicts \eqref{contra.mu}. Hence
\begin{equation}
\label{mu.limit}
\lim_{t\rightarrow +\infty} \mu(t)=\mu_{\infty} \in (0,\infty).
\end{equation}
\subsubsection*{Step 3: proof of Proposition \ref{prop.CV0}.}
\label{sss.CVhdot}
We are now ready to prove \eqref{lim.expo}, which will complete the proof of  Proposition \ref{prop.CV0}. 
Let us first show that $\dd(u(t))$ tends exponentially to $0$. We first claim the following inequality
\begin{equation}
\label{ineg.int}
\exists C>0,\; \forall t\geq 0,\quad \int_t^{+\infty} \dd(u(\tau))d\tau \leq C\dd(u(t)).
\end{equation}
Indeed if \eqref{ineg.int} does not hold, there exists a sequence $T_n\rightarrow +\infty$ such that 
\begin{equation}
\label{contra.expo1}
\int_{T_n}^{+\infty} \dd(u(\tau))d\tau \geq n\dd(u(T_n)).
\end{equation}
By \eqref{mu.limit}, $\mu(t)$ is bounded from below. As usual, this implies that the parameter $\lambda(t)$ of Proposition \ref{compactness.u} is bounded from below. 
By Step $1$ of the proof, the assumptions of Claim \ref{claim.virial.seq} are fullfilled for the sequence $(u_k)_k$, with $k=(n,n')$, $n<n'$, and $u_k=u$, $\lambda_k(t)=\lambda(t)$, $t_{0k}=T_{n}$ and $t_{1k}=T_{n'}$. Hence
$$ \forall n,n',\; n<n', \quad \int_{T_{n}}^{T_{n'}} \dd(u(t))dt \leq C\big[\dd(u(T_{n}))+\dd(u(T_{n'}))\big],$$
Thus $\int_{T_{n}}^{+\infty} \dd(u(t))dt \leq C\dd(u(T_{n}))$ which contradicts \eqref{contra.expo1}, showing \eqref{ineg.int}.\par

Now by \eqref{ineg.int} we have, for some constants $C,c>0$
$$ \int_t^{+\infty} \dd(u(\tau))d\tau \leq Ce^{-ct}.$$
Together with the estimate $|\alpha'(t)|\leq C\dd(u(t))$ of Lemma \ref{bound.modul}, we get
$$ |\alpha(t)|=\left|\int_t^{+\infty} \alpha'(\tau)d\tau\right| \leq C e^{-ct}.$$
Recalling that by Lemma \ref{bound.modul} $|\alpha(t)|\approx \dd(u(t))$, we get the bound on $\dd(u(t))$ in \eqref{lim.expo}.

Estimate \eqref{lim.expo} is then a straightforward consequence of the estimate $\|\tilde{u}(t)\|_{\hdot}+|\theta'(t)|+\left|\frac{\mu'(t)}{\mu(t)}\right|\leq C\mu^2(t)\dd(u(t))$ of Lemma \ref{bound.modul} and the boundedness of $\mu$.
The proof of Proposition \ref{prop.CV0} is complete.

\medskip

\noindent\emph{Proof of Corollary \ref{corol.incomp}.} We must show that there is no solution $u$ of \eqref{CP} satisfying \eqref{hyp.sub} and \eqref{non-scatter+-}. Let $u$ be such a solution. By Proposition \ref{prop.CV0} applied forward and backward, the set $\{u(t),\; t\in\RR\}$, is relatively compact in $\hdot$. Furthermore
$$ \lim_{t\rightarrow+\infty} \dd(u(t))=\lim_{t\rightarrow -\infty} \dd(u(t))=0.$$
By Claim \ref{claim.virial.seq} with $u(t)=u_n(t)$, $t_{0n}=-n$, $t_{1n}=n$ and  $\lambda_n(t)=1$, we have
$ \int_{-\infty}^{+\infty}\dd(u(t))dt=\lim_{n\rightarrow +\infty} \int_{-n}^{+n} \dd(u(t)) dt=0.$
Thus $\dd(u_0)=0$ which contradicts \eqref{hyp.sub}. Corollary \ref{corol.incomp} is proven.\qed

\section{Convergence to $W$ in the supercritical case}
\label{sec.super}
In this section we consider a solution of \eqref{CP} with initial condition $u_{\restriction t=0}=u_0$ and such that
\begin{equation}
\label{supercrit}
E(u_0)=E(W),\quad \|u_0\|_{\hdot}>\|W\|_{\hdot}.
\end{equation}
\begin{prop}
\label{PropSuper}
Let $u$ be a radial solution of \eqref{CP} satisfying \eqref{supercrit} and defined on $[0,+\infty)$. Assume furthermore that $u_0\in L^2(\RR^N)$. Then there exist constants $\theta_0\in\RR$, $\mu_0,\,c,\,C>0$ such that
\begin{equation}
\label{conclusiontheo}
\forall t\geq 0,\quad \|u(t)-W_{[\theta_0,\mu_0]}\|_{\hdot}\leq Ce^{-ct}.
\end{equation}
A similar result holds for negative times if $u$ satisfies \eqref{supercrit} and is defined on $(-\infty,0]$. 
\end{prop}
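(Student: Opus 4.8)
The plan is to follow the scheme of Section~\ref{sec.convergence}, keeping track of the sign change produced by $\|u_0\|_{\hdot}>\|W\|_{\hdot}$ and substituting for the subcritical input (Proposition~\ref{compactness.u}) a supercritical compactness statement. It suffices to treat forward time, for the backward statement follows by applying the result to $\overline{u}(-t,x)$, which again satisfies \eqref{supercrit} and is defined on $[0,+\infty)$. A first remark is that $\|u\|_{S(0,+\infty)}=+\infty$: otherwise $u$ scatters to some $u_+\in\hdot$ by Lemma~\ref{lem.CP}, and since $\|e^{it\Delta}u_+\|_{S(\RR)}\le C\|u_+\|_{\hdot}$ by \eqref{StrichartzI} and \eqref{SobI}, we get $\|e^{it_n\Delta}u_+\|_{L^{2^*}}\to0$ along some $t_n\to+\infty$; passing to the limit in the conserved energy then gives $\|u(t_n)\|_{\hdot}^2\to 2E(W)=\tfrac2N\|W\|_{\hdot}^2<\|W\|_{\hdot}^2$, contradicting \eqref{supercrit}.

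The heart of the proof is to establish, exactly as in Proposition~\ref{compactness.u}, a function $\lambda\colon[0,+\infty)\to(0,+\infty)$ such that $K_+=\{u_{[\lambda(t)]}(t):t\ge0\}$ is relatively compact in $\hdot$; since the $\hdot$-norm is scaling invariant, this forces $M:=\sup_{t\ge0}\|u(t)\|_{\hdot}<+\infty$. I expect this to be the main obstacle, because Lemma~\ref{lemmacc} is proved only for data with $\|u_n^0\|_{\hdot}\le\|W\|_{\hdot}$; one must therefore rerun the Keraani profile decomposition of the sequence $u(t_n)$, $t_n\to T_+=+\infty$, in the supercritical range, using the finite-time blow-up theorem of \cite{KeMe06} for $L^2$ solutions with $E<E(W)$ and $\|u_0\|_{\hdot}>\|W\|_{\hdot}$, the uniform bound $\|u(t_n)\|_{L^2}=\|u_0\|_{L^2}$, and the pairwise orthogonality of the profiles, in order to rule out every alternative but compactness. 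Once this is granted, $\inf_{t\ge0}\lambda(t)>0$: if $\lambda(t_n)\to0$ along $t_n\to+\infty$, then by conservation of the $L^2$-norm $\|u_{[\lambda(t_n)]}(t_n)\|_{L^2(|x|\le1)}^2\le\lambda(t_n)^2\|u_0\|_{L^2}^2\to0$, while, up to a subsequence, $u_{[\lambda(t_n)]}(t_n)\to v_0$ in $\hdot$ and hence in $L^2_{\loc}$; thus $v_0=0$, which is absurd since $E(v_0)=E(W)\ne0$.

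Next I would prove $\lim_{t\to+\infty}\dd(u(t))=0$ by the localized virial argument of Subsection~\ref{sub.mean}. With $\varphi_R$ as there and $G_R(t)=2\im\int\overline{u}(t)\nabla u(t)\cdot\nabla\varphi_R$, the supercritical relation $\int|\nabla u(t)|^2-\int|u(t)|^{2^*}=-\tfrac2{N-2}\dd(u(t))$ turns the computation behind \eqref{gR'bis} into $G_R'(t)=-\tfrac{16}{N-2}\dd(u(t))+A_R(u(t))$, with $A_R$ as in \eqref{defAR}; by the relative compactness of $\overline{K}_+$, the lower bound on $\lambda$, and \eqref{forlater}, for every $\eps>0$ there is $R_\eps$ with $|A_R(u(t))|\le\eps$ for all $t\ge0$ and $R\ge R_\eps$. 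Since $u_0\in L^2$ and $\sup_t\|u(t)\|_{\hdot}=M<\infty$, Cauchy--Schwarz gives $\sup_t|G_{R_\eps}(t)|<\infty$. Integrating $G_{R_\eps}'$ over $[0,T]$ yields $\tfrac{16}{N-2}\int_0^T\dd(u(t))\,dt\le 2\sup_t|G_{R_\eps}(t)|+\eps T$, so $\tfrac1T\int_0^T\dd(u(t))\,dt\to0$, and hence $\dd(u(t_n))\to0$ along some $t_n\to+\infty$. This step is actually lighter than its subcritical counterpart: the concavity of the localized variance makes the auxiliary lower bound $\sqrt t\,\lambda(t)\to+\infty$ of Subsection~\ref{sub.mean} unnecessary.

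From here the argument reproduces Subsections~\ref{sub.decompo}--\ref{sub.dem.SC}. The modulation Lemmas~\ref{lem.ortho} and \ref{bound.modul} are insensitive to the sign of $\|u_0\|_{\hdot}-\|W\|_{\hdot}$ (now $\alpha(t)>0$), and Lemmas~\ref{lem.CV0.seq} and \ref{lem.mu.seq} and Claim~\ref{claim.virial.seq} carry over with \eqref{hyp.sub} replaced by \eqref{supercrit}: in the localized virial step one merely replaces $G_{R,n}'(t)=\tfrac{16}{N-2}\dd(u_n(t))+A_R(u_n(t))$ by $G_{R,n}'(t)=-\tfrac{16}{N-2}\dd(u_n(t))+A_R(u_n(t))$, which flips a sign but leaves the conclusion $\int_{t_{0n}}^{t_{1n}}\dd(u_n(t))\,dt\le C\big[\dd(u_n(t_{0n}))+\dd(u_n(t_{1n}))\big]$ unchanged, the limiting solutions occurring in the proofs being again global forward (as limits of time-translates of $u$) and compact up to modulation. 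Applying Lemma~\ref{lem.CV0.seq} on the consecutive intervals $[t_n,t_{n+1}]$ upgrades the subsequential statement to $\dd(u(t))\to0$; Lemma~\ref{lem.mu.seq} then forces $\mu(t)\to\mu_\infty\in(0,\infty)$; and Claim~\ref{claim.virial.seq} gives $\int_t^{+\infty}\dd(u(\tau))\,d\tau\le C\dd(u(t))$, whence $\dd(u(t))\le Ce^{-ct}$. Combined with \eqref{bound.derives} and the boundedness of $\mu$, this yields $|\alpha(t)|+\|\tilde u(t)\|_{\hdot}+|\theta(t)-\theta_\infty|+|\mu(t)-\mu_\infty|\le Ce^{-ct}$, and undoing the modulation gives \eqref{conclusiontheo}.
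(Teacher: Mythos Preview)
Your outline has a genuine gap at the step you yourself flag as ``the main obstacle'': the supercritical compactness of $K_+=\{u_{[\lambda(t)]}(t)\}$. The Keraani profile decomposition requires the sequence $u(t_n)$ to be \emph{bounded} in $\hdot$, but in the supercritical regime there is no a~priori bound: from $E(u(t))=E(W)$ and Sobolev one only gets $\tfrac12\|u(t)\|_{\hdot}^2-\tfrac{C_N^{2^*}}{2^*}\|u(t)\|_{\hdot}^{2^*}\le E(W)$, which places no ceiling on $\|u(t)\|_{\hdot}$. You then use the compactness to deduce $\sup_t\|u(t)\|_{\hdot}<\infty$, which is circular. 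Even granting boundedness, the profile argument you sketch (ruling out strictly subcritical profiles via the finite-time blow-up of \cite{KeMe06}) is delicate: a nonlinear profile with $\|\cdot\|_{\hdot}>\|W\|_{\hdot}$ need not inherit an $L^2$ bound from $u$, so the blow-up theorem of \cite{KeMe06} is not directly available for it. In short, this step is not a routine adaptation of Lemma~\ref{lemmacc}, and you have not supplied a proof.

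The paper bypasses compactness entirely. It chooses the cutoff $\varphi$ with the additional convexity constraint $\varphi''(r)\le 2$ (see \eqref{condphi}); this makes the $|\nabla u|^2$ contribution to $A_R$ nonpositive, so it can be dropped when bounding $A_R$ from above. The remaining terms are then controlled \emph{pointwise} from the $L^2$ conservation: the $|u|^2(\Delta^2\varphi_R)$ term by $\tfrac{C}{R^2}\|u_0\|_{L^2}^2$, and the tail $\int_{|x|\ge R}|u|^{2^*}$ by Strauss' radial decay lemma (Lemma~\ref{Strauss}), which gives $\int_{|x|\ge R}|u|^{2^*}\le C R^{-\frac{2N-2}{N-2}}\|u_0\|_{L^2}^{\frac{2N-2}{N-2}}\|u(t)\|_{\hdot}^{\frac{2}{N-2}}$. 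A small-$\dd$ estimate on $A_R$ (with $\mu(t)$ bounded below again via $L^2$, see \eqref{contra.musuper}) plus a concavity argument in $\dd$ then yield $|A_R(u(t))|\le\tfrac{8}{N-2}\dd(u(t))$ for all $t\ge0$ and $R\ge R_0$, with $R_0$ depending only on $\|u_0\|_{L^2}$. This gives $G_R'\le-\tfrac{8}{N-2}\dd$ and $G_R\le CR^2\dd$ directly, with no compactness and no a~priori $\hdot$ bound needed; from there the monotonicity $G_R'<0$ together with global forward existence forces $G_R>0$, and the Gronwall/modulation endgame you describe goes through. The $L^2$ hypothesis is thus used in a much more elementary way than you propose, and it replaces, rather than supplements, the compactness machinery of Section~\ref{sec.compact}.
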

\begin{corol}
\label{corol.sur}
Let $u$ be a radial solution of \eqref{CP} satisfying \eqref{supercrit} and such that $u_0\in L^2(\RR^N)$. Then $u$ is not defined on $\RR$. 
\end{corol}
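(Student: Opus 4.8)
I would argue by contradiction. Suppose $u$ is a radial solution of \eqref{CP} satisfying \eqref{supercrit}, with $u_0\in L^2(\RR^N)$, and \emph{defined on all of} $\RR$. Then $u$ is defined on both $[0,+\infty)$ and $(-\infty,0]$, so Proposition \ref{PropSuper} applies in both time directions: there are $\theta_\pm\in\RR$, $\mu_\pm>0$ and $c,C>0$ with $\|u(t)-W_{[\theta_+,\mu_+]}\|_{\hdot}\le Ce^{-ct}$ for $t\ge 0$ and, symmetrically, $u(t)$ converges exponentially in $\hdot$ as $t\to-\infty$ to some $W_{[\theta_-,\mu_-]}$ (possibly up to complex conjugation). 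The facts I would extract for later use are: $\dd(u(t))\le Ce^{-c|t|}$, so $\dd(u(\cdot))\in L^1(\RR)$; $\sup_{t\in\RR}\|u(t)\|_{\hdot}<\infty$; the trajectory $\{u(t):t\in\RR\}$ is relatively compact in $\hdot$ (it is the continuous image of $[-\infty,+\infty]$ in $\hdot$); the modulation parameter $\mu(t)$ of \eqref{decompo.v} satisfies $\mu(t)\ge c_*>0$ for $|t|$ large; and $\dd(u(t))$ is bounded below by some $\dd_*>0$ on any fixed compact time-interval, since it never vanishes (Remark \ref{RemPersist}).

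Because we only assume $u_0\in L^2$, not $xu_0\in L^2$ (note $xW\notin L^2$ when $N\le 6$), I would replace the exact variance by a truncated one. With $\varphi_R\ge 0$ as in Subsection \ref{CV.in.mean} ($\varphi_R(x)=|x|^2$ on $\{|x|\le R\}$, $\supp\varphi_R\subset\{|x|\le 2R\}$), set $V_R(t):=\int_{\RR^N}\varphi_R(x)\,|u(t,x)|^2\,dx$, which is finite since $u(t)\in L^2$. Then $V_R'(t)=G_R(t)$, and the computation leading to \eqref{gR'bis}, now with the \emph{supercritical} identity $\int|\nabla u(t)|^2-\int|u(t)|^{2^*}=-\frac{2}{N-2}\dd(u(t))$ (which follows from $E(u_0)=E(W)$ together with $\|u(t)\|_{\hdot}>\|W\|_{\hdot}$), gives
$$ V_R''(t)=G_R'(t)=-\frac{16}{N-2}\,\dd(u(t))+A_R(u(t)), $$
with $A_R$ the error term \eqref{defAR}.

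The crux is to fix $R$ so large that $A_R$ is dominated by the leading term. I would combine two estimates on $A_R(u(t))$: first, from the explicit form \eqref{defAR}, the Hardy and Sobolev inequalities, and the relative compactness of $\{u(t):t\in\RR\}$, for every $\eps>0$ there is $R_\eps$ with $|A_R(u(t))|\le\eps$ for all $t$ and $R\ge R_\eps$; second, for $|t|$ large --- where $\dd(u(t))$ is small and $\mu(t)\ge c_*$ --- decomposing $u(t)$ as in \eqref{decompo.v} and using \eqref{bound.smalld}, $|A_R(u(t))|\le\frac{8}{N-2}\dd(u(t))$ once $R$ is large. Fixing $R$ large enough for both (with $\eps<\frac{16}{N-2}\dd_*$ on the remaining compact time-interval) yields $V_R''(t)\le 0$ for all $t$, with $V_R''\in L^1(\RR)$ (from $\dd(u(\cdot))\in L^1$ and $A_R(u(\cdot))\in L^1(\RR)$) and $\int_\RR V_R''(t)\,dt<0$. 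Hence $V_R'(t)$ has finite limits $\ell_\pm=\lim_{t\to\pm\infty}V_R'(t)$ with $\ell_-=\ell_+-\int_\RR V_R''>\ell_+$. If $\ell_+<0$ then $V_R'(t)<\ell_+/2<0$ for large $t$, so $V_R(t)\to-\infty$, contradicting $V_R\ge 0$; hence $\ell_+\ge 0$, so $\ell_->0$, so $V_R'(t)>\ell_-/2>0$ for $t$ very negative, which forces $V_R(t)\to-\infty$ as $t\to-\infty$ --- again impossible. This contradiction shows that $u$ cannot be global.

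I expect the main difficulty to be precisely the $L^2$-only hypothesis: it forbids the classical Glassey argument with the exact variance $\int|x|^2|u|^2$ and forces the truncated $V_R$, so the boundary term $A_R$ has to be controlled uniformly in $t$. This needs \emph{both} the compactness of the full trajectory in $\hdot$ (for the bulk in time, via Proposition \ref{PropSuper} applied forward \emph{and} backward) \emph{and} the fine bound \eqref{bound.smalld} on $A_R$ near $t=\pm\infty$, exactly where $\dd(u(t))\to0$ degrades the leading term. A secondary subtlety is that $V_R''$ is negative only ``on average'' (though in fact, for $R$ large, one can arrange $V_R''\le 0$ pointwise), so the endgame is cleanest when phrased through the limits $\ell_\pm$ of $V_R'$ rather than via a ``concave and bounded below'' shortcut. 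For $N=3,4$ there is also a quicker route: $W_{[\theta_+,\mu_+]}\notin L^2$, so choosing $t_n\to+\infty$ and passing to an a.e.\ convergent subsequence, Fatou's lemma gives $\|W_{[\theta_+,\mu_+]}\|_{L^2}^2\le\liminf\|u(t_n)\|_{L^2}^2=\|u_0\|_{L^2}^2<\infty$, a contradiction; but the localized virial argument above covers all $N\in\{3,4,5\}$ uniformly.
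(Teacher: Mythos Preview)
Your argument is correct and follows essentially the paper's route: apply Proposition~\ref{PropSuper} in both time directions, then derive a contradiction from the truncated virial. The paper's version is more compact because it invokes Claim~\ref{ClaimSuperViriel} (forward, and on $t\mapsto\overline{u}(-t)$ for negative times) to obtain directly $G_R'(t)<0$ for all $t\in\RR$ together with $G_R(t)\to 0$ as $t\to\pm\infty$, an immediate contradiction that bypasses your analysis of the limits $\ell_\pm$ of $V_R'$.
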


The proof relies again on the localized virial argument. Consider a radial function $\varphi$ in $C^{\infty}_0(\RR^N)$ such that
\begin{gather}
\label{condphi}
\varphi(r)=r^2,\;r\leq 1,\quad \varphi(r)\geq 0\text{ and }\frac{d^2\varphi}{dr^2}(r)\leq 2,\; r\geq 0.
\end{gather}
Consider the function $G_{R}$ of Subsection \ref{sub.mean}
$$  G_{R}(t):=2\im \int_{\RR^N} \ubar(t) \nabla u(t) \cdot\nabla \varphi_{R}=H_R'(t),\quad
H_R(t):=\int_{\RR^N}|u(t)|^2\varphi_R. $$
where $\varphi_{R}(x)=R^2\varphi\left(\frac xR\right)$, 

As usual, the key point of the proof is to bound $G_{R}$ and $G_{R}'$. 
\begin{claim}
\label{ClaimSuperViriel}
Under the assumptions of Proposition \ref{PropSuper}, there exist constants $C,R_0>0$ (depending only on $\int|u_0|^2$), such that for $R\geq R_0$, and all $t\geq 0$
\begin{align}
\label{GRsuper}
G_{R}(t)&\leq CR^2\dd(u(t)),\\
\label{GR'super}
G_{R}'(t)&\leq -\frac{8}{N-2}\dd(u(t)).
\end{align}
\end{claim}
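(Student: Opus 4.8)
\textbf{Proof plan for Claim \ref{ClaimSuperViriel}.} The strategy parallels the virial estimates used in the subcritical case (Claim \ref{claim.virial.seq} and Lemma \ref{lem.delta.mean}), but now the sign of $\dd(u(t))$ is reversed since $\|u(t)\|_{\hdot}>\|W\|_{\hdot}$, which is precisely what produces the favourable sign in \eqref{GR'super}. First I would establish \eqref{GRsuper}. Since $u_0\in L^2$ and the $L^2$-norm is conserved, $H_R(t)$ is finite and uniformly bounded, and by Cauchy--Schwarz $|G_R(t)|=|H_R'(t)|\leq C R \|u(t)\|_{\hdot}\|u(t)\|_{L^2}\leq CR^2$; but this only gives a bound, not the factor $\dd(u(t))$. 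To get the stronger bound \eqref{GRsuper} I would argue as in Step 1 of the proof of Claim \ref{claim.virial.seq}: the variational characterization of $W$ (Aubin--Talenti, together with the development \eqref{W+g}) applies equally when $\dd(u(t))$ is small and $\|u(t)\|_{\hdot}>\|W\|_{\hdot}$, so when $\dd(u(t))\leq\delta_1$ one can modulate $u(t)$ around the manifold $\{W_{[\theta,\mu]}\}$, write $(u(t))_{[\theta(t),\mu(t)]}=W+v(t)$ with $\|v(t)\|_{\hdot}\lesssim\dd(u(t))$, use that $G_R(W)=0$, expand the integrand as $\im(W\nabla v+\overline v\nabla W+\overline v\nabla v)$, and conclude $|G_R(t)|\leq CR^2(\|v(t)\|_{\hdot}+\|v(t)\|_{\hdot}^2)\leq CR^2\dd(u(t))$; for $\dd(u(t))\geq\delta_1$ the crude bound $|G_R(t)|\leq CR^2\leq (C/\delta_1)R^2\dd(u(t))$ suffices. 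Here I would need the fact, coming from the variational characterization, that $\dd(u(t))$ stays below $\delta_0$ is \emph{not} automatic; so the modulation is only available on the (possibly empty) set of times where $\dd(u(t))<\delta_0$, and the argument is split accordingly just as in the subcritical case.

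For \eqref{GR'super}, the starting point is the same explicit computation as in \eqref{gR'bis}, giving
\[
G_R'(t)=8\Big(\int|\nabla u(t)|^2-\int|u(t)|^{2^*}\Big)+A_R(u(t)),
\]
where $A_R$ is the remainder defined in \eqref{defAR}, supported in $\{|x|\geq R\}$. Under \eqref{supercrit} one has $E(u_0)=E(W)$ with $\|u(t)\|_{\hdot}>\|W\|_{\hdot}$, hence $\int|\nabla u(t)|^2-\int|u(t)|^{2^*}=-\tfrac{2}{N-2}\dd(u(t))$ (the sign opposite to the subcritical identity), so $G_R'(t)=-\tfrac{16}{N-2}\dd(u(t))+A_R(u(t))$. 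It then suffices to show $|A_R(u(t))|\leq\tfrac{8}{N-2}\dd(u(t))$ for $R$ large, uniformly in $t\geq 0$. This is the crux of the argument. Unlike the subcritical case, I do \emph{not} have a compact family $K_+$ at my disposal a priori (that is the conclusion of the analogous Proposition \ref{PropSuper}, not a hypothesis), so the smallness of $A_R$ for large $R$ cannot be extracted from compactness of $\{u(t)\}$ in $\hdot$. Instead I would exploit the $L^2$ bound: since $|\Delta^2\varphi_R|\leq C/R^2$ and $\|u(t)\|_{L^2}^2=\|u_0\|_{L^2}^2$ is conserved, the last term in $A_R$ is $O(1/R^2)$ uniformly in $t$, hence $\leq\tfrac{4}{N-2}\dd(u(t))$ once $\dd(u(t))$ is bounded below — but again $\dd$ can be arbitrarily small. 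The clean way out, which I expect is what the authors do, is to combine: (i) for times with $\dd(u(t))$ \emph{small}, modulate and use the decay estimates $\|\nabla W\|_{L^2(|x|\geq\rho)}+\|W\|_{L^{2^*}(|x|\geq\rho)}\lesssim\rho^{-(N-2)/2}$ exactly as in the proof of \eqref{bound.smalld}--\eqref{ARforlater}, together with $G_R(W)=0$, $A_R(W)=0$, to get $|A_R(u(t))|\leq C\big((R\mu(t))^{-(N-2)/2}\dd(u(t))+\dd(u(t))^2\big)$, where $\mu(t)$ is controlled (bounded below, as one shows from the $L^2$ constraint forcing $\mu$ not to go to $\infty$); (ii) for times with $\dd(u(t))$ bounded below, use the crude bound on each term of $A_R$ involving $\|u(t)\|_{\hdot}^2\leq C$ (from $E(u(t))=E(W)$ and Claim \ref{lem.var}-type control, which in the supercritical regime needs the $L^2$ bound to prevent $\|u(t)\|_{\hdot}$ from blowing up) plus the $O(1/R^2)$ term, all of which become $\leq\tfrac{8}{N-2}\dd(u(t))$ for $R$ large. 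Choosing $R_0$ to handle both regimes gives \eqref{GR'super}.

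\textbf{Main obstacle.} The delicate point, and the place where the hypothesis $u_0\in L^2$ is genuinely used, is getting a \emph{uniform-in-time} upper bound on $\|u(t)\|_{\hdot}$ (equivalently on $\dd(u(t))$) in the supercritical regime: in contrast with the subcritical case, Claim \ref{lem.var} does not directly bound $\|u(t)\|_{\hdot}$ from above when $\|u(t)\|_{\hdot}>\|W\|_{\hdot}$, so one has no a priori control of the $\hdot$-norm. The $L^2$ conservation is what rescues this: it prevents the modulation scale $\mu(t)$ from degenerating and, combined with the energy identity, pins down $\|u(t)\|_{\hdot}$, which is needed both for the crude bound in regime (ii) and to make the decomposition of regime (i) uniform. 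Once that boundedness is in hand, the remaining estimates are routine applications of Hardy, Sobolev and Cauchy--Schwarz inequalities together with the explicit decay of $W$ at infinity, exactly mirroring Subsection \ref{sub.CVseq}.
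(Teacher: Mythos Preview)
Your treatment of \eqref{GRsuper} and of the small-$\dd$ regime for \eqref{GR'super} is essentially what the paper does: modulate when $\dd(u(t))$ is small, use that $A_R(W)=0$ and the decay of $W$ to get $|A_R(u(t))|\leq C\big((R\mu(t))^{-(N-2)/2}\dd+\dd^2\big)$, and use the $L^2$ conservation to bound $\mu(t)$ from below (this is exactly \eqref{contra.musuper}).

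There is, however, a genuine gap in your regime (ii). You claim that the $L^2$ bound together with the energy identity ``pins down $\|u(t)\|_{\hdot}$'', and you then want to use a crude bound $|A_R(u(t))|\leq C\|u(t)\|_{\hdot}^2$ to absorb it into $\frac{8}{N-2}\dd(u(t))$ for $R$ large. Neither step works. First, $E(u)=E(W)$ and $\|u(t)\|_{L^2}=\|u_0\|_{L^2}$ do \emph{not} give an a priori upper bound on $\|u(t)\|_{\hdot}$ in the supercritical regime: the energy identity only says $\tfrac12\|u\|_{\hdot}^2-\tfrac1{2^*}\|u\|_{L^{2^*}}^{2^*}=E(W)$, which allows $\|u\|_{\hdot}$ arbitrarily large. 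Second, even granting boundedness, the gradient piece of $A_R$, namely $\int_{|x|\geq R}|\partial_r u|^2\big(4\varphi_R''-8\big)$, is controlled only by $C\|u(t)\|_{\hdot}^2=C(\|W\|_{\hdot}^2+\dd)$ with a constant $C$ that does \emph{not} shrink as $R\to\infty$; without compactness of $\{u(t)\}$ in $\hdot$ (which you correctly note is not available here), you cannot make this smaller than $\tfrac{8}{N-2}\dd$.

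The paper handles regime (ii) by a completely different mechanism that you have missed. The point is that, in this section, the cutoff $\varphi$ is chosen with the extra convexity condition \eqref{condphi}, in particular $\varphi''(r)\leq 2$, so that $4\varphi_R''-8\leq 0$ and the gradient term in $A_R$ has a \emph{favourable sign} and can simply be dropped in an upper bound:
\[
A_R(u(t))\leq C\int_{|x|\geq R}|u(t)|^{2^*}+\frac{C}{R^2}\|u_0\|_{L^2}^2.
\]
The remaining $L^{2^*}$ tail is then controlled using the radial Strauss lemma (Lemma~\ref{Strauss}) together with $L^2$ conservation, yielding
\[
\int_{|x|\geq R}|u(t)|^{2^*}\leq \frac{C}{R^{\frac{2N-2}{N-2}}}\|u(t)\|_{\hdot}^{\frac{2}{N-2}}\|u_0\|_{L^2}^{\frac{2N-2}{N-2}},
\]
which grows only like $\dd(u(t))^{\frac{1}{N-2}}$, i.e.\ sublinearly in $\dd$. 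A short concavity argument (the function $\Phi_{R_4}$ in Step~3 of the paper's proof) then shows that for $R$ large this is $\leq\tfrac{8}{N-2}\dd$ for all $\dd\geq\delta_3$, regardless of the size of $\|u(t)\|_{\hdot}$. So the ``main obstacle'' is not to bound $\|u(t)\|_{\hdot}$---the paper never does and never needs to---but rather to exploit the sign structure of the virial cutoff and the radial decay from Strauss's lemma, both of which are absent from your sketch.
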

Let us show that Claim \ref{ClaimSuperViriel} implies Proposition \ref{PropSuper}. 
\begin{proof}[Proof of Proposition \ref{PropSuper}]

\noindent\emph{Step 1: exponential convergence of $\dd(u(t))$.}
Let us prove 
\begin{equation}
\label{exposuper}
\exists c,C>0,\;\forall t\geq 0,\quad \dd(u(t))\leq Ce^{-ct}.
\end{equation}
Fix $R\geq R_0$. We first remark
\begin{equation}
\label{GRpositive}
\forall t\geq 0,\quad G_{R}(t)>0.
\end{equation}
Indeed by \eqref{GR'super}, $G_{R}$ is strictly decreasing with time, so that if $G_{R}(t_0)\leq 0$ for some $t_0\geq 0$, then
$$ \forall t\geq t_0+1,\quad H'_R(t)= G_{R}(t)\leq G_R(t_0+1)<0.$$ 
This contradicts the fact that $\varphi_R$ is positive and $u$ defined on $[0,+\infty)$, proving \eqref{GRpositive}.\par
Consider two positive times $t<T$. Integrating \eqref{GR'super} between $t$ and $T$, and using \eqref{GRsuper}, we get
\begin{equation}
\label{major}
\frac{8}{N-2} \int_t^T \dd(u(s))ds\leq G_{R}(t)-G_{R}(T)\leq G_{R}(t)\leq CR^2\dd(u(t)).
\end{equation}
Letting $T$ tends to infinity yields,
$\int_t^{+\infty} \dd(u(s))ds\leq C\dd(u(t))$, for some $C>0$ and thus, by Gronwall Lemma
\begin{equation}
\label{inegint.super} 
\int_t^{+\infty} \dd(u(s))ds\leq Ce^{-ct}.
\end{equation}
Our next claim is that
\begin{equation}
\label{superCV}
\lim_{t\rightarrow +\infty} \dd(u(t))=0.
\end{equation}
Indeed, by \eqref{inegint.super}, there exists $t_n\rightarrow +\infty$ such that $\dd(u(t_n))\rightarrow 0$. Assume that \eqref{superCV} does not hold. Then, extracting a subsequence from $(t_n)$, there exists $t_n'>t_n$ such that 
$$\dd(u(t_n'))=\delta_0,\text{ and }\forall t\in(t_n,t_n'),\; 0<\dd(u(t))<\delta_0,$$
where $\delta_0$ is such that \eqref{decompo.v} and Lemma \ref{bound.modul} hold.
Consider the parameter $\alpha$ of decomposition \eqref{decompo.v}. By Lemma \ref{bound.modul}, $|\alpha'(t)|\leq C\dd(u(t))$, for $t\in[t_n,t_n']$ thus \eqref{inegint.super} implies that $\alpha(t_n)-\alpha(t_n')$ tends to $0$. Furthermore, again by Lemma \ref{bound.modul}, $|\alpha(t)|\approx\dd(u(t))$, which shows that $\dd(u(t_n'))$ tends to $0$, contradicting the definition of $t_n'$. Hence \eqref{superCV}.\par
By \eqref{superCV}, the parameter $\alpha(t)$ is well defined for large $t$. In view of the estimates $|\alpha'(t)|\leq C\dd(u(t))$ and $|\alpha(t)|\approx\dd(u(t))$, \eqref{inegint.super} yields \eqref{exposuper}.

\medskip

\noindent\emph{Step 2: convergence of $\mu(t)$ and end of the proof.}
Let us prove
\begin{equation}
\label{musuper}
\lim_{t\rightarrow +\infty} \mu(t)=\mu_{\infty} \in(0,\infty).
\end{equation}
By \eqref{exposuper} and the estimate $\left|\frac{\mu'(t)}{\mu^3(t)}\right|\leq C\dd(u(t))$ of Proposition \ref{bound.modul}, we know that $\frac{1}{\mu^2(t)}$ satisfies the Cauchy criterion of convergence. This shows that
$\lim_{t\rightarrow +\infty} \mu(t)=\mu_{\infty}\in (0,+\infty]$. It remains to show that $\mu_{\infty}$ is finite.\par
 Assume that $\mu_{\infty}=+\infty$. As $u_{[\theta(t),\mu(t)]}$ tends to $W$ in $\hdot$, it implies that for any $\eps>0$, $\int_{|x|\geq \eps} |u|^{2^*}$ tends to $0$ as $t$ tends to $\infty$. By H\"older  inequality and the boundedness of $\int |\nabla u(t)|^2$, this shows that $\lim_{t\rightarrow +\infty} H_R(t)=0$. Since by \eqref{GRpositive}, $H'_R(t)=G_R(t)>0$, this implies that $H_R(t)<0$ for $t\geq 0$ which contradicts the fact that $\varphi_R$ is positive. Hence \eqref{musuper}.\par

In particular, $\mu$ is bounded. Thus by Lemma \ref{bound.modul}, $$\|u-W_{[\theta(t),\mu(t)]}\|_{\hdot}+\left|\mu'(t)\right|+|\theta'(t)|\leq C\dd(u(t))\leq Ce^{-ct},$$
 which shows the Proposition.
\end{proof}
\begin{proof}[Proof of Corollary \ref{corol.sur}.]
Let $u$ be a solution of \eqref{CP} satisfying the assumptions of the corollary and defined on $\RR$.  Then by Proposition \ref{PropSuper} 
\begin{equation}
\label{dd.sur.0}
\lim_{t\rightarrow \pm \infty} \dd(u(t))=0.
\end{equation}
Define $G_R(t)$ as in the proof of Proposition \ref{PropSuper}. Applying Claim \ref{ClaimSuperViriel} to $t\mapsto \overline{u}(-t)$, we see that it holds also for negative times. By \eqref{GR'super}, $G_R'(t)<0$ and by \eqref{GRsuper} and \eqref{dd.sur.0} $G_R(t)\rightarrow 0$ for $t\rightarrow \pm\infty$. This is a contradiction, yielding the corollary.
\end{proof}

\begin{proof}[Proof of Claim \ref{ClaimSuperViriel}]
As in the proof of Lemma \ref{lem.delta.mean}, since $E(u_0)=E(W)$ and $\|u_0\|_{\hdot}>\|W\|_{\hdot}$
\begin{equation}
\label{GR'bis}
G_{R}'(t)= 8\left( \int |\nabla u|^2-\int |u|^{2*}\right) +A_{R}(u(t))=-\frac{16}{N-2} \dd(u(t))+A_{R}(u(t)),
\end{equation}
where $A_{R}$ is defined in \eqref{defAR}. 

\medskip

\noindent\emph{Step 1: a general bound on $A_R$.} We show that there exist $C_1,R_1>0$ (depending only on $\int |u_0|^2$) such that 
\begin{equation}
\label{MajorAR}
\forall R\geq R_1,\; \forall t\geq 0,\quad A_R(u(t))\leq \left\{\frac{C_1}{R^2}+\frac{C_1}{R^{\frac{2N-2}{N-2}}} \|u(t)\|_{\hdot}^{\frac{2}{N-2}}\right\}.
\end{equation}
Indeed, according to \eqref{defAR}, the definition of $\varphi_R$ and \eqref{condphi},
\begin{align*}
A_R(u(t))&=\int_{|x|\geq R} |\nabla u(t)|^2\Big(4\frac{d^2\varphi_R}{dr^2}-8\Big)r^{N-1}dr\\
&\qquad +\int_{|x|\geq R}|u(t)|^{2^*}\left(-\frac{4}{N}\Delta\varphi_R+8\right)r^{N-1}dr -\int |u(t)|^2(\Delta^2\varphi_R)r^{N-1}dr\\
&\leq C\int_{|x|\geq R} |u(t)|^{2*}dx+\frac{C}{R^2} \|u(t)\|_{L^2}^2.
\end{align*}
To bound the first term, we will use the radiality of $u(t)$ and Strauss Lemma \cite{St77}:
\begin{lemma}
\label{Strauss}
There is a constant $C>0$ such that for any radial function $f$ in $H^1(\RR^N)$
$$ \forall x,\;|x|\geq 1,\quad |f(x)|\leq \frac{C}{|x|^{(N-1)/2}} \|f\|_{L^2}^{1/2}\|f\|_{\hdot}^{1/2}.$$
\end{lemma}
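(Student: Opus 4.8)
The estimate is the classical radial Sobolev embedding, due to Strauss (see \cite{St77}), so the plan is simply to recall the one-dimensional computation behind it. First I would reduce, by a density argument, to the case of a radial function $f\in \Cio(\RR^N)$; the general case then follows by approximation in $H^1$, once the pointwise bound is known, since a radial $H^1$ function admits a representative that is continuous on $\RR^N\setminus\{0\}$ (itself a consequence of the estimate applied to differences of smooth approximations).

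Writing $f=f(r)$ with $r=|x|$, I would use that $f$ has compact support to write, for every $r>0$,
$$f(r)^2=-2\int_r^{+\infty}f(s)f'(s)\,ds.$$
Multiplying by $r^{N-1}$, bounding $r^{N-1}\le s^{N-1}$ for $s\ge r$, and applying the Cauchy--Schwarz inequality, one gets
$$r^{N-1}f(r)^2\le 2\left(\int_0^{+\infty}|f(s)|^2 s^{N-1}\,ds\right)^{1/2}\left(\int_0^{+\infty}|f'(s)|^2 s^{N-1}\,ds\right)^{1/2}.$$
For a radial function one has $\int_0^{+\infty}|f|^2 s^{N-1}\,ds=|S^{N-1}|^{-1}\|f\|_{L^2}^2$ and $\int_0^{+\infty}|f'|^2 s^{N-1}\,ds=|S^{N-1}|^{-1}\|\nabla f\|_{L^2}^2=|S^{N-1}|^{-1}\|f\|_{\hdot}^2$, since $|\nabla f|=|f'(r)|$ pointwise. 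Taking square roots then yields
$$r^{(N-1)/2}|f(r)|\le C\,\|f\|_{L^2}^{1/2}\|f\|_{\hdot}^{1/2},\qquad C=\left(2/|S^{N-1}|\right)^{1/2},$$
which is the asserted inequality for all $r>0$, and in particular for $|x|\ge 1$ (the restriction to $|x|\ge 1$ is not actually needed, but is of course harmless).

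There is no serious obstacle here; the only point requiring a little care is the passage to a general radial $f\in H^1(\RR^N)$, for which one applies the pointwise estimate to differences of radial $\Cio$-approximations of $f$ to obtain uniform convergence on each region $\{|x|\ge \eps\}$, and then passes to the limit in the inequality. Alternatively, one may quote \cite{St77} directly.
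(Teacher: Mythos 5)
Your proof is correct, and it is the standard proof of Strauss's radial lemma. The paper itself does not prove this statement: it simply cites \cite{St77} and uses the estimate as a black box, so there is no proof in the paper to compare against. One very minor point: in this paper $f$ is in general complex-valued, so you should write $|f(r)|^2 = -\int_r^{\infty}\frac{d}{ds}|f(s)|^2\,ds = -2\int_r^{\infty}\re\big(\overline{f(s)}f'(s)\big)\,ds$ and bound by $2\int_r^{\infty}|f|\,|f'|\,ds$; the rest of your computation goes through unchanged. Everything else, including the density argument and the observation that the restriction $|x|\ge 1$ is not needed for the argument, is fine.
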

 We have $\int_{|x|\geq R} |u(t)|^{2^*}\leq \|u(t)\|_{L^{\infty}(\{|x|\geq R\})}^{\frac{4}{N-2}}\|u(t)\|_{L^2}^2$, and thus, by Lemma \ref{Strauss},
\begin{equation}
\label{inegL6L2}
\int_{|x|\geq R} |u(t)|^{2^*}dx\leq \frac{C}{R^{\frac{2N-2}{N-2}}} \|u(t)\|_{\hdot}^{\frac{2}{N-2}}\|u(t)\|_{L^2}^{\frac{2N-2}{N-2}}
\end{equation}
which concludes the proof of \eqref{MajorAR} by the conservation of the $L^2$-norm.

\medskip

\noindent\emph{Step 2: estimate on $A_R$ when $\dd(u(t))$ is small.}
Let us show that there exists $\delta_2, R_2,C_2>0$ (depending only on $\int|u_0|^2_{L^2}$) such that
\begin{equation}
\label{MajorAR2}
\forall t\geq 0,\; \forall R\geq R_2,\quad 
\dd(u(t))\leq \delta_2 \Longrightarrow |A_{R}(u(t))|\leq C_2\left(\frac{1}{R^{\frac{N-2}{2}}}\dd(u(t))+\dd(u(t))^2\right).
\end{equation}
Taking a small $\delta_2$, we write by \eqref{decompo.v}, $u_{[\theta(t),\mu(t)]}=W+v$, with $\|v\|_{\hdot}\leq C\dd(u(t))$. In view of the bound \eqref{ARforlater} of $A_R$ shown in the preceding section, it is sufficient to prove
\begin{equation}
\label{m>0}
\mu_-:=\inf\{ \mu(t),\;t\geq 0,\;\dd(u(t))\leq \delta_2\}>0.
\end{equation}
Inequality \eqref{m>0} follows again from the fact that $u_0$ is in $L^2$. 
Indeed, if $\dd(u(t))\leq \delta_2$ we have, from the conservation of the $L^2$ norm, the equality $u_{[\theta(t),\mu(t)]}=W+V$ and Lemma \ref{bound.modul}
\begin{equation}
\label{contra.musuper}
\|u_0\|_{L^2}^2\geq \int_{|x|\leq\mu(t)} |u(t)|^2=\frac{1}{\mu(t)^2}\int_{|x|\leq 1} \left|u_{[\mu(t)]}(t)\right|^2\geq \frac{1}{\mu(t)^2}\left[\Big(\int_{|x|\leq 1} W^2\Big)-C\delta_2^2\right].
\end{equation}
If $\delta_2$ is small enough, this shows \eqref{m>0}, and thus the announced inequality \eqref{MajorAR2}.

\medskip

\noindent\emph{Step 3: conclusion of the proof.} 
In view of \eqref{GR'bis}, it is sufficient to prove
\begin{equation}
\label{major.AR.sur}
\exists R_0>0, \; \forall R\geq R_0,\;\forall t\geq 0,\quad |A_{R}(u(t))|\leq \frac{8}{N-2}\dd(u(t)).
\end{equation}
From \eqref{MajorAR2}, there exist $\delta_3$, $R_3$ such that \eqref{major.AR.sur} holds if $R\geq R_3$ and $\dd(u(t))\leq \delta_3$. Let $R_4>0$ and
$$ \Phi_{R_4}(\delta):=\frac{C_1}{R_4^2}+\frac{C_1}{R_4^{\frac{2N-2}{N-2}}}\left(\delta+\|W\|^2_{\hdot}\right)^{\frac{1}{N-2}}-\frac{8}{N-2}\delta,$$
where $C_1$ is given by step 1. Clearly, $\Phi_{R_4}$ is concave. Chose $R_4\geq R_2$ large enough so that $\Phi_{R_4}(\delta_3)\leq 0$, $\Phi_{R_4}'(\delta_3)\leq 0$. Then $\Phi_{R_4}(\delta)\leq 0$ for all $\delta\geq \delta_3$. Thus \eqref{MajorAR} implies \eqref{major.AR.sur} when $R\geq R_4$ and $\delta\geq \delta_3$, which concludes the proof of \eqref{major.AR.sur} with $R_0:=\max\{R_3,R_4\}$. The proof of Claim \ref{ClaimSuperViriel} is complete.
\end{proof}

\section{Preliminaries on the linearized equation around $W$}
\label{sec.linear}
By Propositions \ref{prop.CV0} and \ref{PropSuper}, in order to conclude the proofs of Theorems \ref{th.exist} and \ref{th.classif}, we need to study solutions $u$ of \eqref{CP} on $[t_0,+\infty)$, ($t_0\geq 0$)  such that
\begin{equation}
\label{devtu1}
\|u(t)-W\|_{\hdot}\leq Ce^{-\gamma_0 t},\quad E(u)=E(W)
\end{equation}
for some $\gamma_0>0$. 

We will write indifferently $f=f_1+if_2$ or $f=\begin{pmatrix} f_1\\f_2\end{pmatrix}$ for a complex valued function $f$ with real part $f_1$ and imaginary part $f_2$. For a solution $u$ of \eqref{CP} satisfying \eqref{devtu1}, we will write
$$ v(t):=u(t)-W.$$
Equation \eqref{CP} yields
\begin{gather}
\label{equation.v}
\partial_t v+ \LLL(v)+R(v)=0,\quad \LLL:=\begin{pmatrix} 0 & \Delta+W^{p_c-1} \\ -\Delta-p_cW^{p_c-1} & 0 \end{pmatrix},\\
\notag
R(v):=-i \left|W+v\right|^{p_c-1}(W+v)+iW^{p_c}+ip_cW^{p_c-1}v_1-W^{p_c-1}v_2.
\end{gather}

The proofs of our theorems in Section \ref{sec.proof} rely on a careful analysis of solutions of the linearized equation $\partial_t h+\LLL h=\eps$, with $h$ and $\eps$ exponentially small as $t\rightarrow +\infty$. Before this analysis, carried out in Subsection \ref{sub.key.anal}, we need to establish some spectral properties of $\LLL$ and Strichartz type estimates for the equation.

\subsection{Spectral theory for the linearized operator}
\label{sub.spectral}
We are interested here by real eigenvalues and other spectral properties of $\LLL$. Note that by direct calculation, 
\begin{equation}
\label{kernelL}
\LLL (iW)=\LLL(W_1)=0.
\end{equation}
\begin{lemma}
\label{lem.Y}
The operator $\LLL$ admits two eigenfunctions $\YYY_+,\,\YYY_-\in \SSS$ with real eigenvalues
\begin{equation}
\label{cond.Y}
\LLL \YYY_+=e_0\YYY_+,\quad \LLL \YYY_-=-e_0\YYY_-,\quad \YYY_+=\overline{\YYY}_-,\quad e_0\in (0,+\infty).
\end{equation}
\end{lemma}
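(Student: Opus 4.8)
\textbf{Proof plan for Lemma \ref{lem.Y}.}

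The plan is to analyze the operator $\LLL$ by reducing the eigenvalue problem $\LLL\YYY = e_0\YYY$ to a scalar equation involving the self-adjoint Schr\"odinger operators that appear in its definition. Writing $\LLL_+ := -\Delta - p_c W^{p_c-1}$ and $\LLL_- := -\Delta - W^{p_c-1}$, we have $\LLL = \begin{pmatrix} 0 & -\LLL_- \\ \LLL_+ & 0 \end{pmatrix}$, so that the system $\LLL\begin{pmatrix} Y_1 \\ Y_2\end{pmatrix} = e_0 \begin{pmatrix} Y_1 \\ Y_2\end{pmatrix}$ becomes $-\LLL_- Y_2 = e_0 Y_1$ and $\LLL_+ Y_1 = e_0 Y_2$. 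Eliminating $Y_2$ gives $\LLL_-\LLL_+ Y_1 = -e_0^2 Y_1$. Thus a real eigenvalue $e_0$ of $\LLL$ corresponds to a \emph{negative} eigenvalue $-e_0^2$ of the product $\LLL_-\LLL_+$. The first step is therefore to establish that $\LLL_-\LLL_+$ has a negative eigenvalue. This follows from standard spectral information on $\LLL_+$ and $\LLL_-$: by the elliptic theory for $W$ (Aubin--Talenti, or the references already invoked such as \cite{Re90}), $\LLL_-\geq 0$ with kernel spanned by $W$ (this is essentially the content of \eqref{im.coer} and the remark that equality holds iff $v\in\vect(W)$), while $\LLL_+$ has exactly one negative eigenvalue (with ground state, hence positive eigenfunction) and kernel spanned by $W_1 = \frac{N-2}{2}W + x\cdot\nabla W$. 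Since $\LLL_+$ has a direction on which its quadratic form is negative and $\LLL_-$ is positive on the orthogonal complement of $W$, one can produce a test function on which $\langle \LLL_-\LLL_+ \varphi,\varphi\rangle_* < 0$ in an appropriate bilinear pairing; more robustly, one applies the standard theory (as in Grillakis--Shatah--Strauss or the Chang--Gustafson--Nakanishi--Tsai type analysis of this exact operator) which guarantees that because $\LLL_+$ has one simple negative eigenvalue and $\LLL_-\geq 0$, the operator $\LLL$ has a pair of real eigenvalues $\pm e_0$ with $e_0>0$.

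Concretely, I would proceed as follows. Set $A := \LLL_-^{1/2}\LLL_+\LLL_-^{1/2}$ on the orthogonal complement of $W$ (where $\LLL_-^{1/2}$ makes sense and is invertible); $A$ is self-adjoint, and its spectrum in $(-\infty,0)$ is in bijection with that of $\LLL_-\LLL_+$. One shows $A$ has a negative eigenvalue by exhibiting $\psi$ with $\langle A\psi,\psi\rangle < 0$: take $\psi$ proportional to $\LLL_-^{-1/2}\chi$ where $\chi$ is the negative eigenfunction of $\LLL_+$, after projecting off $W$, and check that the projection does not kill the negativity — here one uses that $\chi$ is a positive function and $W$ is positive, so $\langle\chi,W\rangle\neq 0$, combined with the Morse-index count. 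Denoting by $-e_0^2$ the bottom eigenvalue of $A$ ($e_0>0$) with eigenfunction $g\perp W$, set $Y_1 := \LLL_-^{1/2} g$ and $Y_2 := -\frac{1}{e_0}\LLL_+ Y_1$; one verifies directly that $(Y_1,Y_2)$ solves the system, hence $\YYY_+ := Y_1 + iY_2$ satisfies $\LLL\YYY_+ = e_0\YYY_+$. Then $\YYY_- := \overline{\YYY_+} = Y_1 - iY_2$ satisfies $\LLL\YYY_- = -e_0\YYY_-$, because conjugation on $\CC$-valued functions corresponds to $\begin{pmatrix} 1 & 0 \\ 0 & -1\end{pmatrix}$ which anticommutes with $\LLL$ (a one-line check from the block form). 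The regularity claim $\YYY_\pm\in\SSS$ (Schwartz space) follows from elliptic regularity and the exponential decay of $W$: $Y_1$ solves $\LLL_-\LLL_+ Y_1 = -e_0^2 Y_1$, a fourth-order elliptic equation with smooth, exponentially decaying coefficients, so bootstrapping in weighted Sobolev spaces gives $Y_1$ smooth with all derivatives decaying exponentially, and likewise for $Y_2 = -\frac{1}{e_0}\LLL_+ Y_1$.

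I expect the main obstacle to be the rigorous verification that $\LLL_-\LLL_+$ (equivalently $A$) genuinely has a strictly negative eigenvalue rather than merely non-positive spectrum touching $0$ — i.e. ensuring $e_0>0$ strictly and simple. This requires carefully combining three facts: $\LLL_+$ has exactly one negative eigenvalue (a known but nontrivial consequence of the nondegeneracy of $W$ as a constrained minimizer, traceable to \cite{Re90} and the Aubin--Talenti analysis), $\ker\LLL_+ = \vect(W_1)$ and $\ker\LLL_- = \vect(W)$ (again nondegeneracy), and a transversality/non-orthogonality input ensuring the negative direction of $\LLL_+$ survives the restriction to $W^\perp$. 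The positivity and strict simplicity then follow from a Perron--Frobenius / Sturm oscillation argument for the scalar reduction, using that the ground state of $\LLL_+$ is sign-definite. An alternative I would keep in reserve is the abstract theorem of Grillakis--Shatah--Strauss on the linearization at a critical point with Morse index one, which packages exactly this conclusion; invoking it would shorten the argument at the cost of citing external machinery. Either way, once the negative eigenvalue of the scalar problem is in hand, the reconstruction of $\YYY_\pm$, the conjugation symmetry, and the Schwartz regularity are all routine.
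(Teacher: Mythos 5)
Your high-level strategy is the same as the paper's: reduce the eigenvalue problem to showing that $P := (-\Delta - V)^{1/2}(-\Delta - p_c V)(-\Delta - V)^{1/2}$ (with $V = W^{p_c-1}$; your operator $A$) has a strictly negative eigenvalue below its essential spectrum $[0,+\infty)$, then reconstruct $\YYY_+$ from the eigenfunction and obtain $\YYY_- = \overline{\YYY}_+$ by conjugation. However, there is a genuine error in the regularity part. You assert that the Schwartz decay of $\YYY_\pm$ follows from ``the exponential decay of $W$.'' But $W(x) = (1+|x|^2/(N(N-2)))^{-(N-2)/2}$ decays only polynomially, like $|x|^{-(N-2)}$, and $V = W^{p_c-1} \sim |x|^{-4}$. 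A fourth-order elliptic equation with only polynomially decaying coefficients does not automatically produce Schwartz solutions; indeed, the kernel of $\Delta+V$ itself contains $W$ and $W_1$, which are not Schwartz. The paper's proof of $\YYY_1\in\SSS$ hinges crucially on the fact that $e_0\ne 0$: the equation is rewritten as $(\Delta^2+e_0^2)\YYY_1 = \text{(terms with extra }|x|^{-4}\text{ decay)}$, and $\Delta^2+e_0^2$ is an isomorphism $H^{s+1}\to H^{s-3}$ precisely because $e_0^2>0$. This gives an iterative gain of one power of $|x|^{-1}$ per step. Your exponential-decay shortcut would bypass this mechanism, and as written it is simply false.

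A secondary gap is in your existence argument for the negative eigenvalue: ``project $\chi$ off $W$ and check the projection does not kill the negativity, using that $\chi$ and $W$ are positive.'' The positivity of $\chi$ and $W$ only gives $\langle\chi,W\rangle\ne 0$; it does not by itself control the sign of $\langle\LLL_+\tilde\chi,\tilde\chi\rangle$ after projection, since the cross-term $2\langle\LLL_+\chi,W\rangle\langle\chi,W\rangle/\|W\|^2 = -2\lambda\langle\chi,W\rangle^2/\|W\|^2$ has the wrong sign (here $-\lambda<0$ is the eigenvalue of $\chi$). The clean way to restore orthogonality to $W$ without losing negativity is to add a multiple of $W_1\in\ker\LLL_+$, which leaves $\langle\LLL_+\cdot,\cdot\rangle$ unchanged; this is exactly what the paper does in the case $N=5$. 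Moreover, in $N=3,4$ one has $W\notin L^2$, so there is no $W$ to project off, but then $\LLL_-$ has no kernel in $L^2$ and $\LLL_-^{-1/2}$ is only densely defined; the paper handles this by a truncation $W_a=\chi(x/a)W$ together with a density-of-range approximation, a step you omit. Your appeal to Grillakis--Shatah--Strauss or Morse-index counting is a legitimate black-box alternative (and the paper cites \cite{Gr90,Sc06} for context), but the explicit test-function route you sketch needs these repairs. Note also that the lemma as stated does not claim simplicity of $e_0$, so the Perron--Frobenius/Sturm discussion you flag as the main obstacle is not actually needed here.
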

See Appendix \ref{app.spectral} for the proof.

Consider the symmetric bilinear form $\BB$ on $\hdot$ such that $Q(f)=\BB(f,f)$, where $Q$ is the quadratic form of Subsection \ref{sub.decompo}
$$ \BB(f,g)=\frac{1}{2}\int \nabla f_1\cdot\nabla g_1-\frac{p_c}{2}\int f_1 g_1 W^{p_c-1}+\frac{1}{2}\int \nabla f_2\cdot\nabla g_2-\frac{1}{2}\int f_2 g_2 W^{p_c-1}=\frac{1}{2}\im \int (\LLL f)\gbar.$$
As a consequence of the definition of $\BB$ we have,
\begin{gather}
\label{BBeasy}
\BB(f,g)=\BB(g,f),\quad \BB(iW,f)=\BB(W_1,f)=0,\quad \forall f,g\in \hdot\\
\label{antisym}
\BB(\LLL f,g)=-\BB(f,\LLL g), \quad \forall f,g\in \hdot, \; \LLL f,\LLL g \in \hdot\\
\label{QY=0}
Q(\YYY_+)=Q(\YYY_-)=0,\quad \BB(\YYY_-,\YYY_+)\neq 0.
\end{gather}
Indeed, the only assertion which is not direct is the fact that $\BB(\YYY_-,\YYY_+)\neq 0$. To prove it, one may argue by contradiction. If $\BB(\YYY_-,\YYY_+)$ was $0$, $B$ and $Q$ would be identically $0$ on $\vect\{W_1,iW,\YYY_-,\YYY_+\}$ which is of dimension $4$. But $Q$ is, by Claim \ref{coercivity}, positive definite on $H^{\bot}$, which is of codimension $3$, yielding a contradiction.\par
By \eqref{antisym}, $\LLL$ is antisymmetric for the bilinear form $\BB$. In the following lemma, we give a subspace $G_{\bot}$ of $\hdot$, related to the eigenfunctions of $\LLL$, in which $Q$ is positive definite. 
\begin{lemma}
\label{lem.positivity}
Let $\ds G_{\bot}=\left\{v\in \hdot,\quad (iW,v)_{\hdot}=(W_1,v)_{\hdot}=\BB(\YYY_+,v)=\BB(\YYY_-,v)=0\right\}.$
Then there exists $c>0$ such that
\begin{equation}
\label{positif}
\forall f\in G_{\bot},\quad Q(f)\geq c \|f\|_{\hdot}^2.
\end{equation}
\end{lemma}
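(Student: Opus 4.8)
The plan is to show that $Q$ is positive semi-definite on $G_\bot$ and that its kernel on $G_\bot$ is trivial, and then upgrade to the coercive estimate \eqref{positif} by a standard compactness/spectral argument. Write $H^\bot$ for the subspace of Subsection \ref{sub.decompo} (orthogonal in $\hdot$ to $W$, $iW$, $\tW$), on which $Q$ is positive definite by Claim \ref{coercivity}. The natural first step is to compare $G_\bot$ with $H^\bot$. Given $f\in G_\bot$, decompose $f = aW + \tilde f$ with $\tilde f\in H^\bot$ (using that $f\bot iW$, $f\bot \tW$ already, so only the $W$-component must be split off, $a = (f,W)_{\hdot}/\|W\|_{\hdot}^2$). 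Then $Q(f) = a^2 Q(W) + 2a\,\BB(W,\tilde f) + Q(\tilde f)$. The cross term is not obviously zero, but $\BB(W,\cdot)$ is a bounded linear functional, so we may write $Q(f) = Q(\tilde f) + (\text{terms linear and quadratic in } a)$. The point is that the two constraints $\BB(\YYY_+,f)=\BB(\YYY_-,f)=0$ should force $a$ to be controlled by $\|\tilde f\|_{\hdot}$; indeed $\BB(\YYY_\pm, f) = a\,\BB(\YYY_\pm, W) + \BB(\YYY_\pm,\tilde f)$, and provided the $2\times 2$ situation is non-degenerate one extracts $|a|\le C\|\tilde f\|_{\hdot}$. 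Combined with Claim \ref{coercivity} this gives $Q(f)\ge c\|\tilde f\|_{\hdot}^2 - C|a|\,\|\tilde f\|_{\hdot} - C a^2 \ge c'\|\tilde f\|_{\hdot}^2 - C'a^2$, which is not yet what we want, so this crude route needs the sign of $Q(W)<0$ to be exploited more carefully, or abandoned in favor of the cleaner argument below.

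The cleaner approach I would actually carry out: first prove $Q(f)\ge 0$ for all $f\in G_\bot$. Suppose not, so $Q(g)<0$ for some $g\in G_\bot$. Consider the subspace $\VVV := \vect\{\YYY_+,\YYY_-,g\}$. On $\vect\{\YYY_+,\YYY_-\}$ the form $Q$ vanishes by \eqref{QY=0}, while $\BB(\YYY_-,\YYY_+)\ne 0$; moreover $\BB(\YYY_\pm,g)=0$ since $g\in G_\bot$. Hence, in the basis $(\YYY_+,\YYY_-,g)$, the Gram matrix of $\BB$ is
\[
\begin{pmatrix} 0 & \BB(\YYY_-,\YYY_+) & 0\\ \BB(\YYY_-,\YYY_+) & 0 & 0 \\ 0 & 0 & Q(g)\end{pmatrix},
\]
which is non-degenerate and has two negative eigenvalues (the hyperbolic $2\times2$ block contributes one negative direction, and $Q(g)<0$ contributes another). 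So $Q$ is negative definite on a $2$-dimensional subspace $\VVV'\subset\VVV$. On the other hand $Q$ is nonpositive on $H = \vect\{W,iW,\tW\}$ with one-dimensional negative part ($Q(W)<0$, $Q\equiv0$ on $\vect\{iW,\tW\}$ by \eqref{valeursQ}), and positive definite on $H^\bot$ by Claim \ref{coercivity}. A maximal subspace on which $Q$ is negative definite therefore has dimension exactly $1$ (one gets $\le 1$ because any such subspace injects into $H$ modulo $H^\bot$, and $\ge 1$ from $W$). This contradicts the existence of the $2$-dimensional $\VVV'$. Hence $Q\ge 0$ on $G_\bot$. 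The main obstacle is exactly this index-counting: making precise that $Q$ has Morse index $1$ requires knowing the full structure of $Q$, which is supplied by \eqref{valeursQ} together with Claim \ref{coercivity}, so it goes through.

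Next, rule out the kernel: if $f\in G_\bot$ with $Q(f)=0$, then by the already-established nonnegativity $f$ is a minimizer, so $\BB(f,\cdot)$ vanishes on $G_\bot$; one checks using the four constraints and the relations \eqref{BBeasy} that this forces $\LLL f\in\vect\{W,iW,\tW,\YYY_+,\YYY_-\}$ paired suitably, and then that $f$ itself lies in $\vect\{iW,\tW,\YYY_+,\YYY_-\}$ (the degenerate directions of $Q$ together with the null eigenfunctions). Imposing $f\bot iW$, $f\bot \tW$, $\BB(\YYY_\pm,f)=0$ and using $\BB(\YYY_-,\YYY_+)\ne0$ kills all components, so $f=0$. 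Finally, to pass from positive-definiteness to the uniform bound \eqref{positif}: the quadratic form $Q(f) = \frac12\|f\|_{\hdot}^2 - \frac12\int W^{p_c-1}(p_c f_1^2 + f_2^2)$ differs from $\frac12\|\cdot\|_{\hdot}^2$ by a compact perturbation (the potential $W^{p_c-1}$ decays and the embedding into the relevant weighted $L^2$ is compact on radial functions — same argument as at the end of the proof of Claim \ref{coercivity}). A compact self-adjoint perturbation of a coercive form which is injective and nonnegative on the closed subspace $G_\bot$ is automatically coercive there, by the standard argument: if $\inf\{Q(f_n): f_n\in G_\bot, \|f_n\|_{\hdot}=1\}\to 0$, extract a weak limit $f$, use weak lower semicontinuity of $\|\cdot\|_{\hdot}^2$ and strong convergence in the compact part to conclude $Q(f)\le 0$ hence $f=0$, but then $\|f_n\|_{\hdot}^2\to 2Q(f_n)+(\text{compact part})\to 0$, contradicting $\|f_n\|=1$. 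This yields \eqref{positif}, completing the proof.
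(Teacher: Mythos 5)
Your argument is correct but takes a genuinely different route from the paper. The paper decomposes $f$, $\YYY_+$, $\YYY_-$ in the $\hdot$-orthogonal sum $H\oplus H^\bot$, observes that $\BB(W,h)=0$ for $h\in H^\bot$ (so $Q$ is block-diagonal in this splitting), derives the explicit identity \eqref{Q(f)} for $Q(f)$, proves that the $H^\bot$-components $h_\pm$ of $\YYY_\pm$ are linearly independent, and finishes with a quantitative Cauchy--Schwarz argument giving a factor $b<1$. You instead run a Morse-index count: $Q$ is block-diagonal on $H\oplus H^\bot$, negative on the one-dimensional span of $W$, null on $\vect\{iW,\tW\}$, and positive definite on $H^\bot$, so its maximal negative-definite subspace is one-dimensional; since $Q$ vanishes on $\YYY_\pm$ with $\BB(\YYY_-,\YYY_+)\neq 0$, adding any $g\in G_\bot$ with $Q(g)<0$ would yield a two-dimensional negative-definite subspace, a contradiction, whence $Q\geq 0$ on $G_\bot$; you then kill the kernel and pass to uniform coercivity by the standard weak-compactness argument. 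This is in the same spirit as the paper's proof that $\BB(\YYY_-,\YYY_+)\neq 0$, so it is a natural generalization. Two small points. First, in your opening paragraph you say the cross term $\BB(W,\tilde f)$ ``is not obviously zero'' — in fact it is zero for $\tilde f\in H^\bot$ (use $\Delta W=-W^{p_c}$ and the $\hdot$-orthogonality), which is exactly the observation that makes the paper's more direct computation go through; you abandon that route anyway, so nothing is damaged. Second, your kernel step is under-specified: the phrase ``one checks ... that this forces $\LLL f\in\vect\{W,iW,\tW,\YYY_+,\YYY_-\}$ paired suitably'' hides the actual work, which is to note that $\BB(f,\cdot)$ vanishing on $G_\bot$ means it is a linear combination of the four defining functionals (the Riesz map for $\BB$ is Fredholm, being a compact perturbation of $\tfrac12 I$, so the $\BB$-annihilator of $G_\bot$ is exactly $\vect\{iW,\tW,\YYY_+,\YYY_-\}$), and then $f\in G_\bot\cap\vect\{iW,\tW,\YYY_+,\YYY_-\}=\{0\}$ using $\BB(\YYY_-,\YYY_+)\ne 0$. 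Once those details are written out, your proof is complete; the trade-off versus the paper's is that the index argument is more conceptual but needs the separate kernel and compactness steps to recover the coercive constant, whereas the paper's explicit formula \eqref{Q(f)} produces the constant directly.
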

Note that $G_{\bot}$ is not stable by $\LLL$. Lemma \ref{lem.positivity} implies the following characterization of the real spectrum of $\LLL$.
\begin{corol}
\label{cor.spectrum}
Let $\Sp(\LLL)$ be the spectrum of the operator $\LLL$ on $L^2$ of domain $D(\LLL)=H^2$. Then
$$ \Sp(\LLL)\cap \RR=\{-e_0,0,e_0\}.$$
\end{corol}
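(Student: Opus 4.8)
The plan is to prove Corollary \ref{cor.spectrum} by combining the explicit eigenvalues $\pm e_0$ from Lemma \ref{lem.Y}, the kernel relation \eqref{kernelL}, and the positivity Lemma \ref{lem.positivity} to rule out any other real spectral value. First I would note the trivial inclusions: by Lemma \ref{lem.Y} we have $\pm e_0 \in \Sp(\LLL)$, and by \eqref{kernelL} (since $iW$ and $W_1$ are nonzero elements of $H^2$, recalling $W_1=\frac{N-2}{2}W+x\cdot\nabla W$ and $W\in L^2$ exactly when $N=5$, but $W_1$, $iW$ lie in $H^2$ in all the relevant dimensions after checking decay) we get $0 \in \Sp(\LLL)$. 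So $\{-e_0,0,e_0\}\subset \Sp(\LLL)\cap\RR$; the content is the reverse inclusion.

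For the reverse inclusion, suppose $\xi\in\RR$ is in the spectrum of $\LLL$; I want to show $\xi\in\{-e_0,0,e_0\}$. The strategy is to exploit the antisymmetry \eqref{antisym} of $\LLL$ with respect to $\BB$ together with the coercivity \eqref{positif} of $Q=\BB(\cdot,\cdot)$ on $G_\bot$. The key structural point is that $\hdot$ (or the relevant $L^2$-based space) decomposes as $\vect\{iW,W_1,\YYY_+,\YYY_-\}\oplus G_\bot$, and $\BB$ restricted to the first summand is, on $\vect\{\YYY_+,\YYY_-\}$, a nondegenerate antisymmetric-looking pairing (via \eqref{QY=0}: $Q(\YYY_\pm)=0$ but $\BB(\YYY_-,\YYY_+)\neq0$), while on the $\{iW,W_1\}$ part it vanishes \eqref{BBeasy}. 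I would argue that if $\xi$ is a real spectral value outside $\{-e_0,0,e_0\}$, one can produce (via a spectral projection / approximate eigenfunction argument, since $\LLL$ is a relatively compact perturbation of $\begin{pmatrix}0&\Delta\\-\Delta&0\end{pmatrix}$ whose real spectrum is just $\{0\}$, so the real spectrum of $\LLL$ consists of isolated eigenvalues of finite multiplicity away from $0$) a genuine eigenfunction $Z$ with $\LLL Z=\xi Z$, $Z\ne 0$. Then $\BB(Z,Z)$: applying \eqref{antisym}, $\xi\,\BB(Z,Z)=\BB(\LLL Z,Z)=-\BB(Z,\LLL Z)=-\xi\,\BB(Z,Z)$, so $\BB(Z,Z)=0$ whenever $\xi\neq0$. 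Similarly $\BB(Z,\YYY_\pm)$ satisfies $\xi\BB(Z,\YYY_\pm)=\BB(\LLL Z,\YYY_\pm)=-\BB(Z,\LLL\YYY_\pm)=\mp e_0\BB(Z,\YYY_\pm)$, forcing $\BB(Z,\YYY_+)=\BB(Z,\YYY_-)=0$ as long as $\xi\neq \mp e_0$. Likewise $\xi\BB(Z,iW)=\BB(\LLL Z,iW)=-\BB(Z,\LLL(iW))=0$ and $\BB(Z,W_1)=0$ too, but these are automatic from \eqref{BBeasy}. One also checks the Hilbert-space orthogonality conditions $(iW,Z)_{\hdot}=(W_1,Z)_{\hdot}=0$ can be arranged by modifying $Z$ by an element of $\ker\LLL=\vect\{iW,W_1\}$ without changing $\LLL Z=\xi Z$ (since $\xi\neq0$, adding a kernel element does not spoil the eigenvalue equation, and we can solve a $2\times2$ system to kill the two inner products). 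Then $Z\in G_\bot$, so by Lemma \ref{lem.positivity}, $Q(Z)=\BB(Z,Z)\geq c\|Z\|_{\hdot}^2>0$, contradicting $\BB(Z,Z)=0$.

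It remains to handle the boundary cases $\xi=0$ and $\xi=\pm e_0$ — but those are precisely the values we are allowing, so there is nothing to rule out. Hence $\Sp(\LLL)\cap\RR\subseteq\{-e_0,0,e_0\}$, and combined with the first paragraph, equality holds.

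The main obstacle I anticipate is the \emph{functional-analytic setup}: making precise in which space $\LLL$ acts, that its real spectrum away from $0$ consists of isolated finite-multiplicity eigenvalues (so that ``$\xi\in\Sp(\LLL)\cap\RR$'' genuinely yields an eigenfunction), and that the adjustment of an eigenfunction by a kernel element to enforce the two orthogonality conditions is legitimate. This requires knowing $\LLL$ is a compact perturbation of the free operator on $L^2$ with domain $H^2$ — the potential $W^{p_c-1}$ decays like $|x|^{-4}$ (for $N=3,4,5$, $p_c-1=4/(N-2)$ so $W^{p_c-1}\sim|x|^{-(N-2)(p_c-1)}=|x|^{-4}$), giving relative compactness — and invoking standard Weyl-type theory. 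Once that is in place, the algebraic manipulations with $\BB$ and the invocation of Lemma \ref{lem.positivity} are routine and short.
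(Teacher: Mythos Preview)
Your approach is essentially the paper's: reduce to eigenvalues via Weyl theory, use the antisymmetry \eqref{antisym} to get $\BB(Z,\YYY_\pm)=0$ and $Q(Z)=0$, then project onto $G_\bot$ and invoke Lemma \ref{lem.positivity}. Two small corrections are needed. First, for $N=3,4$ neither $iW$ nor $W_1$ lies in $L^2$ (both decay like $|x|^{-(N-2)}$), so you cannot get $0\in\Sp(\LLL)$ from \eqref{kernelL}; the paper instead observes that the essential spectrum of $\LLL$ is $i\RR\ni 0$. Second, your parenthetical ``adding a kernel element does not spoil the eigenvalue equation'' is false: if $\LLL Z=\xi Z$ with $\xi\neq 0$ and $K\in\vect\{iW,W_1\}$, then $\LLL(Z+K)=\xi Z\neq\xi(Z+K)$. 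This does not actually break your argument, because what you need of $Z':=Z-\beta iW-\gamma W_1$ is only that $\BB(\YYY_\pm,Z')=0$ and $Q(Z')=0$, and both survive the modification since $\BB(iW,\cdot)=\BB(W_1,\cdot)\equiv 0$ by \eqref{BBeasy}. The paper phrases this same step more cleanly as the decomposition $f=\beta iW+\gamma W_1+g$ with $g\in G_\bot$, so that $0=Q(f)=Q(g)$ forces $g=0$, whence $e_1 f=\LLL f=\beta\LLL(iW)+\gamma\LLL(W_1)=0$ and $f=0$.
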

\begin{proof}[Proof of the corollary]
By Lemma \ref{lem.Y}, $\{-e_0,e_0\}\subset \Sp(\LLL)$. Furthermore, the operator $\LLL$ is a compact perturbation of $\begin{pmatrix} 0 & \Delta \\-\Delta & 0 \end{pmatrix}$, thus its essential spectrum is $i\RR$. Consequently, $0\in \Sp(\LLL)$, and $\Sp(\LLL)\cap \RR^*$ contains only eigenvalues. It remains to show that $-e_0$ and $e_0$ are the only eigenvalues of $\LLL$ in $\RR^*$. Assume that for some $f\in H^2$
$$\LLL f=e_1 f,\quad e_1\in \RR \setminus \{-e_0,0,e_0\}.$$
We must show that $f=0$. By \eqref{antisym}, $(e_1+e_0)\BB(f,\YYY_+)=(e_1-e_0)\BB(f,\YYY_-)=0$ and thus
$$ \BB(f,\YYY_+)=\BB(f,\YYY_-)=0.$$
Write 
$$ f=\beta iW+\gamma W_1+g,\quad g\in G_{\bot}, \; \beta= \frac{(f,iW)_{\hdot}}{\|W\|^2_{\hdot}},\;\gamma= \frac{(f,W_1)_{\hdot}}{\|W_1\|^2_{\hdot}}.$$
Again by \eqref{antisym}, $\BB(f,f)=0$ and thus $B(g,g)=0$. This implies by Lemma \ref{lem.positivity} that $g=0$ and thus $e_1f=\LLL f=\beta \LLL(iW)+\gamma \LLL W_1=0$. Recalling that $e_1\neq 0$, we get as announced that $f=0$, which concludes the proof of Corollary \ref{cor.spectrum}
\end{proof}

\begin{proof}[Proof of Lemma \ref{lem.positivity}]
Recall from Claim \ref{coercivity} that there exists a constant $c_1$ such that
\begin{equation}
\label{coercif}
\forall g\in H^{\bot},\quad Q(g)\geq c_1 \|g\|_{\hdot}^2.
\end{equation}
Let $f\in G_{\bot}$. We will eventually deduce \eqref{positif} from \eqref{coercif}. Decompose $f$, $\YYY_+$ and $\YYY_-$ in the orthogonal sum $\hdot=H\oplus H^{\bot}$:
\begin{gather}
\label{directsum}
f=\alpha W+ \tilde{h},\quad
\YYY_+=\eta\,iW+\xi W_1+\zeta W+h_+,\quad \YYY_-=-\eta iW+\xi W_1+\zeta W+h_-,
\end{gather}
where $\tilde{h},h_+,h_-\in H^{\bot}$, $h_-=\overline{h}_+$. 

\medskip

\noindent\emph{Step 1.} We first show
\begin{equation}
\label{Q(f)}
Q(f)=-\frac{\BB(h_{+},\tilde{h})\BB(h_{-},\tilde{h})}{\sqrt{Q(h_+)}\sqrt{Q(h_-)}}+Q(\tilde{h}).
\end{equation}
Note that if $h\in H^{\bot}$, $\BB(W,h)=\frac 12 \int \nabla W\cdot \nabla h_{1}-\frac{p_c}{2}\int W^{p_c} h_{1}=\frac {1-p_c}{2} \int \nabla W\cdot \nabla h_{1}=0$. By \eqref{BBeasy}, \eqref{QY=0} and \eqref{directsum}, we have
\begin{equation}
\label{coeff+.1}
0=Q(\YYY_+)=\zeta^2Q(W)+Q(h_+),\qquad 0=Q(\YYY_-)=\zeta^2Q(W)+Q(h_-).
\end{equation}
Furthermore, developping the equalities $\BB(f,\YYY_+)=\BB(f,\YYY_-)=0$ with \eqref{directsum} we get
\begin{equation}
\label{coeff+.2}
\alpha\zeta Q(W)+\BB(\tilde{h},h_+)=\alpha\zeta Q(W)+\BB(\tilde{h},h_-)=0.
\end{equation}
By \eqref{directsum}, $Q(f)=\alpha^2Q(W)+Q(\tilde{h})$, and \eqref{Q(f)} follows from \eqref{coeff+.1} and \eqref{coeff+.2}.

\medskip

\noindent\emph{Step 2}. We next prove the following assertion:
\begin{quote} 
\emph{The functions $h_+$ and $h_-$ are independent in the real Hilbert space $\hdot$.}
\end{quote}

Note that $h_+=\overline{h}_-$. Thus it is sufficient to show
\begin{equation}
\label{h.nozero}
h_1:=\re h\neq 0\text{ and }h_2:=\im h \neq 0.
\end{equation}
Write $\YYY_1=\re \YYY_+$, $\YYY_2=\im \YYY_+$. Then \eqref{cond.Y} writes down
\begin{equation}
\label{cond.Y'}
\big(\Delta+W^{p_c-1}\big)\YYY_2=e_0 \YYY_1,\quad \big(\Delta+p_cW^{p_c-1}\big)\YYY_1=-e_0\YYY_2.
\end{equation}
We show \eqref{h.nozero} by contradiction. First assume that $h_2=0$. Then by \eqref{directsum}, $\YYY_2$ is in $\vect(W)$, so that $(\Delta +W^{p_c-1})\YYY_2=0$. By \eqref{cond.Y'}, we get that $\YYY_1=0$ and $\YYY_2=0$, which contradicts the definition of $\YYY_+$.\par
Similarly, assuming that $h_1=0$, and recalling that $(\Delta+p_cW^{p_c-1})W_1=0$, we get by \eqref{directsum} and \eqref{cond.Y'} that
$\YYY_2=-\frac{\zeta(p_c-1)}{e_0} W^{p_c}$, and thus $\YYY_1=-\frac{\zeta(p_c-1)}{e_0^2}(\Delta+W^{p_c-1}) W^{p_c}$.
Now, $\YYY_1=\xi W_1+\zeta W$. This implies that $(\Delta+W^{p_c-1}) W^{p_c}$ is in $\vect\{W,W_1\}$, which is not the case as a direct computation shows. Hence \eqref{h.nozero} which concludes this step of the proof.

\medskip

\noindent\emph{Step 3: conclusion of the proof.} 
Recall that $Q$ is positive definite on $H^{\bot}$. We claim that there is a constant $b<1$, such that
\begin{equation}
\label{CauchySchwarz}
\forall X\in H^{\bot},\quad \left|\frac{\BB(h_+,X)\BB(h_-,X)}{\sqrt{Q(h_+)}\sqrt{Q(h_-)}}\right|\leq bQ(X).
\end{equation}
Indeed it is equivalent to show, by orthogonal decomposition on $H^{\bot}$ related to $B$
\begin{equation}
\label{def.max}
b:= \max_{\substack{ X\in \vect\{h_-,h_+\}\\ X\neq 0}} \left(\frac{\BB(h_+,X)} {\sqrt{Q(h_+)Q(X)}}\right)\left(\frac{\BB(h_-,X)}{\sqrt{Q(h_-)Q(X)}}\right)<1.
\end{equation}
Applying twice Cauchy-Schwarz inequality with $B$, we get $b\leq 1$. Furthermore, if $b=1$, there exists $X\neq 0$ such that the two Cauchy-Schwarz inequalities are equalities and thus $X\in\vect\{h_+\}\cap\vect\{h_-\}=\{0\}$, which is a contradiction, showing \eqref{CauchySchwarz}.\par 
By \eqref{coercif}, \eqref{Q(f)} and \eqref{CauchySchwarz}
\begin{equation}
\label{Qgeq}
Q(f)\geq (1-b)Q(\tilde{h})\geq c_1(1-b) \|\tilde{h}\|_{\hdot}^2.
\end{equation}
Noting that by \eqref{directsum}, $Q(f)=\alpha^2Q(W)+Q(\tilde{h})$, and recalling that $Q(W)<0$, we also get by the first inequality in \eqref{Qgeq} that $b Q(\tilde{h})\geq \alpha^2|Q(W)|$. Hence
$$ C Q(f)\geq \alpha^2\|W\|^2_{\hdot}+\|\tilde{h}\|_{\hdot}^2=\|f\|_{\hdot}^2,$$
which concludes the proof of Lemma \ref{lem.positivity}.
\end{proof}

\subsection{Preliminary estimates}
\label{sub.prelim}
In this subsection we gather some elementary estimates needed in the sequel. We start with bounds on the potential and nonlinear terms of equation \eqref{equation.v}. Let 
\begin{equation}
\label{def.VVV} 
\VVV(v):=W^{p_c-1} \re v+pW^{p_c-1} \im v,
\end{equation}
so that equation \eqref{equation.v} writes as a Schr\"odinger equation
\begin{equation}
\label{Schrodinger.v}
i\partial_t v+\Delta v+\VVV(v)+iR(v)=0.
\end{equation}
We start to recall standard Strichartz estimates for the free Schr\"odinger equation (see \cite{St77a,GiVe85,KeTa98}).
\begin{lemma}
Assume $N\geq 3$ and, for $j=1,2$, let $(p_j,q_j)$ such that $(\frac{2}{p_j}+\frac{N}{q_j}=\frac{N}{2},\; p_j\geq 2)$. Denote by $p'_j$ and $q'_j$ the dual conjugate exponents of $p_j$ and $q_j$. Then
\begin{align}
\label{StriFree}
\|e^{it\Delta}u_0\|_{L^{p_1}(\RR,L^{q_1})}&\leq C\|u_0\|_{L^2}\\
\label{StriFreeRM}
\left\|\int_{t}^{+\infty}e^{i(s-t)\Delta}f(s)ds\right\|_{L^{p_1}(\RR,L^{q_1})}&\leq C\|f\|_{L^{p_2'}(\RR,L^{q_2'})}\\
\label{StriFreeDual}
\left\|\int_{-\infty}^{+\infty} e^{is\Delta}f(s)ds\right\|_{L^2}&\leq C \|f\|_{L^{p_2'}(\RR,L^{q_2'})}.
\end{align}
\end{lemma}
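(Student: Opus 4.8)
The estimates \eqref{StriFree}--\eqref{StriFreeDual} are the classical Strichartz estimates for $e^{it\Delta}$, and the plan is to recall their standard derivation. The two elementary inputs are the conservation law $\|e^{it\Delta}u_0\|_{L^2}=\|u_0\|_{L^2}$ and the dispersive bound $\|e^{it\Delta}u_0\|_{L^\infty}\leq C|t|^{-N/2}\|u_0\|_{L^1}$ (read off from the explicit Gaussian convolution kernel of $e^{it\Delta}$). Interpolating between them gives, for $2\leq q\leq\infty$ and $t\neq 0$,
$$\|e^{it\Delta}u_0\|_{L^q}\leq C|t|^{-N\left(\frac12-\frac1q\right)}\|u_0\|_{L^{q'}}.$$

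For the homogeneous estimate \eqref{StriFree} I would use the $TT^*$ argument: bounding $T:u_0\mapsto e^{it\Delta}u_0$ from $L^2$ to $L^{p_1}(\RR,L^{q_1})$ is equivalent to bounding $TT^*:f\mapsto\int_\RR e^{i(t-s)\Delta}f(s)\,ds$ from $L^{p_1'}(\RR,L^{q_1'})$ to $L^{p_1}(\RR,L^{q_1})$. Applying the interpolated dispersive bound pointwise in $t$ and then the Minkowski integral inequality,
$$\Big\|\int_\RR e^{i(t-s)\Delta}f(s)\,ds\Big\|_{L^{q_1}_x}\leq C\int_\RR|t-s|^{-N\left(\frac12-\frac1{q_1}\right)}\|f(s)\|_{L^{q_1'}_x}\,ds,$$
and since the admissibility relation forces $N\left(\frac12-\frac1{q_1}\right)=\frac2{p_1}$, the right-hand side is a Riesz potential of order $1-\frac2{p_1}\in(0,1)$ of $s\mapsto\|f(s)\|_{L^{q_1'}}$, so the Hardy--Littlewood--Sobolev inequality provides the $L^{p_1}_t$ bound. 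This is the non-endpoint case $p_1>2$; the endpoint pair $p_1=2$, $q_1=\frac{2N}{N-2}$ (admissible for $N\geq 3$) is not reached this way and would be taken from the bilinear dyadic argument of \cite{KeTa98}. The dual estimate \eqref{StriFreeDual}, with two possibly distinct admissible pairs, follows at once: the map $f\mapsto\int_\RR e^{-is\Delta}f(s)\,ds$ from $L^{p_2'}(\RR,L^{q_2'})$ to $L^2$ is the adjoint of $L^2\to L^{p_2}(\RR,L^{q_2})$, already controlled by \eqref{StriFree}.

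Finally, the retarded estimate \eqref{StriFreeRM} is obtained from the non-retarded one by the Christ--Kiselev lemma. Composing \eqref{StriFree} with its dual yields boundedness of $f\mapsto\int_\RR e^{i(t-s)\Delta}f(s)\,ds$ from $L^{p_2'}(\RR,L^{q_2'})$ to $L^{p_1}(\RR,L^{q_1})$, and its upper-triangular truncation $f\mapsto\int_t^{+\infty}e^{i(t-s)\Delta}f(s)\,ds$ (a time-reversed form of the operator in \eqref{StriFreeRM}) remains bounded as soon as $p_2'<p_1$, which holds whenever $(p_1,q_1,p_2,q_2)$ is not the double endpoint $(2,\tfrac{2N}{N-2},2,\tfrac{2N}{N-2})$, since $p_j\geq 2$. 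The genuinely delicate point — the only real obstacle — is precisely this double-endpoint retarded estimate, which escapes Christ--Kiselev and must be proved directly via the dyadic decomposition in \cite{KeTa98}; all remaining cases are routine. Throughout, $C$ depends only on $N$ and the chosen exponents.
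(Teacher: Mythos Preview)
Your sketch is correct and follows the standard route (dispersive estimate, $TT^*$ and Hardy--Littlewood--Sobolev for the non-endpoint homogeneous bound, duality for \eqref{StriFreeDual}, Christ--Kiselev for the retarded estimate away from the double endpoint, and Keel--Tao \cite{KeTa98} for the endpoint cases). The paper itself does not prove this lemma at all: it simply states the estimates as ``standard Strichartz estimates for the free Schr\"odinger equation'' and cites \cite{St77a,GiVe85,KeTa98}. So you have supplied more than the paper does; there is nothing to compare.

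One cosmetic remark: the operator you truncate is $\int_t^{+\infty} e^{i(t-s)\Delta}f(s)\,ds$, whereas \eqref{StriFreeRM} has $e^{i(s-t)\Delta}$ in the integrand. These differ only by the substitution $f\mapsto \overline{f}$ (or equivalently a time reflection), so the bounds are indeed equivalent, but calling it ``a time-reversed form'' is slightly imprecise.
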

If $I$ is a time interval, we will be interested in the spaces $S(I)$ and $Z(I)$ defined in \eqref{defSZ} as well as $N(I):=L^2\Big(I;L^{\frac{2N}{N+2}}\Big)$, which is the dual of the endpoint Strichartz space $L^2\big(I;L^{2^*}\big)$. H\"older and Sobolev inequalities yield immediately:
\begin{lemma}[Linear estimates]
\label{lem.V}
Let $f\in L^{2^*}\left(\RR^N\right)$. Then
\begin{equation}
\label{estim.V1}
\|\VVV(f)\|_{L^{\frac{2N}{N+2}}}\leq C\|f\|_{L^{2^*}}.
\end{equation}
Let $I$ be a finite time interval of length $|I|$ and $f\in S(I)$ such that $\nabla f\in Z(I)$. Then, there exists $C$ independent of $I$, $f$ and $g$ such that
\begin{gather}
\label{Sob}
  \|f\|_{S(I)}\leq C \|\nabla f\|_{Z(I)}\\
\label{estim.V2}
\|\nabla \VVV(f)\|_{N(I)}\leq |I|^{\frac{N}{N+2}} \|\nabla f\|_{Z(I)}.
\end{gather}
\end{lemma}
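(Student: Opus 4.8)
The three bounds are all routine consequences of H\"older's and Sobolev's inequalities; the only input beyond that is that the explicit function $W^{p_c-1}$ and its gradient belong to the Lebesgue spaces forced by the H\"older exponents. Recall from \eqref{defW} that $W$ is smooth, positive and radial with $W(x)\approx|x|^{2-N}$, $|\nabla W(x)|\approx|x|^{1-N}$ as $|x|\to\infty$; since $p_c-2=\frac{6-N}{N-2}>0$ for $N\in\{3,4,5\}$, the function $W^{p_c-1}=W^{\frac4{N-2}}$ is bounded and $C^1$ on $\RR^N$, and
$$W^{p_c-1}(x)\approx|x|^{-4},\qquad \big|\nabla(W^{p_c-1})(x)\big|=(p_c-1)\,W^{p_c-2}\,|\nabla W|\approx|x|^{-5}\qquad(|x|\to\infty),$$
whence $W^{p_c-1}\in L^q(\RR^N)$ for every $q>N/4$ and $\nabla(W^{p_c-1})\in L^q(\RR^N)$ for every $q>N/5$. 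Note also that \eqref{Sob} is nothing but \eqref{SobI}: at a fixed time $t$, $L^{2N(N+2)/(N^2+4)}$ and $L^{2(N+2)/(N-2)}$ are Sobolev conjugate in $\RR^N$, so $\|f(t)\|_{L^{2(N+2)/(N-2)}}\le C\|\nabla f(t)\|_{L^{2N(N+2)/(N^2+4)}}$; raising to the power $\frac{2(N+2)}{N-2}$ and integrating over $t\in I$ yields \eqref{Sob}.

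For \eqref{estim.V1}, from $|\VVV(f)|\le(1+p_c)\,W^{p_c-1}|f|$ and H\"older on $\RR^N$ with $\frac{N+2}{2N}=\frac2N+\frac{N-2}{2N}$ I get $\|\VVV(f)\|_{L^{2N/(N+2)}}\le C\,\|W^{p_c-1}\|_{L^{N/2}}\,\|f\|_{L^{2^*}}$, and $\|W^{p_c-1}\|_{L^{N/2}}<\infty$ because $\int_{\RR^N}(W^{p_c-1})^{N/2}=\int_{\RR^N}W^{2^*}=\|W\|_{L^{2^*}}^{2^*}<\infty$ (equivalently $N/2>N/4$).

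For \eqref{estim.V2}, I distribute the derivative,
$$\nabla\VVV(f)=\nabla(W^{p_c-1})\,\big(\re f+p_c\,\im f\big)+W^{p_c-1}\big(\nabla\re f+p_c\,\nabla\im f\big),$$
so $|\nabla\VVV(f)|\le C\,|\nabla(W^{p_c-1})|\,|f|+C\,W^{p_c-1}|\nabla f|$ pointwise. For the first summand I use H\"older in space pairing $\nabla(W^{p_c-1})\in L^{b}(\RR^N)$, $b=\frac{N(N+2)}{3N+2}>N/5$, with $f(t)\in L^{2(N+2)/(N-2)}$; for the second I pair $W^{p_c-1}\in L^{(N+2)/2}(\RR^N)$ with $\nabla f(t)\in L^{2N(N+2)/(N^2+4)}$. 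This gives, for a.e.\ $t\in I$,
$$\|\nabla\VVV(f)(t)\|_{L^{2N/(N+2)}}\le C\Big(\|f(t)\|_{L^{2(N+2)/(N-2)}}+\|\nabla f(t)\|_{L^{2N(N+2)/(N^2+4)}}\Big).$$
I then take the $L^2$-norm in $t\in I$: since $S(I)$ and $Z(I)$ have time-exponent $\frac{2(N+2)}{N-2}>2$ whereas $N(I)$ has time-exponent $2$, H\"older in time costs only a positive power of $|I|$ (the factor appearing in the statement), and I am left with $\|f\|_{S(I)}+\|\nabla f\|_{Z(I)}$; a final application of \eqref{Sob} absorbs $\|f\|_{S(I)}$ into $\|\nabla f\|_{Z(I)}$.

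I expect no genuine obstacle: the statement is flagged as immediate. The only thing needing care is to check that $W^{p_c-1}$ and $\nabla(W^{p_c-1})$ indeed belong to the precise Lebesgue spaces the H\"older exponents dictate — which is where the explicit decay $W^{p_c-1}(x)\approx|x|^{-4}$, $|\nabla(W^{p_c-1})(x)|\approx|x|^{-5}$ and the restriction $N\le 5$ (via $p_c>2$, so $W^{p_c-1}$ is $C^1$) enter — and to track the time-exponents so that the exponent of $|I|$ is positive.
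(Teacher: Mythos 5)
Your proof is correct and is the natural argument for this routine lemma (the paper offers no written proof, stating only that ``H\"older and Sobolev inequalities yield immediately'' the three estimates, so there is no alternative route to compare against). You identify the right ingredients: the Sobolev conjugacy $2N(N+2)/(N^2+4)\to 2(N+2)/(N-2)$ giving \eqref{Sob}, H\"older with $W^{p_c-1}\in L^{N/2}$ (equivalently $W\in L^{2^*}$) for \eqref{estim.V1}, and for \eqref{estim.V2} the product rule, the spatial pairings $W^{p_c-1}\in L^{(N+2)/2}$ with $\nabla f\in L^{2N(N+2)/(N^2+4)}$ and $\nabla(W^{p_c-1})\in L^{N(N+2)/(3N+2)}$ with $f\in L^{2^*}$, followed by H\"older in time and absorption via \eqref{Sob}. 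All of your exponent arithmetic in space checks out.

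One slip deserves flagging: you assert without computing that the H\"older-in-time loss is ``the factor appearing in the statement,'' i.e.\ $|I|^{N/(N+2)}$. It is not. Passing from the time exponent $\tfrac{2(N+2)}{N-2}$ of $S(I)$ and $Z(I)$ to the time exponent $2$ of $N(I)$ costs
$$|I|^{\frac 12-\frac{N-2}{2(N+2)}}=|I|^{\frac{2}{N+2}},$$
not $|I|^{\frac{N}{N+2}}$. (Check at $N=4$: both terms cost $|I|^{1/2-1/6}=|I|^{1/3}$, whereas $N/(N+2)=2/3$.) Since $\tfrac{N}{N+2}>\tfrac{2}{N+2}$ for $N\ge 3$, the stated bound is, on short intervals, strictly stronger than what any straightforward H\"older pairing produces, and I see no way to obtain it; the exponent in \eqref{estim.V2} (and its reappearance as $\tau^{N/(N+2)}$ in the proof of Lemma \ref{lem.Strichartz.v}) looks like a typo for $2/(N+2)$. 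This is inconsequential — everywhere the lemma is invoked (Lemmas \ref{lem.Strichartz.v}, \ref{claim.contract}) only the positivity of the exponent on $|I|$ is used to close a contraction on a short interval — but you should have caught it by actually computing rather than asserting agreement with the statement. A second, truly minor point: $W^{p_c-1}$ is $C^\infty$ for any exponent, being a power of a smooth, strictly positive function; the restriction $p_c>2$ is what makes the \emph{nonlinearity} $z\mapsto|z|^{p_c-1}z$ of class $C^2$ (relevant for Lemma \ref{lem.R}), not what makes $W^{p_c-1}$ differentiable.
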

The proof of following lemma, given in the appendix, is classical.
\begin{lemma}[Non-linear estimates]
\label{lem.R}
Let $f,g$ be functions in $L^{2^*}(\RR^N)$. Then
\begin{equation}
\label{estim.dR1}
\|R(f)-R(g)\|_{L^{\frac{2N}{N+2}}}\leq C\|f-g\|_{L^{2^*}}\left(\|f\|_{L^{2^*}}+\|g\|_{L^{2^*}}+\|f\|_{L^{2^*}}^{p_c-1}+\|g\|_{L^{2^*}}^{p_c-1}\right).
\end{equation}
Let $I$ be a finite time interval and $f,g$ be functions in $S(I)$, such that $\nabla f$ and $\nabla g$ are in $Z(I)$. Then
\begin{multline}
\label{estim.dR2}
\|\nabla R(f)-\nabla R(g)\|_{N(I)}\leq\\ 
C\left\|\nabla f-\nabla g\right\|_{Z(I)}\left[|I|^{\frac{6-N}{2(N+2)}}\big(\|\nabla f\|_{Z(I)}+\|\nabla g\|_{Z(I)}\big)+\|\nabla f\|_{Z(I)}^{p_c-1}+\|\nabla g\|_{Z(I)}^{p_c-1}\right].
\end{multline}
\end{lemma}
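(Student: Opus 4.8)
The plan is to reduce both inequalities to pointwise estimates on $R(f)-R(g)$ and $\nabla R(f)-\nabla R(g)$ and then apply H\"older's inequality --- in space for \eqref{estim.dR1}, and in space and time for \eqref{estim.dR2}. The key structural remark is that, writing $F(z):=|z|^{p_c-1}z$ as a map $\CC\cong\RR^2\to\RR^2$, one has $R(v)=-i\bigl[F(W+v)-F(W)-DF(W)v\bigr]$; that is, $R$ is ($-i$ times) the second-order Taylor remainder of $F$ at the base point $W$. Since $N\le 5$ we have $p_c=\tfrac{N+2}{N-2}\ge 2$, $p_c-1=\tfrac4{N-2}\ge 1$ and $p_c-2=\tfrac{6-N}{N-2}\ge 0$, so $F$ is $C^1$ with $|DF(z)|\lesssim|z|^{p_c-1}$ and $|DF(z)-DF(w)|\lesssim(|z|+|w|)^{p_c-2}|z-w|$. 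Taylor's formula, the inequality $(a+b)^{\gamma}\lesssim a^{\gamma}+b^{\gamma}$ for $\gamma\ge 0$, and Young's inequality on the cross terms then give
\[
|R(f)-R(g)|\ \lesssim\ |f-g|\,\bigl[\,W^{p_c-2}(|f|+|g|)+|f|^{p_c-1}+|g|^{p_c-1}\,\bigr].
\]
Inequality \eqref{estim.dR1} follows by H\"older in $x$ applied to each monomial with exponents $\bigl(\tfrac{2^*}{p_c-2},2^*,2^*\bigr)$ and $\bigl(\tfrac{2^*}{p_c-1},2^*\bigr)$: since $\tfrac{p_c-2}{2^*}+\tfrac{2}{2^*}=\tfrac{p_c-1}{2^*}+\tfrac1{2^*}=\tfrac{p_c}{2^*}=\tfrac{N+2}{2N}$ the products lie in $L^{2N/(N+2)}$, and $\|W^{p_c-2}\|_{L^{2^*/(p_c-2)}}=\|W\|_{L^{2^*}}^{p_c-2}<\infty$ is absorbed into the constant.

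For \eqref{estim.dR2} I would differentiate, using
\[
\nabla R(v)=-i\Bigl\{[DF(W+v)-DF(W)]\nabla v+[DF(W+v)-DF(W)-D^2F(W)v]\nabla W\Bigr\},
\]
so that $\nabla R(f)-\nabla R(g)$ is a finite sum of products $\mathfrak{C}\cdot\nabla(f-g)$, $\mathfrak{C}\cdot\nabla g$ and $\mathfrak{C}\cdot\nabla W$ in which the coefficient $\mathfrak{C}$ is a first, resp.\ second, difference of $DF$ and so vanishes to the appropriate order at $v=0$. Using the H\"older bounds for $DF$ and for $D^2F$ (positively homogeneous of degree $p_c-2$, hence $\min(1,p_c-2)$-H\"older), one obtains a pointwise estimate $|\nabla R(f)-\nabla R(g)|\lesssim\sum_j W^{a_j}|\nabla W|^{b_j}\,\Pi_j$, where each $\Pi_j$ is a product of factors drawn from $\{|f|,|g|,|f-g|,|\nabla f|,|\nabla g|,|\nabla(f-g)|\}$, exactly one of them involving $f-g$, of total degree $k_j\le p_c$, and $W^{a_j}|\nabla W|^{b_j}$ carries whatever decay is left over.

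Each such term is then estimated in $N(I)=L^2(I;L^{2N/(N+2)})$. In the space variable one puts every gradient factor into the spatial Lebesgue space $L^{q_Z}$ of $Z$ (with $q_Z=\tfrac{2N(N+2)}{N^2+4}$) and every non-gradient factor into the spatial space $L^{2p_c}$ of $S$; the exponent count then pins the weight into an explicit $L^{r_j}(\RR^N)$, and one checks $W^{a_j}|\nabla W|^{b_j}\in L^{r_j}(\RR^N)$ using $W(x)\sim|x|^{2-N}$ and $\nabla W(x)\sim|x|^{1-N}$ --- this is where $N<6$ enters --- terms with an insufficiently decaying weight being reabsorbed into the pure powers by Young. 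By \eqref{Sob} this yields, for each $j$, $\|W^{a_j}|\nabla W|^{b_j}\Pi_j\|_{L^{2p_c/k_j}(I;L^{2N/(N+2)})}\lesssim\|\nabla(f-g)\|_{Z(I)}\bigl(\|\nabla f\|_{Z(I)}\text{ or }\|\nabla g\|_{Z(I)}\bigr)^{k_j-1}$. Finally H\"older in $t$ on the finite interval $I$ passes from $L^{2p_c/k_j}_t$ to $L^2_t$ at the price $|I|^{\frac12-\frac{k_j}{2p_c}}$: for $k_j=2$ this exponent equals $\tfrac12-\tfrac1{p_c}=\tfrac{6-N}{2(N+2)}$, for $k_j=p_c$ it is $0$, and any degree $2\le k_j\le p_c$ in between is dominated by a convex combination of these two by Young's inequality (the scaling of the critical problem puts the pair of $(|I|,\|\nabla f\|)$-exponents on the segment joining $(\tfrac{6-N}{2(N+2)},1)$ and $(0,p_c-1)$). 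Summing over $j$ gives \eqref{estim.dR2}.

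The main obstacle is not conceptual but a matter of organization: one must write out the pointwise bound for $|\nabla R(f)-\nabla R(g)|$ monomial by monomial, verify for each the membership of the residual weight in the correct $L^{r_j}(\RR^N)$, and carry out the Young redistribution into the two admissible shapes; this is routine but long, which is why it belongs in the appendix. The one place that calls for genuine care is dimension $N=5$, where $p_c-1$ and $p_c-2$ are not integers, so all the pointwise bounds must come from H\"older continuity of $DF$ and $D^2F$ rather than from third-order Taylor expansions.
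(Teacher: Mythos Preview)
Your treatment of \eqref{estim.dR1} is correct and essentially identical to the paper's. For \eqref{estim.dR2}, your scheme works in dimensions $N=3,4$ but has a genuine gap when $N=5$.

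The problem is the $\nabla W$ piece. With your decomposition, the coefficient of $\nabla W$ in $\nabla R(f)-\nabla R(g)$ is
\[
\bigl[DF(W+f)-DF(W+g)\bigr]-D^2F(W)(f-g)=\int_0^1\bigl[D^2F(W+g+s(f-g))-D^2F(W)\bigr](f-g)\,ds,
\]
and the \emph{only} global estimate you invoke for $D^2F$ is its $\min(1,p_c-2)$-H\"older continuity. For $N=5$ this gives $|D^2F(W+h)-D^2F(W)|\le C|h|^{p_c-2}$, hence a pointwise bound $(|f|^{p_c-2}+|g|^{p_c-2})|f-g|\,|\nabla W|$ of total degree $p_c-1=4/3$ in $(f,g)$. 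After your space--time H\"older (with $|\nabla W|\in L^{q_Z}$) this produces
\[
|I|^{\frac{N-2}{2(N+2)}}\,\|\nabla f\|_{Z(I)}^{p_c-2}\,\|\nabla(f-g)\|_{Z(I)}
\;=\;|I|^{3/14}\,\|\nabla f\|_{Z(I)}^{1/3}\,\|\nabla(f-g)\|_{Z(I)}\quad(N=5),
\]
and this is \emph{not} dominated by $\|\nabla(f-g)\|_{Z(I)}\bigl[|I|^{1/14}\|\nabla f\|_{Z(I)}+\|\nabla f\|_{Z(I)}^{4/3}\bigr]$: fix $|I|$ and let $\|\nabla f\|_{Z(I)}\to 0$. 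In your language, the pair of exponents $(3/14,1/3)$ lies \emph{off} the segment joining $(1/14,1)$ and $(0,4/3)$ (the corresponding convex parameter is $t=\tfrac{k-2}{p_c-2}<0$ when $k=p_c-1<2$), so the Young redistribution you appeal to cannot work.

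The paper avoids this by writing $R(v)=W^{p_c}J(W^{-1}v)$ with $J(z)=-i\bigl[|1+z|^{p_c-1}(1+z)-1-\tfrac{p_c+1}{2}z-\tfrac{p_c-1}{2}\bar z\bigr]$. Because the base point is shifted to $1$ (away from the singularity of $F$ at $0$), $J$ is genuinely $C^2$ and one gets the \emph{Lipschitz-with-weight} bound $|J(z)-J(z')|\le C|z-z'|\bigl(|z|+|z'|+|z|^{p_c-1}+|z'|^{p_c-1}\bigr)$, i.e.\ only degree-$2$ and degree-$p_c$ pieces. Combining with $|\nabla W|\le CW/(1+|x|)$ and Hardy's inequality $\|\tfrac{1}{|x|}u\|_{Z(I)}\le C\|\nabla u\|_{Z(I)}$ then yields exactly the two shapes in \eqref{estim.dR2}. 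Equivalently, in your framework you would need to split into the regimes $|f|+|g|\le\tfrac12 W$ (where $D^2F$ is locally Lipschitz with constant $\sim W^{p_c-3}$, giving a degree-$2$ term with weight $W^{p_c-3}|\nabla W|\sim W^{p_c-2}/|x|$) and $|f|+|g|>\tfrac12 W$ (where one trades $W^{-1}$ for $(|f|+|g|)^{-1}$ to promote degree $p_c-1$ to degree $p_c$); merely citing the global $(p_c-2)$-H\"older bound on $D^2F$ is not enough.
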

We finish this subsection by showing Strichartz estimates on exponentially small solutions $v$ of \eqref{equation.v}.
\begin{lemma}[Strichartz estimates]
\label{lem.Strichartz.v}
Let $v$ be a solution of \eqref{equation.v}. Assume for some $c_0>0$,
\begin{equation}
\label{decvH1}
\exists C>0,\quad \|v(t)\|_{\hdot}\leq C e^{-c_0 t}.
\end{equation}
Then, for any Strichartz couple $(p,q)$ $(\frac{2}{p}+\frac{N}{q}=\frac{N}{2},\; p\geq 2)$
\begin{equation}
\label{Strichartz.v}
\exists C>0,\quad \|v\|_{S\left(t,+\infty\right)}
+\|\nabla v\|_{L^{p}(t,+\infty;L^q)}\leq C e^{-c_0t}.
\end{equation}
\end{lemma}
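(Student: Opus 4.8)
The plan is to run a bootstrap/continuity argument on the Duhamel formula for equation \eqref{equation.v}, viewing the linear potential term $\VVV(v)$ and the quadratic-and-higher nonlinearity $R(v)$ as perturbations that are controllable on short time intervals and then summing the resulting geometric-type bounds over dyadic-length (in fact unit-length) pieces of $[t,+\infty)$. First I would fix $t$ large (it suffices, by the local theory and the bound \eqref{decvH1}, to prove the estimate for $t\geq t_1$ with $t_1$ chosen below, since on $[t,t_1]$ the claim is immediate from Lemma \ref{lem.CP} and \eqref{decvH1}), and work on a generic interval $I_k:=[k,k+1]$ for integer $k\geq t$. On $I_k$, write the Duhamel formula forward from $+\infty$:
$$ v(s)=-i\int_{s}^{+\infty} e^{i(s-\sigma)\Delta}\big(\VVV(v(\sigma))+iR(v(\sigma))\big)\,d\sigma,\quad s\in I_k,$$
which is legitimate because $v(\sigma)\to 0$ in $\hdot$ and hence (by Sobolev and Strichartz applied backwards from $+\infty$ together with \eqref{decvH1}) the integral converges; this is where one uses that there is no scattering data at $+\infty$, only the decaying profile $W$.

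Next I would apply the inhomogeneous Strichartz estimates \eqref{StriFreeRM} (with a gradient), together with the linear estimate \eqref{estim.V2} and the nonlinear estimate \eqref{estim.dR2} of Lemma \ref{lem.R} (taking $g=0$, noting $R(0)=0$), on the interval $J_k:=[k,+\infty)$ — or rather on $I_k$ after telescoping. Set $a_k:=\|\nabla v\|_{Z(I_k)}+\|v\|_{S(I_k)}$. Combining the Strichartz estimate on $I_k$ with $|I_k|=1$ and Lemma \ref{lem.V}, Lemma \ref{lem.R}, one gets schematically
$$ a_k \leq C\|v(k+1)\|_{\hdot} + C\,a_k\big(1+a_k^{p_c-1}\big),$$
where the first term comes from evolving the "data" $v(k+1)$ (bounded by $Ce^{-c_0(k+1)}$ via \eqref{decvH1}) and the second from the potential and nonlinear pieces. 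Wait — this is not quite right because the potential term $\VVV$ is \emph{linear} in $v$ and its $N(I_k)$-norm is bounded by $|I_k|^{N/(N+2)}\|\nabla v\|_{Z(I_k)}=\|\nabla v\|_{Z(I_k)}$ with constant $1$, not a small constant, so absorbing it requires shrinking the interval length: replace unit intervals by intervals of length $\eta$ small enough that $C\eta^{N/(N+2)}\leq \tfrac12$, which also makes the $a_k^{p_c-1}$ and $|I|^{(6-N)/(2(N+2))}a_k$ terms small once $k$ (hence $a_k$, by \eqref{decvH1} and Sobolev) is large. Then one obtains $a_k\leq 2C\|v(\cdot)\|_{\hdot}$ evaluated at the right endpoint $\leq C'e^{-c_0 k}$ on each such short interval. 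Summing over the $O(1/\eta)$ subintervals covering $[k,k+1]$ and then over $k\geq t$, the geometric series in $e^{-c_0 k}$ yields $\|\nabla v\|_{L^p(t,+\infty;L^q)}+\|v\|_{S(t,+\infty)}\leq Ce^{-c_0 t}$, which is \eqref{Strichartz.v}; one handles the general Strichartz pair $(p,q)$ by feeding the now-controlled right-hand side $\VVV(v)+iR(v)\in N$ back into \eqref{StriFreeRM} with an arbitrary admissible exponent on the left.

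The main obstacle is the linear potential term $\VVV(v)=W^{p_c-1}\re v+p_cW^{p_c-1}\im v$: because $W^{p_c-1}\notin L^{N/2}$ with small norm — $W$ is a fixed nonzero function — the naive global Strichartz estimate does not close (the potential perturbation is not small), and this is exactly why the argument must be localized in time to short intervals where the Strichartz constant is beaten by the factor $|I|^{N/(N+2)}$ from \eqref{estim.V2}. A secondary technical point is justifying convergence of the Duhamel integral from $+\infty$ and the identity itself: one should first establish \eqref{Strichartz.v} with some (possibly small) rate on a tail $[T,+\infty)$ by a fixed-point/contraction argument in the ball of functions with $\|v\|_{S(t,+\infty)}+\|\nabla v\|_{Z(t,+\infty)}$ small (possible since $v(t)\to 0$ in $\hdot$ forces, via \eqref{decvH1} and the local theory, these norms to be small for $T$ large), identify this fixed point with $v$ by uniqueness, and only then bootstrap the rate up to the sharp $e^{-c_0 t}$. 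Everything else is a routine interpolation/Sobolev bookkeeping using Lemmas \ref{lem.V} and \ref{lem.R}.
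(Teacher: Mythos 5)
Your core strategy coincides with the paper's: Duhamel on short intervals $[t,t+\tau]$, Strichartz inequalities \eqref{StriFree}--\eqref{StriFreeRM} combined with Lemmas~\ref{lem.V} and~\ref{lem.R} to produce a bound of the form $\|\nabla v\|_{Z(t,t+\tau)}\leq K\{e^{-c_0t}+\tau^{\alpha_N}\|\nabla v\|_{Z(t,t+\tau)}+\|\nabla v\|_{Z(t,t+\tau)}^{p_c}\}$, absorption of the linear $\VVV$-term by taking $\tau$ small (which is exactly the paper's choice $\tau_0=(3K)^{-1/\alpha_N}$), summation of exponentials (the paper's Claim~\ref{summation}), and finally a second pass through Strichartz to reach an arbitrary admissible $(p,q)$. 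Two small remarks: (i) the excursion about integrating Duhamel from $+\infty$ is unnecessary and not what you actually use — your displayed estimate has data $v(k+1)$ at an endpoint, which is precisely the paper's mechanism with $\|v(t)\|_{\hdot}\leq Ce^{-c_0t}$; and (ii) the step where you claim $a_k$ is ``small once $k$ is large, by \eqref{decvH1} and Sobolev'' is not quite automatic, since $a_k$ is a spacetime Strichartz norm and \eqref{decvH1} only controls the pointwise-in-time $\hdot$-norm. The paper closes this by a clean continuity argument in $\tau$ (noting $\|\nabla v\|_{Z(t,t+\tau)}\to 0$ as $\tau\to 0^+$ and arguing by contradiction), which is simpler and more robust than the auxiliary fixed-point argument you sketch; you should adopt that device rather than invoking a separate contraction.
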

\begin{proof}
We will first estimate $\|v\|_{S(t,+\infty)}+\|\nabla v\|_{Z(t,+\infty)}$. According to the following claim, we juste need to estimate 
$\|v\|_{S(t,t+\tau_0)}$ and $\|\nabla v\|_{Z(t,t+\tau_0)}$ for some small $\tau_0>0$. 
\begin{claim}[Sums of exponential]
\label{summation}
Let $t_0>0$, $p\in [1,+\infty[$, $a_0 \neq 0$, $E$ a normed vector space, and $f\in L^p_{\rm loc}(t_0,+\infty;E)$ such that 
\begin{equation}
\label{small.tau}
\exists \tau_0>0,\; \exists C_0>0, \;\forall t\geq t_0, \quad \|f\|_{L^p(t,t+\tau_0,E)}\leq C_0e^{a_0 t}.
\end{equation}
Then for $t\geq t_0$,
\begin{equation}
\label{conclu.summation}
\|f\|_{L^p(t,+\infty,E)}\leq \frac{C_0e^{a_0 t}}{1-e^{a_0\tau_0}}\; \text{if } a_0<0;\quad \|f\|_{L^p(t_0,t,E)}\leq  \frac{C_0e^{a_0 t}}{1-e^{-a_0\tau_0}}\; \text{if }a_0>0.
\end{equation}
\end{claim}
\begin{proof}
Assume $a_0<0$. Summing up \eqref{small.tau} at time $t=t_0$, $t=t_0+\tau_0$, $t=t_0+2\tau_0$, {\ldots}, and using the triangle inequality, we get \eqref{conclu.summation}. The case $a_0>0$ is analogue.
\end{proof}
By \eqref{Schrodinger.v}, 
\begin{equation}
\label{eq.nabla}
i\partial_t \nabla v+\Delta(\nabla v)+\nabla\big(\VVV(v)+iR (v)\big)=0.
\end{equation}
Let $t$ and $\tau$ such that $0<t$, $0<\tau<1$. By Strichartz inequalities \eqref{StriFree} and \eqref{StriFreeRM}, and equation \eqref{eq.nabla}
$$ \|\nabla v\|_{Z(t,t+\tau)}\leq C\left( \|v(t)\|_{\hdot}+\left\|\nabla (\VVV(v)+R(v))\right\|_{N(t,t+\tau)} \right).$$
Thus by Lemmas \ref{lem.V} and \ref{lem.R}
\begin{equation*}
\|\nabla v\|_{Z(t,t+\tau)} \leq C\left( \|v(t)\|_{\hdot}+{\tau}^{\frac{N}{N+2}}\|\nabla v\|_{Z(t,t+\tau)}+{\tau}^{\frac{6-N}{2(N+2)}}\|\nabla v\|^2_{Z(t,t+\tau)}+\|\nabla v\|_{Z(t,t+\tau)}^{p_c}\right).
\end{equation*}
Using assumption \eqref{decvH1}, we get, for some constants $K>0$ and $\alpha_N>0$
\begin{equation}
\label{crucial}
\|\nabla v\|_{Z(t,t+\tau)}\leq K\left\{ e^{-c_0 t}+\tau^{\alpha_N}\|\nabla v\|_{Z(t,t+\tau)}+\|\nabla v\|^{p_c}_{Z(t,t+\tau)}\right\}.
\end{equation}
We claim that it implies that for large $t$
\begin{equation}
\label{major.nabla}
\|\nabla v\|_{Z(t,t+\tau_0)}\leq 2K e^{-c_0t},\quad \tau_0:=\frac{1}{(3K)^{1/\alpha_N}}.
\end{equation}
Indeed, fix $t>0$. Then \eqref{crucial} implies $\|\nabla v\|_{Z(t,t+\tau)}<2K e^{-c_0t}$ for small $\tau$. If \eqref{major.nabla} does not hold, then there exists $\tau\in (0,\tau_0]$ such that $\|\nabla v\|_{Z(t,t+\tau)}=2K e^{-c_0t}$, contradicting \eqref{crucial} if $t$ is large. Hence \eqref{major.nabla}.\par
By Claim \ref{summation} and Sobolev inequality \eqref{Sob}
\begin{equation}
\label{aboundofv}
\|v\|_{S(t,+\infty)} +\|\nabla v\|_{Z(t,+\infty)} \leq C e^{-c_0t}.
\end{equation}
Now take any Strichartz couple $(p,q)$. Then by \eqref{eq.nabla}, Strichartz estimates \eqref{StriFree} and \eqref{StriFreeRM} and Lemmas \ref{lem.V} and \ref{lem.R},
$\|\nabla v\|_{L^{p}(t,t+1;L^q)}\leq C \left( \|v(t)\|_{\hdot}+\|\nabla v\|_{Z(t,t+1)}+\|\nabla v\|^{p_c}_{Z(t,t+1)}\right).$
Thus \eqref{aboundofv} implies the bound $\|\nabla v\|_{L^{p}\left(t,t+1;L^q\right)}\leq C e^{-c_0t}$, which concludes, by Claim \ref{summation}, the proof of the lemma.
\end{proof}

\subsection{Estimates on exponential solutions of the linearized equation}
\label{sub.key.anal}
Let us consider the linearized equation with right-member
\begin{equation}
\label{eq.h}
\partial_t h+\LLL h=\eps
\end{equation}
with $h$ and $\eps$ such that for $t\geq 0$, 
\begin{gather}
\label{cond.eps}
\|\nabla \eps\|_{N(t,+\infty)}+\|\eps(t)\|_{L^{\frac{2N}{N+2}}}\leq Ce^{-c_1 t},\\
\label{cond.h}
\|h(t)\|_{\hdot}\leq C e^{-c_0 t},
\end{gather}
where $0<c_0<c_1$. The following proposition asserts that $h$ must decay almost as fast as $\eps$, except in the direction $\YYY_+$ where the decay is of order $e^{-e_0t}$. 
\begin{prop}
\label{prop.crucial.h}
Consider $h$ and $\eps$ satisfying \eqref{eq.h}, \eqref{cond.eps} and \eqref{cond.h}. 
Then, for any Strichartz couple $(p,q)$:
\begin{itemize}
\item if $c_0<c_1\leq e_0$ or $e_0<c_0<c_1$,
\begin{equation}
\label{decay.h}
\forall \eta>0,\quad \|h(t)\|_{\hdot}+\|\nabla h\|_{L^{p}(t,+\infty;L^q)}\leq C_{\eta} e^{-(c_1-\eta) t};
\end{equation}
\item if $c_0\leq e_0<c_1$, there exists $A_+\in \RR$ such that
\begin{equation}
\label{decay.h'}
\forall \eta>0,\quad\Big\|h(t)-A_+e^{-e_0t}\YYY_+\Big\|_{\hdot}+\Big\|\nabla(h-A_+e^{-e_0t}\YYY_+)\Big\|_{L^{p}(t,+\infty;L^q)}\leq C_{\eta} e^{-(c_1-\eta) t}.
\end{equation}
\end{itemize}
\end{prop}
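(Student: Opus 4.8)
The plan is to decompose $h$ along the spaces attached to the real spectrum $\{-e_0,0,e_0\}$ of $\LLL$ (Corollary~\ref{cor.spectrum}): control the two eigencomponents by scalar ODEs, the kernel components $iW,W_1$ by integrating a differential inequality, and the remaining ``coercive'' part by combining the positivity of $Q$ on $G_\bot$ (Lemma~\ref{lem.positivity}) with the Strichartz machinery of Subsection~\ref{sub.prelim}. I would prove the $\hdot$-bounds first; the $L^p(t,+\infty;L^q)$-bounds for $\nabla h$ (resp.\ $\nabla(h-A_+e^{-e_0t}\YYY_+)$) then follow by running the short-time Strichartz estimate of Subsection~\ref{sub.prelim} on equation \eqref{eq.h}, whose right-hand side is $\eps$ (resp.\ again $\eps$, since $(\partial_t+\LLL)(e^{-e_0t}\YYY_+)=0$), absorbing the potential term $\VVV(\cdot)$ on small intervals and summing exponentials through Claim~\ref{summation}.

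\emph{Eigencomponents.} Using the bilinear form $B$ and $B(\YYY_-,\YYY_+)\neq0$ from \eqref{QY=0}, set $\alpha_+(t):=B(h(t),\YYY_-)/B(\YYY_+,\YYY_-)$ and $\alpha_-(t):=B(h(t),\YYY_+)/B(\YYY_-,\YYY_+)$, so that $|\alpha_\pm(t)|\le C\|h(t)\|_{\hdot}\le Ce^{-c_0t}$. Pairing \eqref{eq.h} with $\YYY_\mp$ and using the antisymmetry \eqref{antisym} together with $Q(\YYY_\pm)=B(\YYY_\pm,\YYY_\pm)=0$ gives
\[ \alpha_+'+e_0\alpha_+=\beta_+,\qquad \alpha_-'-e_0\alpha_-=\beta_-,\qquad |\beta_\pm(t)|\le C\|\eps(t)\|_{L^{2N/(N+2)}}\le Ce^{-c_1t}, \]
where the bound on $\beta_\pm$ uses $\LLL\YYY_\mp=\mp e_0\YYY_\mp$ and $\YYY_\mp\in\SSS$. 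Integrating the $\alpha_-$ equation from $+\infty$ (where $e^{-e_0t}\alpha_-(t)\to0$ by \eqref{cond.h}) gives $|\alpha_-(t)|\le Ce^{-c_1t}$. For $\alpha_+$ the integrating factor yields $e^{e_0t}\alpha_+(t)=\alpha_+(0)+\int_0^te^{e_0s}\beta_+(s)\,ds$: when $c_1>e_0$ the integral converges, $A_+:=\alpha_+(0)+\int_0^{+\infty}e^{e_0s}\beta_+(s)\,ds$ is well defined and $|\alpha_+(t)-A_+e^{-e_0t}|\le Ce^{-c_1t}$; moreover if in addition $c_0>e_0$, then $|A_+e^{-e_0t}|\le|\alpha_+(t)|+Ce^{-c_1t}\le Ce^{-c_0t}+Ce^{-c_1t}$ forces $A_+=0$ and $|\alpha_+(t)|\le Ce^{-c_1t}$; when $c_1\le e_0$ a direct estimate of $\int_0^te^{(e_0-c_1)s}ds$ gives $|\alpha_+(t)|\le C_\eta e^{-(c_1-\eta)t}$ (the $\eta$ absorbing a factor $t$ when $c_1=e_0$). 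This produces the two-case dichotomy of the statement, $A_+$ being present exactly when $c_0\le e_0<c_1$.

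\emph{Coercive part.} Set $\tilde h:=h-\alpha_+\YYY_+-\alpha_-\YYY_-$; it solves $\partial_t\tilde h+\LLL\tilde h=\tilde\eps$ with $\tilde\eps:=\eps-\beta_+\YYY_+-\beta_-\YYY_-$ satisfying the bounds of \eqref{cond.eps}, and from the definition of $\alpha_\pm$ one checks $B(\YYY_\pm,\tilde h)=B(\YYY_\pm,\tilde\eps)=0$. Put $a(t):=(\tilde h(t),iW)_{\hdot}/\|W\|_{\hdot}^2$, $b(t):=(\tilde h(t),W_1)_{\hdot}/\|W_1\|_{\hdot}^2$ and $g:=\tilde h-a\,iW-b\,W_1$; since $iW,W_1\in\ker B$ (see \eqref{BBeasy}) and are $\hdot$-orthogonal, $g$ satisfies the four conditions defining $G_\bot$, so $Q(g)\ge c\|g\|_{\hdot}^2$. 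The equation for $g$ is $\partial_t g+\LLL g=\tilde\eps-a'\,iW-b'\,W_1$, the kernel terms being annihilated by $\LLL$, and hence, by \eqref{antisym}, $B(\YYY_\pm,g)=0$ and $iW,W_1\in\ker B$,
\[ \frac{d}{dt}Q(g)=2B(\partial_t g,g)=2B(\tilde\eps,g). \]
Integrating from $t$ to $+\infty$ (with $Q(g(T))\to0$), writing the gradient part of $B$ as $\int\nabla\tilde\eps\cdot\nabla g$ and bounding the potential part with $W^{p_c-1}$ bounded, we get
\[ \|g(t)\|_{\hdot}^2\le CQ(g(t))\le C\|\nabla\tilde\eps\|_{N(t,+\infty)}\,\|\nabla g\|_{L^2(t,+\infty;L^{2^*})}+C\!\int_t^{+\infty}\!\|\tilde\eps(s)\|_{L^{2N/(N+2)}}\|g(s)\|_{\hdot}\,ds. \]
Running the short-time Strichartz estimate of Subsection~\ref{sub.prelim} on $\tilde h$ (whose forcing $\tilde\eps$ involves no $\nabla W$ term, so the norms of \eqref{cond.eps} suffice) and summing squares over successive short intervals bounds $\|\nabla\tilde h\|_{L^2(t,+\infty;L^{2^*})}$ by $\sup_{s\ge t}\|\tilde h(s)\|_{\hdot}$ plus $Ce^{-c_1t}$, hence bounds $\|\nabla g\|_{L^2(t,+\infty;L^{2^*})}$ after adding back $a\,\nabla(iW)+b\,\nabla(W_1)$ (using $\nabla W,\nabla W_1\in L^{2^*}$, valid for $N\in\{3,4,5\}$). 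Finally, differentiating $(\tilde h,iW)_{\hdot}$, using $\LLL(iW)=0$, $\LLL\tilde h=\LLL g$ and $|(\LLL g,iW)_{\hdot}|\le C\|g\|_{\hdot}$ (integrate by parts, $-\Delta W=W^{p_c}$), gives $|a'(t)|+|b'(t)|\le C\|g(t)\|_{\hdot}+Ce^{-c_1t}$, so $|a(t)|+|b(t)|\le C\sup_{s\ge t}\|g(s)\|_{\hdot}+Ce^{-c_1t}$.

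\emph{Bootstrap and conclusion.} Starting from the a priori rate $\|g(t)\|_{\hdot}+|a(t)|+|b(t)|\le Ce^{-\sigma t}$ with $\sigma=c_0<c_1$, the three displays above combine into $\|g(t)\|_{\hdot}^2\le Ce^{-(c_1+\sigma)t}$, i.e.\ the rate improves to $(c_1+\sigma)/2$; iterating finitely often reaches any rate $c_1-\eta$. Then $\|h(t)-A_+e^{-e_0t}\YYY_+\|_{\hdot}\le\|g\|_{\hdot}+|a|+|b|+|\alpha_+-A_+e^{-e_0t}|+|\alpha_-|\le C_\eta e^{-(c_1-\eta)t}$ (with the $\YYY_+$ term dropped when $A_+$ does not occur or vanishes), and the Strichartz bounds follow as in the first paragraph. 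I expect the main obstacle to be organizing this bootstrap cleanly — in particular verifying that the spurious forcing terms $\beta_\pm\YYY_\pm$, $a'\,iW$, $b'\,W_1$ polluting the $g$-equation are exactly annihilated in the pairing $B(\cdot,g)$ (because $g\in G_\bot$ while $iW,W_1\in\ker B$), and keeping careful track of which norm of $\eps$ enters where, so that only the quantities controlled by \eqref{cond.eps} ever appear.
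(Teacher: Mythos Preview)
Your proposal is correct and follows essentially the same route as the paper: decompose $h$ along $\YYY_\pm$, $iW$, $W_1$ and $G_\bot$; solve the scalar ODEs for the eigencomponents; use the conservation/decay of $Q$ together with its coercivity on $G_\bot$ to control $g$; bound the kernel coefficients via $|(\LLL g,iW)_{\hdot}|+|(\LLL g,W_1)_{\hdot}|\le C\|g\|_{\hdot}$; and bootstrap the rate from $\sigma$ to $(\sigma+c_1)/2$. The only organizational differences are that the paper differentiates $Q(h)$ (then extracts $Q(g)=Q(h)-2\alpha_+\alpha_-B(\YYY_+,\YYY_-)$) rather than $Q(g)$ directly, and that it packages your pointwise bound $|B(\eps,\YYY_\pm)|\le C\|\eps\|_{L^{2N/(N+2)}}$ and the estimate on $\int_t^\infty B(\tilde\eps,g)$ into a single lemma (their Claim~\ref{Stri.phi}) bounding $\int_I|B(f,g)|$ by the $N(I)\times L^2(I,L^{2^*})$ pairing plus a zero-order term---your version is the same estimate, derived inline.
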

\begin{proof}[Proof of Proposition \ref{prop.crucial.h}]
We will start by proving \eqref{decay.h}. In view of the following claim it is sufficient to prove only the bound of the $\hdot$-norm.
\begin{claim}
\label{claim.Strichartz.h}
Consider $h$ and $\eps$ fullfilling \eqref{eq.h}, \eqref{cond.eps} and \eqref{cond.h} with $0<c_0<c_1$. Then for any Strichartz couple $(p,q)$
\begin{equation}
\|\nabla h\|_{L^{p}(t,+\infty;L^q)}\leq C e^{-c_0 t}.
\end{equation}
\end{claim}
We will omit the proof, which is a simple consequence of Strichartz inequalities and of Lemma \ref{lem.V}, and is similar to the proof of Lemma \ref{lem.Strichartz.v}.\par

Let us decompose $h(t)$ as
\begin{equation}
\label{decompo.v0}
h(t)=\alpha_+(t)\YYY_++\alpha_-(t)\YYY_-+\beta(t)iW+\gamma(t)W_1+g(t), \quad g(t)\in G_{\bot},
\end{equation}
where (recall that by \eqref{BBeasy} and \eqref{QY=0}, $B(iW,\cdot)=B(W_1,\cdot)=0$ and $Q(\YYY_+)=Q(\YYY_-)=0$)
\begin{gather}
\label{def.alpha+-}
\alpha_-:=\frac{\BB(h,\YYY_+)}{\BB(\YYY_+,\YYY_-)},\quad \alpha_+:=\frac{\BB(h,\YYY_-)}{\BB(\YYY_+,\YYY_-)}
\\
\label{def.beta.gamma}
\beta:=\frac{1}{\|W\|^2_{\hdot}}\big(h-\alpha_+\YYY_+-\alpha_-\YYY_-,iW\big)_{\hdot},\quad
\gamma:=\frac{1}{\|W_1\|^2_{\hdot}}\Big(h-\alpha_+\YYY_+-\alpha_-\YYY_-,W_1\Big)_{\hdot}.
\end{gather}
In the sequel, we will assume without loss of generality
\begin{equation}
 \label{c1non0}
c_1\neq e_0.
\end{equation}
We divide the proof into four steps.

\medskip

\noindent\emph{Step 1: differential equations on the coefficients.}
We first claim
\begin{gather}
\label{eq.alpha+-}
\frac{d}{dt}\left(e^{e_0 t} \alpha_+\right)=  e^{e_0t} \frac {\BB(\YYY_-,\eps)}{\BB(\YYY_+,\YYY_-)},\quad
\frac{d}{dt}\left(e^{-e_0 t}\alpha_-\right)=e^{-e_0 t}\frac{\BB(\YYY_+,\eps)}{\BB(\YYY_+,\YYY_-)},\\
\label{eq.Q.beta.gamma}
\frac{d Q(h)}{dt}=2\BB(h,\eps),\quad
\frac{d\beta}{dt}=\frac{(iW,\teps)_{\hdot}}{\|W\|_{\hdot}^2},\quad \frac{d\gamma}{dt}=\frac{(W_1,\teps)_{\hdot}}{\|W_1\|_{\hdot}^2}.
\end{gather}
Where
\begin{equation}
\label{def.H}
\teps:=\eps-\frac{\BB(\YYY_-,\eps)}{\BB(\YYY_+,\YYY_-)}\YYY_+ -\frac{\BB(\YYY_+,\eps)}{\BB(\YYY_+,\YYY_-)}\YYY_- -\LLL g.
\end{equation}

By equation \eqref{eq.h}, 
$$ \BB(\YYY_-,\partial_t h)+\BB(\YYY_-,\LLL h)=\BB(\YYY_-,\eps).$$ 
Furthermore
$ \BB(\YYY_-,\partial_t h)=\frac{d}{dt} \BB(\YYY_-,h)$ and by \eqref{antisym}, $\BB(\YYY_-,\LLL h)=-\BB(\LLL \YYY_-,h)=e_0 \BB(\YYY_-,h)$. In view of \eqref{def.alpha+-}, we get the first equation in \eqref{eq.alpha+-}. A similar calculation yields the second equation.\par
By equation \eqref{eq.h}, $\BB(h,\partial_t h)+\BB(h,\LLL h)=\BB(h,\eps).$
Furthermore by \eqref{antisym}, $\BB(h,\LLL h)=0$ which yields the equation on $Q(h)$ in \eqref{eq.Q.beta.gamma}.\par
It remains to show the equations on $\beta$ and $\gamma$. Differentiating \eqref{def.beta.gamma}, we get
\begin{equation*}
\beta'(t)=\frac{1}{\|W\|_{\hdot}^2}(\widehat{\eps},iW)_{\hdot},\quad \gamma'(t)=\frac{1}{\|W_1\|_{\hdot}^2}(\widehat{\eps},W_1)_{\hdot},\quad \widehat{\eps}:= \eps-\LLL h-\alpha_+'\YYY_+-\alpha_-'\YYY_-.
\end{equation*}
Now, noting that by \eqref{decompo.v0}, $\LLL h=\alpha_+e_0\YYY_+-\alpha_-e_0\YYY_-+\LLL(g)$, and using \eqref{eq.alpha+-}, we obtain
$$ \widehat{\eps}=\eps-\frac{\BB(\YYY_-,\eps)}{\BB(\YYY_+,\YYY_-)}\YYY_+-\frac{\BB(\YYY_+,\eps)}{\BB(\YYY_+,\YYY_-)}\YYY_--\LLL(g)=\teps,$$
which yields the desired result.
\medskip

\noindent\emph{Step 2: bounds on $\alpha_-$ and $\alpha_+$.} We now claim
\begin{align}
\label{dec.alpha-}
|\alpha_-(t)|&\leq Ce^{-c_1 t}\\
\label{dec.alpha+}
\left|\alpha_+(t)\right|&\leq \left\{ 
\begin{array}{ll} 
\ds Ce^{-c_1t} &\text{ if }e_0<c_0,\\
\ds C\left(e^{-e_0t}+e^{-c_1 t}\right)&\text{ if } c_0\leq e_0.
\end{array}
\right.
\end{align}
Let us first show the following general bound on $\BB$.
\begin{claim}
\label{Stri.phi}
For any finite time-interval $I$, of length $|I|$, and any functions $f$ and $g$ such that $f\in L^{\infty}(I,L^{\frac{2N}{N+2}})$, $\nabla f\in N(I)$, $g\in L^{\infty}(I,L^{2^*})$ and $\nabla g \in L^{2}\big(I,L^{2^*}\big)$,
$$ \int_I |\BB(f(t),g(t))|dt \leq C\left[\|\nabla f\|_{N(I)}\|\nabla g\|_{L^2\big(I,L^{2^*}\big)}+|I|\,\|f\|_{L^{\infty}\big(I,L^{\frac{2N}{N+2}}\big)}\|g\|_{L^{\infty}\big(I,L^{2^*}\big)}\right].$$
\end{claim}
\begin{proof}
We have
\begin{align*}
2\BB(f(t),g(t))=a(t)+b(t),\text{where }a(t)&:=\int_{\RR^N} \nabla f_1(t)\nabla g_1(t)+\int_{\RR^N} \nabla f_2(t)\nabla g_2(t)\\
b(t)&:=-p_c\int_{\RR^N} W^{p_c-1}f_1(t)g_1(t)-\int_{\RR^N} W^{p_c-1} f_2(t)g_2(t).
\end{align*}
By  H\"older inequality 
$$\int_I |a(t)|dt\leq C\|\nabla f\|_{\vphantom{\big(}N(I)}\|\nabla g\|_{L^2\big(I,L^{2^*}\big)},\quad |b(t)|\leq C\|f(t)\|_{L^{\frac{2N}{N+2}}}\|g(t)\|_{L^{2^*}\vphantom{L^{\frac{2N}{N+2}}}}\|W^{p_c-1}\|_{L^{\infty}\vphantom{L^{\frac{2N}{N+2}}}}.$$
Integrating the estimate on $b(t)$ over $I$ and summing up, we get the conclusion of the claim.
\end{proof}
Assumption \eqref{cond.eps} on $\eps$, together with the preceding claim yields the inequality
$$\int_{t}^{t+1}|e^{-e_0 s} \BB(\YYY_+,\eps(s))| ds\leq C e^{-(e_0+c_1) t}.$$ 
By Claim \ref{summation}, $\int_{t}^{\infty}|e^{-e_0 s} \BB(\YYY_+,\eps(s))| ds\leq C e^{-(e_0+c_1) t}$.
Integrating the equation on $\alpha_-$ in \eqref{eq.alpha+-} between $t$ and $+\infty$, we get \eqref{dec.alpha-}.

Let us show \eqref{dec.alpha+}. First assume that  $c_0>e_0$. Thus by assumption \eqref{cond.h}, $e^{e_0 t} \alpha_+(t)$ tends to $0$ when $t$ tends to infinity. Furthermore $c_1>c_0>e_0$, and thus by assumption \eqref{cond.eps}, Claims \ref{Stri.phi} and \ref{summation}, $\int_{t}^{+\infty} \left|e^{e_0s}\BB(\YYY_-,\eps(s))\right|ds \leq C e^{(e_0-c_1)t}$.
Integrating between $t$ and $+\infty$  the equation on $\alpha_+$ in \eqref{eq.alpha+-} we get \eqref{dec.alpha+} if $c_0>e_0$.

Now assume that $c_0\leq e_0$. By \eqref{eq.alpha+-} 
\begin{equation*}
 \alpha_+(t)=e^{-e_0t} \alpha_+(0) +\frac{e^{-e_0t}}{\BB(\YYY_+,\YYY_-)} \underbrace{\int_0^t e^{e_0 s} \BB(\YYY_-,\eps(s))ds}_{(a)}.
\end{equation*}
If $c_0\leq e_0<c_1$, assumption \eqref{cond.eps} and Claim \ref{Stri.phi} imply that the integral $(a)$ is bounded, which shows \eqref{dec.alpha+} in this case.\par
It remains to show \eqref{dec.alpha+} when $c_0<c_1<e_0$. By \eqref{cond.eps}, Claim \ref{Stri.phi} and Claim \ref{summation}, $|(a)|\leq Ce^{(e_0-c_1)t}$, which yields again \eqref{dec.alpha+}. Step 2 is complete. 

\medskip

\noindent\emph{Step 3: bounds on $\|g\|_{\hdot}$, $\beta$ and $\gamma$.} We next prove
\begin{equation}
\label{dec.g.beta.gamma}
\big\|g\big(t\big)\big\|_{\hdot}+|\beta(t)|+|\gamma(t)|\leq C e^{-\left(\frac{c_0+c_1}{2}\right)t}.
\end{equation}
By Claims \ref{claim.Strichartz.h} and \ref{Stri.phi}, assumptions \eqref{cond.eps} and  \eqref{cond.h} yield $\int_{t}^{t+1} |\BB(h(s),\eps(s))|ds\leq Ce^{-(c_0+c_1)t}$.
Integrating the equation on $Q$ in \eqref{eq.Q.beta.gamma} between $t$ and $+\infty$ and using  Claim \ref{summation}, we get
\begin{equation}
\label{energy.h}
|Q(h(t))|\leq C e^{-(c_0+c_1)t}.
\end{equation}
Thus
\begin{gather*}
|Q(\alpha_+ \YYY_++\alpha_- \YYY_-+\beta iW+\gamma W_1+g)|\leq Ce^{-(c_0+c_1)t}\\
|2\alpha_+ \alpha_- \BB(\YYY_+,\YYY_-)+Q(g)|\leq C e^{-(c_0+c_1)t}.
\end{gather*}
By \eqref{dec.alpha-} and \eqref{dec.alpha+}
\begin{gather*}
|Q(g)|\leq \left\{ \begin{array}{ll}\ds C\left(e^{-(c_0+c_1)t}+e^{-2c_1t}\right) \leq Ce^{-(c_0+c_1)t}&\text{ if }c_0>e_0\\
\ds C\left(e^{-(c_0+c_1)t} +e^{-(e_0+c_1)t}+e^{-2c_1t}\right)\leq Ce^{-(c_0+c_1)t} &\text{ if } c_0\leq e_0.\end{array}\right.
\end{gather*}
As a consequence of the coercivity of $Q$ on $G_{\bot}$ (Lemma \ref{lem.positivity}), we get the estimate on $\|g\|_{\hdot}$ in \eqref{dec.g.beta.gamma}. It remains to show the bounds on $\beta$ and $\gamma$.\par

Consider the function $\teps$ defined in \eqref{def.H}. By assumption \eqref{cond.eps}
\begin{equation}
\label{scalar.product}
\int_t^{t+1} \left|(iW,\teps(s))_{\hdot}\right|ds\leq Ce^{-c_1 t}+\int_{t}^{t+1}\left|(W,\LLL g(s))_{\hdot}\right|ds.
\end{equation}
We have $ (W,\LLL g)_{\hdot}=\re \int -\Delta (W)\overline{\LLL g}=-\re \int \LLL^*(\Delta W) \overline{g}$,
where $\LLL^*$ is the $L^2$-adjoint of $\LLL$. Note that $\LLL^*(\Delta W)=\LLL^* W^{p_c}$ is in $L^{\frac{2N}{N+2}}$ (indeed by explicit computation, it is a $C^{\infty}$ function of order $\frac{1}{|x|^{2N+4}}$ at infinity). Thus, by the estimate on $\|g\|_{\hdot}$ in \eqref{dec.g.beta.gamma},
$$ \left|(\LLL g(t),W)_{\hdot}\right|\leq C \|g(t)\|_{L^{2^*}}\leq C e^{-\left(\frac{c_0+c_1}{2}\right)t}.$$
In view of Claim \ref{summation} and \eqref{scalar.product}, we get the bound on $\beta$ in \eqref{dec.g.beta.gamma}. An analoguous proof yields the bound on $\gamma$.\par

\medskip

\noindent\emph{Step 4: conclusion.}
Summing up estimates \eqref{dec.alpha-}, \eqref{dec.alpha+} and \eqref{dec.g.beta.gamma}, we get, in view of decomposition \eqref{decompo.v0} of $h$.
$$ \|h(t)\|_{\hdot}\leq \left\{ \begin{array}{ll} 
\ds Ce^{-\left(\frac{c_0+c_1}{2}\right)t} & \text{ if }c_0>e_0\\
\ds C\left[e^{-e_0t} + e^{-\left(\frac{c_0+c_1}{2}\right)t}\right]& \text{ if }c_0\leq e_0.
\end{array}\right.
$$

\noindent\emph{Proof of \eqref{decay.h}.}
Iterating the argument we obtain the bound $\|h(t)\|_{\hdot}\leq C_{\eta} e^{-(c_1-\eta) t}$ if $c_0<c_1<e_0$ or $e_0<c_0<c_1$, which yields (together with Claim \ref{claim.Strichartz.h}), the desired estimate \eqref{decay.h}

\noindent\emph{Proof of \eqref{decay.h'}.}
Let us assume $c_0\leq e_0 <c_1$. Then the equation on $\alpha_+$ in \eqref{eq.alpha+-} shows that $e^{e_0t}\alpha_+(t)$ has a limit $A_+$ when $t\rightarrow +\infty$. Integrating the equation between $t$ and $+\infty$, we get (in view of Claim \ref{Stri.phi})
$$A_+-e^{e_0 t}\alpha_+(t)=e^{e_0t} \int_t^{+\infty} \frac{\BB(\YYY_+,\eps(s))}{\BB(\YYY_+,\YYY_-)}ds= O\big(e^{(e_0-c_1)t}\big).$$
By decomposition \eqref{decompo.v0} and estimates \eqref{dec.alpha-} and \eqref{dec.g.beta.gamma}, we get 
$\|h(t)-A_+e^{-e_0t}\YYY_+\|_{\hdot}\leq Ce^{-\left(\frac{c_0+c_1}{2}\right)t}.$ 
Furthermore, $h_1(t):=h(t)-A_+e^{-e_0t}\YYY_+$ satisfies, as $h$, equation \eqref{eq.h}. Thus the estimate \eqref{decay.h} shown in the preceding step implies \eqref{decay.h'}. The proof of Proposition \ref{prop.crucial.h} is complete.
\end{proof}

\section{Proof of main results}
\label{sec.proof}
We now turn to the proof of Theorems \ref{th.exist} and \ref{th.classif}. In Subsection \ref{sub.contract}, we show the existence of the solutions $W^{\pm}$ of Theorem \ref{th.exist} by a fixed point, approaching them by approximates solutions $W_k^{\aexp}$ of \eqref{CP} constructed in Subsection \ref{sub.dev} and converging exponentially to $W$ for large $t$. Subsection \ref{sub.conclu} is devoted to the conclusion of the proofs of the theorems.

\subsection{A family of approximate solutions converging to $W$}
\label{sub.dev}
\begin{lemma}
\label{lem.approximate}
Let $a\in\RR$. There exist functions $(\Phi^{\aexp}_{j})_{j\geq 1}$ in $\SSS(\RR^N)$, such that $\Phi^{\aexp}_1=\aexp \YYY_+$ and if 
\begin{equation}
\label{defWk}
W^{\aexp}_k(t,x):=W(x)+\sum_{j=1}^k e^{-je_0t}\Phi^{\aexp}_j(x),
\end{equation}
then as $t\rightarrow +\infty$,
\begin{equation}
\label{eqWk}
i\partial_t W^{\aexp}_{k}+\Delta W_k^{\aexp}+\big|W_k^{\aexp}\big|^{p_c-1}W_k^{\aexp}=O(e^{-(k+1)e_0 t}) \text{ in }\SSS(\RR^N).
\end{equation}
\end{lemma}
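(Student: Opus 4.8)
The plan is to determine the profiles $\Phi_j^\aexp$ recursively by inserting the ansatz \eqref{defWk} into \eqref{CP} and matching equal powers of $e^{-e_0 t}$.

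\emph{Step 1: the hierarchy of equations.} Write $v(t):=\sum_{j=1}^k e^{-je_0 t}\Phi_j^\aexp$, so $W_k^\aexp=W+v$, and set $F(z):=|z|^{p_c-1}z$. Using $\Delta W+W^{p_c}=0$ together with the definition \eqref{equation.v} of $\LLL$ and $R$ (note that $DF(W)[v]=p_cW^{p_c-1}v_1+iW^{p_c-1}v_2$ is exactly the potential term in $\LLL$), one gets
\[
i\partial_t W_k^\aexp+\Delta W_k^\aexp+|W_k^\aexp|^{p_c-1}W_k^\aexp=i\big(\partial_t v+\LLL v\big)+\big(F(W+v)-F(W)-DF(W)[v]\big).
\]
Taylor-expand $F(W+v)$ about $W$ to order $k$ — legitimate since $F\in C^\infty$ away from $0$ and $W>0$ — and substitute $v=\sum_{j=1}^k e^{-je_0 t}\Phi_j^\aexp$. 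Collecting the coefficients of $e^{-je_0 t}$ for $1\le j\le k$ and gathering all terms of order $e^{-me_0 t}$ with $m\ge k+1$ into a remainder $\mathcal R_k(t)$ yields
\[
i\partial_t W_k^\aexp+\Delta W_k^\aexp+|W_k^\aexp|^{p_c-1}W_k^\aexp=\sum_{j=1}^{k}e^{-je_0 t}\Big[i(\LLL-je_0)\Phi_j^\aexp+F_j(\Phi_1^\aexp,\dots,\Phi_{j-1}^\aexp)\Big]+\mathcal R_k(t),
\]
where $F_1\equiv 0$ and, for $j\ge 2$, $F_j$ is a fixed finite sum of terms $W^{p_c-n}\,\Phi_{j_1}^\aexp\cdots\Phi_{j_n}^\aexp$ with $2\le n\le j$ and $j_1+\dots+j_n=j$, coming from $D^nF(W)$. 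Since each such term has $n\ge2$ factors, every $j_i$ is $\le j-1$, so $F_j$ involves only profiles of strictly smaller index and the recursion closes; and each $F_j$ is a Schwartz function, because a product of at least two Schwartz functions absorbs the (at most polynomial) growth of $W^{p_c-n}$.

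\emph{Step 2: solving the hierarchy in $\SSS$.} We choose the $\Phi_j^\aexp$ so that every bracket above vanishes. For $j=1$: $i(\LLL-e_0)\Phi_1^\aexp=0$ is solved by $\Phi_1^\aexp:=\aexp\YYY_+$, using $\LLL\YYY_+=e_0\YYY_+$ (Lemma~\ref{lem.Y}). For $j\ge2$, assuming $\Phi_1^\aexp,\dots,\Phi_{j-1}^\aexp\in\SSS$ already built, we must solve $(\LLL-je_0)\Phi_j^\aexp=iF_j(\Phi_1^\aexp,\dots,\Phi_{j-1}^\aexp)$ with $F_j\in\SSS\subset L^2$. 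By Corollary~\ref{cor.spectrum}, $\Sp(\LLL)\cap\RR=\{-e_0,0,e_0\}$, so for $j\ge2$ the real number $je_0$ lies in the resolvent set of $\LLL$ and $(\LLL-je_0):H^2\to L^2$ is an isomorphism, giving a unique solution $\Phi_j^\aexp\in H^2$. That $\Phi_j^\aexp\in\SSS$ then follows from elliptic regularity and a bootstrap: decoupling the system gives a fourth-order elliptic equation for each real/imaginary component whose leading symbol $|\xi|^4+(je_0)^2$ is bounded away from $0$, with right-hand side in $\SSS$ and potential decaying polynomially, so $\Phi_j^\aexp$ and all its derivatives decay faster than any polynomial. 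This constructs $\Phi_1^\aexp,\dots,\Phi_k^\aexp$ and annihilates the whole sum in Step~1.

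\emph{Step 3: the remainder, and the main difficulty.} With this choice, $i\partial_t W_k^\aexp+\Delta W_k^\aexp+|W_k^\aexp|^{p_c-1}W_k^\aexp=\mathcal R_k(t)$, and $\mathcal R_k$ splits into (i) finitely many explicit terms $e^{-me_0 t}\Psi_m$ with $k+1\le m\le k^2$, $\Psi_m\in\SSS$ a fixed multilinear expression in $\Phi_1^\aexp,\dots,\Phi_k^\aexp$, each trivially $O(e^{-(k+1)e_0 t})$ in every Schwartz seminorm; and (ii) the $k$-th order Taylor remainder of $F(W+\cdot)$ at $W$, evaluated at $v$. For (ii) one uses that, for $t$ large, $W+v$ is comparable to $W$: since $\Phi_j^\aexp\in\SSS$ decays faster than $W\sim|x|^{-(N-2)}$, one has $|v(t,x)|\le\tfrac12 W(x)$ uniformly for $t$ large, whence this remainder is pointwise $\le CW(x)^{p_c-k-1}|v(t,x)|^{k+1}$; the factor $|v|^{k+1}=O(e^{-(k+1)e_0 t})$ has super-polynomial decay and absorbs the at most polynomially growing weight $W^{p_c-k-1}$, and a similar treatment of derivatives gives $\|\mathcal R_k(t)\|_{\SSS}\le Ce^{-(k+1)e_0 t}$, i.e. \eqref{eqWk}. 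The only genuinely delicate point is the Schwartz decay claim in Step~2: invertibility of $\LLL-je_0$ is immediate from Corollary~\ref{cor.spectrum}, but upgrading it to a Schwartz bound on $\Phi_j^\aexp$ needs a careful elliptic bootstrap because $\LLL$ is non-self-adjoint. A second, softer, point is the bookkeeping of the (for $N=5$ non-polynomial) Taylor expansion of $|z|^{p_c-1}z$ about $W$ and of the weighted remainder estimate, which works precisely because the profiles decay faster than any power of $|x|$.
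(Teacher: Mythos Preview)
Your proof is correct and follows essentially the same strategy as the paper's: recursively construct the profiles $\Phi_j^\aexp$ by matching powers of $e^{-e_0 t}$, invert $\LLL-je_0$ on the resulting Schwartz right-hand side (using Corollary~\ref{cor.spectrum} for $j\ge 2$), and control the remainder. The paper packages the expansion slightly differently, writing $R(v)=W^{p_c}J(W^{-1}v)$ with $J$ real-analytic near $0$ and using its convergent power series rather than a finite Taylor expansion plus remainder; this makes the $\SSS$-estimates on the error term a touch cleaner and avoids tracking the separate pieces (i) and (ii) of your $\mathcal R_k$, but the content is the same. Your identification of the two delicate points (the Schwartz regularity of $(\LLL-je_0)^{-1}$ via a fourth-order elliptic bootstrap, and the weighted remainder bound exploiting that the $\Phi_j$ decay faster than $W$) matches exactly what the paper does in Remark~\ref{rem.S} and in the $J$-expansion argument.
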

\begin{remark}
Let 
$
\tilde{\eps}_k^{\aexp}:=i\partial_t W_k^{\aexp}+\Delta W_k^{\aexp}+|W^{\aexp}_{k}|^{p_c-1}W_k^{\aexp}.
$
By \eqref{eqWk} we mean that for all $J,M$, there exists $C_{J,M}>0$ such that $(1+|x|)^M |\partial_x^J \tilde{\eps}_k^{\aexp}(t,x)|\leq C_{J,M}e^{-(k+1)e_0 t}$.
\end{remark}
\begin{proof}[Proof of the lemma]
Let us fix $a\in \RR$.  To simplify notations, we will omit most of the superscripts $a$. We will construct the functions $\Phi_j=\Phi^{\aexp}_{j}$ by induction on $j$. Assume that $\Phi_1$, \ldots, $\Phi_k$ are known, and let $v_k:=W_k-W=\sum_{j=1}^k e^{-je_0t}\Phi_j(x)$. Assertion \eqref{eqWk} writes
\begin{equation}
\tag{\ref{eqWk}'}
\label{eqvk}
\eps_k:=\partial_t v_k+\LLL(v_k)+R(v_k)=O\big(e^{-(k+1)e_0 t}\big) \text{ in } \SSS(\RR^N).
\end{equation}

\medskip

\noindent\emph{Step 1: $k=1$.} Let $\Phi_1:=a \YYY_+$, which is in $\SSS$ (see Remark \ref{rem.S}) and $v_1(t,x):=e^{-e_0t}\Phi_1(x)$. We have $\partial_t v_1+\LLL v_1=0$ and thus 
$$\partial_t v_1+\LLL v_1+R(v_1)=R(v_1).$$
Note that $R(v_1)= W^{p_c} J(W^{-1}v_1)$, where $J(z):=-i\big[|1+z|^{p_c-1}(1+z)-1-\frac{p_c+1}{2}z-\frac{p_c-1}{2}\zbar\big]$ is real-analytic for $\{|z|<1\}$ and satisfies $J(0)=\partial_z J(0)=\partial_{\zbar} J(0)=0$. Write 
\begin{equation}
\label{dev.J}
J(z):=\sum_{j_1+j_2\geq 2} a_{j_1j_2} z^{j_1}\overline{z}^{j_2},
\end{equation}
with normal convergence of the series and all its derivatives, say for $|z|\leq \frac 12$. Chose $t_0$ such that $\forall t\geq t_0,\;\left|v_1(t)\right|\leq \frac{1}{2}W$. Then
\begin{equation}
\label{dev.v1}
\forall t\geq t_0,\; \forall x\in \RR^N,\quad R(v_1)=\sum_{j_1+j_2\geq 2} a_{j_1j_2}W^{p_c} \big(W^{-1}v_1\big)^{j_1}\big(W^{-1}\overline{v}_1\big)^{j_2}.
\end{equation}
As a consequence, there exists a constant $C>0$ such that for large $t$, $|R(v_1)|\leq C|W^{-1}v_1|^2$. Using analoguous inequalities on the derivatives of $R(v_1)$, and the fact that $v_1=a e^{-e_0t}\Phi_1$ with $\Phi_1\in \SSS(\RR^N)$, we get $R(v_1)=O(e^{-2e_0t})$ in $\SSS(\RR^N)$, which gives \eqref{eqvk} for $k=1$.

\medskip

\noindent\emph{Step 2: induction.} Let us assume that $\Phi_1$, \ldots, $\Phi_k$ are known and satisfy \eqref{eqvk} for some $k\geq 1$. To construct $\Phi_{k+1}$, we will first show that there exists $\Psi_k\in \SSS(\RR^N)$ (depending only on $\Phi_1$, \ldots, $\Phi_k$) such that for large $t$
\begin{equation}
\label{form.epsk}
\eps_k(x,t)=e^{-(k+1)e_0 t}\Psi_k(x)+O\big(e^{-(k+2)e_0t}\big) \text{ in } \SSS(\RR^N).
\end{equation}
Indeed by \eqref{eqvk}
\begin{equation}
\label{form.epsk1}
\eps_k(t,x)=\sum_{j=1}^k e^{-je_0t}\big( {-j}e_0\Phi_j(x)+\LLL\Phi_j(x)\big) +R(v_k(t,x)).
\end{equation}
All the functions $\Phi_j$ are in $\SSS(\RR^N)$, so that for large $t$, and all $x$, $\left|v_k(t,x)\right|\leq \frac{1}{2}W(x)$.
Furthermore  $R(v_k)= W^{p_c} J(W^{-1}v_k)$, and by the development \eqref{dev.J} of $J$ we get by \eqref{form.epsk1} that there exist functions $F_j\in\SSS(\RR^N)$ such that for large $t$
$$ \eps_k(t,x)=\sum_{j=1}^{k+1} e^{-je_0t} F_{j}(x)  + O\big(e^{-(k+2) t}\big) \text{ in } \SSS(\RR^N).$$
By \eqref{eqvk} at rank $k$, $F_j=0$ for $j\leq k$ which shows \eqref{form.epsk} with $\Phi_k=F_{k+1}$.\par
By Corollary \ref{cor.spectrum}, $(k+1) e_0$ is not in the spectrum of $\LLL$. Let 
$$\Phi_{k+1}:=-(\LLL-(k+1)e_0)^{-1} \Psi_k$$
which belongs to  $\SSS(\RR^N)$ (see Remark \ref{rem.S}) and is uniquely determined by $\Phi_1$,\ldots,$\Phi_k$. By definition, $v_{k+1}=v_k+e^{-(k+1)e_0t}\Phi_{k+1}$.
Furthermore, 
\begin{align*}
\eps_{k+1}&:=\partial_t v_{k+1}+\LLL v_{k+1}+R(v_{k+1})\\
&=\partial_t v_k+\LLL v_k+R(v_k)-(k+1)e_0\Phi_{k+1} e^{-(k+1)e_0t}+\LLL\Phi_{k+1}e^{-(k+1)e_0t}+R(v_{k+1})-R(v_k)\\
&=\eps_k-e^{-(k+1)e_0t} \Psi_{k}+R(v_{k+1})-R(v_k).
\end{align*}
By \eqref{form.epsk}, $\eps_k-e^{-(k+1)e_0t} \Psi_{k}=O\big(e^{-(k+2)e_0t}\big)$ in $\SSS(\RR^N)$. Writing as before $R=W^{p_c}J(W\cdot)$, and using the developpment \eqref{dev.J} of $J$, we get that $R(v_{k+1})-R(v_k)=O\big(e^{-(k+2)e_0t}\big)$ in $\SSS(\RR^N)$ which yields \eqref{eqvk} at rank $k+1$. The proof is complete.
\end{proof}

\subsection{Contraction argument near an approximate solution}
\label{sub.contract}
\begin{prop}
\label{prop.fxpt}
Let $a\in \RR$. There exists $k_0>0$ such that  for any $k\geq k_0$, there exists $t_k\geq 0$ and a solution $W^{\aexp}$ of \eqref{CP} such that for $t\geq t_k$,
\begin{equation}
\label{CondW_A}
\big\|\nabla\big(W^{\aexp}-W_{k}^{\aexp}\big)\big\|_{Z(t,+\infty)}\leq e^{-(k+\frac{1}{2})e_0t}.
\end{equation}
Furthermore, $W^{\aexp}$ is the unique solution of \eqref{CP} satisfying \eqref{CondW_A} for large $t$. Finally, $W^{\aexp}$ is independent of $k$ and satisfies for large $t$, 
\begin{equation}
\label{CondWa2}
\|W^{\aexp}(t)-W-a e^{-e_0t}\YYY_+\|_{\hdot}\leq e^{-\frac 32e_0t}.
\end{equation}
\end{prop}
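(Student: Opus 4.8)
The plan is to construct $W^{\aexp}$ in the form $W^{\aexp}=W^{\aexp}_k+h$ for $k$ large, obtaining $h$ by a contraction mapping in a space of functions decaying like $e^{-(k+\frac12)e_0t}$ as $t\to+\infty$, and then to check that the $h$ produced is in fact $O(e^{-(k+1)e_0t})$ and carries no slow $\YYY_+$-component, which will force it to be independent of $k$. Concretely, writing $v_k:=W^{\aexp}_k-W$ and subtracting equation \eqref{eqWk} from the equation \eqref{CP} required of $W^{\aexp}$, one sees that $h$ must solve
\begin{equation*}
\partial_t h+\LLL h=-\big(R(v_k+h)-R(v_k)\big)-\eps_k,
\end{equation*}
equivalently the Schr\"odinger form \eqref{Schrodinger.v} with the corresponding right-hand side, where $\eps_k=O(e^{-(k+1)e_0t})$ in $\SSS(\RR^N)$ is the error from Lemma \ref{lem.approximate}. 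I would work in the complete metric space
\begin{equation*}
X_k:=\Big\{h\in C^0([t_k,+\infty),\hdot):\ \|h\|_\ast:=\sup_{t\ge t_k}e^{(k+\frac12)e_0t}\big(\|h(t)\|_{\hdot}+\|\nabla h\|_{Z(t,+\infty)}\big)\le 1\Big\}
\end{equation*}
with the distance induced by $\|\cdot\|_\ast$ (note that $\|h\|_\ast\le1$ already encodes \eqref{CondW_A}), and define $\MMM(h)$ to be the solution $w$ of $\partial_t w+\LLL w=-\big(R(v_k+h)-R(v_k)\big)-\eps_k$ that decays at $+\infty$ and has vanishing slow component, in the sense that $e^{e_0t}\BB(w(t),\YYY_-)\to0$. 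A fixed point of $\MMM$ in $X_k$ then gives $W^{\aexp}:=W^{\aexp}_k+h$.

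\emph{The linear inversion.} This is where the spectral analysis of Section \ref{sec.linear} is used. Decomposing $w=\MMM(h)$ as in \eqref{decompo.v0}, the coefficients $\alpha_\pm,\beta,\gamma$ solve the scalar equations \eqref{eq.alpha+-}--\eqref{eq.Q.beta.gamma} with the present right-hand side in place of $\eps$, and for each of them there is exactly one choice decaying at $+\infty$, obtained by integrating the corresponding equation from $+\infty$; for $\alpha_+$ this is precisely the selection that kills the slow mode, and, the right-hand side being $O(e^{-(k+1)e_0t})$, it gives $|\alpha_+(t)|\le Ce^{-(k+1)e_0t}$. The component $g\in G_\bot$ is controlled by integrating the equation for $Q(w)$ in \eqref{eq.Q.beta.gamma} from $+\infty$, using Claim \ref{summation} and the coercivity of $Q$ on $G_\bot$ (Lemma \ref{lem.positivity}); this yields the $\hdot$-bound for $w$. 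The Strichartz bound on $\nabla w$ in $Z(t,+\infty)$ is then recovered exactly as in Lemma \ref{lem.Strichartz.v} and Claim \ref{claim.Strichartz.h}: one estimates $\nabla w$ on intervals of fixed small length using \eqref{StriFree}, \eqref{StriFreeRM} and the gain $|I|^{N/(N+2)}$ of \eqref{estim.V2} to absorb the potential term $\VVV(w)$, and then sums the geometric series of exponentials via Claim \ref{summation}.

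\emph{Contraction and conclusions.} On $X_k$ one has $\|\nabla v_k\|_{Z(s,s+1)}\le Ce^{-e_0s}$, so \eqref{estim.dR2} applied on unit intervals and summed through Claim \ref{summation} gives $\|\nabla(R(v_k+h)-R(v_k))\|_{N(t,+\infty)}\le C\|h\|_\ast e^{-(k+\frac32)e_0t}$, with an analogous bound for the difference of two iterates; together with $\|\nabla\eps_k\|_{N(t,+\infty)}+\|\eps_k(t)\|_{L^{\frac{2N}{N+2}}}\le Ce^{-(k+1)e_0t}$ and the linear bounds above, this yields $\|\MMM(h)\|_\ast\le Ce^{-\frac12e_0t_k}$ and $\|\MMM(h_1)-\MMM(h_2)\|_\ast\le Ce^{-e_0t_k}\|h_1-h_2\|_\ast$, so $\MMM$ maps $X_k$ into itself and is a contraction once $k\ge k_0$ and $t_k$ is large. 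Banach's theorem produces $h$, hence $W^{\aexp}$ satisfying \eqref{CondW_A}, with in fact $\|h\|_\ast\le Ce^{-(k+1)e_0t}$. Uniqueness of the fixed point gives uniqueness of $W^{\aexp}$ among solutions satisfying \eqref{CondW_A}; for $k'>k$, $W^{\aexp}_{k'}-W^{\aexp}_k=\sum_{j=k+1}^{k'}e^{-je_0t}\Phi^{\aexp}_j=O(e^{-(k+1)e_0t})$ in $\SSS$, so the solution built at level $k'$ also satisfies \eqref{CondW_A} at level $k$ and therefore equals the one built at level $k$, which proves independence of $k$ (alternatively: the difference of two such solutions solves $\partial_t\delta+\LLL\delta=O(e^{-(k+\frac32)e_0t})$ and decays like $e^{-(k+\frac12)e_0t}$, and Proposition \ref{prop.crucial.h} applied iteratively forces $\delta\equiv0$). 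Finally, since $\Phi^{\aexp}_1=\aexp\YYY_+$,
\begin{equation*}
W^{\aexp}(t)-W-\aexp e^{-e_0t}\YYY_+=\sum_{j=2}^{k}e^{-je_0t}\Phi^{\aexp}_j+h=O(e^{-2e_0t}),
\end{equation*}
which is $\le e^{-\frac32e_0t}$ for $t$ large, giving \eqref{CondWa2}.

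\emph{Main difficulty.} The delicate step is the linear inversion of $\partial_t+\LLL$ on exponentially weighted spaces: pinning the unstable $\YYY_-$-direction by integration from $+\infty$, selecting the decaying solution with vanishing $e^{-e_0t}\YYY_+$-component so that $W^{\aexp}$ is well defined and unique, and upgrading the $\hdot$-control coming from the coercivity of $Q$ on $G_\bot$ to Strichartz $Z$-control in the presence of the potential $\VVV$, which carries no smallness. This is precisely why the full apparatus of Section \ref{sec.linear} is needed rather than a bare Strichartz contraction; the nonlinear and source estimates, by contrast, are routine applications of Lemmas \ref{lem.V}, \ref{lem.R}, \ref{lem.approximate} and Claim \ref{summation}.
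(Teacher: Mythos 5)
Your construction of the map $\MMM$ takes a genuinely different route from the paper's, and the route you chose has a gap at its central step, the linear inversion. In the paper, $\MMM_k$ is defined by the \emph{free} Schr\"odinger Duhamel formula from $+\infty$,
$$\MMM_k(h)(t)=-\int_t^{+\infty}e^{i(t-s)\Delta}\big[i\VVV(h(s))-R(v_k(s)+h(s))+R(v_k(s))-\eps_k(s)\big]ds,$$
so the potential term $\VVV(h)$ is kept on the right-hand side together with the nonlinear remainders. The contraction on $B_Z^k$ then follows (for $k\geq k_0$) because, over an interval of length $\tau_0$, estimate \eqref{estim.V2} gains a factor $\tau_0^{N/(N+2)}$ on $\nabla\VVV(h)$; choosing $\tau_0$ small and then $k_0$ large, Claim \ref{summation} sums the geometric series of exponentials with a small constant. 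No spectral theory at all enters this step. You instead attempt to invert $\partial_t+\LLL$ itself, using the decomposition \eqref{decompo.v0}, integrating the coefficient ODEs from $+\infty$ and controlling the $G_\bot$-component via the coercivity of $Q$. This does not close as stated: the ODEs for $\beta$ and $\gamma$ in \eqref{eq.Q.beta.gamma} involve $\teps$ of \eqref{def.H}, which contains $\LLL g$ and is therefore not a given right-hand side; moreover $G_\bot$ is not stable by $\LLL$ (the paper states this explicitly after Lemma \ref{lem.positivity}), so there is no autonomous equation on $G_\bot$ to integrate, and the $Q$-estimate is only an a priori bound, not a construction of $w$. The difficulty you flag — ``upgrading the $\hdot$-control \ldots to Strichartz $Z$-control in the presence of the potential $\VVV$, which carries no smallness'' — is precisely what the paper avoids by not inverting $\partial_t+\LLL$: the smallness comes from the time-localized Strichartz gain, not from the spectral decomposition.

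The remainder of your argument (the nonlinear estimates via \eqref{estim.dR2} and Claim \ref{summation}, the independence of $k$ from uniqueness in the fixed point, the derivation of \eqref{CondWa2} from $\Phi^{\aexp}_1=a\YYY_+$) is sound and matches the paper's, and your alternative uniqueness argument through Proposition \ref{prop.crucial.h} is also correct. But the spectral apparatus of Section \ref{sec.linear} enters the paper's scheme only at the classification stage (Lemma \ref{lem.unic}), not in the construction of $W^{\aexp}$, and importing it into the construction in the way you propose does not produce a well-defined linear solution operator.
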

\begin{proof}

\emph{Step 1: transformation into a fixed-point problem.}
As in the preceding proof, we will fix $a\in\RR$ and omit most of the superscripts $a$. 
Let
$$ h:=W^{\aexp}-W_k^{\aexp}.$$
The function $W^{\aexp}$ is solution of \eqref{CP} if and only if $w^{\aexp}:=W^{\aexp}-W$ is solution of \eqref{equation.v}. Substracting equations \eqref{equation.v} on $w^{\aexp}$ and \eqref{eqvk} on $v_k:=W_k^{\aexp}-W$, we get that $W^{\aexp}$ satisfies \eqref{CP} if and only if $h=w^{\aexp}-v_k$ satisfies $\partial_t h+\LLL h=-R(v_k+h)+R(v_k)+\eps_k$ (see \eqref{eqvk} for the definition of $\eps_k$). This may be rewritten (recalling \eqref{def.VVV} for the definition of $\VVV$)
$$i\partial_t h+\Delta h=-\VVV(h)-i R(v_k+h)+i R(v_k)+i\eps_k.$$ 
Thus the existence of a solution $W^{\aexp}$ of \eqref{CP} satisfying \eqref{CondW_A} for $t\geq t_k$ may be written as the following fixed-point problem
\begin{multline}
\label{defM}
\forall t\geq t_k,\quad h(t)=\MMM_k(h)(t)\text{ and } \|\nabla h\|_{Z(t,+\infty)}\leq e^{-\left(k+\frac{1}{2}\right)e_0 t} \\
\text{ where }\MMM_k(h)(t):=-\int_t^{+\infty}e^{i(t-s)\Delta} \big[i\VVV(h(s))-R(v_k(s)+h(s))+R(v_k(s))-\eps_k(s)\big]ds.
\end{multline}
Let us fix $k$ and $t_k$. Consider
\begin{align*}
E_Z^k&:=\left\{h\in S(t_k,+\infty),\;\nabla h\in Z(t_k,+\infty);\; \|h\|_{E_Z^k}:=\sup_{t\geq t_k} e^{\left(k+\frac{1}{2}\right)e_0 t}\|\nabla h\|_{Z(t,+\infty)}<\infty\right\}\\
B_Z^k&:=\big\{h\in E_Z^k,\; \|h\|_{E_Z^k}\leq 1\big\}.
\end{align*}
The space $E_Z^k$ is clearly a Banach space. In view of \eqref{defM}, it is sufficient to show that if $t_k$ and $k$ are large enough, the mapping $\MMM_k$ is a contraction on $B_Z^k$. This is the aim of the next step.
\medskip

\noindent\emph{Step 2: contraction property.} Note that by Strichartz inequality \eqref{StriFreeRM}, there is a constant $C^*>0$ such that if $g,h\in E_Z^k$, $k\geq 1$,
\begin{align}
\label{Strichartz.M1}
\|\nabla(\MMM_k(h))\|_{Z(t,+\infty)}&\leq 
C^*\Big[\|\nabla(\VVV(h))\|_{N(t,+\infty)}\\ \notag &\qquad+\|\nabla (R(v_k+h)-R(v_k))\|_{N(t,+\infty)}+\|\nabla \eps_k\|_{N(t,+\infty)}\Big]\\
\label{Strichartz.M2}
\|\nabla(\MMM_k(g)-\MMM_k(h))\|_{Z(t,+\infty)}&\leq  C^*\Big[\|\nabla(\VVV(g-h))\|_{N(t,+\infty)}\\ \notag &\qquad+\|\nabla (R(v_k+h)-R(v_k+g))\|_{N(t,+\infty)}\Big].
\end{align}
\begin{claim}
\label{claim.contract}
There exists $k_0>0$ such that for $k\geq k_0$ the following holds: for all $h\in E_Z^k$
\begin{equation}
\label{fxpt.VVV}
\|\nabla(\VVV(h))\|_{N(t,+\infty)} \leq \frac{1}{4C^*} e^{-(k+\frac 12)e_0t}\|h\|_{E_Z^k};
\end{equation}
and there exists a constant $C_{k}$, depending only on $k$ such that for all $f,g\in B_Z^k$ and $t\geq t_k$
\begin{gather}
\label{fxpt.R}
\big\|\nabla\big(R(v_k+g)-R(v_k+h)\big)\big\|_{N(t,+\infty)} \leq C_{k}e^{-(k+\frac 32)e_0t} \|g-h\|_{E_Z^k},\\
\label{fxpt.epsk}
\|\nabla \eps_k\|_{N(t,+\infty)}\leq C_{k} e^{-(k+1)e_0t}.
\end{gather}
\end{claim}
Let us first assume Claim \ref{claim.contract} and prove the proposition.
Chose $k\geq k_0$. By \eqref{Strichartz.M1}, \eqref{Strichartz.M2}, \eqref{fxpt.VVV}, \eqref{fxpt.R} and \eqref{fxpt.epsk}, we get, if $g,h\in B_Z^k$ 
\begin{gather*}
\|\MMM_k(h)\|_{E_Z^k}\leq \left(\frac{1}{4}+C^*C_{k}e^{-e_0t_k}+C^*C_{k} e^{-\frac 12 e_0t_k}\right),\\
\|\MMM_k(g)-\MMM_k(h)\|_{E_Z^k}\leq \|g-h\|_{E_Z^k}\left(\frac{1}{2}+C^*C_{k} e^{-e_0t_k}\right),
\end{gather*}
which shows, chosing a larger $t_k$ if necessary, that $\MMM_k$ is a contraction of $B_Z^k$.\par
Thus, for each $k\geq k_0$, \eqref{CP} has an unique solution $W^{\aexp}$ satisfying \eqref{CondW_A} for $t\geq t_k$. The preceding proof clearly remains valid taking a larger $t_k$, so that the uniqueness still holds in the class of solutions of \eqref{CP} satisfying \eqref{CondW_A} for $t\geq t'_k$, where $t'_k$ is any real number larger than $t_k$.
Let $k<\tilde{k}$ and $W^{\aexp}$, $\widetilde{W}^{\aexp}$ be the solutions of \eqref{CP} constructed above for $k$ and $\tilde{k}$ respectively. Then, $\widetilde{W}^{\aexp}$ although satisfies \eqref{CondW_A} for large $t$, so that the uniqueness in the fixed-point shows that $W^{\aexp}(t)=\widetilde{W}^{\aexp}(t)$, for large $t$ and thus, by uniqueness in \eqref{CP}, that $W^{\aexp}=\widetilde{W}^{\aexp}$.
This shows that $W^{\aexp}$ does not depend on $k$. 

It remains to show \eqref{CondWa2}. Let $k>0$ be a large integer and $h\in B_Z^k$. By Strichartz inequality \eqref{StriFreeDual}, and the definition of $\MMM_k$, we have, for $t\geq t_k$,
\begin{equation*}
\|\MMM_k(h)(t)\|_{\hdot}\leq \big\|\nabla\big(i\VVV(h)-R(v_k+h)+R(v_k)-\eps_k\big)\big\|_{N(t,+\infty)}.
\end{equation*}
As a consequence of Claim \ref{claim.contract} and the fact that $\|h\|_{E_Z}^k\leq 1$, we get
$$\|\MMM_k(h)(t)\|_{\hdot}\leq C\left(e^{-\left(k+\frac 12 \right)e_0t}\|h\|_{E_Z^k}+e^{-(k+1)e_0 t}\right)\leq Ce^{-\left(k+\frac 12 \right)e_0 t}.$$
Using the preceding inequality on $h=W^{\aexp}-W_k^{\aexp}$ (which satisfies $h=\MMM_k(h)$), and noting that $W_k^{\aexp}=W+a e^{-e_0t} \YYY_++O(e^{-2e_0t})$ in $\hdot$, we get directly \eqref{CondWa2}. To complete the proof of the proposition, it remains to show Claim \ref{claim.contract}.
\end{proof}

\begin{proof}[Proof of Claim \ref{claim.contract}]
Estimate \eqref{fxpt.epsk} follows immediately from \eqref{eqvk}.\par
Let us show \eqref{fxpt.R}. By Lemma \ref{lem.R}, 
\begin{gather}
\label{fxptR1}
\|\nabla(R(v_k+g)-R(v_k+h))\|_{N(t,t+1)}\leq  (A)\|\nabla(g-h)\|_{Z(t,t+1)},\\
\notag
\begin{aligned}
(A):=C \Big(\|\nabla g\|_{Z(t,t+1)}&+\|\nabla h\|_{Z(t,t+1)}+\|\nabla v_k\|_{Z(t,t+1)}\\
&+\|\nabla g\|^{p_c-1}_{Z(t,t+1)}
+\|\nabla h\|^{p_c-1}_{Z(t,t+1)}+\|\nabla v_k\|^{p_c-1}_{Z(t,t+1)}\Big).
\end{aligned}
\end{gather}
By the explicit form of $v_k$ and the fact that $g,h\in B_Z^k$, we get 
\begin{equation}
\label{fxptR2}
(A)\leq C_k' e^{-e_0t},
\end{equation}
where $C_k'$ only depends on $k$. Combining \eqref{fxptR1} and \eqref{fxptR2}, we get 
$$\|\nabla(R(v_k+g)-R(v_k+h))\|_{N(t,t+1)}\leq C_k'e^{-e_0t}\|\nabla(g-h)\|_{Z(t,t+1)}\leq C_k'' e^{-(k+\frac 32)e_0 t}\|g-h\|_{E_Z}$$
which gives \eqref{fxpt.R} in view of Claim \ref{summation}. \par
It remains to show \eqref{fxpt.VVV}. Let $\tau_0>0$. By Lemma \ref{lem.V}, there exists a constant $C_2>0$ such that 
$$\|\nabla(\VVV h)\|_{N(t,t+\tau_0)}\leq C_2\tau_0^{\frac{N}{N+2}}\|\nabla h\|_{Z(t,t+\tau_0)}\leq C_2\tau_0^{\frac{N}{N+2}}e^{-\left(k+\frac{1}{2}\right)e_0 t}\|h\|_{E_Z^k}$$
By Claim \ref{summation},
$$\|\nabla(\VVV h)\|_{N(t,+\infty)}\leq \frac{C_2 e^{-(k+\frac 12)e_0t}} {1-e^{-(k+\frac 12)e_0\tau_0}}\tau_0^{\frac{N}{N+2}} \|h\|_{E_Z^k}.$$
Chosing $\tau_0$ and $k_0$ such that $C_2 \tau_0^{\frac{N}{N+2}}=\frac{1}{8C_1}$ and $e^{-(k_0+\frac 12)e_0\tau_0}\leq \frac 12$, we get \eqref{fxpt.VVV} for $k\geq k_0$.

\end{proof}

\subsection{Conclusion of the proofs of the theorems}
\label{sub.conclu}

\begin{proof}[Proof of Theorem \ref{th.exist}.]
 Denote as before $\YYY_1:=\re \YYY_+=\re \YYY_-$. Note that 
$(W,\YYY_1)_{\hdot}\neq 0.$
Indeed, if $(W,\YYY_1)_{\hdot}=0$, then by the equation $\Delta W=-W^{p_c}$, we would have  $\BB(W,\YYY_+)=\BB(W,\YYY_-)=0$ so that $W\in G_{\bot}$, which contradicts, in view of Lemma \ref{lem.positivity}, the fact that $Q(W)=-\frac{2}{(N-2)C_N^N}<0$. Replacing $\YYY_{\pm}$ by $-\YYY_{\pm}$ if necessary, we may assume
\begin{equation}
\label{WY1>0}
(W,\YYY_1)_{\hdot}>0.
\end{equation}
Let 
$$W^{\pm}:=W^{\pm 1},$$ 
which yields two solutions of \eqref{CP} for large $t>0$. Then all the conditions of Theorem \ref{th.exist} are satisfied. Indeed \eqref{ex.energy} follows from the conservation of the energy and the fact that $W^{\aexp}$ tends to $W$ in $\hdot$, \eqref{ex.lim} is an immediate consequence \eqref{CondWa2}. Furthermore, again by \eqref{CondWa2},
$$ \|W^{\aexp}\|_{\hdot}^2=\|W\|^{2}_{\hdot}+2a e^{-e_0t}(W,\YYY_1)_{\hdot}+O(e^{-\frac{3}{2}e_0t}),$$
which shows, together with \eqref{WY1>0}, that for large $t>0$,
$$ \|W^+(t)\|_{\hdot}>0, \quad \|W^-(t)\|_{\hdot}<0.$$
From Remark \ref{RemPersist}, these inequalities remain valids for every $t$ in the intervals of existence of $W^+$ and $W^-$.
Finally $T_-(W^-)=-\infty$ by Proposition \ref{compactness.u} and $\|u\|_{S(-\infty,0)}<\infty$ by Corollary \ref{corol.incomp}.\par
It remains to show that in the case $N=5$, $T_-(W^+)<\infty$. For this we will show that for any $a$ and any $t$ in the interval of definition of $W^a$,
\begin{equation}
\label{N=5L2}
N=5\Longrightarrow W^a(t)\in L^{2}(\RR^5).
\end{equation}
Consider as in Subsection \ref{sub.compactness.u} a positive radial function $\psi$ on $\RR^5$, such that $\psi=1$ if $|x|\leq 1$ and $\psi=0$ if $|x|\geq 2$. Define, for $R>0$ and large $t$,
$$ F_R(t):=\int_{\RR^5} |W^a(t,x)|^2 \psi\big(\frac{x}{R}\big) dx.$$
Then, $W^a$ being a solution of \eqref{CP}, 
\begin{align*}
F'_R(t)&=\frac{2}{R}\im \int W^a\nabla\overline{W}^a\cdot (\nabla\psi)\big(\frac{x}{R}\big) dx
=\frac{2}{R}\im \int W\nabla(\overline{W}^a-W)\cdot(\nabla\psi)\big(\frac{x}{R}\big) dx\\
&+\frac{2}{R}\im \int (W^a-W)\nabla W\cdot(\nabla\psi)\big(\frac{x}{R}\big) dx+\frac{2}{R}\im \int (W^a-W)\nabla\left(\overline{W}^a-\overline{W}\right)\cdot(\nabla\psi)\big(\frac{x}{R}\big) dx.
\end{align*}
Using that by \eqref{CondWa2}, $\|W^a(t)-W\|_{\hdot}\leq Ce^{-e_0t}$, we get, by Hardy inequality $$|F'_R(t)|\leq C\|W^a(t)-W\|_{\hdot}\left(\|W^a(t)\|_{\hdot}+\|W\|_{\hdot}\right)\leq Ce^{-e_0t},$$ 
with a constant $C$ independent of $R$, and thus, integrating between a large $t$ and $+\infty$,
$$ \left|F_R(t)-\int_{\RR^N} |W(x)|^2 \psi\big(\frac{x}{R}\big) dx\right|\leq Ce^{-e_0t}.$$
Letting $R$ goes to $+\infty$, we get \eqref{N=5L2} and $\|W^a(t)\|_{L^2}=\|W\|_{L^2}$. In particular $W^+(t)\in L^2(\RR^5)$, and thus, by Corollary \ref{corol.sur}, $T_-(W^+)<\infty$ which concludes the proof of Theorem \ref{th.exist}.
\end{proof}

\begin{proof}[Proof of Theorem \ref{th.classif}]
Let us first prove:
\begin{lemma}
\label{lem.unic}
If $u$ is a solution of \eqref{CP} satisfying 
\begin{equation}
\label{devtu1'}
\|u(t)-W\|_{\hdot}\leq Ce^{-\gamma_0 t},\quad E(u)=E(W)
\end{equation}
then 
$$\exists! a\in \RR, \quad u=W^{\aexp}.$$
\end{lemma}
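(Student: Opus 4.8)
The plan is to rewrite \eqref{CP} as a perturbation of the linearized flow around $W$, bootstrap the decay rate of $u-W$ using Proposition~\ref{prop.crucial.h}, read off the coefficient along $\YYY_+$, and then identify $u$ with the corresponding $W^{\aexp}$ via the uniqueness in the fixed-point of Proposition~\ref{prop.fxpt}.

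First I would set $v(t):=u(t)-W$, so that $v$ solves \eqref{equation.v} and, by \eqref{devtu1'}, $\|v(t)\|_{\hdot}\le Ce^{-\gamma_0 t}$; Lemma~\ref{lem.Strichartz.v} upgrades this to $\|v\|_{S(t,+\infty)}+\|\nabla v\|_{Z(t,+\infty)}\le Ce^{-\gamma_0 t}$. Writing \eqref{equation.v} as $\partial_t v+\LLL v=\eps$ with $\eps:=-R(v)$, the nonlinear estimates \eqref{estim.dR1}--\eqref{estim.dR2} of Lemma~\ref{lem.R} (applied with $g=0$ on unit time-intervals), Claim~\ref{summation}, and the fact that $p_c-1\ge 1$ give $\|\nabla\eps\|_{N(t,+\infty)}+\|\eps(t)\|_{L^{\frac{2N}{N+2}}}\le Ce^{-2\gamma_0 t}$ for $t$ large. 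Thus $(v,\eps)$ satisfies \eqref{eq.h}, \eqref{cond.eps}, \eqref{cond.h} with $c_0=\gamma_0<c_1=2\gamma_0$, and I would apply Proposition~\ref{prop.crucial.h} repeatedly: as long as we remain in the regimes $c_0<c_1\le e_0$ or $e_0<c_0<c_1$, estimate \eqref{decay.h} improves the $\hdot$-decay rate of $v$ from $c_0$ to $c_1-\eta$, which through Lemma~\ref{lem.Strichartz.v} and Lemma~\ref{lem.R} makes the new $\eps=-R(v)$ decay at roughly twice that rate, so the rate grows geometrically. After finitely many steps either $v$ decays faster than $e^{-Nt}$ for every $N>0$, or we land in the case $c_0\le e_0<c_1$; in the latter, \eqref{decay.h'} furnishes $\aexp:=A_+\in\RR$ with $\|v(t)-\aexp\,e^{-e_0 t}\YYY_+\|_{\hdot}\le Ce^{-(e_0+\theta)t}$ for some $\theta>0$ (in the former the same holds with $\aexp=0$). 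In all cases $\|u(t)-W\|_{\hdot}\le Ce^{-e_0 t}$.

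Next I would compare $u$ with the solution $W^{\aexp}$ of Proposition~\ref{prop.fxpt}: by \eqref{CondWa2}, $\|W^{\aexp}(t)-W-\aexp\,e^{-e_0 t}\YYY_+\|_{\hdot}\le e^{-\frac32 e_0 t}$, so $\tilde v(t):=u(t)-W^{\aexp}(t)$ satisfies $\|\tilde v(t)\|_{\hdot}\le Ce^{-ct}$ for some $c>e_0$. Since $u$ and $W^{\aexp}$ both solve \eqref{CP}, $\tilde v$ solves $\partial_t\tilde v+\LLL\tilde v=\tilde\eps$ with $\tilde\eps=-\big(R(u-W)-R(W^{\aexp}-W)\big)$, and \eqref{estim.dR1}--\eqref{estim.dR2}, Claim~\ref{summation}, together with $\|u-W\|_{\hdot}+\|W^{\aexp}-W\|_{\hdot}\le Ce^{-e_0 t}$, give $\|\nabla\tilde\eps\|_{N(t,+\infty)}+\|\tilde\eps(t)\|_{L^{\frac{2N}{N+2}}}\le Ce^{-(c+e_0)t}$. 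Hence $\tilde v$ always falls in the case $e_0<c_0<c_1$ of Proposition~\ref{prop.crucial.h}; bootstrapping as before shows that $\|\tilde v(t)\|_{\hdot}$, and by Claim~\ref{claim.Strichartz.h} also $\|\nabla\tilde v\|_{Z(t,+\infty)}$, decays faster than $e^{-Nt}$ for every $N>0$. Finally, fixing a large $k\ge k_0$ and using \eqref{defWk} to write $u-W_{k}^{\aexp}=(W^{\aexp}-W_{k+1}^{\aexp})+e^{-(k+1)e_0 t}\Phi^{\aexp}_{k+1}+\tilde v$, the bound \eqref{CondW_A} at level $k+1$, the Schwartz decay of $\Phi^{\aexp}_{k+1}$, and the super-exponential decay of $\|\nabla\tilde v\|_{Z(t,+\infty)}$ yield $\|\nabla(u-W_{k}^{\aexp})\|_{Z(t,+\infty)}\le C_k e^{-(k+1)e_0 t}\le e^{-(k+\frac12)e_0 t}$ for $t$ large, so $u$ satisfies \eqref{CondW_A} and the uniqueness in Proposition~\ref{prop.fxpt} forces $u=W^{\aexp}$. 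Uniqueness of $\aexp$ is immediate from \eqref{CondWa2}: if $u=W^{a_1}=W^{a_2}$, then $|a_1-a_2|\,\|\YYY_+\|_{\hdot}\le 2e^{-\frac12 e_0 t}\to 0$.

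The main difficulty will be the bookkeeping in the two bootstraps: one must check at each iteration that $-R(v)$ (resp.\ the difference of nonlinear terms for $\tilde v$) satisfies \eqref{cond.eps} with an exponent strictly larger than the current decay rate, and that only finitely many applications of Proposition~\ref{prop.crucial.h} suffice to reach the spectral regime $c_0\le e_0<c_1$ (resp.\ to force super-exponential decay for $\tilde v$). Once this is arranged, the identification $\aexp=A_+$ is automatic from the normalizations in \eqref{decay.h'} and \eqref{CondWa2}.
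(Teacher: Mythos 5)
Your proposal is correct and follows essentially the same route as the paper's proof: rewrite $v=u-W$ as a forced linearized flow with $\eps=-R(v)$, bootstrap the decay rate with Lemma \ref{lem.Strichartz.v}, Lemma \ref{lem.R}, Claim \ref{summation} and Proposition \ref{prop.crucial.h} until reaching the spectral window $c_0\leq e_0<c_1$, read off $a=A_+$ from \eqref{decay.h'}, then iterate the same argument on $\tilde v=u-W^a$ to get super-exponential decay and invoke the uniqueness in Proposition \ref{prop.fxpt}. The only cosmetic differences are that the paper tracks the bootstrap as $\gamma_0\mapsto\tfrac32\gamma_0$ in Step 1 and $m_1\mapsto m_1+\tfrac{e_0}{2}$ in Step 2 rather than describing it as a geometric growth, and that the paper leaves the uniqueness of $a$ implicit whereas you spell it out; these are not substantive divergences.
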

\begin{corol}
\label{corol.unic}
For any $a\neq 0$, there exists $T_a\in \RR$ such that
\begin{equation}
\label{W+-=Wa}\left\{
\begin{aligned}
 W^{\aexp}=W^+(t+T_a) &\text{ if } & a>0\\
W^{\aexp}=W^-(t+T_a) &\text{ if } & a<0.
\end{aligned}
\right.
\end{equation}
\end{corol}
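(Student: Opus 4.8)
The plan is to deduce the corollary from the uniqueness in Lemma \ref{lem.unic}, combined with the asymptotic expansion \eqref{CondWa2}, by using the time-translation invariance of \eqref{CP}. Fix $a\neq 0$. For any $T\in\RR$ the function $t\mapsto W^{\aexp}(t+T)$ is again a solution of \eqref{CP}, defined on a neighborhood of $+\infty$ (since $W^{\aexp}$ is defined on $[t_k,+\infty)$ by Proposition \ref{prop.fxpt}). Since $W^{\aexp}(t)\to W$ in $\hdot$ and $E$ is conserved, $E\big(W^{\aexp}(\cdot+T)\big)=E(W^{\aexp})=E(W)$, and by \eqref{CondWa2} the translate converges to $W$ in $\hdot$ exponentially fast as $t\to+\infty$. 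Thus $W^{\aexp}(\cdot+T)$ satisfies the hypotheses \eqref{devtu1'} of Lemma \ref{lem.unic}, and there is a unique $a'=a'(T)\in\RR$ with $W^{\aexp}(\cdot+T)=W^{a'}$ (as maximal solutions, by the uniqueness part of Lemma \ref{lem.CP}).

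The next step is to identify $a'$. Applying \eqref{CondWa2} to $W^{\aexp}$ and to $W^{a'}$, one has, for large $t$ and in $\hdot$,
\begin{gather*}
W^{\aexp}(t+T)=W+\big(ae^{-e_0T}\big)e^{-e_0t}\YYY_++O\big(e^{-\frac32 e_0t}\big),\\
W^{a'}(t)=W+a'e^{-e_0t}\YYY_++O\big(e^{-\frac32 e_0t}\big).
\end{gather*}
Subtracting and using $W^{\aexp}(\cdot+T)=W^{a'}$ gives $\big|ae^{-e_0T}-a'\big|\,\|\YYY_+\|_{\hdot}\leq Ce^{-\frac12 e_0t}$; letting $t\to+\infty$, and recalling that $\YYY_+\neq0$, this forces $a'=ae^{-e_0T}$. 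Hence $W^{\aexp}(\cdot+T)=W^{ae^{-e_0T}}$ for every $T\in\RR$.

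To conclude, set $\sigma:=\operatorname{sgn}(a)\in\{+1,-1\}$ and $T_a:=-\tfrac1{e_0}\ln|a|\in\RR$. The identity above, with $a$ replaced by $\sigma$ and $T=T_a$, gives $W^{\sigma}(\cdot+T_a)=W^{\sigma e^{-e_0T_a}}=W^{\sigma|a|}=W^{\aexp}$, which is exactly \eqref{W+-=Wa} since $W^+=W^{1}$ and $W^-=W^{-1}$. I expect the bookkeeping (that $W^{\aexp}(\cdot+T)$ is defined on some $[t_0,+\infty)$ with $t_0\geq0$, so that Lemma \ref{lem.unic} applies; that coincidence near $+\infty$ propagates to coincidence of the maximal solutions) to be painless. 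The only genuine point is the identification of $a'$: it relies on the separation of the scales $e^{-e_0t}$ and $e^{-\frac32 e_0t}$ in \eqref{CondWa2} together with $\YYY_+\neq0$, so that the coefficient of $e^{-e_0t}\YYY_+$ is an intrinsic invariant of the solution. An alternative, avoiding Lemma \ref{lem.unic}, would be to check directly the scaling relation $\Phi^{ae^{-e_0T}}_j=e^{-je_0T}\Phi^{\aexp}_j$ for the profiles of Lemma \ref{lem.approximate} (by induction on $j$, starting from $\Phi^{\aexp}_1=a\YYY_+$), hence $W^{\aexp}_k(\cdot+T)=W^{ae^{-e_0T}}_k$, and then invoke the uniqueness statement in Proposition \ref{prop.fxpt}.
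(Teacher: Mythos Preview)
Your proof is correct and follows essentially the same approach as the paper: apply Lemma \ref{lem.unic} to a time-translate of $W^{\aexp}$ and identify the resulting parameter using the asymptotic expansion \eqref{CondWa2}. The paper proceeds slightly more directly by choosing $T_a$ with $|a|e^{-e_0T_a}=1$ from the start rather than first proving the general relation $W^{\aexp}(\cdot+T)=W^{ae^{-e_0T}}$, but this is only a cosmetic difference.
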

\begin{proof}
Let $u=W+v$ be a solution of \eqref{CP} for $t\geq t_0$ satisfying \eqref{devtu1'}. Recall that $v$ satisfies equation \eqref{equation.v}.

\medskip

\noindent\emph{Step 1.} We show that there exists $a\in\RR$ such that
\begin{equation}
\label{condA1}
\forall \eta>0,\quad \|v(T)-a e^{-e_0 T}\YYY_+\|_{\hdot}+\big\|\nabla\big(v(t)-a e^{-e_0 t}\YYY_+\big)\big\|_{Z(T,+\infty)}\leq C_{\eta}e^{
-(2-\eta)e_0 T}.
\end{equation}
Indeed we will show
\begin{equation}
\label{decay.v}
\|v(t)\|_{\hdot} \leq Ce^{-e_0t}, \quad \|R(v(t))\|_{L^{\frac{2N}{N+2}}}+\|\nabla(R(v))\|_{N(t,+\infty)}\leq  C e^{-2e_0 t}.
\end{equation}
Assuming \eqref{decay.v}, we are in the setting of Proposition \ref{prop.crucial.h} with $h=v$, $\eps=-R(v)$, $c_0=e_0$ and $c_1=2e_0$. The conclusion \eqref{decay.h'} of the proposition would then yield \eqref{condA1}. It remains to prove \eqref{decay.v}.

By Lemma \ref{lem.R}, Claim \ref{claim.Strichartz.h} and Claim \ref{summation}, the bound on $R(v)$ in \eqref{decay.v} follows from the bound on $\|v(t)\|_{\hdot}$, so that we only need to show this first bound.

By Lemma \ref{lem.Strichartz.v}, assumption \eqref{devtu1'} implies $\|v(t)\|_{\hdot}+\|\nabla v\|_{Z(t,+\infty)}\leq C e^{-\gamma_0 t}.$ By Lemma \ref{lem.R} and Claim \ref{summation}
$$ \|R(v(t))\|_{L^{\frac{2N}{N+2}}}+\|\nabla(R(v))\|_{N(t,+\infty)}\leq  C e^{-2\gamma_0 t}.$$
Thus we can apply Proposition \ref{prop.crucial.h}, showing that
$$ \|v(t)\|_{\hdot}\leq C\big(e^{-e_0t}+ e^{-\frac{3}{2}\gamma_0t}\big).$$
If $\frac{3}{2}\gamma_0\geq e_0$ the proof of \eqref{decay.v} is complete. If not, assumption \eqref{devtu1'} on $v$ holds with $\frac{3}{2}\gamma_0$ instead of $\gamma_0$, and an iteration argument yields the result. The proof of \eqref{decay.v} is complete, which concludes Step 1.

\medskip
\noindent\emph{Step 2.} 
Let us show
\begin{equation}
\label{condA2}
\forall m>0,\; \exists t_0>0,\; \forall t\geq t_0,\quad \|u(t)-W^{\aexp}(t)\|_{\hdot}+\big\|\nabla\big(u-W^{\aexp}\big)\big\|_{Z(t,+\infty)}\leq e^{-m t}.
\end{equation}
This will show that $u=W^a$, by uniqueness in Proposition \ref{prop.fxpt}. According to Step $1$, \eqref{condA2} holds for $m=\frac{3}{2}e_0$. Let us assume \eqref{condA2} holds for some $m=m_1>e_0$. We will show that it holds for $m=m_1+\frac{e_0}{2}$, which will yield \eqref{condA2} by iteration and conclude the proof.\par
Write $v(t):=u(t)-W$, $w^{\aexp}(t):=W^{\aexp}(t)-W$ (so that in particular $u-W^{\aexp}=v-w^{\aexp}$). Then
$$ \partial_t (v-w^{\aexp})+\LLL(v-w^{\aexp})=-R(v)+R(w^{\aexp}).$$
We have assumed $\|v(t)-w^{\aexp}(t)\|_{\hdot}+\big\|\nabla\big(v-w^{\aexp}\big)\big\|_{Z(t,+\infty)}\leq e^{-m_1 t}.$
According to Lemma \ref{lem.R} and Claim \ref{summation}
$$ \big\|\nabla\big(R(v)-R(w^{\aexp})\big)\big\|_{N(t,+\infty)}+\big\|R(v(t))-R(w^{\aexp}(t))\big\|_{L^{\frac{2N}{N+2}}}\leq Ce^{-(m_1+e_0) t}.$$
Then by Proposition \ref{prop.crucial.h}
$$ \|v(t)-w^{\aexp}(t)\|_{\hdot}+\|\nabla(v-w^{\aexp})\|_{Z(t,+\infty)}\leq C e^{-\big(m_1+\frac{3}{4}e_0\big)t},$$
which yields \eqref{condA2} with $m=m_1+\frac{e_0}{2}$. By iteration, \eqref{condA2} holds for any $m>0$. Using this with $m=(k_0+1)e_0$ (where $k_0$ is given by Proposition \ref{prop.fxpt}), we get that for large $t>0$
$$ \big\|\nabla\big(u-W_{k_0}^{\aexp}\big)\big\|_{Z(t,+\infty)}\leq e^{-(k_0+\frac{1}{2})e_0t}.$$
By uniqueness in Proposition \ref{prop.fxpt}, we get as announced that $u=W^a$ which concludes the proof of the lemma.
\end{proof}
\begin{proof}[Proof of Corollary \ref{corol.unic}]
Let $a\neq 0$ and chose $T_a$ such that $|a|e^{-e_0T_a}=1$.
By \eqref{CondWa2}, 
\begin{equation}
\label{important.unic}
\|W^{\aexp}(t+T_a)-W\mp e^{-e_0t} \YYY_+\|_{\hdot}\leq Ce^{-\frac 32 e_0t}.
\end{equation}
 Furthermore, $W^{\aexp}(\cdot+T_a)$ satisfies the assumptions of Lemma \ref{lem.unic}, which shows that there exists $a'$ such that $W^{\aexp}(\cdot+T_a)=W^{\aexp'}$.
By \eqref{important.unic}, $a'=1$ if $a>0$ and $a'=-1$ if $a<0$, hence \eqref{W+-=Wa}. 
\end{proof}

Let us turn to the proof of Theorem \ref{th.classif}. Point \eqref{theo.crit} is an immediate consequence of the variational characterization of $W$ (\cite{Au76}, \cite{Ta76}). \par

Let us show \eqref{theo.sub}. Let $u$ be a solution of \eqref{CP} such that $E(u_0)=E(W)$ and $\|u_0\|_{\hdot}<\|W\|_{\hdot}$. Assume that $\|u\|_{S(\RR)}=\infty$. Replacing if necessary $u(t)$ by $\overline{u}({-t})$, we may assume that $\|u\|_{S(0,+\infty)}=\infty$. Then by Proposition \ref{prop.CV0}, there exist $\theta_0\in \RR$, $\mu_0>0$, and $c,C>0$ such that $\|u(t)-W_{[\theta_0,\mu_0]}\|_{\hdot}\leq Ce^{-ct}.$
This shows that $u_{[-\theta_0,\mu_0^{-1}]}$ fullfills the assumptions of Lemma \ref{lem.unic}. Using that $\|u\|_{\hdot}<\|W\|_{\hdot}$, this implies that there exists $a<0$ such that $u_{[-\theta_0,\mu_0^{-1}]}=W^{\aexp}$. Thus by Corollary \ref{corol.unic},
$$ u(t)=W^-_{[\theta_0,\mu_0]}(t+T_a),$$
which shows \eqref{theo.sub}.\par
The proof of \eqref{theo.super} is similar. Indeed if $u$ is a solution of \eqref{CP} defined on $[0,+\infty)$ and such that $E(u_0)=E(W)$, $\|u_0\|_{\hdot}>\|\nabla W\|_{\hdot}$ and $u_0\in L^2$, then by Proposition \ref{PropSuper},  $\|u(t)-W_{[\theta_0,\mu_0]}\|_{\hdot}\leq Ce^{-ct}$, which shows using Lemma \ref{lem.unic} and the same argument as before that for some $t_0\in \RR$,
$$ u(t)=W^+_{[\theta_0,\mu_0]}(t+t_0).$$
The proof of Theorem \ref{th.classif} is complete.
\end{proof}
\section{Appendix}
\subsection{Proofs of some results of decomposition near $W$}
\label{Appendix.Decompo}
\subsubsection{Proof of Lemma \ref{lem.ortho}}
Let us first show the lemma when $f$ is close to $W$.
Consider the following functionals on $\RR\times (0,+\infty) \times \hdot$:
$$ J_0: (\theta,\mu,f)\mapsto \left(f_{[\theta,\mu]},iW\right)_{\hdot},\quad J_1: (\theta,\mu,f)\mapsto \left(f_{[\theta,\mu]},\tW\right)_{\hdot}$$
Then, by \eqref{iW.W1}
\begin{align*}
\frac{\partial J_0}{\partial \theta}(0,1,W)&=\int |\nabla W|^2&\frac{\partial J_0}{\partial \mu}(0,1,W)&=0\\
\frac{\partial J_0}{\partial \mu}(0,1,W)&=0&\frac{\partial J_1}{\partial \mu}(0,1,W)&=-\int |\nabla \tW|^2.
\end{align*}
Furthermore, $J_0(0,1,W)=J_1(0,1,W)=0$.
Thus by the Implicit Function Theorem there exists $\eps_0,\eta_0>0$ such that for $h\in \hdot$:
$$ \|h-W\|_{\hdot}<\eps_0\Longrightarrow \exists! (\theta,\mu),\quad |\theta|+|\mu-1|\leq \eta_0\text{ and }\left(h_{[\theta,\mu]},iW\right)_{\hdot}= \left(h_{[\theta,\mu]},\tW\right)_{\hdot}=0.$$
Let $f$ be as in the proposition. By the variational characterization of $W$, if $\dd(f)$ is small enough, we can choose $\mu_1$ and $\theta_1$ such that $ f_{[\theta_1,\mu_1]}=W+g$, $\|g\|_{\hdot}\leq \eps\big(\dd(f)\big),$ and we are now reduced to the preceding case. The assertions on the uniqueness of $(\theta,\mu)$ and the regularity of the mapping $f\mapsto(\theta,\mu)$ follows from the Implicit Functions Theorem.
The proof of Lemma \ref{lem.ortho} is complete.
\qed

\subsubsection{Proof of Lemma \ref{bound.modul}}
Take $u$ as in Lemma \ref{bound.modul} and let 
\begin{equation}
\label{def.v}
v(t):=u_{[\theta(t),\mu(t)]}(t)-W=\tilde{u}(t)+\alpha(t) W.
\end{equation}

\medskip
\noindent\emph{Proof of \eqref{approximations}.} In this part of the proof, $t$ is just a parameter and we will not write it for the sake of simplicity. By \eqref{def.v},
\begin{equation}
\label{orthog}
\|v\|_{\hdot}^2=\alpha^2\|W\|_{\hdot}^2+\|\tilde{u}\|^2_{\hdot}.
\end{equation}
To get a second relation between $\|v\|_{\hdot}$, $\alpha$ and $\big\|\tilde{u}\big\|_{\hdot}$, we use the equation $E(W)=E(W+v)$ together with \eqref{W+g}. Denote by $\tilde{u}_1$ and $\tilde{u}_2$ the real and imaginary parts of $\tilde{u}$. By the orthogonality of $\tilde u_1$ and $\tilde u_2$ with $W$ in $\hdot$, and the equation $\Delta W+W^{p_c}=0$ we have
\begin{equation*}
 \int \nabla W \cdot\nabla \tilde u_1=\int \nabla W \cdot\nabla \tilde u_2=\int W^{p_c}\tilde{u}_1=\int W^{p_c}\tilde{u}_2 =0.
\end{equation*}
Thus $W$ and $\tilde{u}$ are $Q$-orthogonal and
$ Q(v)=Q(\tilde{u}+\alpha W)=-|Q(W)|\alpha^2+Q(\tilde{u}).$
This yields, using \eqref{W+g}, $\Big|\alpha^2|Q(W)|-Q(\tilde{u})\Big|\leq C\|v\|_{\hdot}^3.$
By the coercivity of $Q$ on $H^{\bot}$ (Claim \ref{coercivity}) which implies $Q(\tilde{u})\approx \big\|\tilde{u}\big\|_{\hdot}^2$, we get
\begin{equation}
\label{fgalpha}
\|\tilde{u}\|_{\hdot}^2\leq C \left(\|v\|^3_{\hdot}+\alpha^2\right),\quad \alpha^2\leq C\left( \|\tilde{u}\|_{\hdot}^2+\|v\|^3_{\hdot}\right).
\end{equation}
It follows from the variational characterization of $W$ that $\|v\|_{\hdot}$ is small when $\dd(u)$ is small. By \eqref{orthog} and \eqref{fgalpha}, we get, for small $\dd(u)$,
\begin{equation}
\label{adtf1}
|\alpha|\approx \|v\|_{\hdot}\approx \|\tilde{u}\|_{\hdot}.
\end{equation}
This is the first part of \eqref{approximations}. It remains to show the estimate on $\dd(u)$. Developing the equation $\|W+v\|_{\hdot}^2=\|W\|_{\hdot}^2+\dd(u)$ we get,
\begin{equation}
\label{claimNH}
 \|v\|_{\hdot}^2+2(v,W)_{\hdot}=\|v\|_{\hdot}^2+2\alpha=\dd(u)
\end{equation}
which gives, thanks to \eqref{adtf1}, the desired result. The proofs of \eqref{approximations} is complete.

\noindent\emph{Proof of \eqref{bound.derives}.}
Let us consider the self-similar variables $y$ and $s$ defined by
$$ \mu(t)y=x,\quad ds=\mu^2(t) dt.$$
Then \eqref{CP} may be rewritten
\begin{equation}
\label{CP'}
i\partial_s  u_{[\theta,\mu]}+\Delta_y u_{[\theta,\mu]}+\left|u_{[\theta,\mu]}\right|^{p_c-1} u_{[\theta,\mu]}+\theta_s u_{[\theta,\mu]}+i\frac{\mu_s}{\mu}\left(\frac{N-2}{2} u_{[\theta,\mu]}+y\cdot \nabla u_{[\theta,\mu]}\right)=0
\end{equation}
where the subscript $s$ denotes the derivative with respect to $s$ and $\Delta_y$ the Laplace operator with respect to the new space variable $y$.\par
We much show
\begin{equation}
\label{bound.derives'}
|\alpha_s(s)|+|\theta_s(s)|+\left|\frac{\mu_s}{\mu}(s)\right|\leq C|d(u(s))|.
\end{equation}
For any complex-valued function $f$, we will write $f_1:=\re f$, $f_2:=\im f$. Writing $u_{[\theta,\mu]}=W+v$, we get
$$ \partial_s v+\LLL v+R(v)-\theta_s iW-\theta_s iv+\frac{\mu_s}{\mu} W_1+\frac{\mu_s}{\mu}\left(\frac{N-2}{2}v+y\cdot \nabla v\right)=0.$$
Where the linear operator $\LLL$ and the remainder term $R$ are defined by \eqref{equation.v}.
We will need the following bound on $R(v)$ (see Lemma \ref{lem.R})
\begin{equation}
\label{boundRmodul}
\|R(v)\|_{L^{\frac{2N}{N+2}}}\leq C\left(\|v\|^2_{\hdot}+\|v\|_{\hdot}^{p_c}\right).
\end{equation}
Writing $v=\tilde{u}+\alpha(s)W$ and keeping in the left-hand side only the terms that are linear in $\tilde{u}$, $\alpha$, $\alpha_s$, $\theta$, $\theta_s$ and $\mu_s/\mu$, we get
\begin{multline}
\label{eq.tildeu}
\partial_s \tilde{u}_1+i\partial_s \tilde{u}_2+\alpha_s W+(\Delta+W^{p_c-1})\tilde{u}_2-i(\Delta+p_cW^{p_c-1})\tilde{u}_1-i\alpha(p_c-1) W^{p_c}\\
-\theta_s iW+\frac{\mu_s}{\mu} W_1=-R(v)+\theta_s i v-\frac{\mu_s}{\mu}\left(\frac{N-2}{2}v+y\cdot \nabla v\right).
\end{multline}
In view of estimates \eqref{approximations}, it is easy to see that the $\hdot$-scalar products of the right-hand term by $W$, $iW$ and $W_1$ are bounded up to a constant by $\eps(s)$, where $\eps(s)$ is defined by
$$\eps(s):=|\dd|\left(|\dd|+|\theta_s(s)|+\Big|\frac{\mu_s}{\mu}(s)\Big|\right),\quad\dd:=\dd(u(s)).$$
For instance, by \eqref{boundRmodul}
\begin{equation}
\label{formalIPP}
\left|\big(R(v),W\big)_{\hdot}\right|=\left|\big(R(v),\Delta W\big)_{L^2}\right|\leq \|R(v)\|_{L^{\frac{2N}{N+2}}}\|\Delta W\|_{L^{2^*}}=O\big(\dd^2\big).
\end{equation}
The formal integration by part in \eqref{formalIPP}, which is rigorous for smooth solutions of \eqref{CP} decaying fast enough at infinity, may be justified by passing to the limit and using the standard Cauchy problem theory for \eqref{CP}.
Projecting equation \eqref{eq.tildeu} in $\hdot$ on $W$, $iW$ and $\tW$, we get \big(denoting by $c:=\NH{W}^2$, $c_1:=\NH{\tW}^2$\big)
\begin{gather}
\label{PiW}
c\alpha_s=-(\Delta \tilde{u}_2,W)_{\hdot}-( W^{p_c-1}\tilde{u}_2,W)_{\hdot}+O(\eps(s))\\
\label{PW}
c\theta_s=-(\Delta \tilde{u}_1,W)_{\hdot}-p_c(W^{p_c-1}\tilde{u}_1,W)_{\hdot}-\alpha(p_c-1)(W^{p_c},W)_{\hdot}+O(\eps(s))\\
\label{PiW1}
\frac{\mu_s}{\mu} c_1=-(\Delta \tilde{u}_2,W_1)_{\hdot}-(W^{p_c-1}\tilde{u}_2 ,W_1)_{\hdot}+O(\eps(s)).
\end{gather}
Justifying as before the integrations by parts, we have
$$\big(\Delta \tilde{u}_1,W)_{\hdot}=(\tilde{u}_1,\Delta W\big)_{\hdot},\quad \big(\Delta \tilde{u}_2,W)_{\hdot}=(\tilde{u}_2,\Delta W\big)_{\hdot},\quad \big(\Delta \tilde{u}_2,W_1)_{\hdot}=(\tilde{u}_2,\Delta W_1\big)_{\hdot}.$$
Consequently all the right-hand terms in equations \eqref{PiW}, \eqref{PW} and \eqref{PiW1} are bounded up to a constant by $\|\tilde{u}\|_{\hdot}+\eps$. By \eqref{approximations}, $\|\tilde{u}\|_{\hdot}\leq C\dd$ which yields \eqref{bound.derives'} and completes the proof of Lemma \ref{bound.modul}.
\qed

\subsection{Spectral properties of the linearized operator}
\label{app.spectral}
This part of the appendix is dedicated to the proof of Lemma \ref{lem.Y}, which is a variation of the classical proof (see \cite{Gr90} and the survey \cite{Sc06} for similar results).

\subsubsection{Proof of the existence of the eigenfunctions}

Note that $\overline{\LLL(v)}=-\LLL(\overline{v})$, so that if $e_0>0$ is an eigenvalue for $\LLL$ with eigenfunction $\YYY_+$, $-e_0$ is an eigenvalue of $\LLL$ with eigenfunction $\overline{\YYY}_+$. Let us show the existence of $\YYY_+$. Writing $\YYY_1=\re \YYY_+$, $\YYY_2=\im \YYY_+$, we must solve
\begin{equation}
\label{eq.vp}
\left\{
\begin{aligned}
(\Delta+p_cW^{p_c-1})\YYY_1=&-e_0 \YYY_2\\
(\Delta+W^{p_c-1}) \YYY_2=&e_0 \YYY_1.
\end{aligned}\right.
\end{equation}
Let $V:=W^{p_c-1}$.
The operator $-\Delta-V$ on $L^2$ with domain $H^2$ is self-adjoint and nonnegative, thus it has a unique square root $(-\Delta-V)^{\frac 12}$ with domain $H^1$ (see \cite{We80}). 
Assume that there exist $f_1\in H^4$ such that 
\begin{equation}
\label{eqP}
Pf_1=-e_0^2 f_1,\text{ where }P:=(-\Delta-V)^{\frac 12}(-\Delta-p_cV)(-\Delta-V)^{\frac 12}.
\end{equation}
Then taking
$$ \YYY_1:= (-\Delta-V)^{\frac 12}f_1,\quad \YYY_2:=\frac{1}{e_0} (-\Delta-p_cV)(-\Delta-V)^{\frac 12}f_1,$$
would yield a solution of system \eqref{eq.vp}, showing the existence of $\YYY_+$ and $\YYY_-$.\par
The remainder of the proof is devoted to proving that the operator $P$ on $L^2$ with domain $H^4$ has a strictly negative eigenvalue. Note that 
$$
P=(\Delta+V)^2-(p_c-1)(-\Delta-V)^{\frac 12}V(-\Delta-V)^{\frac 12}$$
is a relatively compact, selfadjoint, perturbation of $\Delta^2$, so that its essential spectrum is $[0,+\infty)$ (see \cite{We80}). Thus we only need to show the following claim.
\begin{claim}
\label{sigma<0}
\begin{equation*}
\sigma_-(P):=\inf\big\{(Pf,f)_{L^2}, \;\,f \in D(P),\;\|f\|_{L^2}=1\big\}<0.
\end{equation*}
\end{claim}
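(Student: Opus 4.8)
The plan is to produce a single $f$ in $D(P)=H^{4}$ with $(Pf,f)_{L^{2}}<0$; since $P$ is self-adjoint this gives $\sigma_{-}(P)=\inf\sigma(P)<0$ at once. Write $L_{-}:=-\Delta-V$ and $L_{+}:=-\Delta-p_{c}V$ (recall $V=W^{p_{c}-1}$), so that $P=L_{-}^{1/2}L_{+}L_{-}^{1/2}$ and, for $f\in H^{4}$,
\[
(Pf,f)_{L^{2}}=\big(L_{+}L_{-}^{1/2}f,\ L_{-}^{1/2}f\big)_{L^{2}} .
\]
The idea is to choose $f$ so that $L_{-}^{1/2}f$ is an explicit smooth compactly supported function. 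I take $f:=L_{-}^{1/2}\eta$ with $\eta\in\Cio(\RR^{N})$ real-valued. Since $\eta$ lies in the domain of every power of $L_{-}$, we get $f\in D(L_{-}^{2})=H^{4}$, while $L_{-}^{1/2}f=L_{-}\eta\in\Cio(\RR^{N})$ (here smoothness of $V$ is used). Hence
\[
(Pf,f)_{L^{2}}=\big(L_{+}(L_{-}\eta),\,L_{-}\eta\big)_{L^{2}}=q(L_{-}\eta),\qquad
q(\psi):=\int_{\RR^{N}}|\nabla\psi|^{2}-p_{c}\int_{\RR^{N}}W^{p_{c}-1}\psi^{2},
\]
so the whole matter reduces to finding $\eta\in\Cio(\RR^{N})$ with $q(L_{-}\eta)<0$.

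Since $W^{p_{c}-1}\in L^{N/2}(\RR^{N})$, H\"older and Sobolev inequalities show that $q$ extends continuously to $\hdot(\RR^{N},\RR)$. Using $\NH{W}^{2}=\int W^{p_{c}+1}=\int W^{2^{*}}=1/C_{N}^{N}$ (the first identity by testing $-\Delta W=W^{p_{c}}$ against $W$, the others from $p_{c}+1=2^{*}$ and the normalisation of $W$),
\[
q(W)=\NH{W}^{2}-p_{c}\int W^{p_{c}+1}=\frac{1-p_{c}}{C_{N}^{N}}<0 .
\]
Thus $\{\psi\in\hdot:q(\psi)<0\}$ is a non-empty open subset of $\hdot$; since $L_{-}$ maps $\Cio(\RR^{N})$ boundedly into $\hdot$, the claim follows as soon as we prove:
\[
L_{-}\big(\Cio(\RR^{N})\big)\ \text{is dense in}\ \hdot(\RR^{N},\RR).
\]

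To establish this density, let $h\in\hdot$ satisfy $\big(\nabla h,\nabla(L_{-}\eta)\big)_{L^{2}}=0$ for every $\eta\in\Cio(\RR^{N})$. Writing $L_{-}\eta=-\Delta\eta-V\eta$ and integrating by parts, this says $\Delta^{2}h+V\Delta h=0$ in $\DDD'(\RR^{N})$, i.e.\ $w:=\Delta h$ solves $L_{-}w=0$ in $\DDD'$. Now $w=\Delta h\in\dot H^{-1}$, and because $V=W^{p_{c}-1}$ is smooth with $V,\nabla V\in L^{\infty}\cap L^{N}$, multiplication by $V$ is bounded on $\hdot$, hence on $\dot H^{-1}$; from $-\Delta w=Vw$ and the isomorphism $-\Delta:\hdot\to\dot H^{-1}$ we conclude $w\in\hdot$. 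Pairing $L_{-}w=0$ with $w$ then gives
\[
0=\big(L_{-}w,w\big)_{\dot H^{-1},\hdot}=\NH{w}^{2}-\int_{\RR^{N}}W^{p_{c}-1}w^{2},
\]
and the equality case of Talenti's inequality (as in the proof of Claim \ref{coercivity}) forces $w\in\vect(W)$. But $w=\Delta h\in\dot H^{-1}$ whereas $W\notin\dot H^{-1}(\RR^{N})$ for $N\in\{3,4,5\}$: the $|x|^{-(N-2)}$ tail of $W$ makes $\widehat W(\xi)\sim|\xi|^{-2}$ as $\xi\to0$, so $|\xi|^{-1}\widehat W\notin L^{2}$ near the origin. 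Hence $w=0$, i.e.\ $\Delta h=0$; a harmonic function lying in $\hdot(\RR^{N})$, $N\ge3$, is constant, so $h=0$. This proves the density. Choosing $\eta\in\Cio(\RR^{N})$ with $q(L_{-}\eta)<0$ and setting $f=L_{-}^{1/2}\eta$ — which is nonzero, for otherwise $(Pf,f)=0$ — we obtain $\sigma_{-}(P)\le(Pf,f)_{L^{2}}/\|f\|_{L^{2}}^{2}<0$, proving Claim \ref{sigma<0}.

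The serious step is the density $L_{-}(\Cio(\RR^{N}))$ in $\hdot$: it rests on the short elliptic bootstrap for the zero-energy equation $L_{-}w=0$, on the ground-state characterisation $\ker_{\hdot}L_{-}=\vect(W)$ (precisely the equality case of the Sobolev inequality already exploited in the proof of Claim \ref{coercivity}), and on the elementary observation that $W\notin\dot H^{-1}$ in dimensions $3,4,5$. The remaining points — that $L_{-}^{1/2}\eta\in H^{4}$, the identity $(Pf,f)=q(L_{-}\eta)$, and $q(W)<0$ — are routine verifications.
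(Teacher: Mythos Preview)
Your argument is correct and takes a genuinely different route from the paper. Both proofs reduce to producing a function $F$ in the range of $L_{-}$ (equivalently of $L_{-}^{1/2}$) with $(L_{+}F,F)_{L^{2}}<0$, but the paper works in $L^{2}$ and must split cases: for $N=3,4$ (where $W\notin L^{2}$, so $\ker_{L^{2}}L_{-}=\{0\}$) it takes a spatial cutoff $W_{a}=\chi(\cdot/a)W$, checks $((\Delta+p_{c}V)W_{a},W_{a})_{L^{2}}>0$ for large $a$, and then approximates $(\Delta+V-1)W_{a}$ by elements of $\mathrm{Ran}(\Delta+V)$ using the $L^{2}$-density of that range; for $N=5$ (where $W\in L^{2}$ and $\ker_{L^{2}}L_{-}=\vect\{W\}$) it instead works with $Z+\alpha W_{1}$, $Z$ a positive-eigenvalue eigenfunction of $\Delta+p_{c}V$, choosing $\alpha$ so as to land in $W^{\perp}=\overline{\mathrm{Ran}(\Delta+V)}$. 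Your proof is more uniform: by moving the density question to $\dot H^{1}$ and using the single fact that $W\notin\dot H^{-1}$ for $N\in\{3,4,5\}$ (indeed for $N\le 6$, since $\hat W(\xi)\sim c|\xi|^{-2}$ near $0$ from $-\Delta W=W^{p_{c}}\in L^{1}$), you avoid the dimension split entirely and can use $W$ itself---with $q(W)<0$ immediate---rather than a cutoff or an auxiliary eigenfunction. The cost is the short bootstrap $w\in\dot H^{-1}$, $L_{-}w=0$ $\Rightarrow$ $w\in\dot H^{1}$, which is straightforward but does require some care with homogeneous spaces (the harmonic remainder $w-\tilde w$ is a tempered harmonic distribution, hence a polynomial, and no nonzero polynomial lies in $\dot H^{1}+\dot H^{-1}$). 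Overall your version is a clean alternative that trades the paper's concrete $L^{2}$ approximations for a slightly more abstract $\dot H^{1}$ duality argument.
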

\begin{proof}
Note that $(Pf,f)_{L^2}=-\big((\Delta+p_cV)F,F\big)_{L^2}$, where $F:= (-\Delta-V)^{\frac 12} f$. Thus it is sufficient to find $F$ such that
\begin{equation}
\label{FL2>0}
\big((\Delta+p_cV)F,F\big)_{L^2}>0,\text{ and }\exists g\in H^4, \; F=(\Delta+V)g.
\end{equation}
We distinguish two cases. First assume that $N=3,4$, so that $W\notin L^2$.
Let $W_a(x):=\chi\big( x/a \big) W(x)$, where $\chi$ is a smooth, radial function such that
$\chi(r)=1$ for $r\leq 1$ and $\chi(r)=0$ for $r\geq 2.$
We first claim
\begin{equation}
\label{agrand}
\exists a>0,\quad E_a:=\int (\Delta+p_cV) W_a W_a >0.
\end{equation}
Recall that $\Delta W=-W^{p_c}$. Thus
\begin{equation*}
(\Delta+p_cV)W_a=(p_c-1)\chi\big( x/a \big) W^{p_c} +\frac 2a (\nabla \chi) \big( x/a \big) \cdot \nabla W+\frac{1}{a^2} (\Delta \chi)\big(x/a \big) W.
\end{equation*}
Hence
\begin{equation*}
\int (\Delta +p_cV)W_a W_a=\int\chi_a^2(p_c-1) W^{p_c+1}+\underbrace{\frac{2}{a} \int(\nabla \chi) \big( x/a \big) \cdot \nabla W\, W}_{(A)}+\underbrace{\frac{1}{a^2}\int (\Delta \chi)\big(x/a \big) W^2}_{(B)}.
\end{equation*}
According to the explicit expression \eqref{defW} of $W$, $W\leq C |x|^{-(N-2)}$ and $|\nabla W|\leq C |x|^{-(N-1)}$ at infinity, which gives 
$|(A)|+|(B)|\leq \frac Ca$ if $N=3$, $|(A)|+|(B)|\leq \frac {C}{a^2}$ if $N=4$. Hence \eqref{agrand}.

Let us fix $a$ such that \eqref{agrand} holds.
Recall that $W$ is not in $L^2$. Thus $\Delta+V$ is a selfadjoint operator on $L^2$, with domain $H^2$, and without eigenfunction. In particular the orthogonal of its range $R(\Delta+V)$ is $\{0\}$, and thus $R(\Delta+V)$ is dense in $L^2$. Let $\eps>0$, and consider
$G_{\eps}\in H^2$ such that
\begin{equation*}
\|(\Delta+V)G_{\eps}-(\Delta+V-1)W_a\|_{L^2}\leq \eps.
\end{equation*}
Taking $F_{\eps}:=(\Delta+V-1)^{-1}(\Delta+V)G_{\eps}$, we obtain  $\|(\Delta+V-1)(F_{\eps}-W_a)\|_{L^2}\leq \eps$ which implies 
$\|F_{\eps}-W_a\|_{H^2}\leq \eps\|(\Delta+V-1)^{-1}\|_{L^2\rightarrow L^2}.$
Hence for some constant $C_0$,
$$ \left| \int_{\RR^N} (\Delta+p_cV) F_{\eps} F_{\eps}-\int_{\RR^N} (\Delta+p_cV)W_a W_a \right|\leq C_0\eps.$$
As a consequence of \eqref{agrand}, we get \eqref{FL2>0} for $F=F_{\eps}$, $\eps=\frac{E_a}{2 C_0}$, which shows the claim in the case $N=3,4$.\par

Assume now that $N=5$, so that $W$ is in $L^2$ and more generally in all spaces $H^s(\RR^N)$. In this case
$(R(\Delta+V))^{\bot}=N(\Delta+V)=\vect\{W\}$, and thus
\begin{equation}
\label{imagedense}
\overline{R(\Delta+V)}=\big\{f\in L^2,\; (f,W)_{L^2}=0\big\}.
\end{equation}
Furthermore, $\Delta+p_cV$ is a self-adjoint compact perturbation of $\Delta$ and $((\Delta+p_c V)W,W)_{L^2}>0$, which shows that $\Delta+p_cV$ has a positive eigenvalue. Let $Z$ be the eigenfunction for this eigenvalue. Recalling that $(\Delta+p_cV)W_1=0$ we get, for any real number $\alpha$
$$ \int_{\RR^N} (\Delta+p_cV)(Z+\alpha W_1)\,(Z+\alpha W_1)=\int_{\RR^N} (\Delta+p_cV)ZZ>0.$$
By explicit calculation, $(W_1,W)_{L^2}\neq 0$, so that we can chose the real number $\alpha$ to have $(Z+\alpha W_1,W)_{L^2}=0$. Hence
$$ \big((\Delta+V-1)(Z+\alpha W_1),W\big)_{L^2}=\big(Z+\alpha W_1,(\Delta+V-1)W\big)_{L^2}=-(Z+\alpha \tW,W)_{L^2}=0.$$
By \eqref{imagedense}, we can chose, for any $\eps>0$ a function $G_{\eps}$ in $H^2$ such that
$$ \|(\Delta+V)G_{\eps}-(\Delta+V-1)(Z+\alpha W_1)\|_{L^2}<\eps.$$
As in the preceding case, $F_{\eps}=(\Delta+V-1)^{-1}(\Delta+V)G_{\eps}$ satisfies \eqref{FL2>0} for small $\eps>0$. Claim \ref{sigma<0} is shown for $N=3,4,5$, which concludes the proof of the existence of the real eigenvalues of $e_0$ and $-e_0$.
\end{proof}

\subsubsection{Decay at infinity of the eigenfunctions}

To conclude the proof of Lemma \ref{lem.Y}, it remains to show that $\YYY_{\pm}\in\SSS(\RR^N)$. 
By a simple boot-strap argument, it is easy to see that the eigenfunctions $\YYY_+$ and $\YYY_-$ are $C^{\infty}$. It remains to show the decay at infinity of $\YYY_+$, $\YYY_-$ and all their derivatives.

Recall that the eigenfunctions $\YYY_+$ and $\YYY_-$ are complex conjugates. According to system \eqref{eq.vp} on $\YYY_1=\re \YYY_+$ and $\YYY_2=\im \YYY_+$, it suffices to show the decay result on $\YYY_1$ only. Furthermore, by Sobolev embeddings, we only have to show that the following property holds for all $k$ and $s$ 
\begin{equation}
\label{PNs}
\tag{$\PPP_{k,s}$} \forall \varphi\in C^{\infty}_0\left( \RR^N\backslash \{0\}\right), \; \exists C,\; \forall R\geq 1,\; \|\varphi(x/R)\YYY_1 \|_{H^s}\leq \frac{C}{(1+R)^k}.
\end{equation}
Recall that $\YYY_1=\sqrt{-\Delta-V} f_1$, with $f_1\in H^4$, so that $(\PPP_{0,3})$ is satisfied. We will show that for $k\geq 0$, $s\geq 3$, $(\PPP_{k,s})$ implies $(\PPP_{k+1,s+1})$. Assume $(\PPP_{k,s})$ and consider $\varphi$ and $\tilde{\varphi}$ in $C^{\infty}_0\left(\RR^N\backslash \{0\}\right)$ such that $\tilde{\varphi}$ is $1$ on the support of $\varphi$. Note that by \eqref{eq.vp}
\begin{equation}
\label{eq.v1}
(\Delta^2+e_0^2)\YYY_1=-V \Delta \YYY_1-\Delta(p_c V\YYY_1)-p_cV^2 \YYY_1.
\end{equation}
By the explicit form of $W$, $V$ and all its derivatives decay at least as $\frac{1}{|x|^4}$ at infinity. Thus \eqref{eq.v1} implies $\|\varphi(x/R) (\Delta^2+e_0^2)\YYY_1\|_{H^{s-3}}\leq \frac{C}{R^4}\|\tphi(x/R)\YYY_1\|_{H^s}$. Hence
\begin{equation}
\label{ineg.v1}
\|(\Delta^2+e_0^2)(\varphi(x/R)\YYY_1)\|_{H^{s-3}}\leq \frac{C}{R}\|\tphi(x/R)\YYY_1\|_{H^s}.
\end{equation}
By ($\PPP_{k,s}$), the right-hand side of  \eqref{ineg.v1} is bounded by $\frac{C}{R^{k+1}}$ for large $R$. Furthermore, $\Delta^2+e_0^2$ is an isomorphism from $H^{s+1}$ to $H^{s-3}$, so that \eqref{ineg.v1} implies $\|\varphi(x/R)\YYY_1\|_{H^{s+1}}\leq \frac{C}{R^{k+1}}$,
which yields exactly $(\PPP_{k+1,s+1})$. The proof is complete.
\qed
\begin{remark}
\label{rem.S}
Let $\Psi\in \SSS(\RR^N)$ and $e_1\in \RR\setminus \{-e_0,0,e_0\}$ (thus by Corollary \ref{cor.spectrum}, $e_1$ is not in the spectrum of $\LLL$). Then by a proof similar to the one above
\begin{equation}
\label{inS}
\Phi:=(\LLL-e_1)^{-1} \Psi\in\SSS(\RR^N).
\end{equation}
Indeed $\Phi_1=\re \Phi$ and $\Phi_2=\im \Phi$ satisfy the equations
\begin{equation}
\label{eq.Phi}
-e_1\Phi_1+(\Delta+V)\Phi_2=\Psi_1,\quad
-e_1\Phi_2-(\Delta+p_cV)\Phi_1=\Psi_2.
\end{equation}
As $\Phi_1$ and $\Phi_2$ are, by definition, in $L^2$, a simple bootstrap argument shows that they are in all $H^s$, $s\geq 0$. Furthermore
$$ (\Delta^2+e_1^2)\Phi_1=-V\Delta \Phi_1-\Delta(p_cV\Phi_1)-p_cV^2\Phi_1-e_1\Psi_1-(\Delta+V)\Psi_2,$$
which gives equation \eqref{eq.v1}, up to a right-member term $-e_1\Psi_1-(\Delta+V)\Psi_2$ which is in $\SSS(\RR^N)$. Thus the iteration argument above shows that $\Phi_1\in \SSS(\RR^N)$, which implies by \eqref{eq.Phi} that $\Phi_2\in\SSS(\RR^N)$. Hence \eqref{inS}.
\end{remark}
\subsection{Proof of Lemma \ref{lem.R}}
We have
$$ R(f)=-i|W+f|^{p_c-1}(W+f)+iW^{p_c}+i\frac{p_c+1}{2}W^{p_c-1}f+i\frac{p_c-1}{2}W^{p_c-1}\fbar=W^{p_c} J\left(W^{-1}f\right)$$
where $J$ is the function defined on $\CC$ by
$$J(z)=-i|1+z|^{p_c-1}(1+z)+i+i\frac{p_c+1}{2}z+i\frac{p_c-1}{2}\zbar.$$
Recall that $p_c>2$. Thus $J$ is of class $C^2$ on $\CC$ and $J(0)=\partial_z J(0)=\partial_{\zbar} J(0)=0$. Furthermore, for large $|z|$, $J$ is bounded by $C|z|^{p_c}$, and its derivatives of order $k=1,2$ by $C|z|^{p_c-k}$. Hence
\begin{align}
\label{estimJ2}
|J(z)-J(z')|&\leq C|z-z'|\big(|z|+|z'|+|z|^{p_c-1}+|z'|^{p_c-1}\big)\\
\label{estimdJ2}
|\partial_z J(z)-\partial_z J(z')|+|\partial_{\zbar} J(z)-\partial_{\zbar} J(z')|&\leq C|z-z'|\left(1+|z|^{p_c-2}+|z'|^{p_c-2}\right).
\end{align}
By \eqref{estimJ2} we get the pointwise bound
\begin{equation}
\label{major.dR}
|R(f)-R(g)|\leq C|f-g|\left(W^{p_c-2}|f|+W^{p_c-2}|g|+|f|^{p_c-1}+|g|^{p_c-1}\right),
\end{equation}
which yields \eqref{estim.dR1} using H\"older inequality $\|abc^{p_c-2}\|_{L^{\frac{2N}{N+2}}}\leq\|a\|_{L^{2^*}}\|b\|_{L^{2^*}}\|c\|_{L^{2^*}}^{p_c-2}$.\par
Now, remark that 
\begin{multline*}
\nabla(R(f))=p_c W^{p_c-1}(\nabla W)J\left(W^{-1} f\right)\\+W^{p_c}\nabla(W^{-1}f)(\partial_zJ)\left(W^{-1}f\right)+W^{p_c}\nabla(W^{-1}f)(\partial_{\zbar} J)\left(W^{-1}f\right).
\end{multline*}
By \eqref{estimJ2} and \eqref{estimdJ2} we get
\begin{gather*}
|\nabla R(f)-\nabla R(g)|\leq C\Big\{(A)+(B)+(C)\Big\}\\ (A):=\frac{1}{|x|+1}|f-g|\Big(W^{p_c-2}|f|+W^{p_c-2}|g|+|f|^{p_c-1}+|g|^{p_c-1}\Big)\\
(B):=\left|W\nabla\big(W^{-1}f-W^{-1}g\big)\right|\Big(W^{p_c-2}|f|+|f|^{p_c-1}\Big)\\ (C):=\left|W\nabla\big(W^{p_c-2}+W^{-1}g\big)\right|\,|f-g|\Big(W^{p_c-2}+|f|^{p_c-2}+|g|^{p_c-2}\Big).
\end{gather*}
Note that $\frac{2N(N+2)}{N^2+4}<N$ for $N=3,4,5$ so that if $u\in S(I)$ and $\nabla u\in Z(I)$, Hardy inequality $\|\frac{1}{|x|} u\|_{Z(I)}\leq \|\nabla u\|_{Z(I)}$ holds. Using H\"older inequality $\left\|abc^{p_c-2}\right\|_{N(I)}\leq \|a\|_{Z(I)}\|b\|_{S(I)}\|c\|_{S(I)}^{p_c-2}$ together with Hardy and Sobolev inequalities we get
\begin{align*}
\|(A)\|_{N(I)} &\leq C\Big\|\frac{1}{|x|+1}(f-g)\Big\|_{Z(I)}\left[\|W\|_{S(I)}^{p_c-2}\left(\|f\|_{S(I)}+\|g\|_{S(I)}\right)+\|f\|^{p_c-1}_{S(I)}+ \|g\|_{S(I)}^{p_c-1}\right]\\
&\leq C \|\nabla(f-g)\|_{Z(I)}\left[|I|^{\frac{6-N}{2(N+2)}}\left(\|\nabla f\|_{Z(I)}+\|\nabla g\|_{Z(I)}\right)+\|\nabla f\|_{Z(I)}^{p_c-1}+\|\nabla g\|_{Z(I)}^{p_c-1}\right].
\end{align*}
The other terms $(B)$ and $(C)$ are handled in the same way. Note in particular that by Hardy inequality $\|W\nabla\big(W^{-1}g\big)\|_{Z(I)}\leq C\|\nabla g\|_{Z(I)}$. The proof of \eqref{estim.dR2} is complete.
\qed

\nocite{KrSc05}\nocite{CoKeStTaTa06}
\bibliographystyle{alpha} 
\bibliography{toto}
\end{document}